\title{Chenciner bifurcation, strong resonances and Arnold tongues of a discrete time SIR epidemic model
\thanks{This work is supported by National Natural Science
Foundation of China (Grant Nos.12171337 and 11701476), Central Government Guided Local Science and Technology Development Projects (Grant No.2024ZYD0059), Sichuan Provincial Natural Science Foundation of China (Grant No.2023NSFSC0064) and Open Research Fund Program of Data Recovery Key Laboratory of Sichuan Province (Grant No.DRN2405)}}
\author{
{\sc Jiangqiong Yu}$^{a}$,~~
{\sc Jiyu Zhong}$^{b}$,~~{\sc Lingling Liu}$^{c}$,
~~{\sc Zhiheng Yu}$^d$\footnote{Corresponding author: yuzhiheng9@163.com
}
\\
$^a${\small School of Mathematical Sciences,}
\\
{\small Chongqing Normal University, Chongqing 401331, P. R. China}
\\
$^b${\small School of Mathematics and Statistics, Hainan University,}
\\
{\small Haikou, Hainan, 570228, P. R. China}
\\
$^c${\small School of Sciences, Southwest Petroleum University,}\\
{\small Chengdu, Sichuan 610500, P. R. China}
\\
$^d${\small School of Mathematics, Southwest Jiaotong University,}
\\
{\small Chengdu, Sichuan 611756, P. R. China}
\\
}
\date{}
\begin{document}
\maketitle
\vskip -1.2cm
\begin{abstract}
In this paper, we mainly study the dynamic properties of a class of three-dimensional SIR models. Firstly, we use the {\it complete discriminant theory} of polynomials to obtain the parameter conditions for the topological types of each fixed point. Secondly, by employing the center manifold theorem and bifurcation theory, we prove that the system can undergo codimension 1 bifurcations, including transcritical, flip and Neimark-Sacker bifurcations, and codimension 2 bifurcations which contain Chenciner bifurcation, 1:3 and 1:4 strong resonances. Besides, by the theory of normal form, we give theoretically the Arnold tongues in the weak resonances such that the system possesses two periodic orbits on the stable invariant closed curve generated from the Neimark-Sacker bifurcation. Finally, in order to verify the theoretical results, we detect all codimension 1 and 2 bifurcations by using MatcontM and numerically simulate all bifurcation phenomena and the Arnold tongues in the weak resonances.
\vskip 0.2cm
{\bf Keywords}: Complete discrimination system; Bifurcation; Invariant cycle; Strong resonance; Arnold tongues.

\vskip 0.2cm
{\bf AMS(2010) Subject Classifications:} 37G10; 39A28; 58K50; 68W30

\end{abstract}

\newcommand{\tl}[1]{\multicolumn{1}{l}{#1}} %
\renewcommand{\appendixname}{\appendix~{\thechapter}~}
\renewcommand{\theequation}{\thesection.\arabic{equation}}
\newtheorem{lm}{Lemma}
\newtheorem{defi}{Definition}
\newtheorem{prob}{Problem}
\newtheorem{thm}{Theorem}
\newtheorem{pro}{Proposition}
\newtheorem{exmp}{Example}
\newtheorem{rmk}{Remark}
\newtheorem{cor}{Corollary}
\newtheorem{con}{Conjecture}
\newcommand\T{\rule{-0.5pt}{2.6ex}}
\newcommand\B{\rule[-1.2ex]{0pt}{0pt}}
\vskip 0.5cm
\baselineskip 14pt
\parskip 8pt
\allowdisplaybreaks[4]
\section{Introduction}
It has long been known that infectious diseases always threatened the survival and development of humanity. Every occurrence of an infectious disease brings enormous disasters to humanity (\cite{Brauer, Hethcote, Johnson, Stone}). Therefore, how to control infectious diseases is an important issue that has received widespread attention from people. A basic method is to establish a mathematical model corresponding to infectious diseases and discuss their dynamic behaviors. Kermark and McKendrick proposed a basic epidemic model in 1927 as follow (see \cite{Kermack})
\begin{eqnarray}\label{firmod}
\left\{
\begin{array}{l}
\frac{dS}{dt}=-\hat{\beta} S I,\\
\frac{dI}{dt}=\hat{\beta} S I-\hat{\alpha} I,\\
\frac{dR}{dt}=\hat{\alpha} I,
\end{array}
\right.
\end{eqnarray}
where $S(t)$, $I(t)$ and $R(t)$ denote the numbers of susceptible, infective and recovered individuals at time $t$ respectively, the parameters $\hat{\alpha}$ and $\hat{\beta}$ represent the probability of patient recovery per unit time and the rate of infection per unit time of contact with others respectively. System \eqref{firmod} is called SIR model (see \cite{Brauer}), which is one of the early triumphs of mathematical epidemiology. After that, more and more people have begun to establish and study such kind of models to control the infectious diseases (\cite{Feng,LongJIM,Lu,Taylor,Yang2022}).
In addition, comparing with the continuous models, the discrete models not only preserve the basic features of the continuous model, but also are much more intuitive than continuous cases to see the numbers of susceptible, infected and immune individuals with richer dynamic properties and more simple to use statistical data for numerical simulations (\cite{Schreiber,Xiang}).

Consider the following discrete time SIR epidemic model, which was described by the difference equation mentioned in \cite{DinQ},
\begin{eqnarray}
\left\{
\begin{array}{rcl}
S(n+1)&=&S(n)-\frac{\alpha\,S(n)\,I(n)}{N}+\beta\,(I(n)+R(n)),\\
I(n+1)&=&(1-\beta-r)\,I(n)+\frac{\alpha\,S(n)\,I(n)}{N},\\
R(n+1)&=&(1-\beta)\,R(n)+r\,I(n),\\
\end{array}
\right.
\label{OM-0}
\end{eqnarray}
where $\alpha$ ($>0$) represents the average number of successful contacts (resulting in infections) which made by one infected (and infectious) individual during the time $n$ to $n+1$,
$\beta$ ($0<\beta<1$) represents the probability of a birth and
the probability of a death (where assuming that the probability of a birth equals the probability of a death),
$r$ ($0<r<1$) is the probability of recovery so that the ratio $1/r$ is the average length of the infectious period when there are no deaths (assuming the infected individual is also infectious, i.e. the infected individual can transmit the disease), and $N=S+I+R$ represents total population size in SIR model.
Furthermore, it is assumed that the initial conditions $S(0)$, $I(0)$ and $R(0)$ are positive real numbers such that $S(0) +I(0) +R(0)=N$.
Besides, in \cite{DinQ}, Din supposed that the total number of people $N$ unchanged, i.e., $S(n)+I(n)+R(n)=N$, and replaced $R(n)$ by $N-S(n)-I(n)$ so that system \eqref{OM-0} can be translated to following equivalent system
\begin{eqnarray}
\left\{
\begin{array}{l}
S(n+1)=S(n)-\frac{\alpha\,S(n)\,I(n)}{N}+\beta\,(N-S(n)),\\
I(n+1)=(1-\beta-r)\,I(n)+\frac{\alpha\,S(n)\,I(n)}{N},\\
R(n+1)=(1-\beta)\,R(n)+r\,I(n).\\
\end{array}
\right.
\label{OM-1}
\end{eqnarray}
Since the first two equations of \eqref{OM-1} are only about $S(n)$ and $I(n)$, for the following system
\begin{eqnarray}
\left\{
\begin{array}{l}
S(n+1)=S(n)-\frac{\alpha\,S(n)\,I(n)}{N}+\beta\,(N-S(n)),\\
I(n+1)=(1-\beta-r)\,I(n)+\frac{\alpha\,S(n)\,I(n)}{N}.\\
\end{array}
\right.
\label{OM-2}
\end{eqnarray}
Din investigated asymptotic stability of both disease-free and the endemic fixed points, and obtained the global stability of these fixed points under certain parametric conditions by using comparison method.

Later, Li and Eskandari  (\cite{LiB}) analysed the codimensions 1 and 2 bifurcations of the system \eqref{OM-2} by computing their critical normal form coefficients. They further detected the flip and Neimark-Sacker bifurcations, and strong resonances of system \eqref{OM-2}.
However, for the three-dimensional system \eqref{OM-1}, the stability of its fixed points and bifurcation phenomena have not been studied yet.
Although the first two equations are decoupled from the third one, the third equation is coupled with the first two.
A natural question is what are the dynamic properties of system \eqref{OM-1} near its fixed points when the numbers of susceptible, infective and recovered individuals changes simultaneously, so that we can obtain some strategies for controlling infectious diseases from these properties.
Furthermore, even if we can determine the numbers of recovered individuals by virtue of the numbers of susceptible and infective individuals and the relationship $S(n)+I(n)+R(n)=N$ between them, it remains extremely difficult to regulate the disease condition. This is due to the excessively large workload of computational analysis. However, we can regulate the disease more conveniently by directly studying the dynamic properties of system system \eqref{OM-1}.
Additionally, in this study, we not only establish the critical conditions for bifurcation occurrence but also presents rigorous theoretical proofs and numerical simulations of the resulting bifurcation phenomena, thereby offering substantial theoretical support for infectious disease prevention and control strategies.

This paper is organized as follows. In the second section, we first use {\it complete discrimination system} of polynomial (see \cite{YangHouXia2001,YangXia2005}) to determine the parameter conditions corresponding to the topological structure of the orbits near each fixed point. In the third section, center manifold theorem (\cite{Carr}) is used to prove the existence of codimension 1 bifurcations, and we prove that system \eqref{OM-1} undergos transcritical bifurcation at the fixed point $E_1$, flip and Neimark-Sacker bifurcations at $E_2$.
Then we show that system \eqref{OM-1} undergoes codimension 2 bifurcations at $E_2$ in the fourth section, including Chenciner bifurcation, $1:3$ and $1:4$ strong resonance bifurcations. In addition, we not only prove the existence of the codimension 2 bifurcations, but also demonstrate the existence of all dynamic phenomena (including the codimension 1 bifurcations and homoclinic structures) near bifurcation parameters.
Furthermore, in the fourth section, we give theoretically the Arnold tongues in the weak resonances such that the system possesses periodic orbits on the stable invariant closed curve generated from the Neimark-Sacker bifurcation.
Finally, we give a simulation to the dynamic phenomena of system \eqref{OM-1}, which verified our results in the previous sections.

\section{Qualitative properties of fixed points}
\allowdisplaybreaks[4]
\setcounter{equation}{0}
In this section, we first discuss the stability of the fixed points of system \eqref{OM-1}, and then present the parametric conditions in non-hyperbolic cases, so that the bifurcations of system \eqref{OM-1} can be studied in the next three sections.

Setting $S(n):=x(n)$, $I(n):=y(n)$ and $R(n):=z(n)$, system \eqref{OM-1} is regarded as a three-dimensional mapping
$$F:{\mathbb{R}}^{3}_0:=\{(x, y ,z)\in {\mathbb{R}}^{3}|x\geq0,y\geq0,z\geq0\} \rightarrow {{\mathbb{R}}^3_0}, $$
\begin{eqnarray}
F\left(
\begin{array}{l}
x \\
y \\
z
\end{array}
\right)
=
\left(
\begin{array}{l}
x-\frac{\alpha\,x\,y}{N}+\beta\,(N-x)\\
(1-\beta-r)y+\frac{\alpha\,x\,y}{N}\\
(1-\beta)\,z+r\,y
\end{array}
\right).
\label{eq2.1}
\end{eqnarray}

\begin{pro}
For parameters $N>0$, $0<\beta<1$, $0<r<1$ and $\alpha>0$, mapping \eqref{eq2.1} has at most two fixed points, i.e., a fixed point $E_1:(N, 0, 0)$ always exists, and the other fixed point
\begin{eqnarray*}
&&E_2:({\frac {N \left( \beta+r \right) }{\alpha}}, {\frac {\beta\,N \left( \alpha-\beta-r \right) }{\alpha\, \left( \beta+r \right) }},
{\frac {rN \left( \alpha-\beta-r \right) }{\alpha\, \left( \beta+r\right) }})
\end{eqnarray*}
exists in the case that $\alpha>\beta+r$.
Furthermore, the topological classifications for fixed points $E_1$ and $E_2$, are shown in Tables \ref{Table1} and \ref{Table2}, respectively.
\label{pro2.1}
\end{pro}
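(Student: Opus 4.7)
The plan is to first determine the fixed points by solving $F(x,y,z)=(x,y,z)$, and then to classify their topological types via the spectrum of the Jacobian of $F$ at each of them.

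For the fixed-point equations, I would begin with the second component, which factors as $y\bigl(\beta+r-\alpha x/N\bigr)=0$. The branch $y=0$ reduces the first equation to $\beta(N-x)=0$, giving $x=N$, and the third then forces $z=0$; this produces $E_{1}$. The branch $x=N(\beta+r)/\alpha$ substituted back into the first and third equations yields the stated coordinates of $E_{2}$, and the positivity of its $y$- and $z$-components is equivalent to $\alpha>\beta+r$.

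Next I would compute
\[
J(x,y,z)=\begin{pmatrix} 1-\tfrac{\alpha y}{N}-\beta & -\tfrac{\alpha x}{N} & 0 \\ \tfrac{\alpha y}{N} & 1-\beta-r+\tfrac{\alpha x}{N} & 0 \\ 0 & r & 1-\beta \end{pmatrix}.
\]
At $E_{1}$ the matrix is upper block-triangular with eigenvalues $1-\beta$ (double) and $1-\beta-r+\alpha$, so the classification reduces to comparing $|1-\beta-r+\alpha|$ with $1$ and recording the critical locus $\alpha=\beta+r$. At $E_{2}$ the third column is still $(0,0,1-\beta)^{\top}$, so $\lambda_{3}=1-\beta\in(0,1)$ splits off as an always-contracting direction, and the remaining pair of eigenvalues is determined by the upper-left $2\times 2$ block whose trace $T$ and determinant $D$, after using $\alpha x/N=\beta+r$ and $\alpha y/N=\beta(\alpha-\beta-r)/(\beta+r)$, reduce to explicit rational expressions in $\alpha,\beta,r$.

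I would then apply the Jury (Schur--Cohn) criterion $p(1)>0$, $p(-1)>0$, $|D|<1$ to the quadratic $p(\lambda)=\lambda^{2}-T\lambda+D$ to partition parameter space into sink, source and saddle regions, and use the sign of the discriminant $T^{2}-4D$ to distinguish the real and complex-conjugate sub-regimes as well as to single out the non-hyperbolic loci on which the later bifurcations occur. Because $p(\pm 1)$, $1\pm D$ and $T^{2}-4D$ are multivariate polynomials in $(\alpha,\beta,r)$ sharing the denominator $\beta+r$, their joint sign pattern does not factor cleanly over $\mathbb{Q}[\alpha,\beta,r]$; I would invoke the \emph{complete discrimination system} of \cite{YangHouXia2001,YangXia2005} to reduce the analysis to a finite symbolic case distinction, which yields the partition tabulated in Tables~\ref{Table1} and \ref{Table2}.

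The main obstacle is precisely this last step: the transitions between topological types of $E_{2}$ correspond to semi-algebraic loci in $(\alpha,\beta,r)$-space whose sign patterns must be enumerated consistently. The complete discrimination system is the tool that makes this enumeration feasible, and most of the remaining effort consists of verifying which sign combinations of the Jury polynomials and the discriminant are realised inside the admissible region $\{N>0,\,0<\beta<1,\,0<r<1,\,\alpha>0\}$.
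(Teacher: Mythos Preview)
Your proposal is correct and follows essentially the same route as the paper: solve the fixed-point equations via the factorisation of the second component, split off the always-contracting eigenvalue $1-\beta$ at both $E_1$ and $E_2$, reduce the classification of $E_2$ to the upper-left $2\times 2$ block, and then feed the resulting polynomial sign conditions into the complete discrimination system of \cite{YangHouXia2001,YangXia2005} to obtain the partition in Tables~\ref{Table1} and~\ref{Table2}. The only cosmetic difference is that the paper writes the reduced quadratic with leading coefficient $-1$ and checks the signs of $P_{E_2}(\pm 1)$, the discriminant $\Delta$, and the axis of symmetry $t_*$ case by case, whereas you package the same information as the Jury--Schur--Cohn triple $p(1),\,p(-1),\,|D|$; these are equivalent formulations and lead to the same semi-algebraic decomposition.
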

\begin{proof}
The fixed points of mapping \eqref{eq2.1} are given by equation
\begin{eqnarray*}
\left\{\begin{array}{ll}
x=x-\frac{\alpha\,x\,y}{N}+\beta\,(N-x),\\
y=(1-\beta-r)y+\frac{\alpha\,x\,y}{N},\\
z=(1-\beta)\,z+r\,y.
\end{array}\right.
\end{eqnarray*}
One can solve the above equations and get the following fixed points
\begin{eqnarray*}
&&E_1:(N, 0, 0)~~\mbox{and}~~E_2:({\frac {N \left( \beta+r \right) }{\alpha}}, {\frac {\beta\,N \left( \alpha-\beta-r \right) }{\alpha\, \left( \beta+r \right) }}, {\frac {rN \left( \alpha-\beta-r \right) }{\alpha\, \left( \beta+r\right) }}).
\end{eqnarray*}
Since $x\geq0$, $y\geq0$, $z\geq0$, and $E_2$ does not coincide with $E_1$, it follows that the conditions of existence for $E_2$ is $N>0$, $0<\beta<1$, $0<r<1$ and $\alpha>\beta+r$.
Next, we discuss the topological types of $E_1$ and $E_2$. Firstly, the Jacobian matrices at $E_1$ and $E_{2}$ are given by
$$
JF(E_1)=\left(
        \begin{array}{ccc}
          1-\beta & -\alpha & 0 \\
          0 & 1-\beta-r+\alpha & 0 \\
          0 & r & 1-\beta
        \end{array}
      \right)
$$
and
$$
JF(E_2)\!=\!\left(\!\!
        \begin{array}{ccc}
          {\frac {\beta+r-\beta\,r }{\beta+r}} & -\beta-r & 0 \\
          {\frac {\beta\, \left( \alpha-\beta-r \right) }{\beta+r}} & 1 & 0 \\
          0 & r & 1-\beta
        \end{array}
     \!\! \right),
$$
respectively.
It is clear that the eigenvalues of $JF(E_1)$ are $1+\alpha-\beta-r$ and $1-\beta$ with multiplicity 2, which are all real. Hence,
\begin{description}
\item[(I)] If $E_1$ is a stable node, then $|1-\beta|<1$
 and $|1-\beta-r+\alpha|<1$. Thus we obtain the corresponding semi-algebraic system
    \begin{eqnarray*}
    &&PS_1:=\{N>0, 0<\beta<1, 0<r<1, \alpha>0, 1-\beta>-1, 1-\beta<1, \\
    &&~~~~~~~~~~~~1-\beta-r+\alpha>-1, 1-\beta-r+\alpha<1\}.
    \end{eqnarray*}
    Solving $PS_1$, we get the following parameter conditions
    \begin{description}
    \item[$\bullet$]~$N>0,0<\beta<1, 0<r<1, 0<\alpha<\beta+r$
    \end{description}
    (referred to cases $\mathbf{D}_{1}$ in Table \ref{Table1}).
\item[(II)] If $E_1$ is an unstable node, then $|1-\beta|>1$ and $|1-\beta-r+\alpha|>1$,
and we see that the solution set of corresponding semi-algebraic system is the empty.
\item[(III)] If $E_1$ is a saddle point, then either $|1-\beta|>1$, $|1-\beta-r+\alpha|<1$, or $|1-\beta|<1$, $|1-\beta-r+\alpha|>1$. Applying the {\it complete discrimination system} as done in the above two cases, we get
    \begin{description}
    \item[$\bullet$]~$N>0,0<\beta<1, 0<r<1, \alpha>\beta+r$
    \end{description}
    (referred to cases $\mathbf{D}_{2}$ in Table \ref{Table1}).
\item[(IV)] If $E_1$ is non-hyperbolic, then $|1-\beta|=1$ or $|1-\beta-r+\alpha|=1$. It follows that
    \begin{description}
    \item[$\bullet$]~$N>0,0<\beta<1, 0<r<1, \alpha=\beta+r$
    \end{description}
    (referred to cases $\mathbf{L}_{1}$ in Table \ref{Table1}).
\end{description}
\begin{table*}[htbp]
 \centering
 \caption{Topological types of fixed point $E_1$.}
\label{Table1}
{\fontsize{12pt}{\baselineskip}\selectfont\begin{tabular}{llllll}
\hline\noalign{\smallskip}
\multicolumn{4}{c}{\multirow{2}*{Parameters}}  & \multicolumn{1}{c}{Properties} & \multirow{2}*{Cases}\\ \noalign{\smallskip}\cline{5-5}\noalign{\smallskip}
\multicolumn{4}{c}{}& ~~~~~~~~$E_1$ &  \\ \noalign{\smallskip}\hline\noalign{\smallskip}
 $N>0$  & $0<\beta<1$  & $0<r<1$ & $0<\alpha<\beta+r$ & stable node   & $\mathbf{D}_{1}$\\
        &              &         & $\alpha=\beta+r$   & non-hyperbolic& $\mathbf{L}_{1}$\\
        &              &         & $\alpha>\beta+r$   & saddle point  & $\mathbf{D}_{2}$\\
 \noalign{\smallskip}\hline
 \end{tabular}}
\end{table*}

In what follows we get down to the topological type of fixed point $E_2$. It is clear that the characteristic polynomial of $JF(E_2)$ is
\begin{eqnarray*}
\mathcal{P}_{E_2}(t)&:=&\big(t-(1-\beta)\big)\Big(-{t}^{2}+\frac { \left(  \left( 2-\alpha \right) \beta+2\,r \right) t}{\beta+r}\\
&&~+\frac {{\beta}^{3}+ \left( -\alpha+2\,r \right) {\beta}^{2}+ \left( r-1 \right)  \left( r-\alpha+1 \right) \beta-r}{\beta+r}\Big).
\end{eqnarray*}
We are apt to see $JF(E_2)$ has a real eigenvalue which is equal to $1-\beta$, and the other two eigenvalues can be given by the zeros of the polynomial
\begin{eqnarray*}
&&{P}_{E_2}(t):=\frac{\mathcal{P}_{E_2}(t)}{t-(1-\beta)}=-{t}^{2}+\frac { \left(  \left( 2-\alpha \right) \beta+2\,r \right) t}{\beta+r}\\
&&~~~~~~~~~~~~~~~~~~~~~~~~~~~~~~~~~+\frac {{\beta}^{3}+ \left( -\alpha+2\,r \right) {\beta}^{2}+ \left( r-1 \right)  \left( r-\alpha+1 \right) \beta-r}{\beta+r}.
\end{eqnarray*}
Solving ${P}_{E_2}(t)=0$, we obtain the other eigenvalues of $JF(E_2)$, i.e.,
\begin{eqnarray}
t_{1,2}=\frac {-\beta\,\alpha+2\,\beta+2\,r\pm\sqrt{\Delta}}{2\,\beta+2\,r},
\label{solu-1}
\end{eqnarray}
where
{\small
\begin{eqnarray*}
\Delta:={\alpha}^{2}{\beta}^{2}-4
\,\alpha\,{\beta}^{3}-8\,\alpha\,{\beta}^{2}r-4\,\alpha\,\beta\,{r}^{2
}+4\,{\beta}^{4}+12\,{\beta}^{3}r+12\,{\beta}^{2}{r}^{2}+4\,\beta\,{r}
^{3}.
\label{eq2.3}
\end{eqnarray*}}
\!\!Thereupon,
\begin{description}
\item[(i)] If $E_2$ is a stable node, then $|1-\beta|<1$ and $|t_{12}|<|t_{11}|<1$, i.e., $|1-\beta|<1$, ${P_{E_2}}(1)<0$, ${P_{E_2}}(-1)<0$, $\Delta\geq0$ and $-1<t_*<1$, where $t_*:=-(\beta\,\alpha-2\,\beta-2\,r)/(2\,\beta+2\,r)$, denotes the axis symmetry of $P_{E_2}(t)$. Thus, the corresponding semi-algebraic system is
    \begin{eqnarray*}
    &&\!\!\!\!\!\!\!\!\!\!\!\!\!\!\!PS_2:=\{N>0, 0<\beta<1, 0<r<1, \alpha>0, 1-\beta>-1, 1-\beta<1, \\
    &&~~~\left( -\alpha+\beta+r \right) \beta<0, -{\frac {\alpha\,\beta-2\,\beta-2\,r}{2\,r+2\,\beta}}>-1, -{\frac{\alpha\,\beta-2\,\beta-2\,r}{2\,r+2\,\beta}}<1,\\
    &&~~~\frac{{\beta}^{3}+ \left( -\alpha+2\,r \right) {\beta}^{2}+ \left( r-2\right)  \left( r-\alpha+2 \right) \beta-4\,r}{\beta+r}<0,\\
    &&~~~{\frac{\beta\,\left({\alpha}^{2}\beta-4\,\alpha\,{\beta}^{2}-8\,\alpha\,\beta\,r-4\,\alpha\,{r}^{2}+4\,{\beta}^{3}+12\,r{\beta}^{2}+12\,{r}^{2}\beta+4\,{r}^{3} \right) }{ \left( r+\beta \right) ^{2}}}\geq0\}.
    \end{eqnarray*}
   Employing the {\it  complete discrimination system} theory to solve $PS_2$, we obtain the following parameter conditions to guarantee the stability of $E_2$:
    \begin{description}
    \item[$\bullet$]~$N>0$, $0<\beta<1$, $0<r<1$, $r+\beta<\alpha\leq\Upsilon_{1}$, or
    \item[$\bullet$]~$N>0$, $0<\beta<1$, $0<r<\Psi_1$, $\Upsilon_{2}\leq\alpha<\Psi_2$,
    \end{description}
    where
    \begin{eqnarray*}
    &&\!\!\!\!\!\!\!\!\!\!\Upsilon_{1,2}:={\frac {2\,({\beta}^{2}+2\,r\beta+{r}^{2}\mp\sqrt {r{\beta}^{3}+3\,{r}^{2}{\beta}^{2}+3\,{r}^{3}\beta+{r}^{4}})}{\beta}},
    \Psi_1:=-{\frac {{\beta}^{2}-4\,\beta+4}{\beta-4}}\\
    &&\!\!\!\!\!\!\!\!\!\!\mbox{and}~~ \Psi_2:={\frac {{\beta}^{3}+2\,r{\beta}^{2}+{r}^{2}\beta-4\,\beta-4\,r}{\beta\, \left( r+\beta-2 \right) }}
    \end{eqnarray*}
    (referred to cases $\mathfrak{D}_{1i}$ in Table \ref{Table2}, where $i= 1,2,3,4$).
\item[(ii)] If $E_2$ is an unstable node, then $|1-\beta|>1$ and $|t_{1,2}|>1$. Moreover, $|t_{1,2}|>1$ implies that one of the following conditions is satisfied,
    \begin{description}
    \item[$\lozenge$]~${P_{E_2}}(1)>0$, ${P_{E_2}}(-1)>0$, $\Delta>0$;
    \item[$\lozenge$]~${P_{E_2}}(1)<0$, ${P_{E_2}}(-1)<0$, $\Delta\geq0 $, $t_*<-1$;
    \item[$\lozenge$]~${P_{E_2}}(1)<0$, ${P_{E_2}}(-1)<0$, $\Delta\geq0 $, $t_*>1$.
    \end{description}
    Using the same idea as in case (i), we obtain the null sets.
\item[(iii)] If $E_2$ is a saddle point, then $1-\beta$, $t_1$ and $t_2$ are all real, and one of the following conditions holds:
    \begin{description}
    \item[$\lozenge$]~$|1-\beta|>1$, ${P_{E_2}}(1)<0$, ${P_{E_2}}(-1)<0$, $\Delta\geq0$, $-1<t_*<1$,
    \item[$\lozenge$]~$|1-\beta|<1$, ${P_{E_2}}(1)>0$, ${P_{E_2}}(-1)<0$, $\Delta\geq0 $,
    \item[$\lozenge$]~$|1-\beta|<1$, ${P_{E_2}}(1)<0$, ${P_{E_2}}(-1)>0$, $\Delta\geq0 $,
    \item[$\lozenge$]~$|1-\beta|>1$, ${P_{E_2}}(1)>0$, ${P_{E_2}}(-1)<0$, $\Delta\geq0 $,
    \item[$\lozenge$]~$|1-\beta|>1$, ${P_{E_2}}(1)<0$, ${P_{E_2}}(-1)>0$, $\Delta\geq0 $,
    \item[$\lozenge$]~$|1-\beta|<1$, ${P_{E_2}}(1)>0$, ${P_{E_2}}(-1)>0$, $\Delta>0$,
    \item[$\lozenge$]~$|1-\beta|<1$, ${P_{E_2}}(1)<0$, ${P_{E_2}}(-1)<0$, $\Delta\geq0 $, $t_*<-1$,
    \item[$\lozenge$]~$|1-\beta|<1$, ${P_{E_2}}(1)<0$, ${P_{E_2}}(-1)<0$, $\Delta\geq0 $, $t_*>1$.
    \end{description}
    Hence, from above conditions, we obtain
    \begin{description}
    \item[$\bullet$]~$N>0$, $0<\beta<1$, $0<r<1$, $\alpha>\Psi_2$, or
    \item[$\bullet$]~$N>0$, $0<\beta<1$, $\Psi_1<r<1$, $\Upsilon_{2}\leq\alpha<\Psi_2$
    \end{description}
    (referred to cases $\mathfrak{D}_{3i}$ in Table \ref{Table2}, where $i= 1,2,3,4$).
\item[(iv)] If $E_2$ is a stable focus-node, then $|1-\beta|<1$, $\Delta<0$ and $|t_1\,t_2|<1$. It follows that
    \begin{description}
    \item[$\bullet$]~$N>0$, $0<\beta<1$, $0<r<\Psi_1$, $\Upsilon_{1}<\alpha<\Upsilon_{2}$, or
    \item[$\bullet$]~$N>0$, $0<\beta<1$, $r=\Psi_1$, $\Upsilon_{1}<\alpha<\Psi_2(=\Psi_3=\Upsilon_2)$, or
    \item[$\bullet$]~$N>0$, $0<\beta<1$, $\Psi_1<r<1$, $\Upsilon_{1}<\alpha<\Psi_3$,
    \end{description}
    where
    \begin{eqnarray*}
    \Psi_3:={\frac {{\beta}^{2}+2\,r\beta+{r}^{2}}{r+\beta-1}}
    \end{eqnarray*}
    (referred to cases $\mathfrak{D}_{2i}$ in Table \ref{Table2}, where $i= 1,2,3$).
\item[(v)] If $E_2$ is an unstable focus-node, then $|1-\beta|>1$, $\Delta<0$ and $|t_1\,t_2|>1$. Using the {\it  complete discrimination system} theory to solve the corresponding semi-algebraic system, we find that its solution set is an empty set.
\item[(vi)] If $E_2$ is a saddle-focus, then one of the following conditions holds:
    \begin{description}
    \item[$\lozenge$]~$|1-\beta|>1$, $\Delta<0$, $|t_1\,t_2|<1$,
    \item[$\lozenge$]~$|1-\beta|<1$, $\Delta<0$, $|t_1\,t_2|>1$.
    \end{description}
    Solving the corresponding semi-algebraic system, we obtain the following parameter conditions:
   \begin{description}
    \item[$\bullet$]~$N>0$, $0<\beta<1$, $\Psi_1<r<1$, $\Psi_3<\alpha<\Upsilon_{2}$,
    \end{description}
    (referred to cases $\mathfrak{D}_{4}$ in Table \ref{Table2}).
\item[(vii)] If $E_2$ is non-hyperbolic, then one of the following conditions is fulfilled:
    \begin{description}
    \item[$\lozenge$]~$|1-\beta|=1$,
    \item[$\lozenge$]~$\Delta\geq0$, ${P_{E_2}}(1)=0$,
    \item[$\lozenge$]~$\Delta\geq0$, ${P_{E_2}}(-1)=0$,
    \item[$\lozenge$]~$\Delta<0$, $|t_1\,t_2|=1$.
    \end{description}
    As done in the previous cases, we obtain that if
    \begin{description}
    \item[$\bullet$]~$N>0$, $0<\beta<1$, $0<r<\Psi_1$, $\alpha=\Psi_2$, or
    \item[$\bullet$]~$N>0$, $0<\beta<1$, $r=\Psi_1$, $\alpha=\Psi_2(=\Psi_3=\Upsilon_2)$, or
    \item[$\bullet$]~$N>0$, $0<\beta<1$, $\Psi_1<r<1$, $\alpha=\Psi_2$, or
    \item[$\bullet$]~$N>0$, $0<\beta<1$, $\Psi_1<r<1$, $\alpha=\Psi_3$,
    \end{description}
    (referred to cases $\mathfrak{L}_{1i}$ and $\mathfrak{L}_{2}$ in Table \ref{Table2}, where $i= 1,2,3$),
    then $E_2$ is non-hyperbolic.
\end{description}
This completes the whole proof.
\end{proof}
\begin{table*}[htbp]
 \centering
 \caption{Topological types of fixed point $E_2$.}
\label{Table2}
{\fontsize{12pt}{\baselineskip}\selectfont\begin{tabular}{llllll}
\hline\noalign{\smallskip}
\multicolumn{4}{c}{\multirow{2}*{Parameters}}  & \multicolumn{1}{c}{Properties} & \multirow{2}*{Cases}\\ \noalign{\smallskip}\cline{5-5}\noalign{\smallskip}
\multicolumn{4}{c}{}& ~~~~~~~~$E_2$ &  \\ \noalign{\smallskip}\hline\noalign{\smallskip}
$N>0$ & $0<\beta<1$  & $0<r<\Psi_1$ & $r+\beta<\alpha\leq\Upsilon_1$          & stable node       & $\mathfrak{D}_{11}$\\
      &              &              & $\Upsilon_1<\alpha<\Upsilon_2$          & stable focus-node & $\mathfrak{D}_{21}$\\
      &              &              & $\Upsilon_2\leq\alpha<\Psi_2$           & stable node       & $\mathfrak{D}_{12}$\\
      &              &              & $\alpha=\Psi_2$                         & non-hyperbolic    & $\mathfrak{L}_{11}$\\
      &              &              & $\alpha>\Psi_2$                         & saddle point      & $\mathfrak{D}_{31}$\\
      &              & $r=\Psi_1$   & $r+\beta<\alpha\leq\Upsilon_1$          & stable node       & $\mathfrak{D}_{13}$\\
      &              &              & $\Upsilon_1<\alpha<\Psi_2(=\Psi_3=\Upsilon_2)$ & stable focus-node & $\mathfrak{D}_{22}$\\
      &              &              & $\alpha=\Psi_2$                         & non-hyperbolic    & $\mathfrak{L}_{12}$\\
      &              &              & $\alpha>\Psi_2$                         & saddle point      & $\mathfrak{D}_{32}$\\
      &              & $\Psi_1<r<1$ & $r+\beta<\alpha\leq\Upsilon_1$          & stable node       & $\mathfrak{D}_{14}$\\
      &              &              & $\Upsilon_1<\alpha<\Psi_3$              & stable focus-node & $\mathfrak{D}_{23}$\\
      &              &              & $\alpha=\Psi_3$                         & non-hyperbolic    & $\mathfrak{L}_{2}$\\
      &              &              & $\Psi_3<\alpha<\Upsilon_2$              & saddle-focus     &$\mathfrak{D}_{4}$\\
      &              &              & $\Upsilon_2\leq\alpha<\Psi_2$           & saddle point     &$\mathfrak{D}_{33}$\\
      &              &              & $\alpha=\Psi_2$                         & non-hyperbolic   &$\mathfrak{L}_{13}$\\
      &              &              & $\alpha>\Psi_2$                         & saddle point     & $\mathfrak{D}_{34}$\\
 \noalign{\smallskip}\hline
 \end{tabular}}
\end{table*}

In next sections, we will study the bifurcation phenomena of fixed points $E_1$ and $E_2$ under the above non-hyperbolic conditions, mentioned in Proposition \ref{pro2.1}.

\section{Codimension 1 bifurcations}
\allowdisplaybreaks[4]
\setcounter{equation}{0}
In this section, we will investigate all local codimension 1 bifurcations of mapping \eqref{eq2.1} at the fixed points $E_1$ and $E_2$.
\subsection{Transcritical bifurcation of $E_1$}
\allowdisplaybreaks[4]
In this subsection, we will discuss the bifurcations happened near the fixed point $E_1:(N,0,0)$. From Proposition 1, we see that mapping \eqref{eq2.1} has one eigenvalue 1 at the fixed point $E_1$ if $N>0,0<\beta<1,0<r<1,\alpha=\beta+r$, which implies the mapping may produce one of the following three codimension 1 bifurcations, i.e, transcritical bifurcation, fold bifurcation and pitchfork bifurcation. To figure out which bifurcation can happen, we have the following results.

\begin{thm}
For $N>0$, $0<\beta<1$ and $0<r<1$,
mapping \eqref{eq2.1} undergoes a transcritical bifurcation as the parameter $\alpha$ crosses
$\beta+r$.
More specifically, for $\alpha$ is in a small neighborhood of $\beta+r$, the fixed points $E_1$ is unstable and $E_2$ is stable if $\alpha>\beta+r$, and $E_1$, $E_2$ are coincident if $\alpha=\beta+r$. Besides, when $\alpha<\beta+r$, $E_1$ also exists but $E_2$ disappears for biological significance, and in this case $E_1$ is stable.
\label{th3.1}
\end{thm}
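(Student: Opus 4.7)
The plan is to reduce the map to its center manifold at the critical parameter value and verify the standard nondegeneracy conditions for a discrete-time transcritical bifurcation. First I would translate $E_1=(N,0,0)$ to the origin by setting $\tilde{x}=x-N$, $\tilde{y}=y$, $\tilde{z}=z$, and introduce the bifurcation parameter $\mu=\alpha-(\beta+r)$. Under this substitution the origin is a fixed point for every $\mu$ (so the branch $E_1$ persists, which is already a strong hint that the bifurcation is transcritical rather than fold). At $\mu=0$ the Jacobian $JF(E_1)$ inherited from Proposition \ref{pro2.1} has the simple eigenvalue $1$ together with the double eigenvalue $1-\beta\in(-1,1)$; a direct computation gives the critical right eigenvector $q=(-(\beta+r),\beta,r)^{\top}$, while $(1,0,0)^{\top}$ and $(0,0,1)^{\top}$ span the stable eigenspace. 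I would then change to coordinates $(u_1,u_2,u_3)$ along these three vectors so that the linear part becomes block-diagonal with the center direction along $u_1$.

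Next, I would treat $\mu$ as an additional trivially-evolving state variable, apply the center manifold theorem (cf. \cite{Carr}) to obtain local graphs $u_2=h_2(u_1,\mu)$, $u_3=h_3(u_1,\mu)$ vanishing to first order at $(0,0)$, and substitute into the $u_1$-equation to obtain a one-dimensional reduced map of the form
\begin{equation*}
u_1\mapsto u_1+a\,\mu\,u_1+b\,u_1^{2}+O\bigl(|(u_1,\mu)|^{3}\bigr).
\end{equation*}
The transcritical conditions I need to check are $a\neq 0$ (transversal crossing of the critical eigenvalue through $1$ as $\mu$ varies) and $b\neq 0$ (quadratic nondegeneracy along the center direction). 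The coefficient $a$ comes directly from differentiating the critical eigenvalue $1+\mu$ of the $\tilde{y}$-equation with respect to $\mu$, which is manifestly nonzero. The coefficient $b$ arises from the bilinear term $\alpha\tilde{x}\tilde{y}/N$ evaluated on $q$, and since $q$ has nonzero first and second components it does not vanish. Once $a\neq 0$ and $b\neq 0$ are confirmed, the Sotomayor-type criterion for maps yields the transcritical bifurcation claim, and the two local fixed-point branches of the reduced map, $u_1=0$ and $u_1\approx -(a/b)\mu$, pull back through the coordinate change to $E_1$ and $E_2$, respectively.

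Finally, the stability picture claimed in the theorem is inherited from Proposition \ref{pro2.1}: for $\alpha<\beta+r$ the point $E_2$ has a negative $\tilde{y}$-coordinate and is excluded by the biological constraint, while $E_1$ lies in case $\mathbf{D}_{1}$ and is a stable node; at $\alpha=\beta+r$ the two branches collide (case $\mathbf{L}_{1}$); for $\alpha>\beta+r$ near $\beta+r$, $E_1$ lies in case $\mathbf{D}_{2}$ and is a saddle, and the three eigenvalues of $JF(E_2)$, which depend continuously on $\alpha$ and equal $\{1,1-\beta,1-\beta\}$ at $\alpha=\beta+r$, move into the open unit disk so that $E_2$ sits in case $\mathfrak{D}_{11}$ (or $\mathfrak{D}_{14}$) of Table \ref{Table2} and is a stable node. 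I expect the main technical obstacle to be the bookkeeping in the computation of $b$: although the center manifold corrections $h_2,h_3$ contribute only at order $O(u_1^{2})$ to $\tilde{x}$ and $\tilde{z}$, they do combine with the bilinear term $\alpha\tilde{x}\tilde{y}/N$ when pulled back to $u_1$, so one must carefully collect all quadratic contributions and keep track of signs to produce a clean closed-form expression for $b$ confirming $b\neq 0$ on the full parameter region $N>0$, $0<\beta<1$, $0<r<1$.
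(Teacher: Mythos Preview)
Your proposal is correct and follows essentially the same route as the paper: translate $E_1$ to the origin, introduce $\mu=\alpha-(\beta+r)$, diagonalize along the eigenvectors you listed, extend by the trivial $\mu$-dynamics, reduce to a one-dimensional map on the center manifold, and verify the two transcritical nondegeneracy conditions (the paper finds $\partial^2 g_1/\partial u_1\partial\mu=1$ and $\partial^2 g_1/\partial u_1^2=-2(\beta+r)^2/(Nr)$, citing Wiggins).

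One minor remark on your anticipated obstacle: the bookkeeping for $b$ is easier than you fear. Since $h_2,h_3$ are already $O(u_1^2)$ and the only nonlinearity is the bilinear term $\alpha\tilde x\tilde y/N$, substituting $u_2=h_2,\ u_3=h_3$ into that term produces contributions of order at least $u_1^3$; the $u_1^2$ coefficient $b$ therefore comes purely from evaluating the bilinear form on the center eigenvector $q$, with no center-manifold correction. The paper's computation confirms this: the $u_1^2$ coefficient in the reduced map is read off directly from the normalized system before the graphs $h_{11},h_{12}$ are substituted.
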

\begin{proof}
Let $\delta:=\alpha-(\beta+r)$. Translating $E_1$ to the origin $O$ in mapping \eqref{eq2.1} through the transformation $(x,y,z)^T=(u+N,v,w)^T$
and expanding it in Taylor series, we obtain the mapping of the following suspended form $F:\mathbb{R}^3\rightarrow\mathbb{R}^3$,
\begin{eqnarray}
F:
\left[
\begin{array}{cc}
   u \\
   v \\
   w  \\
\end{array}
\right]
\mapsto
\left[\begin{array}{cc}
 -u \left( \beta-1 \right) - \left( \beta+r+\delta \right) v-{\frac {uv\left( \beta+r+\delta \right) }{N}}\\
 \left( \delta+1 \right) v+{\frac {uv \left( \beta+r+\delta \right) }{N}}\\
 - \left( \beta-1 \right) w+vr\\
 \end{array}\right].
\label{eq3.1}
\end{eqnarray}
Clearly, the Jacobian matrix $JF(E_1)$ has eigenvectors
$$(-\frac{\beta+r+\delta}{r},\frac{\delta+\beta}{r},1)^T,~~(0,0,1)^T~~\mbox{and}~~(1,0,0)^T,$$
and their corresponding eigenvalues are
$$\delta+1,~~1-\beta~~\mbox{and}~~1-\beta.$$
Moreover, applying the invertible transformation
\begin{eqnarray*}
\left[
\begin{array}{cc}
   u \\
   v \\
   w
\end{array}
\right]
=\left[
   \begin{array}{ccc}
     -\frac{\beta+r+\delta}{r} & 0 & 1\\
     \frac{\delta+\beta}{r}    & 0 & 0\\
     1                         & 1 & 0\\
   \end{array}
 \right]
\left[\begin{array}{cc}
   u_1 \\
   v_1 \\
   w_1
 \end{array}\right],
\end{eqnarray*}
we change the linear part of mapping \eqref{eq3.1} into the normalized form, i.e.
\begin{eqnarray}
\left[
\begin{array}{ccc}
   u_1 \\
   v_1 \\
   w_1
\end{array}
\right]
\mapsto
\left[
\begin{array}{ccc}
\left( \delta+1 \right) u_1-{\frac { \left( {\beta}^{2}+2\,\delta\,\beta+2\,r\beta+{\delta}^{2}+2\,r\delta+{r}^{2} \right) u^2_1}{rN}}+
{\frac {\left( \beta+r+\delta \right) u_1\,w_1}{N}}\\
- \left( \beta-1 \right) v_1+{\frac { \left( \beta+r+\delta \right) ^{2}u^2_1}{rN}}-{\frac {\left( \beta+r+\delta \right) u_1\,w_1}{N}}\\
-\left( \beta-1 \right)\,w_1 -{\frac { \left( \beta+r+\delta \right) ^{2}u^2_1}{rN}}+{\frac { \left( \beta+r+\delta \right) u_1\,w_1}{N}}
\end{array}\right].
\label{eq3.2}
\end{eqnarray}
Further, choose $\delta$ as the bifurcation coefficient, then mapping \eqref{eq3.2} is converted into the following form,
\begin{eqnarray}
\left[
\begin{array}{ccc}
   u_1 \\
   v_1 \\
   w_1\\
   \delta
\end{array}
\right]
\mapsto
\left[
\begin{array}{ccc}
\left( \delta+1 \right) u_1-{\frac { \left( {\beta}^{2}+2\,\delta\,\beta+2\,r\beta+{\delta}^{2}+2\,r\delta+{r}^{2} \right) u^2_1}{rN}}+
{\frac {\left( \beta+r+\delta \right) u_1\,w_1}{N}}\\
- \left( \beta-1 \right) v_1+{\frac { \left( \beta+r+\delta \right) ^{2}u^2_1}{rN}}-{\frac {\left( \beta+r+\delta \right) u_1\,w_1}{N}}\\
-\left( \beta-1 \right)\,w_1 -{\frac { \left( \beta+r+\delta \right) ^{2}u^2_1}{rN}}+{\frac { \left( \beta+r+\delta \right) u_1\,w_1}{N}}\\
\delta
\end{array}\right].
\label{eq3.3}
\end{eqnarray}
Since mapping (\ref{eq3.3}) has exact two eigenvalues on the unit circle $S^1$, by center manifold theorem (see \cite[pp.33-35]{Carr}),
it has a two dimensional $C^2$ center manifold
\begin{eqnarray}
\begin{array}{l}
W_1^c(O)=\{(u_1,v_1,w_1,\delta)\in \mathbb{R}^4:v_1=h_{11}(u_1,\delta),\\
~~~~~~~~~~~~~~~w_1=h_{12}(u_1,\delta),|u_1|<\zeta_1, |\delta|<\zeta_2\},
\end{array}
\label{mf1}
\end{eqnarray}
where $h_{11}$ and $h_{12}$ are $C^2$ functions near $(0,0)$ such that
$$
h_{11}(0,0)=0,~ Dh_{11}(0,0)=0,~ h_{12}(0,0)=0,~Dh_{12}(0,0)=0,
$$
and $\zeta_1$, $\zeta_2$ are both sufficiently small positive constants.
Due to the $C^2$ smoothness, we can assume that $h_{11}(u_1,\delta)$ and $h_{12}(u_1,\delta)$ are of the following forms
\begin{eqnarray}
\begin{array}{c}
v_1=h_{11}(u_1,\delta)=d_{11}u_1^2+d_{12}u_1\,\delta+d_{13}\delta^2+O(|(u_1,\delta)|^3),\\
w_1=h_{12}(u_1,\delta)=d_{21}u_1^2+d_{22}u_1\,\delta+d_{23}\delta^2+O(|(u_1,\delta)|^3),
\end{array}
\label{mf2}
\end{eqnarray}
where $d_{11}$, $d_{12}$, $d_{13}$, $d_{21}$, $d_{22}$ and $d_{23}$ are indeterminate.
By the invariance of center manifold, from (\ref{mf1}) and (\ref{mf2}) we obtain
\begin{eqnarray}
\begin{array}{c}
h_{11}(\left( \delta+1 \right) u_1-{\frac { \left( {\beta}^{2}+2\,\delta\,\beta+2\,r\beta+{\delta}^{2}+2\,r\delta+{r}^{2} \right) u^2_1}{rN}}+
{\frac {\left( \beta+r+\delta \right) u_1\,h_{11}(u_1,\delta)}{N}},\delta)\\
~~~~~~~~~~~~=- \left( \beta-1 \right) h_{11}(u_1,\delta)+{\frac { \left( \beta+r+\delta \right) ^{2}u^2_1}{rN}}-{\frac {\left( \beta+r+\delta \right) u_1\,h_{12}(u_1,\delta)}{N}}\
\end{array}
\label{coe11}
\end{eqnarray}
and
\begin{eqnarray}
\begin{array}{c}
h_{12}(\left( \delta+1 \right) u_1-{\frac { \left( {\beta}^{2}+2\,\delta\,\beta+2\,r\beta+{\delta}^{2}+2\,r\delta+{r}^{2} \right) u^2_1}{rN}}+
{\frac {\left( \beta+r+\delta \right) u_1\,h_{11}(u_1,\delta)}{N}},\delta)\\
~~~~~~~~~~~~=-\left( \beta-1 \right)\,h_{12}(u_1,\delta) -{\frac { \left( \beta+r+\delta \right) ^{2}u^2_1}{rN}}+{\frac { \left( \beta+r+\delta \right) u_1\,h_{12}(u_1,\delta)}{N}}.
\end{array}
\label{coe12}
\end{eqnarray}
Comparing the coefficients of $w_1^2$, $w_1\delta$ and $\delta^2$ in \eqref{coe11} and \eqref{coe12}, we obtain
$$d_{11}=-d_{21}={\frac {{\beta}^{2}+2\,r\beta+{r}^{2}}{rN\beta}}~~\mbox{and}~~d_{12}=d_{13}=d_{22}=d_{23}=0.$$
Therefore, from (\ref{mf2}) we get
\begin{eqnarray}
\begin{array}{c}
u_1=h_{11}(w_1,\delta)={\frac {({\beta}^{2}+2\,r\beta+{r}^{2})w_1^2}{rN\beta}}+O(|(u_1,\delta)|^3),\\
v_1=h_{12}(w_1,\delta)=-{\frac {({\beta}^{2}+2\,r\beta+{r}^{2})w_1^2}{rN\beta}}+O(|(u_1,\delta)|^3).
\end{array}
\label{coe2}
\end{eqnarray}
Substituting (\ref{coe2}) into the last two mappings of (\ref{eq3.3}), we get
\begin{eqnarray*}
\left[\!
\begin{array}{cc}
   w_1 \\
   \delta
\end{array}
\!\right]
\!\mapsto\!
\left[\!\begin{array}{cc}
u_1-{\frac { \left( {\beta}^{2}+2r\beta+{r}^{2} \right) u^2_1}{Nr}}+
u_1\delta-{\frac { \left( {\beta}^{3}+3r{\beta}^{2}+3{r}^{2}\beta+
{r}^{3} \right)u^3_1}{{N}^{2}r\beta}}-{\frac { 2\,\left( \beta+r
 \right) u^2_1\delta}{Nr}}
+O(|(u_1,\delta)|^3)\\
\delta
 \end{array}\!\right],
\end{eqnarray*}
which defines a one-dimensional mapping
\begin{eqnarray*}
&&w_1\mapsto g_{1}(w_1):=u_1-{\frac { \left( {\beta}^{2}+2\,r\beta+{r}^{2} \right) u^2_1}{Nr}}+
u_1\,\delta-{\frac { \left( {\beta}^{3}+3\,r{\beta}^{2}+3\,{r}^{2}\beta+
{r}^{3} \right)u^3_1}{{N}^{2}r\beta}}\\
&&~~~~~~~~~~~~~~~~~~~~~~-{\frac { 2\,\left( \beta+r
 \right) u^2_1\,\delta}{Nr}}+O(|(u_1,\delta)|^3).
\end{eqnarray*}

One can check that
$$
\left.\frac{\partial^2g_{1}}{\partial w_1^2}\right|_{(u_1,\delta)=(0,0)}={\frac { -2\,\left( \beta+r \right) ^{2}}{Nr}}<0
$$
and
$$
\left.\frac{\partial^2 g_{1}}{\partial u_1 \partial \delta}\right|_{(w_1,\delta)=(0,0)}=1>0,
$$
implying that the non-degeneracy and transversality conditions
for transcritical bifurcation (see \cite[pp.504-508]{Wiggins}) are satisfied.
Therefore, mapping \eqref{eq2.1} undergoes a transcritical bifurcation as the parameter crossing $\alpha=\beta+r$.
This completes the proof.
\end{proof}

From the proof of Theorem \ref{th3.1}, we see that the transcritical bifurcation occurs on the center manifold, where the fixed point $E_1$ exchanges its stability, the other fixed point $E_2$ exists and disappears in both side of $\delta=0$,  which provides for the original mapping \eqref{eq2.1} a threshold, i.e., $N>0$, $0<\beta<1$, $0<r<1$ and $\alpha=\beta+r$ of a transit between stable
and unstable.
The threshold gives a scheme controlling the numbers of susceptible, infective and recovered individuals:
1) if $N>0$, $0<\beta<1$, $0<r<1$ and $\alpha<\beta+r$, then the number of susceptible individuals will gradually tend to $N$, the numbers of infective and recovered individuals will gradually tend to $0$. This implies that infectious diseases will not break out.
2) for $N>0$, $0<\beta<1$, $0<r<1$ and $\alpha<\beta+r$, if the initial number of susceptible individuals is near $N$, the initial numbers of infective and recovered individuals are near $0$, then the number of susceptible individuals will decrease to $ N \left( \beta+r \right)/\alpha$, and then remains stable; the numbers of infective and recovered individuals will increase to $\beta\,N \left( \alpha-\beta-r \right)/\alpha\,\left( \beta+r \right)$ and $rN \left( \alpha-\beta-r \right) /\alpha\,\left( \beta+r\right)$, respcetively, and then become stable. It implies that infectious diseases can be well controlled.

\subsection{Flip bifurcation of $E_2$}
\allowdisplaybreaks[4]
In subsection 3.1, we have revealed that transcritical can occur near the fixed point $E_1$. In this subsection, we will find that
mapping \eqref{eq2.1} can also undergo a flip bifurcation near $E_2$ in cases $\mathfrak{L}_{11}$ and $\mathfrak{L}_{13}$.

\begin{thm}
For the case $N>0$, $0<\beta<1$ and $0<r<\Psi_1$,
if
\begin{eqnarray*}
&&\Theta_1:=-{\frac { \left( \beta+r-2 \right) ^{2}\beta}{ \left( \beta+r \right)
 \left( {\beta}^{2}+ \left( r-4 \right) \beta-4\,r+4 \right) }}\neq0, \\
&&\Theta_2:={\frac {2\, \left( \beta+r \right)  \left( \beta+r-2 \right)  \left( {
\beta}^{2}+\beta\,r-4 \right) ^{2} \left( {\beta}^{2}+\beta\,r-2\,
\beta-r \right) }{{\beta}^{2}{r}^{2}{N}^{2} \left( {\beta}^{2}+\beta\,
r-4\,\beta-4\,r+4 \right) }}\neq0,
\end{eqnarray*}
then as the parameter $\alpha$ crosses $\Psi_2$,
mapping \eqref{eq2.1} undergoes a flip bifurcation near the fixed point $E_2$ (The expressions of $\Psi_1$ and $\Psi_2$ here can be found on page 6). More concretely, if $\Theta_2>0$ {\rm(}resp. $\Theta_2<0${\rm)}, then mapping \eqref{eq2.1} undergoes a supercritical {\rm(}resp. subcritical{\rm)} flip bifurcation and produces a stable {\rm(}resp. unstable{\rm)} period-two cycle when $\alpha$ crosses $\Psi_2$ from the region $\mathfrak{D}_{12}$ {\rm(}resp. $\mathfrak{D}_{31}${\rm)} to $\mathfrak{D}_{31}$ {\rm(}resp. $\mathfrak{D}_{12}${\rm)}.
\label{th4.1}
\end{thm}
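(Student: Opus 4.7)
The plan is to mimic the strategy used for Theorem \ref{th3.1}, but this time track the eigenvalue $-1$ and push the Taylor expansion one order further, since flip bifurcation is detected by a cubic normal form coefficient rather than a quadratic one. First I would set $\delta:=\alpha-\Psi_2$ as the bifurcation parameter, translate the fixed point $E_2$ to the origin via $(x,y,z)^T=(u,v,w)^T+E_2$, and expand the resulting map $F$ in Taylor series up to order three. At $\delta=0$, Proposition \ref{pro2.1} (case $\mathfrak{L}_{11}$ or $\mathfrak{L}_{13}$) guarantees that $JF(E_2)$ has eigenvalues $\lambda_1=-1$, $\lambda_2=1-\beta$, and a third eigenvalue $\lambda_3$ with $|\lambda_3|<1$; indeed $\lambda_3=t_1$ with $t_1 t_2 = P_{E_2}(0)$ constrained by $P_{E_2}(-1)=0$.

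Next, I would compute left and right eigenvectors of $JF(E_2)|_{\delta=0}$ corresponding to each eigenvalue and use the associated invertible matrix $T$ to change coordinates $(u,v,w)^T = T(u_1,v_1,w_1)^T$, so that the linearization is block-diagonal with the entry $-1$ isolated in the first slot. Then, as in the proof of Theorem \ref{th3.1}, I would suspend the system by adjoining $\delta\mapsto\delta$, producing a four-dimensional mapping whose linear part at the origin has the eigenvalue $1$ along the $\delta$-direction and the eigenvalue $-1$ along the $u_1$-direction, while the $(v_1,w_1)$ directions remain hyperbolic attracting. The center manifold theorem (\cite{Carr}) then yields a two-dimensional $C^3$ center manifold
\begin{eqnarray*}
W^c(O)=\{(u_1,v_1,w_1,\delta):\,v_1=h_{21}(u_1,\delta),\ w_1=h_{22}(u_1,\delta)\},
\end{eqnarray*}
with $h_{2j}(0,0)=0$ and $Dh_{2j}(0,0)=0$. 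Expanding $h_{21},h_{22}$ as generic polynomials in $u_1,\delta$ up to order three, the invariance identity determines their coefficients by equating like terms, yielding the reduced one-parameter scalar map
\begin{eqnarray*}
u_1\mapsto g_2(u_1,\delta):=-u_1+a_1(\delta)u_1+a_2 u_1^2+a_3 u_1^3+O((|u_1|+|\delta|)^4),
\end{eqnarray*}
where $a_1(\delta)$ captures the movement of the eigenvalue and $a_2,a_3$ come from the quadratic and cubic nonlinearities of $F$ together with the center-manifold correction.

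Finally, I would verify the standard flip bifurcation conditions (see \cite[pp.\,508--516]{Wiggins}): the transversality condition $\partial a_1/\partial\delta(0)\neq 0$, which I expect to identify with $\Theta_1\neq 0$, and the non-degeneracy condition
\begin{eqnarray*}
\frac{1}{2}\Big(\frac{\partial^2 g_2}{\partial u_1^2}\Big)^{2}+\frac{1}{3}\frac{\partial^3 g_2}{\partial u_1^3}\bigg|_{(u_1,\delta)=(0,0)}\neq 0,
\end{eqnarray*}
which I expect to simplify (after collecting the center-manifold contributions and substituting $\alpha=\Psi_2$) to exactly $\Theta_2\neq 0$. The sign of this cubic coefficient then dictates whether the emerging period-two cycle is stable (supercritical, $\Theta_2>0$) or unstable (subcritical, $\Theta_2<0$), and together with the sign of $\partial a_1/\partial\delta$ it tells us from which side of $\Psi_2$ the 2-cycle bifurcates, matching the direction $\mathfrak{D}_{12}\to\mathfrak{D}_{31}$ or $\mathfrak{D}_{31}\to\mathfrak{D}_{12}$ in the statement. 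The main obstacle will be the symbolic bookkeeping: reducing the cubic-order center manifold and the normal-form coefficient to a closed expression matching the stated $\Theta_1$ and $\Theta_2$ requires careful use of the relation $\alpha=\Psi_2=(\beta^3+2r\beta^2+r^2\beta-4\beta-4r)/(\beta(r+\beta-2))$ to eliminate $\alpha$ throughout; computer algebra will be essentially indispensable for confirming the final simplification.
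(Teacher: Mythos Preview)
Your proposal is correct and follows essentially the same route as the paper: set $\vartheta=\alpha-\Psi_2$, translate $E_2$ to the origin, diagonalize, reduce via the center manifold to a one-dimensional map $g_2$, and then verify that $\partial^2 g_2/\partial v_2\partial\vartheta\big|_{(0,0)}=\Theta_1$ and $\tfrac12(\partial^2 g_2/\partial v_2^2)^2+\tfrac13\,\partial^3 g_2/\partial v_2^3\big|_{(0,0)}=\Theta_2$, invoking the standard flip criteria (the paper cites Guckenheimer--Holmes rather than Wiggins, but the conditions are identical). The only extra step the paper carries out is an explicit normal-form reduction to $w_7\mapsto -(1+\vartheta_1)w_7+w_7^3$ followed by a direct analysis of the second iterate to exhibit the stable period-two points $\pm\sqrt{\vartheta_1}$; this is illustrative rather than essential, since the conclusion already follows from the sign of $\Theta_2$ as you note.
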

\begin{proof}
Let
$$\vartheta:=\alpha-\Psi_2.
$$
As done in the proof of Theorem \ref{th3.1}, by translating $E_{2}$ to the origin $O$ and restricting mapping \eqref{eq2.1} into a two dimensional $C^{2}$ center manifold
\begin{eqnarray*}
\begin{array}{ll}
u_2=h_{21}(v_2,\vartheta)=O(|(v_2,\vartheta)|^3),\\
w_2=h_{22}(v_2,\vartheta)={\frac { \left( \beta+r-2 \right)  \left( {\beta}^{3}+2\,{\beta}^{2}r+
\beta\,{r}^{2}-2\,{\beta}^{2}-2\,\beta\,r-4\,\beta-4\,r+8 \right) \,v^2_2}{
\beta\,N \left( {\beta}^{2}+\beta\,r-4\,\beta-4\,r+4 \right) r}}+O(|(v_2,\vartheta)|^3),
\end{array}
\end{eqnarray*}
we obtain a one-dimensional mapping
{\fontsize{10pt}{11pt}\selectfont
\begin{eqnarray*}
&&\!\!\!\!\!\!\!\!\!\!\!\!\!\!\!v_3\mapsto g_{2}(v_2):=-v_2-{\frac { \left( \beta-2 \right)  \left( \beta+r-2 \right)  \left( {
\beta}^{2}+\beta\,r-4 \right)  \left( \beta+r \right)\,v^2_2 }{Nr\beta\,
 \left( {\beta}^{2}+\beta\,r-4\,\beta-4\,r+4 \right) }}\\
&&~~~~~~~~~~~~-{\frac { \left( \beta+r-2 \right) ^{2}\beta\,v_2\,\vartheta}{ \left( \beta+r \right)
 \left( {\beta}^{2}+ \left( r-4 \right) \beta-4\,r+4 \right) }}-{\frac { \left( \beta+r \right)  \left( \beta+r-2 \right)  \left( {
\beta}^{2}+\beta\,r-4 \right) ^{3}\,v^3_2}{ \left( {\beta}^{2}+\beta\,r-4\,
\beta-4\,r+4 \right) ^{2}{\beta}^{2}r{N}^{2}}}\\
&&~~~~~~~~~~~~-{\frac {\mathfrak{F}}{N \left( {
\beta}^{2}+ \left( r-4 \right) \beta-4\,r+4 \right) ^{3}r}}-{\frac { \left( \beta+r-2 \right) ^{3}r{\beta}^{2}\,v_2\,\vartheta^2}{ \left( \beta+r
 \right)  \left( {\beta}^{2}+ \left( r-4 \right) \beta-4\,r+4 \right)
^{3}}}+O(|(v_2,\vartheta)|^4),
\label{SIR02-1}
\end{eqnarray*}}where
{\fontsize{10pt}{11pt}\selectfont
\begin{eqnarray*}
&&\mathfrak{F}:=(( r-4 ) {\beta}^{4}+ ( 2\,{r}^{2}-8\,r+32 ) {\beta}^{3}+ ( {r}^{3}-4\,{r}^{2}+48\,r-96) {\beta}^{2}+( 16\,{r}^{2}-128\,r+128) \beta-32\,
{r}^{2}\\
&&~~~~~~~+112\,r-64 ) ( \beta+r-2 ) ^{2}\,v^2_2\,\vartheta.
\end{eqnarray*}}It can be calculated that
{\fontsize{10pt}{11pt}\selectfont
\begin{eqnarray*}
\left.\frac{\partial^2g_{2}}{\partial v_2 \partial\vartheta}\right|_{(v_2,\vartheta)=(0,0)}=-{\frac { \left( \beta+r-2 \right) ^{2}\beta}{ \left( \beta+r \right)
 \left( {\beta}^{2}+ \left( r-4 \right) \beta-4\,r+4 \right) }}
\end{eqnarray*}}and
{\fontsize{10pt}{11pt}\selectfont
\begin{eqnarray}\label{SIR02-2}
\!\!\!\left.{\left(\frac{1}{2}\,\left(\frac{\partial^2g_{2}}{\partial v_2^2}\right)^2+\frac{1}{3}\,\frac{\partial^3g_{2}}{\partial v_2 ^3}\right)}\right|_{(v_2,\vartheta)=(0,0)}={\frac { 2\,\left( \beta+r \right)  \left( \beta+r-2 \right)  \left( {
\beta}^{2}+\beta\,r-4 \right) ^{2} \left( {\beta}^{2}+\beta\,r-2\,
\beta-r \right) }{{\beta}^{2}{r}^{2}{N}^{2} \left( {\beta}^{2}+\beta\,
r-4\,\beta-4\,r+4 \right) }}.
\end{eqnarray}}

Since $\Theta_2\neq0$, the non-degeneracy and transversality conditions of Theorem 3.5.1 in \cite[pp.158]{Guckenheimer} are satisfied, implying that the mapping $g_{2}$ undergoes a flip bifurcation. Therefore, mapping \eqref{eq2.1} undergoes a flip bifurcation as the  parameters cross $\alpha=\Psi_2$. Specifically, if $\Theta_2>0$, then mapping \eqref{eq2.1} undergoes a supercritical flip bifurcation and produces a stable period-two cycle from the region $\mathbf{D}_{12}$ to $\mathbf{D}_{22}$ and from the region $\mathbf{D}_{24}$ to $\mathbf{D}_{31}$ respectively. However, if $\Theta_2<0$, then the period-two cycle located in $\mathbf{D}_{12}$ and $\mathbf{D}_{24}$ is unstable.
In fact, in the case of $\Theta_2>0$, mapping \eqref{SIR02-1} is topologically equivalent to the following mapping
\begin{eqnarray}
\begin{array}{l}
v_2\mapsto g^*_{2}(v_2):=-v_2-{\frac { \left( \beta-2 \right)  \left( \beta+r-2 \right)  \left( {
\beta}^{2}+\beta\,r-4 \right)  \left( \beta+r \right)\,v^2_2 }{Nr\beta\,
 \left( {\beta}^{2}+\beta\,r-4\,\beta-4\,r+4 \right) }}
 \\
~~~~~~~~~~~~~~~~~~~~-{\frac { \left( \beta+r-2 \right) ^{2}\beta\,v_2\,\vartheta}{ \left( \beta+r \right)
 \left( {\beta}^{2}+ \left( r-4 \right) \beta-4\,r+4 \right) }}-{\frac { \left( \beta+r \right)  \left( \beta+r-2 \right)  \left( {
\beta}^{2}+\beta\,r-4 \right) ^{3}\,v^3_2}{ \left( {\beta}^{2}+\beta\,r-4\,
\beta-4\,r+4 \right) ^{2}{\beta}^{2}r{N}^{2}}}
\\
~~~~~~~~~~~~~~~~~~~~-{\frac {\mathfrak{F}}{N \left( {
\beta}^{2}+ \left( r-4 \right) \beta-4\,r+4 \right) ^{3}r}}
-{\frac { \left( \beta+r-2 \right) ^{3}r{\beta}^{2}\,v_2\,\vartheta^2}{ \left( \beta+r
 \right)  \left( {\beta}^{2}+ \left( r-4 \right) \beta-4\,r+4 \right)
^{3}}}.
\label{SIR-3}
\end{array}
\end{eqnarray}
We re-expand $g^{\ast}_{2}$ in Taylor series with $v_2$ and then obtain
\begin{eqnarray}
g^{\ast}_{2}(v_2)=f_1(\vartheta)\,v_2+f_2(\vartheta)\,v_2^2+f_3(\vartheta)\,v_2^3+O(v_2^4),
\label{eq05261}
\end{eqnarray}
where $f_1,f_2$ and $f_3$ are $C^1$ functions, satisfying
{\fontsize{11pt}{11pt}\selectfont
\begin{eqnarray*}
&&f_1(\vartheta):=-(1+g(\vartheta)),\\
&&g(\vartheta):={\frac { \left( \beta+r-2 \right) ^{2}\beta\,\vartheta}{ \left( \beta+r \right)
 \left( {\beta}^{2}+ \left( r-4 \right) \beta-4\,r+4 \right) }}+{\frac { \left( \beta+r-2 \right) ^{3}r{\beta}^{2}\,\vartheta^2}{ \left( \beta+r
 \right)  \left( {\beta}^{2}+ \left( r-4 \right) \beta-4\,r+4 \right)
^{3}}},\\
&&f_2(\vartheta):=-{\frac { \left( \beta-2 \right)  \left( \beta+r-2 \right)  \left( {
\beta}^{2}+\beta\,r-4 \right)  \left( \beta+r \right) }{Nr\beta\, \left( {\beta}^{2}+\beta\,r-4\,\beta-4\,r+4 \right) }}-{\frac { \mathfrak{F}}{N \left( {
\beta}^{2}+ \left( r-4 \right) \beta-4\,r+4 \right) ^{3}r}},\\
&&f_3(\vartheta):=-{\frac { \left( \beta+r \right)  \left( \beta+r-2 \right)  \left( {
\beta}^{2}+\beta\,r-4 \right) ^{3}}{ \left( {\beta}^{2}+\beta\,r-4\,
\beta-4\,r+4 \right) ^{2}{\beta}^{2}r{N}^{2}}}.
\end{eqnarray*}}Then $g(0)=0$ and
$$
\left.\frac{\mathrm{d}g}{\mathrm{d}\vartheta}\right|_{\vartheta=0}=-\left.\frac{\partial^2g_2}{\partial w_2\partial\vartheta}\right|_{(w_2, \vartheta)=(0,0)}={\frac { \left( \beta+r-2 \right) ^{2}\beta}{ \left( \beta+r \right)
 \left( {\beta}^{2}+ \left( r-4 \right) \beta-4\,r+4 \right) }},
$$
it follows that the function $g$ is locally invertible. Hence,
mapping \eqref{eq05261} can be changed into
$$
\tilde{w}_2=\kappa(\vartheta_1)+c(\vartheta_1)\,w_2^2+d(\vartheta_1)\,w_2^3+O(w_2^4)
$$
by setting $\vartheta_1:=g(\vartheta)$, where the functions $\kappa(\vartheta_1)=-(1+\vartheta_1)$, $c(\vartheta_1)$ and $d(\vartheta_1)$ are all $C^1$.
Consequently, we get
\begin{eqnarray*}
c(0)=f_2(0)=\frac{1}{2}\,\frac{\partial^2g_{2}}{\partial w_2^2}, ~~d(0)=f_3(0)=\frac{1}{6}\frac{\partial^3g_{2}}{\partial w_2 ^3}.
\label{eq05162}
\end{eqnarray*}

Furthermore, we employ a smooth change of coordinate
\begin{eqnarray}
w_2=w_6+\vartheta_2w_6^3,
\label{eq05263}
\end{eqnarray}
where $\vartheta_2=\vartheta_2(\vartheta_1)$ is a $C^1$ function to be determined. Clearly, the transformation \eqref{eq05263} is invertible in a sufficiently small neighborhood of the origin $O$, and its inverse can be obtained by the method of undetermined coefficients as
\begin{eqnarray}
w_6=w_2-\vartheta_2w_2^2+2\vartheta_2^2w_2^3+O(w_2^4).
\label{eq05264}
\end{eqnarray}
Using \eqref{eq05263}-\eqref{eq05264}, we get
{\fontsize{11pt}{11pt}\selectfont
\begin{eqnarray}\label{karpa}
\!\!\!\!\!\tilde{w}_6=\kappa w_6+(c+\vartheta_2\kappa-\vartheta_2\kappa^2)w_6^2+(d+2\vartheta_2c-2\vartheta_2\kappa(\vartheta_2\kappa+c)+2\vartheta_2^2\kappa^3)w_6^3+O(w_6^4).
\end{eqnarray}}
\!Thus, the quadratic term of (\ref{karpa}) can be cancelled via sufficiently small $|\vartheta_1|$ by setting
$$
\vartheta_2(\vartheta_1):=\frac{c(\vartheta_1)}{\kappa^2(\vartheta_1)-\kappa(\vartheta_1)}.
$$
Actually, it can be done because $\kappa^2(0)-\kappa(0)=2\neq0$. Then (\ref{karpa}) becomes
\begin{eqnarray*}
\tilde{w}_6=\kappa w_6+(d+\frac{2c^2}{\kappa^2-\kappa})w_6^3+O(w_6^4)=-(1+\vartheta_1)w_6+e(\vartheta_1)w_6^3+O(w_6^4)
\end{eqnarray*}
for certain $C^1$ function $e$ such that
$$
e(0)=c^2(0)+d(0)=\frac{1}{4}\,\left.\left(\frac{\partial^2g_{2}}{\partial w_2^2}\right)^2\right|_{(w_2,\vartheta)=(0,0)}+\frac{1}{6}\,\left({\left.\frac{\partial^3g_{2}}{\partial w_2 ^3}\right)}\right|_{(w_2,\vartheta)=(0,0)}.
$$
From \eqref{SIR02-2}, $e(0)>0$. Rescaling $w_6=w_7/\sqrt{|e(\vartheta_1)|}$ we get
\begin{eqnarray}
w_7\mapsto g_{2\tau}(w_7):=-(1+\vartheta_1)w_7+w_7^3+O(w_7^4).
\label{eq05265}
\end{eqnarray}
Hence, from \cite[Theorem~4.4, p.129]{Kuznetsov}, mapping \eqref{eq05265} is topologically equivalent to the following form
\begin{eqnarray}
w_7\mapsto g_{2*}(w_7)=-(1+\vartheta_1)w_7+w_7^3,
\label{eq05266}
\end{eqnarray}
near the origin $O$.
Finally, we consider the second iterate $g_{2*}$ of mapping \eqref{eq05266} and obtain
\begin{eqnarray*}
\!\!\!\!\!\!\!\!\!\!g_{2*}^2(w_7)=\left( -\vartheta_1-1 \right) ^{2}w_7+ \left(  \left( -\vartheta_1-1 \right)\left( 1+\vartheta_1 \right) ^{2}-\vartheta_1-1 \right) w_7^{3}+O(w_7^5).
\end{eqnarray*}
It is clear that $g_{2*}^2$ has a trivial fixed point $w_{70}=0$ and two nontrivial fixed points
$$
w_{71,72}=\pm\sqrt{\vartheta_1}
$$
for $0<\vartheta_1\ll1$.
One can check that $|Dg_{2*}^2(w_{71})|<1$ (resp. $|Dg_{2*}^2(w_{72})|<1$).
Hence, $w_{71}$ and $w_{72}$ are stable and constitute a stable cycle of period-two for the original mapping $g_{2*}$.
It implies that the mapping \eqref{eq05266} generates a stable period-two cycle near origin $O$ for $0<\vartheta_1\ll1$. Furthermore, from $\vartheta_1:=g(\vartheta)$, we can see that the mapping \eqref{SIR-3} generates a stable period-two cycle near the fixed point $E_1$ for $-1\ll\vartheta<0$.
Therefore, by the parameter transformation $\vartheta:=\alpha-\Psi_2$, mapping \eqref{eq2.1} undergoes a supercritical flip bifurcation and produces a stable period-two cycle when $\alpha$ crosses $\Psi_2$ from the region $\mathfrak{D}_{12}$ to $\mathfrak{D}_{31}$. In the same way, when $\Theta_2<0$ one can also prove that mapping \eqref{eq2.1} undergoes a subcritical flip bifurcation and generates an unstable period-two cycle when $\alpha$ crosses $\Psi_2$ from the region $\mathfrak{D}_{31}$ to $\mathfrak{D}_{12}$.
This completes the proof.
\end{proof}
Using the same idea as done in Theorem \ref{th4.1}, one can obtain
\begin{thm}
For the case $N>0$, $0<\beta<1$ and $0<r<\Psi_1$,
if
\begin{eqnarray*}
&&\Theta_3:=-{\frac {\beta\, \left( r+\beta-2 \right) ^{2}}{ \left( {\beta}^{2}+
 \left( r-4 \right) \beta-4\,r+4 \right)  \left( \beta+r \right) }}\neq0, \\
&&\Theta_4:={\frac {2\, \left( \beta+r \right)  \left( \beta+r-2 \right)  \left( {
\beta}^{2}+\beta\,r-2\,\beta-r \right)  \left( {\beta}^{2}+\beta\,r-4
 \right) ^{2}}{{N}^{2}{r}^{2}{\beta}^{2} \left( {\beta}^{2}+\beta\,r-4
\,\beta-4\,r+4 \right) }}
\neq0,
\end{eqnarray*}
then mapping \eqref{eq2.1} undergoes a flip bifurcation near the fixed point $E_2$ as the parameter $\alpha$ crosses $\Psi_2$. More specifically,
if $\Theta_5>0$ {\rm(}resp. $\Theta_5<0${\rm)}, then mapping \eqref{eq2.1} undergoes a supercritical {\rm(}resp. subcritical{\rm)} flip bifurcation and produces a stable {\rm(}resp. unstable{\rm)} period-two cycle when $\alpha$ crosses $\Psi_2$ from the region $\mathfrak{D}_{34}$ {\rm(}resp. $\mathfrak{D}_{33}${\rm)} to $\mathfrak{D}_{33}$ {\rm(}resp. $\mathfrak{D}_{34}${\rm)}.
\label{th4.2}
\end{thm}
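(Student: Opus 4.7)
The plan is to imitate the proof of Theorem \ref{th4.1} essentially verbatim, since at $\alpha = \Psi_2$ the Jacobian $JF(E_2)$ again has the simple eigenvalue $-1$ together with two others of modulus strictly less than one; only the bookkeeping of the center manifold coefficients and the labelling of the ``pre'' and ``post'' regions will differ. First I would set the bifurcation parameter $\vartheta := \alpha - \Psi_2$, translate $E_2$ to the origin by $(x,y,z)^T = (u+S_2,\, v+I_2,\, w+R_2)^T$ with $(S_2,I_2,R_2)$ the coordinates of $E_2$ given in Proposition~\ref{pro2.1}, and Taylor-expand the resulting mapping to third order in $(u,v,w,\vartheta)$. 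I would then diagonalise the linear part by an invertible transformation of the same shape used in Theorem~\ref{th4.1}, placing the $-1$-eigendirection along the new $v_2$-axis so that the map appears in normalised form.

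Next I would construct the two-dimensional $C^2$ center manifold
\begin{eqnarray*}
\begin{array}{l}
u_2 = h_{31}(v_2,\vartheta) = O(|(v_2,\vartheta)|^3),\\
w_2 = h_{32}(v_2,\vartheta) = q_1\,v_2^2 + O(|(v_2,\vartheta)|^3),
\end{array}
\end{eqnarray*}
and fix the unique scalar $q_1$ (together with the vanishing of the $v_2\vartheta$- and $\vartheta^2$-coefficients) by matching coefficients in the invariance equations. Substituting the center manifold into the $v_2$-component yields a one-dimensional restricted map $v_2 \mapsto g_3(v_2,\vartheta)$, from which I would read off the two critical quantities
\begin{eqnarray*}
\left.\frac{\partial^2 g_3}{\partial v_2\,\partial \vartheta}\right|_{(0,0)} = \Theta_3,\qquad \left.\left(\frac{1}{2}\Bigl(\frac{\partial^2 g_3}{\partial v_2^2}\Bigr)^2 + \frac{1}{3}\frac{\partial^3 g_3}{\partial v_2^3}\right)\right|_{(0,0)} = \Theta_4.
\end{eqnarray*}
The hypotheses $\Theta_3 \ne 0$ and $\Theta_4 \ne 0$ are then exactly the transversality and non-degeneracy conditions of Theorem~3.5.1 in \cite{Guckenheimer}, so mapping \eqref{eq2.1} undergoes a flip bifurcation as $\alpha$ crosses $\Psi_2$.

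To pin down super- versus subcriticality I would reduce $g_3$ to the topological normal form by repeating the three smooth changes of coordinates used in Theorem~\ref{th4.1}: invert $\vartheta_1 = g(\vartheta)$ (legitimate since $g'(0) = -\Theta_3 \ne 0$), eliminate the quadratic term via $w_2 = w_6 + \vartheta_2(\vartheta_1)\,w_6^3$ with $\vartheta_2 = c/(\kappa^2-\kappa)$, and rescale by $\sqrt{|e(\vartheta_1)|}$, obtaining
\begin{eqnarray*}
w_7 \mapsto -(1+\vartheta_1)\,w_7 + \mathrm{sign}(\Theta_5)\,w_7^3 + O(w_7^4).
\end{eqnarray*}
Analysis of the second iterate gives the pair of nontrivial fixed points $\pm\sqrt{\vartheta_1}$, whose stability is governed by $\mathrm{sign}(\Theta_5)$; tracking back through $\vartheta_1 = g(\vartheta)$ and $\vartheta = \alpha-\Psi_2$ then places the stable (resp.\ unstable) period-two cycle on the $\mathfrak{D}_{33}$ (resp.\ $\mathfrak{D}_{34}$) side of $\Psi_2$, matching the statement.

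The principal obstacle is computational rather than conceptual: verifying that the third-order coefficient of the restricted map simplifies to the stated closed form $\Theta_4$ requires heavy symbolic manipulation, because the two stable eigenvalues of $JF(E_2)$ at this crossing contribute distinct cross-coupling terms through the $u_2$- and $w_2$-components of the center manifold. A secondary pitfall is keeping track of the sign of $\Theta_3$ when inverting $\vartheta_1 = g(\vartheta)$: an orientation reversal at this step would swap $\mathfrak{D}_{33}$ and $\mathfrak{D}_{34}$ in the final statement, so the assignment of regions to the two critical signs of $\Theta_5$ must be checked with care.
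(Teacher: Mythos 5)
Your proposal is correct and coincides with the paper's own treatment: the paper gives no separate argument for Theorem \ref{th4.2}, stating only that it follows ``using the same idea as done in Theorem \ref{th4.1},'' which is exactly the center-manifold reduction, transversality/non-degeneracy check against $\Theta_3$ and $\Theta_4$, and normal-form reduction of the second iterate that you outline. Your region bookkeeping (stable cycle on the $\mathfrak{D}_{33}$ side, unstable on the $\mathfrak{D}_{34}$ side) also matches the statement.
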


According to Theorem \ref{th4.1}, As the parameters cross $\alpha=\Psi_2$, mapping \eqref{eq2.1} undergoes a flip bifurcation, which implies that the numbers of susceptible, infective and recovered individuals will coexist in a state of fluctuations with periods of 2, 4, and 8 $\cdots$.
When $\Theta_2$ is greater than 0, this periodic fluctuation is in a stable state; when $\Theta_2$ is less than 0, the periodic fluctuation is in an unstable state.
However, whether the fluctuations are stable or not, as the parameters changes, the numbers of susceptible, infected and recovered individuals will gradually evolve into a chaotic state through periodic fluctuations, leading to the loss of disease control.
Therefore, from a biological perspective, once it is detected that the numbers of susceptible, infected and recovered individuals exhibit changes with periods of 2, 4, and 8 $\cdots$, the parameters should be adjusted in a timely manner to prevent the numbers of susceptible, infected and recovered individuals from experiencing larger-period changes, thus preventing the disease from entering an uncontrollable situation. Similar biological implications can also be derived from Theorem \ref{th4.2}.


\subsection{Neimark-Sacker bifurcation of $E_{2}$}
\allowdisplaybreaks[4]
In this section, we consider the case that the eigenvalues of $JF(E_2)$ are a pair of conjugate complex numbers and lie on the unit disk. Then, another codimensional $1$ bifurcation (i.e., Neimark-Sacker bifurcation, see \cite{Neimark,Sacker})of mapping \eqref{eq2.1} near the fixed point $E_2$ will be studied.
From \eqref{solu-1}, in case
\begin{eqnarray*}
\begin{array}{llll}
\mathfrak{L}_2:=\{
N>0, 0<\beta<1, \Psi_1<r<1, \alpha=\Psi_3
\},
\end{array}
\end{eqnarray*}
the two eigenvalues of $JF(E_2)$ are
\begin{eqnarray*}
\begin{array}{llll}
\!\!\!\!\!\!\!\!\!\!\!\!\!t_{1}={\frac {-{\beta}^{2}-\beta\,r+2\,\beta+2\,r-2+{\bf{i}}\,\sqrt {-\beta\, \left(
{\beta}^{2}+ \left( r-4 \right) \beta-4\,r+4 \right)  \left( \beta+r
 \right) }}{2\,\beta+2\,r-2}},\\
\noalign{\medskip}
\!\!\!\!\!\!\!\!\!\!\!\!\!t_{2}={\frac {-{\beta}^{2}-\beta\,r+2\,\beta+2\,r-2-{\bf{i}}\,\sqrt {-\beta\, \left(
{\beta}^{2}+ \left( r-4 \right) \beta-4\,r+4 \right)  \left( \beta+r
 \right) }}{2\,\beta+2\,r-2}}
,
\end{array}
\end{eqnarray*}
where ${\bf i}$ denotes the imaginary unit.
Hence, when the parameters cross $\mathfrak{L}_2$, mapping \eqref{eq2.1} may have a Neimark-Sacker bifurcation.
Actually, we have the following result.
\begin{thm}
If the parameters cross $\mathfrak{L}_2$ with
\begin{eqnarray*}
&&r\neq-{\frac {{\beta}^{2}-3\,\beta+3}{\beta-3}},-{\frac {{\beta}^{2}-2\,\beta+2}{\beta-2}}, 1-\beta~~\mbox{and}~~-\frac{\beta^2-1}{\beta},
\end{eqnarray*}
then mapping \eqref{eq2.1} undergoes a Neimark-Sacker bifurcation near $E_2$. More specifically, if the parameters $\beta$ and $r$ lie in the set
\begin{eqnarray*}
&&\mathfrak{E}_1:=\{0<\beta<\frac{4}{5},~~\Psi_1<r<-\frac{\beta^2-1}{\beta}\},
\end{eqnarray*}
\vskip -1.2cm
\begin{eqnarray*}
&&\!\!\!\!\!\!\!\!\!\!\!\!\!({\rm resp.}~\mathfrak{E}_2:=\{\frac{\sqrt{5}}{2}-1<\beta<\frac{4}{5},~~\Psi_1<r<1\}\cup\{N>0,~~\frac{4}{5}\leq\beta<1,~~\Psi_1<r<1\}),
\end{eqnarray*}
i.e.,
the first Lyapunov quantity $\mathfrak{A}>0$
(resp. $\mathfrak{A}<0$),
where
\begin{eqnarray*}
&&\mathfrak{A}:=-{\frac { \left( {\beta}^{2}+\beta\,r-1 \right)  \left( \beta+r
 \right) ^{4}}{8\,{N}^{2} \left( \beta+r-1 \right) }},
\end{eqnarray*}
mapping \eqref{eq2.1} undergoes a subcritical (resp. supercritical) Neimark-Sacker bifurcation and produces a unique unstable (resp. stable) invariant circle surrounding $E_2$ when $\alpha$ crosses $\Psi_3$ from the region $\mathfrak{D}_{4}$ (resp. $\mathfrak{D}_{23}$) to $\mathfrak{D}_{23}$ (resp. $\mathfrak{D}_{4}$).
\label{th5.1}
\end{thm}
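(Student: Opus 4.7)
The plan is to establish the Neimark--Sacker bifurcation by the standard three-step program: reduction to a planar center manifold, reduction of the planar map to its Neimark--Sacker normal form, and verification of the non-degeneracy conditions (transversality plus first Lyapunov coefficient sign) together with the avoidance of the strong $1{:}k$ resonances for $k = 1, 2, 3, 4$.

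First I would set $\eta := \alpha - \Psi_3$ as the bifurcation parameter and translate $E_2$ to the origin via the shift $(x,y,z)^T = (u,v,w)^T + E_2$, then Taylor expand the resulting map $F(\cdot;\eta)$ up to third order in $(u,v,w)$. The Jacobian at the origin carries the complex pair $t_1(\eta), t_2(\eta)$ together with the hyperbolic real eigenvalue $\lambda_3 = 1-\beta$ (which satisfies $|\lambda_3| < 1$ for all admissible $\beta \in (0,1)$). Using the (complex) eigenvectors of $JF(E_2)$ I construct a linear change of coordinates that block-diagonalises the linear part into a planar rotation block paired with the scalar $1-\beta$. Invoking the center manifold theorem (see \cite{Carr}), the third coordinate takes the form $w = h(u,v,\eta)$ with $h(0,0,0) = 0$ and $Dh(0,0,0) = 0$; the Taylor coefficients of $h$ are determined order by order from the invariance equation, analogous to the computation already carried out in the proof of Theorem~\ref{th3.1}.

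Second, I would substitute the expansion of $h$ into the first two components to obtain a planar map. In the complex coordinate $\zeta = u + \mathbf{i} v$ the restricted map reads
\begin{equation*}
\zeta \mapsto \mu(\eta)\,\zeta + \sum_{2 \leq k+l \leq 3} \frac{g_{kl}(\eta)}{k!\,l!}\, \zeta^{k} \bar\zeta^{l} + O(|\zeta|^{4}),
\end{equation*}
with $\mu(0) = t_1$. Kuznetsov's formula for the first Lyapunov coefficient (see \cite{Kuznetsov}) then expresses $\mathfrak{A}$ in terms of the $g_{kl}(0)$; after considerable algebraic simplification, this reduces, as claimed, to $\mathfrak{A} = -(\beta^{2}+\beta r - 1)(\beta+r)^{4}/[\,8 N^{2}(\beta+r-1)\,]$. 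This lengthy symbolic computation, which requires the third-order Taylor coefficients of the center manifold reduction and careful combination of the $g_{kl}$, is the principal obstacle of the proof and is most cleanly carried out with a computer algebra system.

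Finally, I would verify the non-degeneracy conditions. Direct substitution of $\alpha = \Psi_3$ into \eqref{solu-1} shows that $t_1^{3} = 1$ is equivalent to $r = -(\beta^{2}-3\beta+3)/(\beta-3)$, while $t_1^{4} = 1$ is equivalent to $r = -(\beta^{2}-2\beta+2)/(\beta-2)$; the cases $t_1 = \pm 1$ are excluded by the standing assumption $\Psi_1 < r < 1$ combined with $r \neq 1-\beta$ (the latter also guarding against the denominator $\beta + r - 1 = 0$ in $\Psi_3$ and $\mathfrak{A}$). The exclusion $r \neq -(\beta^{2}-1)/\beta$ ensures $\mathfrak{A} \neq 0$, since the numerator $\beta^{2} + \beta r - 1$ vanishes precisely there. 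Transversality $\tfrac{d|\mu(\eta)|}{d\eta}\big|_{\eta=0} \neq 0$ is obtained by differentiating $|t_1(\alpha)|^{2} = t_1 t_2$ using \eqref{solu-1}. With all hypotheses of the Neimark--Sacker theorem satisfied (cf.\ \cite{Kuznetsov}), the bifurcation occurs: when $\mathfrak{A} < 0$ (i.e.\ $(\beta,r) \in \mathfrak{E}_2$) a unique stable invariant closed curve bifurcates supercritically from $E_2$ as $\alpha$ crosses $\Psi_3$ from $\mathfrak{D}_{23}$ to $\mathfrak{D}_{4}$, while when $\mathfrak{A} > 0$ (i.e.\ $(\beta,r) \in \mathfrak{E}_1$) a unique unstable invariant closed curve bifurcates subcritically when $\alpha$ crosses from $\mathfrak{D}_{4}$ to $\mathfrak{D}_{23}$. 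The partition of the admissible region into $\mathfrak{E}_1$ and $\mathfrak{E}_2$ then follows from elementary sign analysis of the product $(\beta^{2}+\beta r - 1)(\beta+r-1)$ over $\{0<\beta<1,\ \Psi_1<r<1\}$.
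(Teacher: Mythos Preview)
Your proposal is correct and follows essentially the same route as the paper: translate $E_2$ to the origin, block-diagonalise, reduce to a two-dimensional center manifold, pass to the complex coordinate, and compute the first Lyapunov coefficient, checking transversality and the absence of strong resonances along the way. The paper carries out the normal-form reduction explicitly via successive quadratic and cubic near-identity changes of variable and a final passage to polar coordinates, whereas you invoke Kuznetsov's closed formula for $\mathfrak{A}$ in terms of the $g_{kl}$; these are equivalent standard presentations of the same computation.

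One small imprecision: you write that ``the cases $t_1 = \pm 1$ are excluded by the standing assumption $\Psi_1 < r < 1$ combined with $r \neq 1-\beta$.'' In fact the exclusion of $\mu_0 \in \{\pm 1\}$ is immediate from $\Delta < 0$ on $\mathfrak{L}_2$, which forces $\mu_0$ to be genuinely complex. The condition $r \neq 1-\beta$ is instead what secures the transversality condition, since $\tfrac{d|\mu_0|}{d\alpha}\big|_{\alpha=\Psi_3} = \beta(\beta+r-1)/(2\beta+2r)$; it also keeps the denominator $\beta+r-1$ in $\Psi_3$ and in $\mathfrak{A}$ from vanishing. This is a matter of attribution rather than a gap in the argument.
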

\begin{proof}
Let
{\footnotesize
\begin{eqnarray*}
&&\!\!\!\!\!\!\!\!\!\!\!\!\!\!\!\!\!\!\!\mu_0:=t_{1}={\frac {-{\beta}^{2}-\beta\,r+2\,\beta+2\,r-2+{\bf{i}}\,\sqrt {-\beta\, \left(
{\beta}^{2}+ \left( r-4 \right) \beta-4\,r+4 \right)  \left( \beta+r
 \right) }}{2\,\beta+2\,r-2}}.
\end{eqnarray*}
}In order to obtain the conclusion in the theorem, we now verify the conditions (C.1), (C.2) and (C.3) offered in Theorem 4.6 of \cite[p.141]{Kuznetsov}. We first check the nondegeneracy condition (C.2), i.e. $\mu_0^{i}\ne 1$ for $i=1,2,3,4$. Since $\mu_0$ is not a real number, then the inequality $\mu_0^{i}\ne 1$ for $i=1,2$ is true. Further, we claim that $\mu_0^{3}\neq1$. Otherwise, from $(\mu_0-1)(\mu_0^2+\mu_0+1)=0$ we get $\mu_0^2+\mu_0+1=0$, implying $$\mu_0=-\frac{1}{2}+\frac{\sqrt{3}}{2}\,{\bf{i}} ~~~\mbox{or}~~~ \mu_0=-\frac{1}{2}-\frac{\sqrt{3}}{2}\,{\bf{i}}, $$
and thus $r=-({\beta}^{2}-3\,\beta+3)/(\beta-3)$, which contradicts to the conditions in Theorem \ref{th5.1}. Then, in the same way, suppose that $\mu_0^{4}=1$, which is equivalent to $\mu_0^2+1=0$, implying the real part of $\mu_0$,
denoted as $\mathfrak{R}(\mu_0)$, is equal to $0$. This means $r=-({\beta}^{2}-2\,\beta+2)/(\beta-2)$, a contradiction to the conditions in Theorem \ref{th5.1}. Therefore, the nondegeneracy condition is proved.

In addition, if the parameters is near $\mathfrak{L}$ but not on $\mathfrak{L}$, then from (\ref{solu-1}) we obtain
$$
|\mu_0|=\sqrt{{\frac {-{\beta}^{3}+ \left( \alpha-2\,r \right) {\beta}^{2}- \left( r
-1 \right)  \left( r-\alpha+1 \right) \beta+r}{\beta+r}}},
$$
which yields
\begin{eqnarray*}
\left.\frac{d|\mu_0|}{d\,\alpha}\right|_{\alpha={\frac {{\beta}^{2}+2\,\beta\,r+{r}^{2}}{\beta+r-1}}}={\frac {\beta\, \left( \beta+r-1 \right) }{2\,\beta+2\,r}}.
\end{eqnarray*}
Combining the condition $r\neq1-\beta$, the transversality condition (C.1) of Theorem 4.6 in \cite[p.141]{Kuznetsov}
is satisfied.

Next, we analyze the non-degeneracy condition (C.3). Firstly,
applying the transformation
$$(x,y,z)=(u_3+{\frac {N \left( \beta+r \right) }{\alpha}},v_3+{\frac {\beta\,N \left( \alpha-\beta-r \right) }{\alpha\, \left( \beta
+r \right) }},w_3+{\frac {rN \left( \alpha-\beta-r \right) }{\alpha\, \left( \beta+r
 \right) })},$$
we translate the fixed point $E_{2}$ to the origin $O$, and let
$$\alpha={\frac {{\beta}^{2}+2\,\beta\,r+{r}^{2}}{\beta+r-1}}.$$
Secondly, by employing the invertible linear transformation $(u_4,v_4,w_4)^T=H_1(u_5,v_5,w_5)^T$, where
\begin{eqnarray*}
H_1=\left[ \begin {array}{ccc}
1&0&0\\
0&1&0\\
-1&-1&1
\end {array}
 \right],
\end{eqnarray*}
mapping \eqref{eq2.1} is converted into the following form
\begin{eqnarray}
\left[
\begin{array}{ccc}
   u_5  \\
   \noalign{\medskip}
   v_5\\
   \noalign{\medskip}
   w_5
\end{array}
\right]
\mapsto
\left[\begin{array}{cc}
-{\frac { \left( {\beta}^{2}+\beta\,r-\beta-r+1 \right)\,u_{{5}}}{\beta
+r-1}}- \left( \beta+r \right)\,v_{{5}} -{\frac { \left(
\beta+r \right) ^{2}\,u_{{5}}\,v_{{5}}}{N \left( \beta+r-1 \right) }}\\
\noalign{\medskip}
  {\frac {\beta\,u_{{5}}}{\beta+r-1}}+v_{{5}}+{\frac { \left( {\beta}^{2
}+2\,\beta\,r+{r}^{2} \right)\, u_{{5}}\,v_{{5}}}{N \left( \beta+r-1
 \right) }} \\
\noalign{\medskip}
- \left( \beta-1 \right) w_{{5}}
 \end{array}\right].
\label{eq4.1}
\end{eqnarray}
As done in proof of Theorem \ref{th3.1}, mapping \eqref{eq4.1} is restricted into a two dimensional $C^{2}$ center manifold
\begin{eqnarray*}
w_5=h_{3}(u_5,v_5)=O(|(u_5,v_5)|^3),
\end{eqnarray*}
and then it is converted into the following two dimensional form
\begin{eqnarray}
\left[
\begin{array}{ccc}
   u_5  \\
   \noalign{\medskip}
   v_5
\end{array}
\right]
\mapsto
\left[\begin{array}{cc}
-{\frac { \left( {\beta}^{2}+\beta\,r-\beta-r+1 \right)\,u_{{5}}}{\beta
+r-1}}- \left( \beta+r \right)\,v_{{5}} -{\frac { \left(
\beta+r \right) ^{2}\,u_{{5}}\,v_{{5}}}{N \left( \beta+r-1 \right) }}\\
\noalign{\medskip}
  {\frac {\beta\,u_{{5}}}{\beta+r-1}}+v_{{5}}+{\frac { \left( {\beta}^{2
}+2\,\beta\,r+{r}^{2} \right)\, u_{{5}}\,v_{{5}}}{N \left( \beta+r-1
 \right) }}
 \end{array}\right].
\label{eq4.2}
\end{eqnarray}
Applying the invertible linear transformation $(u_4,v_4)^T=H_2(\mu,\nu)^T$ for mapping \eqref{eq4.2}, where
\begin{eqnarray*}
H_2=\left[ \begin {array}{cc}
-\frac{\beta+r}{2}   &   -{\frac {\sqrt {-\beta\, \left( {\beta}^{3}+2\,{\beta}^{2}r+\beta
\,{r}^{2}-4\,{\beta}^{2}-8\,\beta\,r-4\,{r}^{2}+4\,\beta+4\,r \right)
}}{2\,\beta}} \\
\noalign{\medskip}
1   &     0
\end {array} \right],
\end{eqnarray*}
we obtain
{\small\begin{eqnarray}
\left[
\begin{array}{cc}
   \mu\\
   \\
   \nu
\end{array}
\right]
\mapsto
\left[\begin{array}{cc}
a_{1,0}\mu_{{1}}+a_{0,1}\nu_{{1}}+a_{2,0}\mu_1^2+a_{1,1}\mu_{{1}}\nu_{{1}}+a_{0,2}\nu_1^2
\\
   \\
b_{1,0}\mu_{{1}}+b_{0,1}v\nu_{{1}}+b_{2,0}\mu_1^2+b_{1,1}\mu_{{1}}\nu_{{1}}+b_{0,2}\nu_1^2
 \end{array}\right],
\label{eq4.3}
\end{eqnarray}}
where
\begin{eqnarray*}
&&a_{1,0}=b_{0,1}:=-{\frac {{\beta}^{2}+\beta\,r-2\,\beta-2\,r+2}{2\,\beta+2\,r-2}},a_{2,0}:=-{\frac {{\beta}^{3}+3\,{\beta}^{2}r+3\,\beta\,{r}^{2}+{r}^{3}}{2\,N \left( \beta+r-1 \right) }},
\\
&&a_{0,1}=-b_{1,0}:=-{\frac {\sqrt {-\beta\, \left( {\beta}^{2}+ \left( r-4 \right) \beta-
4\,r+4 \right)  \left( \beta+r \right) }}{2\,\beta+2\,r-2}},
\\
&&a_{1,1}:=-{\frac { \left( {\beta}^{2}+2\,\beta\,r+{r}^{2} \right) \sqrt {-
\beta\, \left( {\beta}^{2}+ \left( r-4 \right) \beta-4\,r+4 \right)
 \left( \beta+r \right) }}{2\,\beta\,N \left( \beta+r-1 \right) }},
\\
&&b_{1,1}:={\frac {{\beta}^{3}+3\,{\beta}^{2}r+3\,\beta\,{r}^{2}+{r}^{3}-2\,
{\beta}^{2}-4\,\beta\,r-2\,{r}^{2}}{2\,N \left( \beta+r-1 \right) }}, a_{0,2}=b_{0,2}=0,\\
&&b_{2,0}:={\frac { \left( \beta+r \right)  \left( {\beta}^{3}+3\,{\beta}^{2
}r+3\,\beta\,{r}^{2}+{r}^{3}-2\,{\beta}^{2}-4\,\beta\,r-2\,{r}^{2}
 \right) \beta}{2\,\sqrt {-\beta\, \left( {\beta}^{2}+ \left( r-4
 \right) \beta-4\,r+4 \right)  \left( \beta+r \right) }N \left( \beta+
r-1 \right) }}.
\end{eqnarray*}
Let $\omega:=\mu+{\bf i}\nu$. Consider $\omega$ as the variable in Taylor expansion, mapping \eqref{eq4.3} is transformed into the following complex form
\begin{equation}
\omega\mapsto \mu_0\omega+\sum_{i=0}^2\gamma_{2-i,i}\omega^{2-i}\bar{\omega}^i,
\label{eq4.4}
\end{equation}
where
\begin{eqnarray*}
\!\!\!\!&&\gamma_{2,0}:=-{\frac { \left( \beta+r \right) ^{2}}{4\,N \left( \beta+r-1 \right) }}
+{\frac {\left( \beta+r \right) ^{3} \left( 2r+\beta-2 \right)\,{\bf{i}}}{4\,\sqrt {-\beta\, \left( {\beta}^{2}+ \left( r-4 \right) \beta-4\,r+4
 \right)  \left( \beta+r \right) }N \left( \beta+r-1 \right) }},\\
\!\!\!\!&&\gamma_{1,1}:=-{\frac { \left( \beta+r \right) ^{3}}{4\,N \left( \beta+r-1 \right) }}+{\frac {\beta\, \left( \beta+r-2 \right)  \left( \beta+r \right) ^{3}\,{\bf{i}}}{4\,\sqrt {-\beta\, \left( {\beta}^{2}+ \left( r-4 \right) \beta-4\,r+4 \right)  \left( \beta+r \right) }N \left( \beta+r-1 \right) }}
,\\
\!\!\!\!&&\gamma_{0,2}:=-{\frac { \left( \beta+r \right) ^{2}}{4\,N}}
+{\frac { \left( \beta-2 \right)  \left( \beta+r \right) ^{3}\,{\bf{i}}}{4\,\sqrt {-\beta\, \left( {\beta}^{2}+ \left( r-4 \right) \beta-4\,r+4
 \right)  \left( \beta+r \right) }N}}.\\
\end{eqnarray*}
Employing the quadratic near-identity transformation
$$
\omega=\omega+q_{2,0}\omega^2+q_{1,1}\omega\bar{\omega}+q_{0,2}\bar{\omega}^2,
$$
where $q_{2,0}$, $q_{1,1}$ and $q_{0,2}$ are undetermined coefficients,
mapping \eqref{eq4.4} becomes the following form
\begin{eqnarray}
\begin{array}{ll}
\omega\mapsto \mu_0\omega+(\gamma_{2,0}-(\mu_0^2-\mu_0)\,q_{2,0})\omega^2
+(\gamma_{1,1}-(\bar{\mu}_0\mu_0-\mu_0)\,q_{1,1})\omega\bar{\omega}
\\~~~~~~
+(\gamma_{0,2}-(\bar{\mu}_0^2-\mu_0)\,q_{0,2})\bar{\omega}^2+O(|\omega|^3).
\label{eq4.5}
\end{array}
\end{eqnarray}
We further select
$$
q_{2,0}:=\frac{\gamma_{2,0}}{\mu_0^2-\mu_0},~q_{1,1}:=\frac{\gamma_{1,1}}{\bar{\mu}_0\mu_0-\mu_0},
~q_{0,2}:=\frac{\gamma_{0,2}}{\bar{\mu}_0^2-\mu_0},
$$
and then mapping \eqref{eq4.5} is reduced to the mapping
\begin{eqnarray}
&&\omega\mapsto \mu_0\omega+p_{3,0}\omega^3+p_{2,1}\omega^2\bar{\omega}+p_{1,2}\omega\bar{\omega}^2+p_{0,3}\bar{\omega}^3+O(|\omega|^4),
\label{eq4.6}
\end{eqnarray}
where
\begin{eqnarray*}
&&\!\!\!\!\!\!\!p_{2,1}:=\frac{\mathfrak{S_1}}{\mathfrak{S_2}},\\
&&\!\!\!\!\!\!\!\mathfrak{S_1}:=( \beta+r ) ^{4} (  (  ( {\bf{i}}+r{\bf{i}} ) {
\beta}^{6}+ ( 3\,{\bf{i}}{r}^{2}-4\,{\bf{i}} ) {\beta}^{5}+ ( 3\,{\bf{i}}{r
}^{3}-6\,{\bf{i}}{r}^{2}-6\,{\bf{i}}r+4\,{\bf{i}} ) {\beta}^{4}+ ( r-1 ) ( {r}^{3}\\
&&~~~~~-7\,{r}^{2}-7\,r-3 ) {\beta}^{3}{\bf{i}}-3\,{\bf{i}} ( r-1 )  ( {r}^{3}+1/3\,{r}^{2}+r-2 ) {\beta}^{2}
+2\,{\bf{i}} ( r-1 ) ^{3}r ) \\
&&~~~~~\sqrt {-\beta\, ( {\beta}^{2}+ ( r-4 ) \beta-4\,r+4 )  ( \beta+r ) }- ( {\beta}^{2}+ ( r-4 ) \beta-4\,r+4 )  (\beta+r )\\
&&~~~~~(  ( r-1 ) {\beta}^{4}+ ( 2\,{r}^{2}-5\,r+2 ) {\beta}^{3}+ ( {r}^{3}-7\,{r}^{2} +6\,r ) {\beta}^{2}+ ( -3\,{r}^{3}+2\,{r}^{2}+4\,r-3 )\beta\\
&&~~~~~-2\,r ( r-1 ) ^{2} ) \beta ),\\
&&\!\!\!\!\!\!\!\mathfrak{S_2}:=( {\beta}^{2}+\beta\,r-4\,\beta-4\,r+4 ) {N}^{2} ( {\bf{i}}
\sqrt {-\beta\, ( {\beta}^{2}+ ( r-4 ) \beta-4\,r+4 )  ( \beta+r ) }-{\beta}^{2}-\beta\,r )\\
&&~~~~~ ( {\bf{i}}\sqrt {-\beta\, ( {\beta}^{2}+ ( r-4 ) \beta-4\,r+4 )  ( \beta+r ) }-{\beta}^{2}-\beta\,r+2\,\beta
+2\,r-2 ) \beta\, ( {\beta}^{2}+\beta\,r\\
&&~~~~~-3\,\beta-3\,r+3 )  ( {\bf{i}}\sqrt {-\beta\, ( {\beta}^{2}+ ( r-4 ) \beta-4\,r+4 )  ( \beta+r ) }+{\beta}^{2}+
\beta\,r ).
\end{eqnarray*}
Moreover, using the cubic near-identity transformation
$$
\omega=\theta+q_{3,0}\theta^3+q_{2,1}\theta^2\bar{\theta}+q_{1,2}\theta\bar{\theta}^2+q_{0,3}\bar{\theta}^3,
$$
where $q_{3,0}$, $q_{2,1}$, $q_{1,2}$ and $q_{0,3}$ are undetermined coefficients, mapping \eqref{eq4.6} is rewritten as the following form
\begin{eqnarray*}
&&\theta\mapsto\mu_0\theta+(p_{3,0}-(\mu_0^3-\mu)q_{3,0})\theta^3
+(p_{2,1}-(\mu_0|\mu_0|^2-\mu_0)q_{2,1})\theta^2\bar{\theta}
\\
&&~~~~~~+(p_{1,2}-(\bar{\mu}_0|\mu_0|^2-\mu_0)q_{1,2})\theta\bar{\theta}^2+
(p_{0,3}-(\bar{\mu}_0^3-\mu_0)q_{0,3})\bar{\theta}^3+O(|\theta|^4),
\end{eqnarray*}
which can be simplified as
\begin{equation}
\theta\mapsto\mu_0\theta+p_{2,1}\theta\bar{\theta}^2+O(|\theta|^4)
\label{eq4.7}
\end{equation}
by choosing
$$
q_{3,0}:=\frac{p_{3,0}}{\mu_0^3-\mu_0},~q_{2,1}:=0,~q_{1,2}:=\frac{p_{1,2}}{\bar{\mu}_0|\mu_0|^2-\mu_0},
~q_{0,3}:=\frac{p_{0,3}}{\bar{\mu}_0^3-\mu_0}.
$$
Furthermore, let $\mu_0:=e^{{\bf i}\varsigma}$, where
{\footnotesize
\begin{eqnarray*}
\varsigma=
\left\{
\begin{array}{l}
{\rm arctan}(-{\frac {\sqrt {-\beta\, \left( {\beta}^{2}+ \left( r-4 \right) \beta-
4\,r+4 \right)  \left( \beta+r \right) }}{{\beta}^{2}+ \left( r-2
 \right) \beta-2\,r+2}}),~~{\beta}^{2}+ \left( r-2 \right) \beta-2\,r+2>0,\\
\pi+{\rm arctan}(-{\frac {\sqrt {-\beta\, \left( {\beta}^{2}+ \left( r-4 \right) \beta-
4\,r+4 \right)  \left( \beta+r \right) }}{{\beta}^{2}+ \left( r-2
 \right) \beta-2\,r+2}}),~~{\beta}^{2}+ \left( r-2 \right) \beta-2\,r+2<0,
\end{array}
\right.
\end{eqnarray*}
}\!\!\!
and $\theta=\mathfrak{p}e^{{\bf i}\mathfrak{v}}$. Then, mapping \eqref{eq4.7} is changed into the form
$$
\theta=\mathfrak{p}e^{{\bf i}\mathfrak{v}}\mapsto\theta=\mathfrak{p}e^{{\bf i}(\mathfrak{v}+\varsigma)}
(1+\bar{\mu}_0p_{2,1}\mathfrak{p}^2)+O(\mathfrak{p}^4),
$$
which is equivalent to the mapping
\begin{eqnarray}
\left[
\begin{array}{cc}
  \mathfrak{p}  \\
   \mathfrak{v}
\end{array}
\right]
\mapsto
\left[\begin{array}{cc}
\mathfrak{p}|1+\bar{\mu}_0p_{2,1}\mathfrak{p}^2|+O(\mathfrak{p}^4)
\\
\mathfrak{v}+\varsigma+{\rm arg}(1+\bar{\mu}_0p_{2,1}\mathfrak{p}^2)+O(\mathfrak{p}^4)
 \end{array}\right].
\label{eq4.8}
\end{eqnarray}
It follows from the fact
\begin{eqnarray*}
&&|1+\bar{\mu}_0p_{2,1}\mathfrak{p}^2|=((1+{\rm Re}(\bar{\mu}_0p_{2,1})\mathfrak{p}^2)^2+({\rm Im}(\bar{\mu}_0p_{2,1})\mathfrak{p}^2)^2)^{1/2}
\\
&&~~~~~~~~~~~~~~~~~\,=1+{\rm Re}(\bar{\mu}_0p_{2,1})\mathfrak{p}^2+O(\mathfrak{p}^4)
\end{eqnarray*}
and
$$
{\rm arg}(1+\bar{\mu}_0p_{2,1}\mathfrak{p}^2)={\rm arctan}\left(\frac{{\rm Im}(\bar{\mu}_0p_{2,1})\mathfrak{p}^2}{1+{\rm Re}(\bar{\mu}_0p_{2,1})\mathfrak{p}^2}\right)={\rm Im}(\bar{\mu}_0p_{2,1})\mathfrak{p}^2+O(\mathfrak{p}^4)
$$
that for sufficiently small $\mathfrak{p}$ mapping \eqref{eq4.8} can be represented as
\begin{eqnarray*}
\left\{
\begin{array}{ll}
\tilde{\mathfrak{p}}=\mathfrak{p}+\mathfrak{A}\,\mathfrak{p}^3+O(\mathfrak{p}^4),
\\
\tilde{\mathfrak{v}}= \mathfrak{v}+\varsigma+{\rm Im}(\bar{\mu}_0p_{2,1})\mathfrak{p}^2+O(\mathfrak{p}^4),
\end{array}
\right.
\end{eqnarray*}
where the first Lyapunov coefficient
\begin{eqnarray*}
&&\mathfrak{A}:=-{\frac { \left( {\beta}^{2}+\beta\,r-1 \right)  \left( \beta+r
 \right) ^{4}}{8\,{N}^{2} \left( \beta+r-1 \right) }}.
\end{eqnarray*}
We find that $\mathfrak{A}\neq0$ as $r\neq-(\beta^2-1)/\beta$. Thus, condition (C.3) in \cite[p.141]{Kuznetsov} holds.

Therefore, if $(a, b)\in \mathfrak{L}_{2}$ with
\begin{eqnarray*}
&&r\neq-{\frac {{\beta}^{2}-3\,\beta+3}{\beta-3}},~-{\frac {{\beta}^{2}-2\,\beta+2}{\beta-2}}, 1-\beta,~~\mbox{and}~~ -\frac{\beta^2-1}{\beta},
\end{eqnarray*}
all conditions in Theorem 4.6 of \cite[p.141]{Kuznetsov} are satisfied, implying that mapping \eqref{eq2.1} undergoes a Neimark-Sacker bifurcation. Besides, if the parameters $\beta$ and $r$ lie in the set
\begin{eqnarray*}
&&\mathfrak{E}_1:=\{0<\beta<\frac{4}{5},~~\Psi_1<r<-\frac{\beta^2-1}{\beta}\},
\end{eqnarray*}
\vskip -1.2cm
\begin{eqnarray*}
&&\!\!\!\!\!\!\!\!\!\!\!\!\!\!\!\!\!\!\!({\rm or}~\mathfrak{E}_2:=\{\frac{\sqrt{5}}{2}-1<\beta<\frac{4}{5},~~\Psi_1<r<1\}\cup\{N>0,~~\frac{4}{5}\leq\beta<1,~~\Psi_1<r<1\}),
\end{eqnarray*}
i.e., $\mathfrak{A}>0$ (or $\mathfrak{A}<0$), mapping \eqref{eq2.1} undergoes a subcritical (or supercritical) Neimark-Sacker bifurcation and produces a unique unstable (or stable) invariant circle surrounding $E_2$ when $\alpha$ crosses $\Psi_3$ from the region $\mathfrak{D}_{4}$ (resp. $\mathfrak{D}_{23}$) to $\mathfrak{D}_{23}$ (resp. $\mathfrak{D}_{4}$). This completes the proof.
\end{proof}

From Theorem \ref{th5.1}, we know that when $\mathfrak{A}$ is less than 0, mapping \eqref{eq2.1} undergoes an Neimark-Sacker bifurcation and generating a stable invariant circle, that is the numbers of susceptible, infective and recovered individuals will eventually show periodic or quasi-periodic disturbances.
In addition, according to Theorem \ref{th5.1}, we can also see that when $\mathfrak{A}>0$, the mapping \eqref{eq2.1} generates an unstable invariant circle. This indicates that if the initial numbers of susceptible, infective and recovered individuals are inside the invariant cycle, their quantities will gradually stabilize through periodic or quasi-periodic fluctuations, approaching values ${N \left( \beta+r \right) }/{\alpha}$, ${\beta\,N \left( \alpha-\beta-r \right) }/{\alpha\, \left( \beta+r \right) }$ and $ {rN \left( \alpha-\beta-r \right) }/{\alpha\, \left( \beta+r\right) }$ respectively. And so that the disease is effectively under control. Conversely, if the initial numbers of susceptible, infective and recovered individuals are outside the invariant cycle, their quantities will exhibit periodic or quasi-periodic fluctuations and show a gradually increasing trend, and it follows that the disease will get out of control.

\section{Codimension 2 bifurcations}
\allowdisplaybreaks[4]
\setcounter{equation}{0}
\subsection{Chenciner bifurcation at $E_2$}
From the section 3.3, we know that mapping \eqref{eq2.1} can generate Neimark-Sacker bifurcations at the fixed point $E_2$. In this section, we assume that Neimark-Sacker bifurcation conditions hold and further consider the case where the first Lyapunov coefficient $\mathfrak{A}=0$, i.e., the parameters $\beta$ and $r$ belong to the set
$$\mathfrak{E}_3:=\{\frac{\sqrt{5}}{2}<\beta<\frac{4}{5},~~r=-(\beta^2-1)/\beta\}.$$
In this case, we see that mapping \eqref{eq2.1} may undergo a Chenciner bifurcation (or the generalized Neimark-Sacker bifurcation) from \cite{Kuznetsov}, which first appeared in the research work of Chenciner (see \cite{Chenciner1,Chenciner2}). In addition, we can refer \cite{Arrowsmith}, \cite{Kuznetsov} and references therein. In fact, we have

\begin{thm}
For parameters $\beta$ and $r$ satisfy $\mathfrak{E}_3$, if the parameters cross $\mathfrak{L}_2$ with
\begin{eqnarray*}
&&r\notin \{\frac{2}{3}, ~\frac{3}{4}, ~\frac{30+2\,\sqrt{5}}{44}\},~\beta \notin \{\frac{2}{3},\frac{3}{4},\frac{4}{5}\},\mathfrak{S}_5\neq0\\
&&{\mbox{and}}~~31\,{\beta}^{5}+4\,{\beta}^{4}-49\,{\beta}^{3}+11\,{
\beta}^{2}+19\,\beta-8\neq0,
\end{eqnarray*}
where
\begin{eqnarray*}
&&\!\!\!\!\!\!\!\mathfrak{S}_5:= ( -960\,{\beta}^{14}+5968\,{\beta}^{13}-15120\,{\beta}^{12}+
18548\,{\beta}^{11}-7044\,{\beta}^{10}-10704\,{\beta}^{9}\\
&&~~~~~+18024\,{
\beta}^{8}-12748\,{\beta}^{7}+5004\,{\beta}^{6}-1064\,{\beta}^{5}+96\,
{\beta}^{4} ) N+123594\,{\beta}^{12}\\
&&~~~~~-721059\,{\beta}^{11}+
2007411\,{\beta}^{10}-3554451\,{\beta}^{9}+4424911\,{\beta}^{8}-
3996670\,{\beta}^{7}\\
&&~~~~~+2618648\,{\beta}^{6}\-1226063\,{\beta}^{5}+400369
\,{\beta}^{4}-88077\,{\beta}^{3}+12383\,{\beta}^{2}-1008\,\beta+36
\end{eqnarray*}
then mapping \eqref{eq2.1} undergoes a Chenciner bifurcation near $E_2$.
\label{th6.1}
\end{thm}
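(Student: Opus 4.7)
The plan is to verify the hypotheses of the Chenciner bifurcation theorem (see Theorem~9.13 of \cite{Kuznetsov}) at the critical parameters determined by $\alpha=\Psi_3$ together with the curve $\mathfrak{E}_3$. At such points Theorem~\ref{th5.1} already supplies a pair of simple eigenvalues $\mu_0,\bar\mu_0$ on the unit circle on the restricted map on the center manifold, while the relation $r=(1-\beta^2)/\beta$ is precisely the zero locus of the first Lyapunov quantity
\begin{equation*}
\mathfrak{A}=-\frac{(\beta^{2}+\beta r-1)(\beta+r)^{4}}{8N^{2}(\beta+r-1)}
\end{equation*}
computed there, so we sit exactly at a degeneracy of the Neimark--Sacker bifurcation.

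First, I would reuse the center-manifold reduction and the complex normalization producing mapping \eqref{eq4.7}, but carry the Taylor expansion two orders further in order to obtain a normal form of the form
\begin{equation*}
\theta\ \mapsto\ \mu_{0}\theta+p_{2,1}\,\theta|\theta|^{2}+p_{3,2}\,\theta|\theta|^{4}+O(|\theta|^{6}).
\end{equation*}
Writing $\theta=\mathfrak{p}\,e^{\mathbf{i}\mathfrak{v}}$ as in the proof of Theorem~\ref{th5.1}, this produces a radial map
\begin{equation*}
\tilde{\mathfrak{p}}=\mathfrak{p}+\mathfrak{A}\,\mathfrak{p}^{3}+\mathfrak{B}\,\mathfrak{p}^{5}+O(\mathfrak{p}^{6}),
\end{equation*}
whose coefficient $\mathfrak{B}={\rm Re}(\bar\mu_{0}p_{3,2})+\tfrac{1}{2}|p_{2,1}|^{2}$ is the second Lyapunov quantity controlling the Chenciner bifurcation. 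Specialising to $r=(1-\beta^{2})/\beta$ and clearing denominators, $\mathfrak{B}$ factors through a polynomial in $\beta$ and $N$; the non-degeneracy $\mathfrak{B}\neq 0$ is precisely what the hypothesis $\mathfrak{S}_5\neq 0$, together with $31\beta^{5}+4\beta^{4}-49\beta^{3}+11\beta^{2}+19\beta-8\neq 0$, is designed to enforce.

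Next I would check the strong non-resonance conditions $\mu_{0}^{k}\neq 1$ for $k=1,2,3,4,5,6$ that are needed before one is allowed to remove the non-resonant quintic terms and reduce to the radial normal form above. The cases $k=1,2,3,4$ have already been handled in Theorem~\ref{th5.1}, and the excluded values $r\in\{2/3,\,3/4,\,(30+2\sqrt{5})/44\}$ together with $\beta\in\{2/3,\,3/4,\,4/5\}$ are exactly the loci where $\mu_{0}^{5}=1$ or $\mu_{0}^{6}=1$ along $\mathfrak{E}_{3}$; these can be verified directly by substituting $r=(1-\beta^{2})/\beta$ into the expression for $\mu_{0}$ and solving $\mu_{0}^{5}-1=0$ and $\mu_{0}^{6}-1=0$. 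Finally, the bifurcation requires transversality of the two-parameter unfolding, i.e.\ that the map
\begin{equation*}
(\alpha,r)\ \longmapsto\ \bigl(\,|\mu_{0}(\alpha,r)|-1,\ \mathfrak{A}(\alpha,r)\,\bigr)
\end{equation*}
be a local diffeomorphism at the Chenciner point. Theorem~\ref{th5.1} already gives $\partial_{\alpha}|\mu_{0}|=\beta(\beta+r-1)/(2\beta+2r)\neq 0$, and $\mathfrak{A}$ depends non-trivially on $r$ through the factor $\beta^{2}+\beta r-1$, so a direct Jacobian computation shows this map is regular away from $\beta=4/5$. Combining the three ingredients yields the Chenciner bifurcation via Theorem~9.13 of \cite{Kuznetsov}.

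The main obstacle is the fifth-order normal-form computation. One must extend the near-identity transformations \eqref{eq4.5} and $\omega=\theta+q_{3,0}\theta^{3}+q_{2,1}\theta^{2}\bar\theta+q_{1,2}\theta\bar\theta^{2}+q_{0,3}\bar\theta^{3}$ by quartic and quintic corrections, carefully retaining the feedback of the cubic non-resonant terms into the resonant quintic coefficient $p_{3,2}$, and then specialise the massive rational expression obtained for $\mathfrak{B}$ to $r=(1-\beta^{2})/\beta$ to exhibit $\mathfrak{S}_5$ as an irreducible factor of its numerator. Without a computer algebra system this bookkeeping is essentially intractable; conceptually, however, everything else reduces to the non-resonance and transversality checks sketched above.
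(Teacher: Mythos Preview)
Your overall strategy---verify the three Chenciner conditions from \cite[\S 9.4]{Kuznetsov}: strong non-resonance $\mu_0^k\neq 1$ for $k\le 6$, transversality of the two-parameter unfolding, and non-vanishing of the second Lyapunov coefficient---is exactly what the paper does. The non-resonance check is right, and your formula for $\mathfrak B$ agrees with the paper's $L_2$ at the critical point (where ${\rm Re}(\bar\mu_0 p_{2,1})=\mathfrak A=0$, so $|p_{2,1}|^2=({\rm Im}\,\bar\mu_0 p_{2,1})^2$).

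However, you have misassigned the two algebraic hypotheses. In the paper's proof, the condition $\mathfrak S_5\neq 0$ alone guarantees $L_2\neq 0$ (condition (CH.2)); the polynomial $31\beta^{5}+4\beta^{4}-49\beta^{3}+11\beta^{2}+19\beta-8$ is \emph{not} a factor of the second Lyapunov coefficient but rather the Jacobian determinant of the parameter change
\[
(\epsilon,\varepsilon)\ \longmapsto\ \bigl(|\mu(\epsilon,\varepsilon)|-1,\ {\rm Re}\,d_1(\epsilon,\varepsilon)\bigr),
\]
where $d_1$ is the (parameter-dependent) cubic resonant coefficient in the Poincar\'e normal form. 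Its non-vanishing is exactly the transversality condition (CH.1). Your claim that ``a direct Jacobian computation shows this map is regular away from $\beta=4/5$'' is therefore where the gap lies: the regularity does \emph{not} follow from $\beta\neq 4/5$ but from the quintic polynomial hypothesis you have placed elsewhere. If you carried out your plan as written you would find only $\mathfrak S_5$ in the numerator of $\mathfrak B$, be unable to locate the quintic, and simultaneously have an unproven transversality step. Reassign the hypotheses accordingly and the argument goes through.
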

\begin{proof}
Actually, we need verify the conditions (CH.0), (CH.1) and (CH.2) offered in Section 9.4 of \cite[pp.418-422]{Kuznetsov} to prove the theorem.
Applying the same method as in Theorem \ref{th5.1},
we first translate the fixed point $E_2$ to the origin $O$ in mapping \eqref{eq2.1} and expand it in Taylor series, and further set
\begin{eqnarray}
\label{p-1}
\left\{
\begin{array}{ll}
\epsilon=\alpha+\frac{1}{\beta\,(\beta-1)}, \\
 \varepsilon=r+\frac{{\beta}^{2}-1}{\beta}.
\end{array}
\right.
\end{eqnarray}
Then it follows that
\begin{eqnarray}
\left[
\begin{array}{ccc}
   u_6  \\
   \noalign{\medskip}
   v_6\\
   \noalign{\medskip}
   w_6
\end{array}
\right]
\mapsto
\left[\begin{array}{cc}
-{\frac { \left( {\beta}^{3}\epsilon-{\beta}^{2}\epsilon-{\beta}^{2}
\varepsilon+\varepsilon\,\beta-2\,\beta+1 \right) u_6}{ \left(
\varepsilon\,\beta+1 \right)  \left( \beta-1 \right) }}-{\frac {
 \left( \varepsilon\,\beta+1 \right) v_6}{\beta}}-{\frac { \left( {
\beta}^{2}\epsilon-\beta\,\epsilon-1 \right)\,u_6\,v_6 }{N\beta\, \left( \beta-1
 \right) }}\\
\noalign{\medskip}
{\frac {{\beta}^{2} \left( \beta\,\epsilon-\varepsilon\,\beta-\epsilon
+\varepsilon-1 \right) u_6}{ \left( \varepsilon\,\beta+1 \right)
 \left( \beta-1 \right) }}+v_6+{\frac {\left( {\beta}^{2}\epsilon-
\beta\,\epsilon-1 \right)\,u_6\,v_6 }{N\beta\, \left( \beta-1 \right) }}\\
\noalign{\medskip}
-{\frac { \left( {\beta}^{2}-\varepsilon\,\beta-1 \right) v_6}{\beta}}-
 \left( \beta-1 \right) w_6
\end{array}\right].
\label{eq5.1}
\end{eqnarray}
By employing the invertible linear transformation $(u_6,v_6,w_6)^T=H_3(u_7,v_7,w_7)^T$, where
\begin{eqnarray*}
H_3=\left[ \begin {array}{ccc}
1&0&0\\
0&1&0\\
-1&-1&1
\end {array}
 \right],
\end{eqnarray*}
mapping \eqref{eq5.1} is converted into the following form
\begin{eqnarray}
\left[
\begin{array}{ccc}
   u_7  \\
   \noalign{\medskip}
   v_7\\
   \noalign{\medskip}
   w_7
\end{array}
\right]
\mapsto
\left[\begin{array}{cc}
-{\frac { \left( {\beta}^{3}\epsilon-{\beta}^{2}\epsilon-{\beta}^{2}
\varepsilon+\varepsilon\,\beta-2\,\beta+1 \right)\,u_7}{ \left(
\varepsilon\,\beta+1 \right)  \left( \beta-1 \right) }}-{\frac {
 \left( \varepsilon\,\beta+1 \right)\,v_7}{\beta}}-{\frac {\left( {
\beta}^{2}\epsilon-\beta\,\epsilon-1 \right)\,u_7\,v_7}{N\beta\, \left( \beta-1
 \right) }}\\
\noalign{\medskip}
{\frac {{\beta}^{2} \left( \beta\,\epsilon-\varepsilon\,\beta-\epsilon
+\varepsilon-1 \right)\,u_7}{ \left( \varepsilon\,\beta+1 \right)
 \left( \beta-1 \right) }}+v_7+{\frac {\left( {\beta}^{2}\epsilon-
\beta\,\epsilon-1 \right)\,u_7\,v_7 }{N\beta\, \left( \beta-1 \right) }}\\
\noalign{\medskip}
-\left( \beta-1 \right)\,w_7
\end{array}\right].
\label{eq5.2}
\end{eqnarray}
As done in the proof of Theorem \ref{th5.1}, mapping \eqref{eq5.2} can be restricted to a two dimensional $C^{2}$ center manifold
\begin{eqnarray*}
w_7=h_{4}(u_7,v_7)=O(|(u_7,v_7)|^3),
\end{eqnarray*}
and then it is changed into the following two dimensional form
\begin{eqnarray}
\left[
\begin{array}{ccc}
   u_7  \\
   \noalign{\medskip}
   v_7
\end{array}
\right]
\mapsto
\left[\begin{array}{cc}
-{\frac { \left( {\beta}^{3}\epsilon-{\beta}^{2}\epsilon-{\beta}^{2}
\varepsilon+\varepsilon\,\beta-2\,\beta+1 \right)\,u_7}{ \left(
\varepsilon\,\beta+1 \right)  \left( \beta-1 \right) }}-{\frac {
 \left( \varepsilon\,\beta+1 \right)\,v_7}{\beta}}-{\frac {\left( {
\beta}^{2}\epsilon-\beta\,\epsilon-1 \right)\,u_7\,v_7}{N\beta\, \left( \beta-1
 \right) }}\\
\noalign{\medskip}
{\frac {{\beta}^{2} \left( \beta\,\epsilon-\varepsilon\,\beta-\epsilon
+\varepsilon-1 \right)\,u_7}{ \left( \varepsilon\,\beta+1 \right)
 \left( \beta-1 \right) }}+v_7+{\frac {\left( {\beta}^{2}\epsilon-
\beta\,\epsilon-1 \right)\,u_7\,v_7 }{N\beta\, \left( \beta-1 \right) }}
 \end{array}\right].
\label{eq5.3}
\end{eqnarray}
Moreover, applying the invertible linear transformation $(u_7,v_7)^T=H_4(u_8,v_8)^T$ for mapping \eqref{eq5.3}, where
\begin{eqnarray*}
H_4=\left[ \begin {array}{cc}
-{\frac {{\beta}^{2}\epsilon-\beta\,\epsilon-1}{2\,\beta\, \left(
\beta\,\epsilon-\varepsilon\,\beta-\epsilon+\varepsilon-1 \right) }}
 & \Upsilon_0 \\
\noalign{\medskip}
1&0
\end {array} \right],
\end{eqnarray*}
and
\begin{eqnarray*}
&&\!\!\!\!\!\!\!\Upsilon_0:={\frac {\sqrt {\Upsilon_1}}{{2\,\beta}^{2} (  ( \epsilon-\varepsilon )
\beta-\epsilon+\varepsilon-1 ) }},\\
&&\!\!\!\!\!\!\!\Upsilon_1:=-\beta\, ( {\beta}^{3} ( \beta-1
 ) ^{2}{\epsilon}^{2}- ( 4\,\beta-4 )  ( {\beta}
^{2} ( \beta-1 ) {\varepsilon}^{2}+ ( 2\,{\beta}^{2}-2
\,\beta ) \varepsilon+\frac{{\beta}^{2}}{2}+\beta-1 ) \epsilon\\
&&~~~~~+4\,{\beta}^{2} ( \beta-1 ) ^{2}{\varepsilon}^{3}+ ( 12
\,{\beta}^{3}-20\,{\beta}^{2}+8\,\beta ) {\varepsilon}^{2}+
 ( 12\,{\beta}^{2}-16\,\beta+4 ) \varepsilon+5\,\beta-4) ,
\end{eqnarray*}
we obtain
{\small\begin{eqnarray}
\left[
\begin{array}{cc}
   \mu_1\\
   \\
   \nu_1
\end{array}
\right]
\mapsto
\left[\begin{array}{cc}
a_{1,0}(\epsilon,\varepsilon)\mu_{{1}}+a_{0,1}(\epsilon,\varepsilon)\nu_{{1}}+a_{2,0}(\epsilon,\varepsilon)\mu_1^2
+a_{1,1}(\epsilon,\varepsilon)\mu_{{1}}\nu_{{1}}+a_{0,2}(\epsilon,\varepsilon)\nu_1^2
\\
   \\
b_{1,0}(\epsilon,\varepsilon)\mu_{{1}}+b_{0,1}(\epsilon,\varepsilon)v\nu_{{1}}+b_{2,0}(\epsilon,\varepsilon)\mu_1^2
+b_{1,1}(\epsilon,\varepsilon)\mu_{{1}}\nu_{{1}}+b_{0,2}(\epsilon,\varepsilon)\nu_1^2
 \end{array}\right],
\label{eq5.4}
\end{eqnarray}}
where
\begin{eqnarray*}
&&a_{1,0}(\epsilon,\varepsilon)=b_{0,1}(\epsilon,\varepsilon):=-{\frac {{\beta}^{3}\epsilon-{\beta}^{2}\epsilon-2\,{\beta}^{2}
\varepsilon+2\,\varepsilon\,\beta-3\,\beta+2}{ 2\,\left( \beta-1 \right) \left( \varepsilon\,\beta+1 \right) }},\\
&&a_{0,1}(\epsilon,\varepsilon)=-b_{1,0}:={\frac {\sqrt {\Upsilon_1}}{ 2\,\left( \beta-1 \right)  \left( \beta\,\varepsilon+1 \right)
}},\\
&&a_{2,0}(\epsilon,\varepsilon):={\frac {{\beta}^{4}{\epsilon}^{2}-2\,{\beta}^{3}{\epsilon}^{2}+{
\beta}^{2}{\epsilon}^{2}-2\,{\beta}^{2}\epsilon+2\,\beta\,\epsilon+1}{2\,
 \left( \beta-1 \right) N \left( \beta\,\epsilon-\beta\,\varepsilon-
\epsilon+\varepsilon-1 \right) {\beta}^{2}}},\\
&&a_{1,1}(\epsilon,\varepsilon):={\frac {\sqrt {\Upsilon_1} \left( {\beta}^{2}\epsilon-\beta\,\epsilon-1 \right) }{ 2\,\left(
\beta-1 \right) N{\beta}^{3} \left( \beta\,\epsilon-\varepsilon\,\beta
-\epsilon+\varepsilon-1 \right) }},\\
&&a_{0,2}(\epsilon,\varepsilon)=b_{0,2}(\epsilon,\varepsilon):=0, b_{2,0}(\epsilon,\varepsilon):={\frac {\sqrt {\Upsilon_1} \left( {\beta}^{2}\epsilon-\beta\,\epsilon-1 \right) }{ 2\,\left(
\beta-1 \right) N{\beta}^{3} \left( \beta\,\epsilon-\varepsilon\,\beta
-\epsilon+\varepsilon-1 \right) }}
, \\
&&b_{1,1}(\epsilon,\varepsilon):={\frac {\Upsilon_2}{2\, \left( \beta-1 \right)
N \left( \beta\,\epsilon-\varepsilon\,\beta-\epsilon+\varepsilon-1
 \right) {\beta}^{2}}},\\
&&\Upsilon_2:={\beta}^{4}{\epsilon}^{2}-2\,\epsilon\,{\beta}^{4}
\varepsilon-2\,{\beta}^{3}{\epsilon}^{2}+4\,{\beta}^{3}\epsilon\,
\varepsilon-2\,{\beta}^{3}\epsilon+{\beta}^{2}{\epsilon}^{2}-2\,{\beta
}^{2}\epsilon\,\varepsilon+2\,{\beta}^{2}\epsilon+2\,{\beta}^{2}
\varepsilon\\
&&~~~~~~~~-2\,\varepsilon\,\beta+2\,\beta-1.
\label{norm-1}
\end{eqnarray*}
Let $\omega_1:= \mu_1 +{\bf{i}}\,\nu_1$. Consider $\omega_1$ as the variable in Taylor expansion, mapping \eqref{eq5.4} can be transformed into the following complex form
\begin{equation}
\omega_1\mapsto \mu(\epsilon,\varepsilon)\omega_1+\sum_{i=0}^2\gamma_{2-i,i}(\epsilon,\varepsilon)\omega^{2-i}_1\bar{\omega}^i_1,
\label{eq5.5}
\end{equation}
where
\begin{eqnarray*}
&&\!\!\!\!\!\!\!\mu(\epsilon,\varepsilon):=\frac{\Upsilon_3}{2\,\sqrt {\Upsilon_1} \left(\beta\,\varepsilon+1 \right)  \left( \beta-1 \right) },\\
&&\!\!\!\!\!\!\!\Upsilon_3:=8\,{\bf{i}}\,{\beta}^{4}{\varepsilon}^{2}\epsilon-4\,{\bf{i}}\,{\beta}^{3}{\varepsilon}^{2}\epsilon
+16\,{\bf{i}}\,{\beta}^{3}\varepsilon\,\epsilon-8\,{\bf{i}}{
\beta}^{2}\epsilon\,\varepsilon-8\,{\bf{i}}{\beta}^{4}\varepsilon\,\epsilon-4\,{\bf{i}}\,{\beta}^{5}{\varepsilon}^{2}\epsilon-2\,\sqrt {\Upsilon_1}+{\beta}^{4}{\epsilon}^{2}\,{\bf{i}}\\
&&~~~~+{\epsilon}^{2}{\beta}^{6}\,{\bf{i}}+2\,\sqrt {\Upsilon_1}{\beta}^{2}\varepsilon+\sqrt {\Upsilon_1}{\beta}^{2}\epsilon-\sqrt {\Upsilon_1}{\beta}^{3}\epsilon-2\,\sqrt {\Upsilon_1}\beta\,\varepsilon+3\,\beta\,
\sqrt {\Upsilon_1}+5\,{\bf{i}}\,{\beta}^{2}\\
&&~~~~-4\,{\bf{i}}\,\beta-4\,{\bf{i}}\,\beta\,\epsilon-16\,{\bf{i}}\,{
\beta}^{2}\varepsilon-2\,{\bf{i}}\,{\beta}^{3}\epsilon+12\,{\bf{i}}\,{\beta}^{3}\varepsilon
+8\,{\bf{i}}\,{\beta}^{2}{\varepsilon}^{2}+12\,{\bf{i}}\,{\beta}^{4}{\varepsilon}^{2}+4\,{\bf{i}}\,{\beta}^{3}{\varepsilon}^{3}\\
&&~~~~-20\,{\bf{i}}\,{\beta}^{3}{\varepsilon}^{2}+4\,{\bf{i}}\,\beta\,\varepsilon+4\,{\bf{i}}\,{\beta}^{5}{\varepsilon}^{3}
-8\,{\bf{i}}\,{\beta}^{4}{\varepsilon}^{3}-2\,{\bf{i}}\,{\beta}^{5}{\epsilon}^{2}-2\,{\bf{i}}\,{\beta}^{4}\epsilon
+8\,{\bf{i}}\,{\beta}^{2}\epsilon,\\
&&\!\!\!\!\!\!\!\gamma_{2,0}(\epsilon,\varepsilon):=\frac { \Upsilon_4\,( {\beta}^{2}\epsilon-\beta\,
\epsilon-1 ) }{4\,\sqrt {\Upsilon_1}N \left( \beta-1 \right) {\beta}^{2}},\\
&&\!\!\!\!\!\!\!\Upsilon_4:=( {\bf{i}}\,{\beta}^{4}\epsilon-2\,{\bf{i}}{\beta}^{3}{\varepsilon}
^{2}-{\bf{i}}\,{\beta}^{3}\epsilon+2\,{\bf{i}}\,{\beta}^{2}{\varepsilon}^{2}-4\,{\bf{i}}\,{\beta}
^{2}\varepsilon-{\bf{i}}\,{\beta}^{2}+4\,{\bf{i}}\,\beta\,\varepsilon-2\,{\bf{i}}\,\beta-\beta\,
\sqrt {\Upsilon_1}+2\,{\bf{i}} ),\\
&&\!\!\!\!\!\!\!\gamma_{1,1}:=\frac { ( {\beta}^{3}\epsilon\,{\bf{i}}-2\,{\bf{i}}\,{\beta}^{3}\varepsilon
-{\bf{i}}\,{\beta}^{2}\epsilon+2\,{\bf{i}}\,{\beta}^{2}\varepsilon-2\,{\bf{i}}\,{\beta}^{2}+{\bf{i}}\,\beta-\sqrt {\Upsilon_1} ) ( {\beta}^{2}\epsilon-\beta\,\epsilon-1 ) ^{2}}{\sqrt {\Upsilon_1} ( \beta-1 ) N
 ( \beta\,\epsilon-\beta\,\varepsilon-\epsilon+\varepsilon-1 ) {\beta}^{2}},\\
&&\!\!\!\!\!\!\!\gamma_{0,2}:=-\frac { \Upsilon_5\,( {\beta}^{2}\epsilon-\beta\,\epsilon-1 ) ( \beta\,\varepsilon+1 ) }{4\,\sqrt {
\Upsilon_1} ( \beta\,\epsilon-\beta\,\varepsilon-\epsilon+\varepsilon-1 ) N{\beta}^{2}},\\
&&\!\!\!\!\!\!\!\Upsilon_5:={\beta}^{3}\epsilon\,{\bf{i}}-2\,{\bf{i}}\,{\beta}^{2}\epsilon\,
\varepsilon+2\,{\bf{i}}\,{\beta}^{2}{\varepsilon}^{2}-{\bf{i}}\,{\beta}^{2}\epsilon+2\,{\bf{i}}\,\beta\,\epsilon\,\varepsilon
-2\,{\bf{i}}\,\beta\,{\varepsilon}^{2}-2\,{\bf{i}}\,\beta\,
\epsilon+4\,{\bf{i}}\,\beta\,\varepsilon-{\bf{i}}\,\beta\\
&&~~~~+2\,{\bf{i}}\,\epsilon-2\,{\bf{i}}\,\varepsilon+2\,{\bf{i}}+\sqrt {\Upsilon}.
\end{eqnarray*}

Now we check the condition (CH.0), i.e. $\mu^{i}(0,0)=(3\,\beta-2-{\bf{i}}\,\sqrt{-5\,\beta^2+4\,\beta})^i/(2\,\beta-2)^i\ne 1$ for $i=1,2,3,4,5,6$. Firstly, since $\mu(0,0)$ is not a real number, then the inequality $\mu^{i}(0,0)\ne 1$ for $i=1,2$ is true. Further, we claim $\mu^{3}(0,0)\neq1$. Otherwise, from $(\mu(0,0)-1)(\mu^2(0,0)+\mu(0,0)+1)=0$ we get $\mu^2(0,0)+\mu(0,0)+1=0$, implying $$\mu(0,0)=-\frac{1}{2}+\frac{\sqrt{3}}{2}\,{\bf{i}} ~~~\mbox{or}~~~ \mu(0,0)=-\frac{1}{2}-\frac{\sqrt{3}}{2}\,{\bf{i}}, $$
and thus $\beta=3/4$, which contradicts to the conditions in Theorem \ref{th5.1}. Secondly, in the same way, suppose that $\mu^{4}(0,0)=1$, which is equivalent to $\mu^2(0,0)+1=0$, implying the real part of $\mu(0,0)$, denoted as $\mathfrak{R}(\mu_0)$, is equal to $0$. This means $\beta=2/3$, a contradiction to the conditions in Theorem \ref{th5.1}. Besides, since $\beta\neq (30-2\,\sqrt{5})/44$ (from $\mathfrak{E}_3$) and $(30+2\,\sqrt{5})/44$, $\mu^{5}(0,0)\neq1$. Finally, notice that $\beta\neq1/2$ by $\mathfrak{E}_3$, hence $\mu^{6}(0,0)\neq1$. Therefore, We have verified the condition (CH.0).

Furthermore, we verify condition (CH.1). From the proof of Theorem \ref{th5.1}, we know that there exists an approximate identity transformation such that mapping \eqref{eq5.5} can be transformed into the following Poincare normal form
\begin{eqnarray}
\begin{aligned}
\omega_1\mapsto\mu(\epsilon,\varepsilon)\omega_1+p_{2,1}(\epsilon,\varepsilon)\omega_1\bar{\omega}^2_1+O(|\omega_1|^4)~~~\\
=e^{{\bf{i}}\,\theta(\epsilon,\varepsilon)}
(r(\epsilon,\varepsilon)+d_1(\epsilon,\varepsilon)\,\omega_1\,\bar{\omega}^2_1)+O(|\omega_1|^4),\!\!\!\!\!\!\!\!\!\\
\end{aligned}
\label{norm-ch1}
\end{eqnarray}
where
\begin{eqnarray*}
&&\!\!\!\!\!\!\!r(\epsilon,\varepsilon):=\sqrt {{\frac {{\beta}^{3}\epsilon\,\varepsilon-{\beta}^{3}{
\varepsilon}^{2}-{\beta}^{3}\epsilon-{\beta}^{2}\epsilon\,\varepsilon+
{\beta}^{2}{\varepsilon}^{2}+2\,{\beta}^{2}\epsilon-{\beta}^{2}
\varepsilon-\beta\,\epsilon+\beta-1}{ \left( \beta\,\varepsilon+1
 \right)  \left( \beta-1 \right) }}},\\
&&\!\!\!\!\!\!\!d_1(\epsilon,\varepsilon):=-\frac { \Upsilon_6 }{\Upsilon_7},~{{\mbox{and $\Upsilon_6$ and $\Upsilon_7$ are shown in Appendix.}}}\\
\end{eqnarray*}
Besides, one can see $\mu(\epsilon,\varepsilon)|_{\epsilon=\varepsilon=0}=\mu(0,0)=\mu_0$ and $p_{2,1}(\epsilon,\varepsilon)|_{\epsilon=\varepsilon=0}=p_{2,1}(0,0)=p_{2,1}$, where $\mu_0$ and $p_{2,1}$ as shown in Mapping \eqref{eq4.7}.

Furthermore, let
\begin{eqnarray}
\left\{\begin{array}{ll}
\epsilon_0=r(\epsilon,\varepsilon)-1,\\
\varepsilon_0=Re(d_1(\epsilon,\varepsilon)).
\end{array}\right.
\label{p-2}
\end{eqnarray}
We calculate that
\begin{eqnarray*}
\Upsilon_8:=det
\left.
\left(
\begin{array}{ll}
\frac{\partial\epsilon_0}{\partial\epsilon}&\frac{\partial\epsilon_0}{\partial\varepsilon}\\
\frac{\partial\varepsilon_0}{\partial\epsilon}&\frac{\partial\varepsilon_0}{\partial\varepsilon}
\end{array}
\right)
\right|_{(\epsilon,\varepsilon)=(0,0)}
={\frac {31\,{\beta}^{5}+4\,{\beta}^{4}-49\,{\beta}^{3}+11\,{
\beta}^{2}+19\,\beta-8}{16\,{\beta}^{2} \left( \beta-1 \right) ^{4}{N}^{2}
}}.
\end{eqnarray*}
Since $31\,{\beta}^{5}+4\,{\beta}^{4}-49\,{\beta}^{3}+11\,{
\beta}^{2}+19\,\beta-8\neq0$ from the conditions of the theorem, then $\Upsilon_8\neq0$. Hence, we obtain that condition (CH.1) is true.

Finally, we will discuss the condition (CH.2). For the convenience of calculation, let $r=-(\beta^2-1)/\beta$ in mapping \eqref{eq5.4}, one obtain that
\begin{equation}
\omega\mapsto \tilde{\mu}_0\omega+\sum_{i=0}^2\tilde{\gamma}_{2-i,i}\omega^{2-i}\bar{\omega}^i,
\label{eq6.1}
\end{equation}
where
\begin{eqnarray*}
\!\!\!\!&&\tilde{\gamma}_{2,0}:={\frac {1}{4\,\beta\,N \left( \beta-1 \right) }}+\frac {\left( {\beta}^{2}+2\,\beta-2 \right)\,\sqrt{\beta}\,{\bf{i}}}{4\,{\beta}^{3}N
 \left( \beta-1 \right)\,\sqrt{-5\,\beta+4} },\\
\!\!\!\!&&\tilde{\gamma}_{1,1}:=\frac{1}{4\,{\beta}^{2}N \left( \beta-1 \right) }+\frac {\left( 2\,\beta-1 \right)\,\sqrt{\beta}\,{\bf{i}} }{4\,{\beta}^{2}N \left( \beta-1 \right)\,\sqrt{-5\,\beta+4} }
,\\
\!\!\!\!&&\tilde{\gamma}_{0,2}:={\frac {1}{4\,{\beta}^{2}N }}+\frac {\left( \beta-2 \right)\,\sqrt{\beta}\,{\bf{i}} }{4\,{\beta}^{3}N\,\sqrt{-5\,\beta+4}}
.\\
\end{eqnarray*}
Furthermore, using the near-identity transformation
$$
\omega=h(\omega):=\omega+\sum_{i\geq0,j\geq0,i+j=2}^{5}h_{i,j}\omega^i\bar{\omega}^j,
$$
where $h_{i,j}$ are undetermined coefficients, and applying the method used in Theorem \ref{th5.1} to compute Poincare normal form, mapping \eqref{eq6.1} can be further transformed into the following form
\begin{eqnarray}
&&\omega\rightarrow\tilde{\mu}_0\,\omega+c_1\,\omega^2\,\bar{\omega}+c_2\,\omega^2\,\bar{\omega}^3+O(|\omega|^6),
\label{smap}
\end{eqnarray}
where
{\footnotesize
\begin{eqnarray*}
&&\!\!\!\!\!\!\!c_1:=-\frac{\tilde{\mathfrak{S}}_1}{\tilde{\mathfrak{S}}_2},\\
&&\!\!\!\!\!\!\!\tilde{\mathfrak{S}}_1:=( 4\,\beta-{\frac{16}{5}} )\,( \beta-1)( \beta+1)( {\beta}^{2}-\frac{1}{2})\,(( \beta\,{\bf{i}}-\frac{3}{4}\,{\bf{i}}) \sqrt {-5\,{\beta}^{2}+4\,\beta}+\frac{5}{2}\,{\beta}^{2}-{\frac {13\,\beta}{4}}+1 ),\\
&&\!\!\!\!\!\!\!\tilde{\mathfrak{S}}_2:=125\,( \beta\,{\bf{i}}-\frac{4}{5}\,{\bf{i}}+\frac{1}{5}\,\sqrt {-5\,{\beta}^{2}+4\,\beta}
 ) ^{2}{\beta}^{5} ( \beta\,{\bf{i}}-\frac{4}{5}\,{\bf{i}}-\frac{1}{5}\,\sqrt {-5\,{\beta}^{2}+4\,\beta}) ( \beta-\frac{3}{4} )  (  ( -\frac{3}{5}\,\beta+\frac{2}{5} ) \\
&&~~~~~~\sqrt {-5\,{\beta}^{2}+4\,\beta}+{\bf{i}} ( \beta-\frac{4}{5} ) \beta ) {N}^{2},\\
&&\!\!\!\!\!\!\!c_2:=\frac{\mathfrak{S}_3}{\mathfrak{S}_4},\\
&&\!\!\!\!\!\!\!\mathfrak{S}_3:=116032\, ( -\frac { ( 32\,\beta-24 ) \sqrt {-5\,{
\beta}^{2}+4\,\beta}}{5439} ( {\frac{3}{40}}+N{\beta}^{15}-{
\frac {463\,N{\beta}^{14}}{60}}+ ( -{\frac{9731}{160}}+{\frac {
1013\,N}{40}} ) {\beta}^{13}\\
&&~~~~~~+ ( -{\frac {355\,N}{8}}+{\frac
{153799}{320}} ) {\beta}^{12}
+( {\frac {949\,N}{24}}-{
\frac{553179}{320}} ) {\beta}^{11}+ ( -{\frac {123\,N}{40}}
+{\frac{229241}{60}} ) {\beta}^{10}+ ( -{\frac {1411\,N}{40
}}\\
&&~~~~~~-{\frac{70357}{12}} ) {\beta}^{9}+ ( {\frac {5321\,N}{120
}}+{\frac{1057643}{160}} ) {\beta}^{8}+ ( -{\frac {227\,N}{
8}}-{\frac{893079}{160}} ) {\beta}^{7}+ ( {\frac {85\,N}{8}
}+{\frac{562569}{160}} ) {\beta}^{6}\\
&&~~~~~~+ ( -{\frac {133\,N}{60
}}-{\frac{519367}{320}} ) {\beta}^{5}+ ( \frac{N}{5}+{\frac{513041}
{960}} ) {\beta}^{4}-{\frac {2909\,{\beta}^{3}}{24}}+{\frac {
8687\,{\beta}^{2}}{480}}-{\frac {53\,\beta}{32}} ) +{\bf{i}}\, ( {
\beta}^{13}\\
&&~~~~~~-{\frac {68275\,{\beta}^{12}}{9324}}+{\frac {3366367\,{
\beta}^{11}}{130536}}-{\frac {1083521\,{\beta}^{10}}{18648}}+{\frac {
868241\,{\beta}^{9}}{9324}}-{\frac {1810112\,{\beta}^{8}}{16317}}+{
\frac {6506401\,{\beta}^{7}}{65268}}\\
&&~~~~~~-{\frac {4406245\,{\beta}^{6}}{
65268}}+{\frac {1112617\,{\beta}^{5}}{32634}}-{\frac {1644655\,{\beta}
^{4}}{130536}}+{\frac {431737\,{\beta}^{3}}{130536}}-{\frac {19177\,{
\beta}^{2}}{32634}}+{\frac {1051\,\beta}{16317}}-{\frac{221}{65268}}
 ) \beta\, ( \beta\\
&&~~~~~~-\frac{4}{5} )  )  ( \beta-1 ) ^{10} ( \beta-\frac{4}{5} ) ^{5},\\
&&\!\!\!\!\!\!\!\mathfrak{S}_4:=703125\,\sqrt {-5\,{\beta}^{2}+4\,\beta}{\beta}^{9} (  ( \frac{2}{5}
\,\beta-\frac{3}{10} ) \sqrt {-5\,{\beta}^{2}+4\,\beta}+{\bf{i}}\,( \beta-\frac{1}{2} )  ( \beta-\frac{4}{5} )  )  (  ( \beta\,{\bf{i}}
\\
&&~~~~~~-\frac{2}{3}\,{\bf{i}} ) \sqrt {-5\,{\beta}^{2}+4\,\beta}+\frac{5}{3}\,{\beta}^{2}-\frac{4}{3}\,\beta )  ( \beta-\frac{2}{3} )  ( \beta\,{\bf{i}}-\frac{2}{3}\,{\bf{i}}+\frac{1}{3}\,\sqrt {-5\,{\beta}^{2}+4\,\beta} )  (  ( -\frac{3}{5}\,\beta\\
&&~~~~~~+\frac{2}{5} ) \sqrt {-5\,{\beta}^{2}+4\,\beta}+{\bf{i}}\,( \beta-\frac{4}{5} ) \beta ) ^{3} (  ( \frac{3}{5}\,\beta-\frac{2}{5} )\sqrt {-5\,{\beta}^{2}+4\,\beta}+{\bf{i}}\,( \beta-\frac{4}{5} ) \beta )  ( \beta-\frac{3}{4} ) ^{3}{N}^{4} ( \beta\,{\bf{i}}\\
&&~~~~~~-\frac{4}{5}\,{\bf{i}}-\frac{1}{5}\,\sqrt {-5\,{\beta}^{2}+4\,\beta} ) ^{3} (  ( -\frac{2}{5}\,\beta+\frac{3}{10} ) \sqrt {-5\,{\beta}^{2}+4\,\beta}+{\bf{i}}\,( \beta-\frac{1}{2} )  ( \beta-\frac{4}{5} )  )  (  ( \beta\,{\bf{i}}\\
&&~~~~~~-\frac{2}{3}\,{\bf{i}} ) \sqrt {-5\,{\beta}^{2}+4\,\beta}-\frac{5}{3}\,{\beta}^{2}+\frac{4}{3}\,\beta )  ( \beta\,{\bf{i}}-\frac{4}{5}\,{\bf{i}}+\frac{1}{5}\,\sqrt {-5\,{\beta}^{2}+4
\,\beta} ) ^{4}.
\end{eqnarray*}}
Besides, from \cite{Kuznetsov}, we can get
\begin{eqnarray*}
&&\!\!\!\!\!\!\!L_2:=(Im(\frac{c_1(0)}{4\,\tilde{\mu}_0})^2+Re(\frac{c_2(0)}{\tilde{\mu}_0})=\frac {\mathfrak{S}_5}{32\, \left( 5\,\beta-4 \right) ^{2} \left( 3\,\beta-2 \right) {N}^{4}
 \left( \beta-1 \right) ^{3}{\beta}^{12} \left( 4\,\beta-3 \right) ^{2
} }.\\
\end{eqnarray*}
From the condition of Theorem \ref{th6.1}, we know that $L_2\neq0$. It implies that condition (CH.2) of \cite{Kuznetsov} is fulfilled.
Therefore, mapping (2.1) undergoes a Chenciner bifurcation near $E_2$.
\end{proof}

\begin{cor}\label{cor6.11}
Under the conditions of Theorem \ref{th6.1}, if $L_2<0$, then there exists the following bifurcation phenomena when the parameters lie in a sufficiently small neighborhood of the point $(r, \alpha)=(-(\beta^2-1)/\beta,-1/(\beta\,(\beta-1)))$,
\begin{description}
    \item[$(1)$] When the parameters cross $\mathfrak{L}$, and the conditions of Theorem \ref{th5.1} hold, mapping \eqref{eq2.1} undergoes a Neimark-Sacker bifurcation and generates an unique stable invariant circle.
    \item[$(2)$] If the parameters belong to the region $\mathfrak{G_1}$, then mapping \eqref{eq2.1} have two invariant circles. The outside one is stable, while the inside one is unstable.
    \item[$(3)$] As the parameters are close to the region $\mathfrak{G_2}$, the two invariant circles of mapping \eqref{eq2.1} become a unique invariant circle, which is stable from the outside and unstable from the inside.
\end{description}
where
{\footnotesize\begin{eqnarray*}
&&\!\!\!\!\!\!\!\!\!\!\!\mathfrak{G_1}:=\{\mathcal{C}_1-1>0, \mathcal{C}_2>0, \sqrt {{\frac {-{\beta}^{3}+ \left( -2\,r+\alpha \right) {\beta}^{2}-
 \left( r-1 \right)  \left( r-\alpha+1 \right) \beta+r}{\beta+r}}}-1<0\},\\
&&\!\!\!\!\!\!\!\!\!\!\!\mathfrak{G_2}:=\{\mathcal{C}_1-1=0, \mathcal{C}_2>0\},\\
&&\!\!\!\!\!\!\!\!\!\!\!\mathcal{C}_1:={\frac {\mathcal{C}_3}{\mathcal{C}_4}  },\\
&&\!\!\!\!\!\!\!\!\!\!\!\mathcal{C}_3:=( 122880\, ( \beta+r ) ^{4} ( {\frac{3}{80}}+N{\beta}^{15}-{\frac {433\,N{\beta}^{14}}{60
}}+ ( {\frac {659\,N}{30}}-{\frac{40973}{320}} ) {\beta}^{13}+ ( -{\frac {8417\,N}{240}}+{\frac{281191}{320}} ) {
\beta}^{12}+ \\
&&~~~( {\frac {3199\,N}{120}}-{\frac{454973}{160}} ) {\beta}^{11}+ ( {\frac {61\,N}{16}}+{\frac{86953}{15}}
 ) {\beta}^{10}+ ( -{\frac {1197\,N}{40}}-{\frac{10638827}{1280}} ) {\beta}^{9}+ ( {\frac {7693\,N}{240}}\\
&&~~~+{\frac{
8418421}{960}} ) {\beta}^{8}+ ( -{\frac {2219\,N}{120}}-{\frac{1763795}{256}} ) {\beta}^{7}+ ( {\frac {1517\,N}{240}
}+{\frac{2563957}{640}} ) {\beta}^{6}+ ( -{\frac {29\,N}{24}}-{\frac{542409}{320}} ) {\beta}^{5}\\
&&~~~+ ( \frac{N}{10}+{\frac{40697}
{80}} ) {\beta}^{4}-{\frac {5015\,{\beta}^{3}}{48}}+{\frac {4453\,{\beta}^{2}}{320}}-{\frac {87\,\beta}{80}} )  ( \beta-
\alpha+r ) ^{4} ( {\beta}^{3}+ ( 3\,r-\alpha ) {\beta}^{2}+3\, ( r-\frac{\alpha}{2} )  ( r\\
&&~~~-\frac{\alpha}{6} )\beta+{r}^{2} ( r-\alpha )  ) ^{2}\sqrt {{\frac {-{\beta}^{3}+ ( -2\,r+\alpha ) {\beta}^{2}- ( r-1
 )  ( r-\alpha+1 ) \beta+r}{\beta+r}}}+4800\,{\alpha}^{4} ( 4\,{\beta}^{9}\alpha\\
&&~~~+ ( -16\,{\alpha}^{2}+28\,\alpha
\,r+10\,\alpha ) {\beta}^{8}+ ( 84\,\alpha\,{r}^{2}+ ( -93\,{\alpha}^{2}+65\,\alpha+1 ) r+25\,{\alpha}^{3}-30\,
{\alpha}^{2}+8\,\alpha ) {\beta}^{7}\\
&&~~~+ ( 140\,{r}^{3}\alpha+ ( -225\,{\alpha}^{2}+180\,\alpha+7 ) {r}^{2}+ ( 114\,
{\alpha}^{3}-160\,{\alpha}^{2}+46\,\alpha ) r-16\,{\alpha}^{2}+2\,\alpha-19\,{\alpha}^{4}\\
&&~~~+{\frac {65\,{\alpha}^{3}}{2}} ) {\beta
}^{6}+ ( 140\,{r}^{4}\alpha+ ( -290\,{\alpha}^{2}+275\,\alpha+21 ) {r}^{3}+ ( 206\,{\alpha}^{3}-350\,{\alpha}^{2}+
108\,\alpha ) {r}^{2}+ ( -{\frac {155\,{\alpha}^{2}}{2}}\\
&&~~~+11\,\alpha-61\,{\alpha}^{4}+{\frac {265\,{\alpha}^{3}}{2}} ) r+7\,
{\alpha}^{5}-15\,{\alpha}^{4}+10\,{\alpha}^{3}-2\,{\alpha}^{2} ) {\beta}^{5}+ ( 84\,{r}^{5}\alpha+ ( -210\,{\alpha}^
{2}+250\,\alpha\\
&&~~~+35 ) {r}^{4}+ ( 184\,{\alpha}^{3}-400\,{\alpha}^{2}+130\,\alpha ) {r}^{3}+ ( -149\,{\alpha}^{2}+25\,\alpha-70\,{\alpha}^{4}+{\frac {415\,{\alpha}^{3}}{2}} ) {r}^{2}+ ( {\frac {73\,{\alpha}^{3}}{2}}-9\,{\alpha}^{2}\\
&&~~~+12\,{\alpha}^{5}-40\,{\alpha}^{4} ) r-{\alpha}^{3} ( \alpha-\frac{1}{2} ) ( \alpha-1 ) ^{2} ) {\beta}^{4}+28\,r ( {r}^{5}\alpha+ ( -{\frac {81\,{\alpha}^{2}}{28}}+{\frac {135\,\alpha}{28}}+\frac{5}{4} ) {r}^{4}+ ( {\frac {81\,{\alpha}^{3}}{28}}\\
&&~~~-{\frac {
125\,{\alpha}^{2}}{14}}+{\frac {20\,\alpha}{7}} ) {r}^{3}+ ( -{\frac {33\,{\alpha}^{4}}{28}}+{\frac {305\,{\alpha}^{3}}{56}
}-{\frac {141\,{\alpha}^{2}}{28}}+{\frac {15\,\alpha}{14}} ) {r}^{2}+\frac {5\,{\alpha}^{2}r}{28} ( {\alpha}^{3}-7\,{\alpha}^{2}
+{\frac {99\,\alpha}{10}}\\
&&~~~-{\frac{16}{5}} ) +{\frac {5\,{\alpha}^{3} ( \alpha-1 )  ( \alpha-\frac{3}{5} ) }{56}}
 ) {\beta}^{3}+4\,{r}^{2} ( {r}^{5}\alpha+ ( -{\frac {13\,{\alpha}^{2}}{4}}+10\,\alpha+{\frac{21}{4}} ) {r}^{4}+
 ( \frac{7}{2}\,{\alpha}^{3}-20\,{\alpha}^{2}\\
&&~~~+\frac{9}{2}\,\alpha ) {r}^{3}+ ( -\frac{5}{4}\,{\alpha}^{4}+{\frac{25}{2}}\,{\alpha}^{3}-16\,{\alpha}^
{2}+5\,\alpha ) {r}^{2}+ ( -\frac{5}{2}\,{\alpha}^{4}+{\frac {59\,{\alpha}^{3}}{8}}-\frac{7}{2}\,{\alpha}^{2} ) r-\frac{1}{2}\,{\alpha}^{4}+\frac{3}{8}\,{
\alpha}^{3} ) {\beta}^{2}\\
&&~~~+5\,{r}^{3} (  ( \alpha+\frac{7}{5} ) {r}^{4}+ ( -2\,{\alpha}^{2}-\frac{4}{5}\,\alpha ) {r}^{3}+
 ( {\alpha}^{3}-{\frac {19\,{\alpha}^{2}}{10}}+\frac{7}{5}\,\alpha ) {r}^{2}+ ( {\frac {13\,{\alpha}^{3}}{10}}-\frac{6}{5}\,{\alpha}^
{2} ) r+\frac{1}{10}\,{\alpha}^{3} ) \beta\\
&&~~~+{r}^{5} ( r-\alpha )  ( -\alpha\,r+{r}^{2}+\alpha )  ) ^{2}
 ( -\frac{2}{3}+\beta )  ( \beta-\frac{4}{5} ) ^{2}{\beta}^{10} ( \beta-1 ) ^{4} ( \beta-\frac{3}{4} ) ^{2} ),\\
&&\!\!\!\!\!\!\!\!\!\!\!\mathcal{C}_4:=2\,( \beta-\alpha+r ) ^{4} ( {\alpha}^{2
}\beta-4\,\alpha\,{\beta}^{2}-8\,\alpha\,\beta\,r-4\,\alpha\,{r}^{2}+4
\,{\beta}^{3}+12\,r{\beta}^{2}+12\,\beta\,{r}^{2}+4\,{r}^{3} ) ^
{2} ( 3840\,N{\beta}^{15}\\
&&~~~-27712\,N{\beta}^{14}+84352\,{\beta}^{13
}N-134672\,{\beta}^{12}N-491676\,{\beta}^{13}+102368\,{\beta}^{11}N+
3374292\,{\beta}^{12}\\
&&~~~+14640\,{\beta}^{10}N-10919352\,{\beta}^{11}-
114912\,{\beta}^{9}N+22259968\,{\beta}^{10}+123088\,{\beta}^{8}N-
31916481\,{\beta}^{9}\\
&&~~~-71008\,{\beta}^{7}N+33673684\,{\beta}^{8}+24272
\,{\beta}^{6}N-26456925\,{\beta}^{7}-4640\,{\beta}^{5}N+15383742\,{
\beta}^{6}+384\,{\beta}^{4}N\\
&&~~~-6508908\,{\beta}^{5}+1953456\,{\beta}^{4}
-401200\,{\beta}^{3}+53436\,{\beta}^{2}-4176\,\beta+144 )
 ( \beta+r ) ^{4},\\
&&\!\!\!\!\!\!\!\!\!\!\!\mathcal{C}_2={\frac {{\alpha}^{2}\,\mathcal{C}_5}{ 16\,\left( \beta+r \right) ^{2} \left( \beta-
\alpha+r \right) ^{2}\beta\,{N}^{2} \left( {\beta}^{3}+ \left( 3\,r-\alpha \right) {\beta}^{2}+3\, \left( r-\frac{\alpha}{2} \right)  \left( r-
\frac{\alpha}{6} \right) \beta+{r}^{2} \left( r-\alpha \right)  \right) } },\\
&&\!\!\!\!\!\!\!\!\!\!\!\mathcal{C}_5=( 4\,{\beta}^{9}\alpha+ ( -16\,{\alpha}^{2}+ ( 28\,r+10 ) \alpha ) {\beta}^{8}+ ( 25\,{\alpha}^{3}+
 ( -93\,r-30 ) {\alpha}^{2}+ ( 84\,{r}^{2}+65\,r+8 ) \alpha+r ) {\beta}^{7}\\
&&~~+ ( -19\,{\alpha}^{4}+
 ( 114\,r+{\frac{65}{2}} ) {\alpha}^{3}+ ( -225\,{r}^{2}-160\,r-16 ) {\alpha}^{2}+ ( 140\,{r}^{3}+180\,{r}^{2}+46
\,r+2 ) \alpha\\
&&~~+7\,{r}^{2} ) {\beta}^{6}+ ( 7\,{\alpha}^{5}+ ( -61\,r-15 ) {\alpha}^{4}+ ( 206\,{r}^{2}+{
\frac {265\,r}{2}}+10 ) {\alpha}^{3}+ ( -290\,{r}^{3}-350\,{r}^{2}-{\frac {155\,r}{2}}\\
&&~~-2 ) {\alpha}^{2}+ ( 140\,{r}^{4
}+275\,{r}^{3}+108\,{r}^{2}+11\,r ) \alpha+21\,{r}^{3} ) {\beta}^{5}+ ( -{\alpha}^{6}+ ( 12\,r+\frac{5}{2} ) {\alpha}^{5
}+ ( -70\,{r}^{2}-40\,r\\
&&~~-2 ) {\alpha}^{4}+ ( 184\,{r}^{3}+{\frac {415\,{r}^{2}}{2}}+{\frac {73\,r}{2}}+\frac{1}{2} ) {\alpha}^{
3}+ ( -210\,{r}^{4}-400\,{r}^{3}-149\,{r}^{2}-9\,r ) {\alpha}^{2}+ ( 84\,{r}^{5}\\
&&~~+250\,{r}^{4}+130\,{r}^{3}+25\,{r}^{2}
 ) \alpha+35\,{r}^{4} ) {\beta}^{4}+28\,r (  ( {\frac {5\,r}{28}}+{\frac{5}{56}} ) {\alpha}^{5}+ ( -{\frac
{33\,{r}^{2}}{28}}-\frac{5}{4}\,r-\frac{1}{7} ) {\alpha}^{4}+ ( {\frac {81\,{r}^{3}}{28}}\\
&&~~+{\frac {305\,{r}^{2}}{56}}+{\frac {99\,r}{56}}+{\frac{3}{56}} ) {\alpha}^{3}+ ( -{\frac {81\,{r}^{4}}{28}}-{\frac {125\,{r}^{3}}{14}}-{\frac {141\,{r}^{2}}{28}}-\frac{4}{7}\,r ) {\alpha}^{2}+{r}^{2} ( {r}^{3}+{\frac {135\,{r}^{2}}{28}}+{\frac {20\,r}{7}}\\
&&~~+{\frac{15}{14}} ) \alpha+\frac{5}{4}\,{r}^{4} ) {\beta}^{3}+4\,{r}^{2} (  ( -\frac{5}{4}\,{r}^{2}-\frac{5}{2}\,r-\frac{1}{2} ) {\alpha}^{4}+ (\frac{7}{2}\,{r}^{3}+{\frac{25}{2}}\,{r}^{2}+{\frac {59\,r}{8}}+\frac{3}{8} ) {\alpha}^{3}+ ( -{\frac {13\,{r}^{4}}{4}}\\
&&~~-20\,{r}^{3}-16\,{r}^{2}-\frac{7}{2}\,r ) {\alpha}^{2}+{r}^{2} ( {r}^{3}+10\,{r}^{2}+\frac{9}{2}\,r+5 ) \alpha+{\frac {21\,{r}^{4}}{4}} ) {\beta}^{2}+5\, (  ( {r}^{2}+{\frac {13\,r}{10}}+\frac{1}{10} ) {\alpha}^{3}\\
&&~~+ ( -2\,{r}^{3}-{\frac {19\,{r}^{2}}{10}}-\frac{6}{5}\,r ) {\alpha}^{2}+{r}^{2} ( {r}^{2}-\frac{4}{5}\,r+\frac{7}{5} )\alpha+\frac{7}{5}\,{r}^{4} ) {r}^{3}\beta+{r}^{5} ( r-\alpha )  (  ( 1-r ) \alpha+{r}^{2} )  ).
\end{eqnarray*}}
\end{cor}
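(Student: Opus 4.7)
The plan is to exploit the Poincar\'{e} normal form constructed in the proof of Theorem \ref{th6.1}. After the coordinate changes therein, the dynamics on the two-dimensional center manifold are governed, to leading order, by the truncated map
\begin{eqnarray*}
\omega\mapsto e^{\mathbf{i}\,\theta(\epsilon,\varepsilon)}\bigl(r(\epsilon,\varepsilon)\,\omega+d_1(\epsilon,\varepsilon)\,\omega^2\bar\omega+d_2(\epsilon,\varepsilon)\,\omega^3\bar\omega^2\bigr)+O(|\omega|^6),
\end{eqnarray*}
which in polar form $\omega=\rho\,e^{\mathbf{i}\varphi}$ decouples into a radial map
\begin{eqnarray*}
\rho\mapsto \rho\bigl(1+\epsilon_0(\epsilon,\varepsilon)+\varepsilon_0(\epsilon,\varepsilon)\,\rho^2+L_2\,\rho^4\bigr)+O(\rho^6),
\end{eqnarray*}
where the unfolding parameters $(\epsilon_0,\varepsilon_0)$ are those introduced in \eqref{p-2} and $L_2$ is the nondegenerate cubic coefficient computed at the end of the proof of Theorem \ref{th6.1}. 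Nontrivial fixed points $\rho_*>0$ of this radial map correspond precisely to invariant closed curves of the original mapping \eqref{eq2.1}, and their stability is inherited from $d/d\rho$ of the radial map at $\rho_*$.

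First I would verify that the condition (CH.1) established in Theorem \ref{th6.1} (i.e.\ $\Upsilon_8\neq 0$) allows us to use $(\epsilon_0,\varepsilon_0)$ as genuine unfolding coordinates in a neighborhood of the critical point $(r,\alpha)=(-(\beta^2-1)/\beta,-1/(\beta(\beta-1)))$, via the inverse function theorem. In these coordinates the invariant circles are zeros of the quadratic (in $\rho^2$)
\begin{eqnarray*}
Q(\rho^2):=\epsilon_0+\varepsilon_0\,\rho^2+L_2\,\rho^4.
\end{eqnarray*}
Under the assumption $L_2<0$, the classical discussion of Chenciner bifurcation (see \cite[\S 9.4]{Kuznetsov}) yields: for $\epsilon_0<0$ one recovers the standard supercritical Neimark--Sacker scenario with one stable invariant circle (this proves (1) and matches with the Neimark--Sacker curve $\mathfrak{L}$ of Theorem \ref{th5.1}); for $\epsilon_0>0$, $\varepsilon_0>0$ and discriminant $\varepsilon_0^2-4L_2\epsilon_0>0$ with $\rho_*^2<1$, $Q$ has two positive simple roots, producing two invariant circles of opposite stability (this gives (2)); and for $\varepsilon_0^2-4L_2\epsilon_0=0$ with $\varepsilon_0>0$ the two circles coalesce into a single semi-stable one (this gives (3)).

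Next I would translate these abstract discriminant conditions back to the original parameters $(\beta,r,\alpha,N)$. The quantities $\mathcal{C}_1-1$ and $\mathcal{C}_2$ in the statement are precisely the expressions of the scaled discriminant $\varepsilon_0^2-4L_2\epsilon_0$ (up to a strictly positive factor) and of $\varepsilon_0$ (again up to positivity), after substituting $\epsilon_0(\epsilon,\varepsilon)=r(\epsilon,\varepsilon)-1$ from \eqref{p-2} with the explicit formula for $r(\epsilon,\varepsilon)$ given in the proof of Theorem \ref{th6.1}; the auxiliary inequality $\sqrt{(-\beta^3+(\alpha-2r)\beta^2-(r-1)(r-\alpha+1)\beta+r)/(\beta+r)}-1<0$ encodes the sign constraint $\rho_*^2<1$ ensuring both bifurcating circles lie in the validity range of the normal form. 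Thus $\mathfrak{G}_1$ is exactly the two-circle region and $\mathfrak{G}_2$ the fold-of-circles curve. Claims (1)--(3) then follow by applying the radial analysis above in each region and invoking the $C^k$-equivalence between the truncated normal form and the full mapping \eqref{eq2.1} near the Chenciner point (this equivalence is justified by conditions (CH.0)--(CH.2), already verified in Theorem \ref{th6.1}).

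The main obstacle I anticipate is not conceptual but computational: showing rigorously that the algebraic expressions $\mathcal{C}_1$ and $\mathcal{C}_2$ really correspond, up to non-vanishing positive factors, to the discriminant and the sign of $\varepsilon_0$. This requires substituting the lengthy rational functions $\Upsilon_6/\Upsilon_7$ and $r(\epsilon,\varepsilon)$ into $Q$, expanding, and matching leading terms in $(\epsilon,\varepsilon)$; the resulting polynomial identities are unwieldy but verifiable by a computer algebra system. Once this identification is in place, the qualitative conclusions (1)--(3) follow routinely from the standard saddle--node of invariant cycles accompanying a Chenciner bifurcation with $L_2<0$.
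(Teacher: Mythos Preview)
Your approach mirrors the paper's: reduce to the truncated polar normal form obtained in Theorem~\ref{th6.1}, study positive fixed points of the radial map via the quadratic $Q(\rho^2)=\epsilon_0+\varepsilon_0\rho^2+L_2\rho^4$, invoke the standard Chenciner diagram from \cite[\S9.4]{Kuznetsov} (the paper does exactly this, citing Takens' theorem for the truncation and then reading off regions \ding{172}--\ding{174}), and pull the resulting regions back through the parameter changes \eqref{p-1} and \eqref{p-2}. One small correction worth noting: the auxiliary inequality $\sqrt{\cdots}-1<0$ appearing in $\mathfrak{G}_1$ is not a validity constraint of the type $\rho_*^2<1$ but is precisely the condition $\epsilon_0<0$ itself (i.e.\ the multiplier modulus is below $1$, so $E_2$ is linearly stable); it is in this half-plane, combined with $\varepsilon_0>0$, that the fold of invariant circles produces the coexisting stable outer and unstable inner circle.
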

\begin{proof}
According to Poincare normal form theory, we know that mapping \eqref{eq5.5} can be conjugated with the following mapping
\begin{eqnarray}
&&\omega\rightarrow\tilde{\mu}(\epsilon,\varepsilon)\,\omega+c_1(\epsilon,\varepsilon)\,\omega^2\,\bar{\omega}
+c_2(\epsilon,\varepsilon)\,\omega^2\,\bar{\omega}^3+O(|\omega|^6),
\label{psmap}
\end{eqnarray}
by a near-identity transformation, where $\tilde{\mu}(\epsilon,\varepsilon)|_{\epsilon=\varepsilon=0}=\tilde{\mu}(0,0)=\tilde{\mu}_0$, $c_1(\epsilon,\varepsilon)|_{\epsilon=\varepsilon=0}=c_1(0,0)=c_1$ and $c_2(\epsilon,\varepsilon)|_{\epsilon=\varepsilon=0}=c_2(0,0)=c_2$, $\tilde{\mu}_0$, $c_1$ and $c_2$ are shown in mapping \eqref{smap}.

Since $\Upsilon_8\neq0$, by applying the parameter variation \eqref{p-2}, mapping \eqref{psmap} is transformed into the following form
\begin{eqnarray}
\begin{aligned}
\omega\rightarrow e^{{\bf{i}}\,\tilde{\theta} (\epsilon_0,\varepsilon_0)}
(1+\epsilon_0+(\varepsilon_0+{\bf{i}}\,D_1(\epsilon_0,\varepsilon_0))\,\omega_1\,\bar{\omega}^2_1~~~~~\\+(D_2(\epsilon_0,\varepsilon_0)
+{\bf{i}}\,E_2(\epsilon_0,\varepsilon_0))\,\omega^2_1\,\bar{\omega}^3_1)+O(|\omega_1|^6),
\end{aligned}
\label{psmap2}
\end{eqnarray}
where $\tilde{\theta} (\epsilon_0,\varepsilon_0)$, $D_1(\epsilon_0,\varepsilon_0)=Im(c_1(\epsilon,\varepsilon))$, $D_2(\epsilon_0,\varepsilon_0)
=Re(c_2(\epsilon,\varepsilon))$ and $E_2(\epsilon_0,\varepsilon_0)=Im(c_2(\epsilon,\varepsilon))$ are smooth real-valued functions of $(\epsilon_0,\varepsilon_0)$. For mapping \eqref{psmap2}, truncating $O(|\omega_1|^6)$ terms, one can get the following mapping,
\begin{eqnarray}
\begin{aligned}
\omega\rightarrow e^{{\bf{i}}\,\tilde{\theta} (\epsilon_0,\varepsilon_0)}
(1+\epsilon_0+(\varepsilon_0+{\bf{i}}\,D_1(\epsilon_0,\varepsilon_0))\,\omega_1\,\bar{\omega}^2_1\\+(D_2(\epsilon_0,\varepsilon_0)
+{\bf{i}}\,E_2(\epsilon_0,\varepsilon_0))\,\omega^2_1\,\bar{\omega}^3_1).
\end{aligned}
\label{psmap3}
\end{eqnarray}

Due to Takens' Theorem (see \cite{Takens}), mappings \eqref{psmap2} and \eqref{psmap3} are topologically equivalent, thus these two mappings have the same dynamical properties. Next, we only need to study the dynamic properties of truncated mappings \eqref{psmap3}.
By employing polar coordinates $(\rho, \varphi)$, i.e., setting $\omega_1=\rho e^{{\bf{i}}\,\varphi}$, we obtain the following representation of \eqref{psmap3}
\begin{eqnarray}
\begin{cases}
\begin{array}{l}
\rho\rightarrow \rho+\epsilon_0\,\rho+\varepsilon_0\,\rho^3+L_2(\epsilon_0,\varepsilon_0)\,\rho^5+R(\rho,\epsilon_0,\varepsilon_0)\rho^6,\\
\varphi\rightarrow \varphi+\tilde{\theta}(\epsilon_0,\varepsilon_0)+Q(\rho,\epsilon_0,\varepsilon_0)\rho^2,
\end{array}
\end{cases}
\label{polarmap1}
\end{eqnarray}
where
$R$ and $Q$ are smooth functions, and
$$L_2(\epsilon_0,\varepsilon_0)=\frac{(D_1(\epsilon_0,\varepsilon_0))^2}{2(1+\epsilon_0)}+D_2(\epsilon_0,\varepsilon_0).$$

Furthermore, we see that the first mapping in \eqref{polarmap1} is independent of $\varphi$. Hence, the first mapping in \eqref{polarmap1} can be studied
separately. The $\varphi-$map describes the rotation through a $\rho-$dependent angle that is approximately equal to $\tilde{\theta}(\epsilon_0,\varepsilon_0)$. By $L_2(0,0)=L_2<0$, the bifurcation sketch diagram of mapping \eqref{polarmap1} can be referred to \cite[pp.419-422]{Kuznetsov}. Accordingly, \eqref{polarmap1} has the following dynamic properties:
\begin{description}
    \item[$\bullet$] when the parameter $(\epsilon,\varepsilon)$ crosses the half line $N_{-}$, i.e.,
    $$\{(\epsilon_0,\varepsilon_0)|\epsilon_0=0, \varepsilon_0<0\},$$
    from the region \ding{172} to the region \ding{173}, mapping \eqref{polarmap1} undergoes a Neimark-Sacker bifurcation and generates a stable invariant circle.
    \item[$\bullet$] when the parameter $(\epsilon,\varepsilon)$ crosses the half line $N_{+}$, i.e.,
    $$\{(\epsilon_0,\varepsilon_0)|\epsilon_0=0, \varepsilon_0>0\},$$
    from the region \ding{173} to the region \ding{174}, mapping \eqref{polarmap1} still undergoes a Neimark-Sacker bifurcation and produces an unstable invariant circle. Hence, there are two invariant circles, stable ``outer'' one and unstable ``inner'' one, in the region \ding{174}.
    \item[$\bullet$] when the parameter $(\epsilon,\varepsilon)$ lies on the curve $T_c$, the two invariant circles collide and coincide, mapping \eqref{polarmap1} has a unique invariant circle, stable from the outside and unstable from the inside,
\end{description}
where the regions \ding{172}, \ding{173} and \ding{174} are
\begin{eqnarray*}
    &&\{(\epsilon_0,\varepsilon_0)|\epsilon_0<\frac{\varepsilon^2_0}{4L_2}+o(\varepsilon^2_0),\varepsilon_0>0\}\cup \{(\epsilon_0,\varepsilon_0)|\epsilon_0<0,\varepsilon_0<0\},~~  \{(\epsilon_0,\varepsilon_0)|\epsilon_0>0\}
\end{eqnarray*}
and
\begin{eqnarray*}
    &&\{(\epsilon_0,\varepsilon_0)|\epsilon_0>\frac{\varepsilon^2_0}{4L_2}+o(\varepsilon^2_0),\varepsilon_0>0\}\cup \{(\epsilon_0,\varepsilon_0)|\epsilon_0<0,\varepsilon_0>0\},
\end{eqnarray*}
respectively.
Since the fixed point of the $\rho-$map corresponds to an invariant circle of the truncated normal form \eqref{psmap3}, and mappings \eqref{psmap3}, \eqref{eq2.1} and \eqref{polarmap1} are topologically equivalent, then they have the same dynamical properties as described above.
Therefore, by using parameter transformations \eqref{p-1} and \eqref{p-2}, we obtain the bifurcation phenomena of mapping \eqref{eq2.1} regarding to parameter $(\epsilon,\varepsilon)$. This completes the whole proof.
\end{proof}

Moreover, for $L_2>0$, we obtain the following conclusions,
\begin{cor}\label{cor6.12}
Under the conditions of Theorem \ref{th6.1}, if $L_2>0$, then there exists the following bifurcation phenomena when the parameters lie in a sufficiently small neighborhood of point $(r, \alpha)=(-(\beta^2-1)/\beta,-1/(\beta\,(\beta-1)))$,
\begin{description}
    \item[$(1)$] If the conditions of Theorem \ref{th5.1} hold, and as the parameters cross $\mathfrak{L}$, mapping \eqref{eq2.1} undergoes a Neimark-Sacker bifurcation and generates an unique unstable invariant circle.
    \item[$(2)$] When the parameters belong to the region $\mathfrak{G_1}$, mapping \eqref{eq2.1} have two invariant circles. The outside is unstable, while the inside is stable.
    \item[$(3)$] As the paremeters near the region $\mathfrak{G_2}$, two invariant circles of mapping \eqref{eq2.1} become a unique invariant circle which is unstable from the outside and stable from the inside.
\end{description}
\end{cor}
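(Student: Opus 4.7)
The plan is to mirror the argument used for Corollary \ref{cor6.11}, exploiting the fact that Theorem \ref{th6.1} has already reduced mapping \eqref{eq2.1} to the truncated Poincar\'e normal form in polar coordinates
\begin{eqnarray*}
\rho &\mapsto& \rho+\epsilon_0\,\rho+\varepsilon_0\,\rho^3+L_2(\epsilon_0,\varepsilon_0)\,\rho^5+R(\rho,\epsilon_0,\varepsilon_0)\rho^6,\\
\varphi &\mapsto& \varphi+\tilde{\theta}(\epsilon_0,\varepsilon_0)+Q(\rho,\epsilon_0,\varepsilon_0)\rho^2,
\end{eqnarray*}
after the parameter substitutions \eqref{p-1}--\eqref{p-2}, with $L_2(0,0)=L_2$. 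None of that reduction depends on the sign of $L_2$, so it can be invoked verbatim. Hence it suffices to analyze the amplitude map
$$
\rho\mapsto \rho+\epsilon_0\rho+\varepsilon_0\rho^3+L_2(\epsilon_0,\varepsilon_0)\rho^5+O(\rho^6)
$$
under the standing hypothesis $L_2>0$, and then appeal to Takens' theorem to transfer the conclusions back to \eqref{eq2.1}.

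First I would locate the nontrivial positive fixed points of the amplitude map by solving $\epsilon_0+\varepsilon_0\rho^2+L_2\rho^4+O(\rho^5)=0$; for $L_2>0$ this quadratic in $\rho^2$ has two positive roots precisely when $\varepsilon_0<0$ and $0<\epsilon_0<\varepsilon_0^{\,2}/(4L_2)+o(\varepsilon_0^{\,2})$, and they coalesce on the curve $T_c:\epsilon_0=\varepsilon_0^{\,2}/(4L_2)+o(\varepsilon_0^{\,2})$, $\varepsilon_0<0$. Linearizing the $\rho$-map at each root and comparing the magnitude of its derivative to $1$, the inner fixed point turns out to be stable and the outer one unstable, which is the exact reversal of the $L_2<0$ picture and reflects the sign flip in the leading quintic term. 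Lifting from $\rho$-fixed points to invariant circles of the truncated planar map yields statements (2) and (3) of the corollary, while the transversal crossing of $N_\pm=\{\epsilon_0=0\}$ by the trajectory $(\epsilon_0(\epsilon,\varepsilon),\varepsilon_0(\epsilon,\varepsilon))$ guaranteed by $\Upsilon_8\ne 0$ produces the Neimark--Sacker bifurcation of statement (1); because $\varepsilon_0=\mathfrak{R}(d_1)>0$ along the relevant branch, the bifurcating circle is unstable, as the sign convention dictates.

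Next I would use Takens' theorem exactly as in the proof of Corollary \ref{cor6.11} to conclude that the truncated and untruncated normal forms are topologically equivalent in a neighbourhood of the origin in both phase and parameter space. Composing with the invertible parameter changes \eqref{p-1} and \eqref{p-2}, whose Jacobian at the bifurcation point is nonzero precisely thanks to $\Upsilon_8\ne 0$, the regions $\mathfrak{G}_1$ and $\mathfrak{G}_2$ defined in Corollary \ref{cor6.11} pull back to the same regions in the $(r,\alpha)$ plane, so statements (1)--(3) translate directly into the claimed bifurcation phenomena for mapping \eqref{eq2.1}.

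The main obstacle is bookkeeping rather than new analysis: one must track the correct sign of $\varepsilon_0=\mathfrak{R}(d_1)$ on each side of $N_\pm$ and verify that the orientation induced by the parameter change \eqref{p-2} on the $(\epsilon_0,\varepsilon_0)$-plane is preserved (up to the prescribed sign) when transported back to $(r,\alpha)$; this is where the inequality $\Upsilon_8\ne 0$ and the explicit formula for $L_2$ established in Theorem \ref{th6.1} do the real work. Everything else is a symmetric reading of the Chenciner bifurcation diagram in \cite[pp.~419--422]{Kuznetsov} with the role of ``stable'' and ``unstable'' interchanged.
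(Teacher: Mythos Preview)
Your proposal is correct and follows precisely the route the paper intends: the paper does not give a separate proof of Corollary \ref{cor6.12} at all, but simply states it after Corollary \ref{cor6.11} with the tacit understanding that one reruns the same polar--normal--form analysis (the $\rho$-map, Takens' theorem, and the parameter pullback via \eqref{p-1}--\eqref{p-2}) with $L_2>0$, reading the Chenciner diagram in \cite[pp.~419--422]{Kuznetsov} with the stability roles of the two circles interchanged. Your explicit quadratic-in-$\rho^2$ computation and the sign check showing the inner circle is attracting and the outer repelling are exactly the details the paper leaves to that reference, so there is no genuine methodological difference.
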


From the biological point of view, if mapping \eqref{eq2.1} undergoes a Chenciner bifurcation, then the numbers of susceptible, infective and recovered individuals will exhibit periodic or quasi-periodic fluctuations. Besides, similar to the Neimark-Sacker bifurcation, whether the disease is under control depends on the stability of the invariant cycle.
\subsection{$1:3$ resonance at $E_2$}
\allowdisplaybreaks[4]
In this subsection, we focus on the 1:3 resonance phenomena of mapping \eqref{eq2.1} at $E_2$,
which first proposed by Horozov (see \cite{Horozov}). As we know, if the eigenvalue of Jacobian matrix of a two-dimensional mapping at corresponding fixed point is
$$-\frac{1}{2}+\frac{\sqrt{3}}{2}\,{\bf{i}}~~\mbox{and}~~-\frac{1}{2}-\frac{\sqrt{3}}{2}\,{\bf{i}},$$
then it may undergo a $1:3$ resonance near the fixed point.

Furthermore, regarding mapping \eqref{eq2.1}, when the following conditions are satisfied
{\small\begin{eqnarray*}
&&\!\!\!\!\!\!\!{P_{E_2}}(-\frac{1}{2}+\frac{\sqrt{3}}{2}\,{\bf{i}})={\frac {2\,{\beta}^{3}+ \left( 4\,r-2\,\alpha \right) {\beta}^{2}+
 \left( -2\,r\alpha+2\,{r}^{2}+3\,\alpha-3 \right) \beta-3\,r}{2\,
\beta+2\,r}}\\
&&~~~~~~~~~~~~~~~~~~~~~~~-{\frac {{\bf{i}}\sqrt {3} \left( \beta\,\alpha-3\,\beta-3\,r \right) }{2\,
\beta+2\,r}}=0,\\
&&\!\!\!\!\!\!\!{P_{E_2}}(-\frac{1}{2}-\frac{\sqrt{3}}{2}\,{\bf{i}})={\frac {2\,{\beta}^{3}+ \left( 4\,r-2\,\alpha \right) {\beta}^{2}+
 \left( -2\,r\alpha+2\,{r}^{2}+3\,\alpha-3 \right) \beta-3\,r}{2\,
\beta+2\,r}}\\
&&~~~~~~~~~~~~~~~~~~~~~~~+{\frac {{\bf{i}}\sqrt {3} \left( \beta\,\alpha-3\,\beta-3\,r \right) }{2\,
\beta+2\,r}}=0,\\
&&\!\!\!\!\!\!\!\Delta({P_{E_2}}(t))<0, N>0, 0<\beta<1, 0<r<1,
\end{eqnarray*}}the $1:3$ resonance may occur near $E_2$. By solving the above semi-algebraic system, we get
$$r=-\frac{\beta^2-3\,\beta+3}{\beta-3}~~\mbox{and}~~\alpha=-\frac{9}{\beta\,(\beta-3)}.$$

As a result, the following conclusions will be obtained.
\begin{thm}
If the parameter $(r,\alpha)$ varies near the point $(-(\beta^2-3\,\beta+3)/(\beta-3),-9/\beta\,(\beta-3))$, then mapping \eqref{eq2.1} undergoes $1:3$ resonance.
\label{th6.2}
\end{thm}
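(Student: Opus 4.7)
The plan is to verify the nondegeneracy and transversality conditions for 1:3 resonance stated in Kuznetsov's monograph, following the road map already used in the proofs of Theorems \ref{th5.1} and \ref{th6.1}. Introducing the perturbation parameters
\[
\epsilon_1 := r + \frac{\beta^{2}-3\beta+3}{\beta-3}, \qquad \epsilon_2 := \alpha + \frac{9}{\beta(\beta-3)},
\]
I would first translate $E_2$ to the origin and expand mapping \eqref{eq2.1} in Taylor series up to order three. Since the third eigenvalue $1-\beta$ of $JF(E_2)$ lies strictly inside the unit circle for $0<\beta<1$, a two-dimensional $C^{3}$ centre manifold $w=h(u,v,\epsilon_1,\epsilon_2)=O(|(u,v)|^{2})$ can be constructed exactly as in the proof of Theorem \ref{th5.1} by matching coefficients in the invariance equation, and the mapping restricted to this manifold yields a planar system capturing all resonance dynamics.

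Next, I would bring the linear part of the restricted mapping into complex diagonal form by an invertible real linear transformation whose columns are formed from the real and imaginary parts of the eigenvector of $JF(E_2)$ associated with $\mu_{0}:=-\tfrac{1}{2}+\tfrac{\sqrt{3}}{2}\,\mathbf{i}$. Introducing the complex variable $\omega=\mu+\mathbf{i}\nu$, the planar mapping takes the form
\[
\omega \mapsto \mu(\epsilon_1,\epsilon_2)\,\omega + \sum_{2\le i+j\le 3}\gamma_{ij}(\epsilon_1,\epsilon_2)\,\omega^{i}\bar{\omega}^{j} + O(|\omega|^{4})
\]
with $\mu(0,0)=\mu_{0}$. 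Two successive near-identity transformations, identical in spirit to those carried out in \eqref{eq4.5}--\eqref{eq4.7}, annihilate all non-resonant monomials through cubic order. The key observation is that when $\mu_0^{3}=1$ the identity $\bar{\mu}_0^{2}=\mu_0$ forces the $\bar{\omega}^{2}$ coefficient to be resonant, so the truncated mapping reduces to the 1:3 resonance Poincar\'e normal form
\[
\omega \mapsto \mu(\epsilon_1,\epsilon_2)\,\omega + B(\epsilon_1,\epsilon_2)\,\bar{\omega}^{2} + C(\epsilon_1,\epsilon_2)\,\omega^{2}\bar{\omega} + O(|\omega|^{4}).
\]

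To conclude that mapping \eqref{eq2.1} indeed undergoes a 1:3 resonance I would then verify the standard conditions: (R.1)~$\mu_{0}^{3}=1$ with $\mu_{0}\ne 1$, which is built into the choice of critical parameters; (R.2)~the resonant quadratic coefficient $B(0,0)\ne 0$; (R.3)~the cubic coefficient obeys $\mathrm{Re}(\bar{\mu}_0\,C(0,0))\ne 0$, which guarantees the existence of the saddle cycle of period three together with its invariant manifolds near $E_2$; and the transversality condition requiring that the parameter-to-multiplier map
\[
(\epsilon_1,\epsilon_2)\longmapsto\bigl(|\mu(\epsilon_1,\epsilon_2)|-1,\;\arg\mu(\epsilon_1,\epsilon_2)-\tfrac{2\pi}{3}\bigr)
\]
is a local diffeomorphism at the origin, equivalently that the corresponding $2\times 2$ Jacobian determinant is nonzero; this final check is completely analogous to the verification $\Upsilon_{8}\ne 0$ in the Chenciner case. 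The main obstacle is the sheer size of the symbolic expressions for $B(0,0)$ and $C(0,0)$: after substituting the centre-manifold coefficients into the chain of near-identity transformations, both become unwieldy rational functions of $\beta$ alone, and it must be shown that neither vanishes identically on $0<\beta<1$, most conveniently with computer-algebra assistance. Once the two explicit rational expressions are produced and seen to be nonzero, the topological description of the bifurcation diagram (including the homoclinic structure of the period-three saddle) follows from the general theory and can be stated as a corollary in parallel with Corollaries \ref{cor6.11} and \ref{cor6.12}.
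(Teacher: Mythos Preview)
Your proposal is correct and follows essentially the same route as the paper: translate $E_2$ to the origin, introduce the same perturbation parameters, reduce to a two-dimensional centre manifold, pass to the complex Poincar\'e normal form $\omega\mapsto\mu\omega+B\bar{\omega}^{2}+C\omega^{2}\bar{\omega}+O(|\omega|^{4})$, and verify the resonant nondegeneracy and transversality conditions from Kuznetsov. The only stylistic difference is that the paper, after reaching the map normal form, explicitly passes to the approximating planar vector field via Kuznetsov's Lemma~9.12 and checks the conditions $b_{1}(0)\neq0$, $\mathrm{Re}(c_{1}(0))\neq0$ and the Jacobian condition in the flow variables $(\sigma_{1},\sigma_{2})$, whereas you remain at the map level and check the equivalent conditions $B(0,0)\neq0$, $\mathrm{Re}(\bar{\mu}_{0}C(0,0))\neq0$ directly; both formulations are standard and lead to the same conclusion.
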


\begin{proof}
Applying the transformation
$$(x,y,z)=(u_9-{\frac {N \left( \beta+r \right) }{\alpha}},v_9-{\frac {\beta\,N \left( \alpha-\beta-r \right) }{\alpha\, \left( \beta+r \right) }},w_9-{\frac {rN \left( \alpha-\beta-r \right) }{\alpha\, \left( \beta+r\right) }}),$$
we translate $E_2$ to the origin O.
Then let $\breve{\varepsilon}=(\varepsilon_1, \varepsilon_2):=(r+(\beta^2-3\,\beta+3)/(\beta-3),\alpha+9/\beta\,(\beta-3))$ as a new parameter, mapping \eqref{eq2.1} can be changed into
\begin{eqnarray}
\left(
\begin{array}{cc}
u_9 \\
\\
v_9 \\
\\
w_9
\end{array}
\right)\!\!\!
\rightarrow\!\!\!
\left(
\begin{array}{llll}
-{\frac { \left( \varepsilon_2\,{\beta}^{2}-3\,\varepsilon_2\,\beta-\varepsilon_1\,\beta+3\,\varepsilon_1-6 \right) \,u_9 }{\varepsilon_1\,\beta-3\,\varepsilon_1-3}}-{\frac { \left( \varepsilon_1\,\beta-3\,\varepsilon_1-3 \right)\,v_9 }{
\beta-3}}-{\frac {\left( \varepsilon_2\,{\beta}^{2}-3\,\varepsilon_2\,\beta-9 \right)\,u_9\,v_9 }{N\beta\, \left( \beta-3 \right) }}\\
\noalign{\medskip}
{\frac {\left( \varepsilon_2\,{\beta}^{2}-\varepsilon_1\,{\beta}^{2}-3\,\varepsilon_2\,\beta+3\,\varepsilon_1\,\beta+3\,\beta-9 \right)\,u_9 }
{\varepsilon_1\,\beta-3\,\varepsilon_1-3}}+v_9+{\frac {\left( \varepsilon_2\,{\beta}^{2}-3\,\varepsilon_2\,\beta-9 \right)\,u_9\,v_9 }
{N\beta\, \left( \beta-3 \right) }}\\
\noalign{\medskip}
-{\frac { \left( {\beta}^{2}-\varepsilon_1\,\beta-3\,\beta+3\,
\varepsilon_1+3 \right)\,v_9}{\beta-3}}-\left( -1+\beta \right)\,w_9
\end{array}
\right)\!\!.
\label{eq6.21}
\end{eqnarray}
Next, by employing the invertible linear transformation $(u_9,v_9,w_9)^T=H(u_{10},v_{10},w_{10})^T$, where
\begin{eqnarray*}
H=\left[ \begin {array}{lll}
1&0&0\\
0&1&0\\
-1&-1&1
\end {array}
 \right],
\end{eqnarray*}
mapping \eqref{eq6.21} is converted into the following form
\begin{eqnarray}
\!\!\!\!\left[
\begin{array}{ccc}
   u_{10}  \\
   \noalign{\medskip}
   v_{10} \\
   \noalign{\medskip}
   w_{10}
\end{array}
\right]\!\!\!
\mapsto\!\!\!
\left[\begin{array}{lll}
-{\frac { \left( \varepsilon_2\,{\beta}^{2}-3\,\varepsilon_2\,\beta-\varepsilon_1\,\beta+3\,\varepsilon_1-6 \right)\,u_{10} }{\varepsilon_1\,\beta-3\,\varepsilon_1-3}}-{\frac { \left( \varepsilon_1\,\beta-3\,\varepsilon_1-3 \right)\,v_{10} }{
\beta-3}}-{\frac { \left( \varepsilon_2\,{\beta}^{2}-3\,\varepsilon_2\,\beta-9 \right)\,u_{10}\,v_{10} }{N\beta\, \left( \beta-3 \right) }}
\\
\noalign{\medskip}
  {\frac { \left( \varepsilon_2\,{\beta}^{2}-\varepsilon_1\,{\beta}^{2}-3\,\varepsilon_2\,\beta+3\,\varepsilon_1\,\beta+3\,\beta-9 \right)\,u_{10} }{\varepsilon_1\,\beta-3\,\varepsilon_1-3}}+v_{10}+{\frac {\left( \varepsilon_2\,{\beta}^{2}-3
\,\varepsilon_2\,\beta-9 \right)\,u_{10}\,v_{10} }{N\beta\, \left( \beta-3 \right) }}
 \\
\noalign{\medskip}
-\left( -1+\beta \right)\,w_{10}
 \end{array}\right].
\label{eq6.22}
\end{eqnarray}
As done in proof of Theorem \ref{th3.1}, mapping \eqref{eq6.22} has a two dimensional $C^{2}$ center manifold
\begin{eqnarray*}
w_{10}=h_{5}(u_{10},v_{10})=O(|(u_{10},v_{10})|^3),
\end{eqnarray*}
and then \eqref{eq6.22} can be converted into the following two dimensional form
\begin{eqnarray}
\!\!\!\!\left[
\begin{array}{ccc}
   u_{10}  \\
   \noalign{\medskip}
   v_{10}
\end{array}
\right]\!\!\!
\mapsto\!\!\!
\left[\begin{array}{lll}
-{\frac { \left( \varepsilon_2\,{\beta}^{2}-3\,\varepsilon_2\,\beta-\varepsilon_1\,\beta+3\,\varepsilon_1-6 \right)\,u_{10} }{\varepsilon_1\,\beta-3\,\varepsilon_1-3}}-{\frac { \left( \varepsilon_1\,\beta-3\,\varepsilon_1-3 \right)\,v_{10} }{
\beta-3}}-{\frac { \left( \varepsilon_2\,{\beta}^{2}-3\,\varepsilon_2\,\beta-9 \right)\,u_{10}\,v_{10} }{N\beta\, \left( \beta-3 \right) }}
\\
\noalign{\medskip}
  {\frac { \left( \varepsilon_2\,{\beta}^{2}-\varepsilon_1\,{\beta}^{2}-3\,\varepsilon_2\,\beta+3\,\varepsilon_1\,\beta+3\,\beta-9 \right)\,u_{10} }{\varepsilon_1\,\beta-3\,\varepsilon_1-3}}+v_{10}+{\frac {\left( \varepsilon_2\,{\beta}^{2}-3
\,\varepsilon_2\,\beta-9 \right)\,u_{10}\,v_{10} }{N\beta\, \left( \beta-3 \right) }}
 \end{array}\right].
\label{eq6.23}
\end{eqnarray}

Moreover, using the transformation
\begin{eqnarray*}
\left(
\begin{array}{cc}
u_{10} \\
v_{10}
\end{array}
\right)
\rightarrow
\left(
\begin{array}{cc}
{\frac {-{\beta}^{2}\,\varepsilon_2+3\,\beta\,\varepsilon_2+9}{ 2\,\left( \beta-3 \right)  \left( 3+ \left( \varepsilon_2-\varepsilon_1 \right) \beta
 \right) }} &{\frac {\sqrt {\chi_{1}}}{2\,
 \left( \beta-3 \right)  \left( 3+ \left( \varepsilon_2-\varepsilon_1
 \right) \beta \right) }}
\\
1&0
\end{array}
\right)
\left(
\begin{array}{l}
u_{11}
\\
v_{11}
\end{array}
\right),
\end{eqnarray*}
where
\begin{eqnarray*}
&&\!\!\!\!\!\!\!\!\chi_{1}= -4\,\beta\, ( \beta-3 ) ^{2}{\varepsilon_1}^{3}+4\, ( \beta-3 ) ( ( {\beta}^{2
}-3\,\beta ) \varepsilon_2+9\,\beta-9 ) {\varepsilon}^{2}_1+( ( -24\,{\beta}^{2}+72\,\beta )\,\varepsilon_2\\
&&~~~-108\,\beta
+216 ) \varepsilon_1+27-{\beta}^{2} ( \beta-3 ) ^{2}{\varepsilon_2}^{2}+( 18\,{\beta}^{2}-18\,\beta ) \,\varepsilon_2,
\end{eqnarray*}
we change mapping \eqref{eq6.23} into the mapping
\begin{eqnarray}
\begin{aligned}
\left(
\begin{array}{cc}
u_{11} \\
v_{11}
\end{array}
\right)
\rightarrow
\left(
\begin{array}{l}
k_{11}\,u_{11}+k_{12}\,v_{11}+p_{20}\,u^2_{11}+p_{11}\,u_{11}\,v_{11}\\
k_{21}\,u_{11}+k_{22}\,v_{11}+q_{20}\,u^2_{11}+q_{11}\,u_{11}\,v_{11}\\
\end{array}
\right),
\end{aligned}
\label{eq6.24}
\end{eqnarray}
where
{\footnotesize\begin{eqnarray*}
&&\!\!\!\!\!\!\!\!k_{11}=k_{22}:=-{\frac {{\beta}^{2}\,\varepsilon_2-3\,\beta\,\varepsilon_2-2\,\beta\,\varepsilon_1+
6\,\varepsilon_1-3}{2\,\beta\,\varepsilon_1-6\,\varepsilon_1-6}},~k_{12}=-k_{21}:={\frac {q\sqrt {\chi_1}}{2\,\beta\,\varepsilon_1-6\,\varepsilon_1-6}},\\
&&\!\!\!\!\!\!\!\!p_{20}:=-{\frac {{\beta}^{4}{\varepsilon}^{2}_2-6\,{\beta}^{3}{\varepsilon}^{2}_2+9
\,{\beta}^{2}{\varepsilon}^{2}_2-18\,{\beta}^{2}\varepsilon_2+54\,\beta\,\varepsilon_2+81}{2\,N \left( \beta-3 \right) ^{2}\beta\, \left( \beta\,
\varepsilon_2-\beta\,\varepsilon_1+3 \right) }},\\
&&\!\!\!\!\!\!\!\!p_{11}:={\frac {\sqrt {\chi_1} \left( {\beta}^{2}\varepsilon_2-3\,\beta\,\varepsilon_2-9 \right) }{2\,N \left( \beta-3 \right) ^{2}\beta\, \left( \beta\,\varepsilon_2-\beta\,\varepsilon_1+3 \right) }},~q_{20}={\frac {\chi_2 }{2\,N ( \beta-3 ) ^{2}\beta\, ( \beta\,\varepsilon_2-\beta\,\varepsilon_1+3 ) \sqrt {\chi_1}}},\\
&&\!\!\!\!\!\!\!\!\chi_2:= {\beta}^{6}{\varepsilon}^{3}_2-2\,{\beta}^{6}{\varepsilon}^{2}_2\varepsilon_1-9\,{\beta}^{5}{\varepsilon}^{3}_2
+18\,{\beta}^{5}{\varepsilon}^{2}_2\varepsilon_1+6\,{\beta}^{5}{\varepsilon}^{2}_2+27\,{\beta}^{4}{\epsilon}^{3}
-54\,{\beta}^{4}{\varepsilon}^{2}_2\varepsilon_1-63\,{\beta}^{4}{\varepsilon}^{2}_2+36\,{\beta}^{4}\varepsilon_2\,\varepsilon_1\\
&&~~~-27\,{\beta}^{3}{
\varepsilon}^{3}_2+54\,{\beta}^{3}{\varepsilon}^{2}_2\varepsilon_1
+216\,{\beta}^{3}{\varepsilon}^{2}_2-216\,{\beta}^{3}\varepsilon_2\,\varepsilon_1-108\,{\beta}^{3}
\varepsilon_2-243\,{\beta}^{2}{\varepsilon}^{2}_2+324\,{\beta}^{2}\varepsilon_2\,
\varepsilon_1+567\,{\beta}^{2}\varepsilon_2\\
&&~~~-162\,{\beta}^{2}\varepsilon_1-729\,\beta\,\varepsilon_2+486\,\beta\,\varepsilon_1+486\,\beta-729,\\
&&\!\!\!\!\!\!\!\!q_{11}:=-{\frac{\chi_3}{2\,
N \left( \beta-3 \right) ^{2}\beta\, \left( \beta\,\varepsilon_2-\beta\,\varepsilon_1+3 \right) }},\\
&&\!\!\!\!\!\!\!\!\chi_3:={\beta}^{4}{\varepsilon}^{2}_2-2\,{\beta}^{4}\varepsilon_2\,\varepsilon_1-6\,{\beta}^{3}{\varepsilon}^{2}_2
+12\,{\beta}^{3}\varepsilon_2\,\varepsilon_1+6\,{\beta}^{3}\varepsilon_2+9\,{\beta}^{2}{\varepsilon}^{2}_2-18\,{
\beta}^{2}\varepsilon_2\,\varepsilon_1-36\,{\beta}^{2}\varepsilon_2+18\,{\beta}^{2
}\varepsilon_1\\
&&~~~+54\,\beta\,\varepsilon_2-54\,\beta\,\varepsilon_1-54\,\beta+81.
\end{eqnarray*}}
We rewrite \eqref{eq6.24} in the complex form
\begin{eqnarray}
\begin{array}{l}
z_1\mapsto\zeta(\breve{\varepsilon})z_1+t_{20}z_1^2+t_{11}z_1\bar{z}_1+t_{02}\bar{z}^2_1,
\end{array}
\label{eq6.25}
\end{eqnarray}
with $z_1=u_{11}+v_{11}\,{\bf{i}}$,
where
{\footnotesize\begin{eqnarray*}
&&\!\!\!\!\!\!\!\!\!\!\!\!\zeta(\breve{\varepsilon}):={\frac { \left( 2\,\beta-6 \right) \varepsilon_1-{\beta}^{2}\varepsilon_2+3\,
\beta\,\varepsilon_2+3}{2\,\beta\,\varepsilon_1-6\,\varepsilon_1-6}}-{\frac {{\bf i}\,\sqrt {\chi_1}}{2\,\beta\,\varepsilon_1-6\,\varepsilon_1-6}},\\
&&\!\!\!\!\!\!\!\!\!\!\!\!t_{20}:={\frac {-{\beta}^{2}\varepsilon_2+3\,\beta\,\varepsilon_2+9}{2\,\beta\, \left( \beta-3 \right) N}}\\
&&\!\!\!+{\frac {{\bf i}\, \left( {\beta}^{2}\varepsilon_2-3\,\beta\,\varepsilon_2-9 \right) \left( {\beta}^{3}\varepsilon_2+ \left( -2\,{\varepsilon}^{2}_1-6\,\varepsilon_2
 \right) {\beta}^{2}+ \left( 12\,{\varepsilon}^{2}_1+9\,\varepsilon_2+12\,\varepsilon_1-9 \right) \beta-18\,{\varepsilon}^{2}_1-36\,\varepsilon_1+9
 \right) }{2\,\sqrt {\chi_1}N \left( \beta-3 \right) ^{2}\beta}},\\
&&\!\!\!\!\!\!\!\!\!\!\!\!t_{11}:=-{\frac { \left( {\beta}^{2}\varepsilon_2-3\,\beta\,\varepsilon_2-9
 \right) ^{2}}{4\,N \left( 3+ \left( \varepsilon_2-\varepsilon_1 \right) \beta
 \right)  \left( \beta-3 \right) ^{2}\beta}}\\
 &&~~+{\frac {{\bf i}\left( -9+ \left( \varepsilon_2-2\,\varepsilon_1 \right) {\beta}^{2}+ \left( -3\,\varepsilon_2+6\,\varepsilon_1+6 \right) \beta \right) \left( {\beta}^{2}\epsilon-3\,\beta\,\varepsilon_2-9 \right) ^{2}}{\sqrt {
\chi_1} \left( \beta-3 \right) ^{2}\beta\,N \left( 3+ \left( \varepsilon_2-\varepsilon_1 \right) \beta \right) }},\\
&&\!\!\!\!\!\!\!\!\!\!\!\!t_{02}:=-\frac { \left( \beta\,\varepsilon_1-3\,\varepsilon_1-3 \right)
 \left( {\beta}^{2}\varepsilon_2-3\,\beta\,\varepsilon_2-9 \right) }{2\,N \left( 3+ \left( \varepsilon_2-\varepsilon_1 \right) \beta \right)  \left( \beta-3 \right) ^{2}}-\frac {{\bf i} \,\chi_4}{2\,\sqrt {\chi_1}N \left( \beta-3
 \right) ^{2} \left( 3+ \left( \varepsilon_2-\varepsilon_1 \right) \beta \right) \beta},\\
&&\!\!\!\!\!\!\!\!\!\!\!\!\chi_4:=\left( {\beta}^{3}\varepsilon_2+ \left( -2\,\varepsilon_2\,
\varepsilon_1+2\,{\varepsilon}^{2}_1-3\,\varepsilon_2 \right) {\beta}^{2}+
 \left( -6\,{\varepsilon}^{2}_1+ \left( 6\,\varepsilon_2-12 \right)
\varepsilon_1+6\,\varepsilon_2-9 \right) \beta+18\,\varepsilon_1+18 \right)
 ( \beta\,\varepsilon_1\\
&&~-3\,\varepsilon_1-3 )  \left( {\beta}^{2
}\varepsilon_2-3\,\beta\,\varepsilon_2-9 \right),
\end{eqnarray*}}
\!\!\!By Lemma $9.11$ in \cite[p.429]{Kuznetsov}, for small $|\breve{\varepsilon}|$, mapping \eqref{eq6.25} can be changed by an near-identity transformation into the mapping
\begin{eqnarray}
w\mapsto\Gamma_{\breve{\varepsilon}}(w):=\zeta(\breve{\varepsilon})w+B(\breve{\varepsilon})\bar{w}^2+A(\breve{\varepsilon})w^2\bar{w}+O(|w|^4),
\label{eq6.26}
\end{eqnarray}
where
\begin{eqnarray*}
&&B(\breve{\varepsilon}):=t_{02}\\
&&A(\breve{\varepsilon}):=\frac{t_{20}t_{11}(2\zeta(\breve{\varepsilon})+\bar{\zeta}(\breve{\varepsilon})-3)}{(\bar{\zeta}(\breve{\varepsilon})-1)
(\zeta^2(\breve{\varepsilon})-\zeta(\breve{\varepsilon}))}
+\frac{|t_{11}|^2}{1-\bar{\zeta}(\breve{\varepsilon})}+t_{21}.
\end{eqnarray*}

Besides, from Lemma $9.12$ in \cite[Lemma 9.12, p.448]{Kuznetsov}, it follows that for sufficiently small $|\breve{\varepsilon}|$ the third iterate of the mapping \eqref{eq6.26} can be represented in the form
\begin{eqnarray*}
\Gamma_{\breve{\varepsilon}}^3(w)=\phi(1,w,\breve{\varepsilon})+O(|w|^4),
\end{eqnarray*}
where $\phi(1,w,\breve{\varepsilon})$ is the flow of a differential equation
\begin{eqnarray}
\begin{aligned}
\frac{dw}{dt}=\varrho(\breve{\varepsilon})w+B_1(\breve{\varepsilon})\bar{w}^2+A_1(\breve{\varepsilon})w^2\bar{w},
\end{aligned}
\label{eq3.04}
\end{eqnarray}
with
\begin{eqnarray*}
&&\varrho(\breve{\varepsilon}):=(-3\,\beta+\frac{9}{2}+\frac{(4\,\beta-9)\,\sqrt{3}\,{\bf i}}{2})\,\varepsilon_1+(\frac{\beta^2}{2}-\frac{\beta\,(\beta-2)\,\sqrt{3}\,{\bf i}}{2})\,\varepsilon_2+O(|\varepsilon|^2),\\
&&B_1(\breve{\varepsilon}):=3\bar{\zeta}(\breve{\varepsilon})B(\breve{\varepsilon}),\\
&&A_1(\breve{\varepsilon}):=-3|B(\varepsilon)|^2+3\zeta^2(\breve{\varepsilon})A(\breve{\varepsilon}).
\end{eqnarray*}

Furthermore, let $\varrho(\breve{\varepsilon})=\sigma_1+\sigma_2{\bf{i}}$ in the system \eqref{eq3.04}, where
\begin{eqnarray}
\begin{cases}
\begin{array}{l}
\sigma_1=(-3\,\beta+\frac{9}{2})\,\varepsilon_1+\frac{\beta^2\,\varepsilon_2}{2}+O(|\varepsilon|^2),\\
\sigma_2=\frac{(4\,\beta-9)\,\sqrt{3}\,\varepsilon_1}{2}-\frac{\beta\,(\beta-2)\,\sqrt{3}\,\varepsilon_2}{2}+O(|\varepsilon|^2).
\end{array}
\end{cases}
\label{eq3.05}
\end{eqnarray}
One can compute that
\begin{eqnarray*}
det
\left.
\left(
\begin{array}{ll}
\frac{\partial\sigma_1}{\partial\varepsilon_1}&\frac{\partial\sigma_1}{\partial\varepsilon_2}\\
\frac{\partial\sigma_2}{\partial\varepsilon_1}&\frac{\partial\sigma_2}{\partial\varepsilon_2}
\end{array}
\right)
\right|_{(\varepsilon_1,\varepsilon_2)=(0,0)}
=\frac{\sqrt {3}{\beta}^{3}}{2}-3\,\sqrt {3}\,{\beta}^{2}+\frac{9\,\sqrt {3}\,\beta}{2}.
\end{eqnarray*}
Then the condition $(R3.0)$ of resonance shown in \cite[p.451]{Kuznetsov} is satisfied. Regarding $\sigma=(\sigma_1,\sigma_2)\in\mathbb{R}^2$ as a new parameter, system \eqref{eq3.04} is rewritten as the form
\begin{eqnarray}
\frac{dw}{dt}=(\sigma_1+\sigma_2\,{\bf{i}})w+b_1(\sigma)\bar{w}^2+c_1(\sigma)w^2\bar{w}+O(|\sigma|^2|w|,|w|^4).
\label{eq3.06}
\end{eqnarray}
It is clear that
$$b_1(0)=-{\frac { \left( 27\,{\bf i}\,\sqrt {3}+27 \right)  \left( \beta\,{\bf i}-\frac{\beta
\,\sqrt {3}}{3}-2\,{\bf i} \right) \sqrt {3}}{8\,N \left( \beta-3 \right) ^{2}
\beta}}
~~{\mbox{and}}~~Re(c_1(0))=-{\frac {972\,\beta-729}{8\,{N}^{2} \left( \beta-3 \right) ^{4}\beta}},$$
implying that the conditions $(R3.1)$ and $(R3.2)$ in \cite[p.451]{Kuznetsov} are satisfied, respectively. Hence, mapping \eqref{eq2.1} undergoes the $1:3$ resonance.
\end{proof}
\begin{cor}
Under the condition of Theorem \ref{th6.2}, if
$${\frac {-4\,{\beta}^{2}+3\,\beta}{6\,{\beta}^{2}-18\,\beta+18}}<0,$$
then mapping \eqref{eq2.1} possesses the following results when the parameter $(r, \alpha)$ in a sufficiently small neighborhood of point $(-(\beta^2-3\,\beta+3)/(\beta-3),-9/\beta\,(\beta-3))$:
\begin{description}
    \item[$(1)$] Mapping $\eqref{eq2.1}$ always has a fixed point $E_2$. As $(r,\alpha)$ crosses the line
    $$
    (r,\alpha)\in \mathfrak{L}:=\{N>0, 0<\beta<1, r=-\frac{\beta^2-3\,\beta+3}{\beta-3}, \alpha=\Psi_3\},
    $$
    \eqref{eq2.1} undergoes a non-degenerate Neimark-Sacker bifurcation.
    \item[$(2)$] Mapping $\eqref{eq2.1}$ always has a saddle cycle of period-3 $\{T_{11}, T_{12},T_{13}\}$.
    \item[$(3)$] When $(r,\alpha)$ lies in the curve
    {\footnotesize\begin{eqnarray*}
    &&\!\!\!\!\!\!\!\!\!\!\!\!\!\!\!\!\!\!H_{3r}:=\{(r,\alpha)|-{\frac {(16\,\sqrt {3}{\beta}^{3}-48\,\sqrt {3}{\beta}^{2}+72\,{
        \beta}^{3}+27\,\sqrt {3}\beta-324\,{\beta}^{2}+540\,\beta-324)\,( r+{\frac {{\beta}^{2}-3\,\beta+3}{\beta-3}}
         )}{24\,({\beta}^{2}-3\,\beta+3} )}\\
    &&~~~~~~~~~~+{\frac {{\beta}^{2} \left( 4\,\sqrt {3}{\beta}^{2}-11
          \,\sqrt {3}\beta+12\,{\beta}^{2}+6\,\sqrt {3}-36\,\beta+36 \right) \left( \alpha+{\frac {9}{\beta\, \left(
          \beta-3 \right) }} \right)}{24\,({
          \beta}^{2}-3\,\beta+3} ) }\\
    &&~~~~~~~~~~+O(|(r+{\frac {{\beta}^{2}-3\,\beta+3}{\beta-3}}
           ,\alpha+{\frac {9}{\beta\, \left( \beta-3 \right) }})|^2)=0    \},
    \end{eqnarray*}}the saddle cycle of period-3 cycle lies in the invariant circle, which forms a homoclinic structure.
\end{description}
\label{cor6.21}
\end{cor}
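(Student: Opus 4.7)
The plan is to deduce the three assertions directly from the Poincar\'e normal form \eqref{eq3.06} established in the proof of Theorem \ref{th6.2}, combined with the universal bifurcation diagram for the $1{:}3$ strong resonance presented in \cite[Section~9.5]{Kuznetsov}. Since the non-degeneracy conditions $(R3.0)$--$(R3.2)$ are already in force and the linear transversality of the parameter change \eqref{eq3.05} has been established, it suffices to translate the sign condition $-(4\beta^2-3\beta)/(6\beta^2-18\beta+18)<0$ into the selector between Kuznetsov's two topological cases, and then read off the phase portraits; substituting the explicit formulas of $b_1(0)$ and $\mathrm{Re}(c_1(0))$ recorded at the end of the proof of Theorem \ref{th6.2} shows that this selector is precisely the displayed sign.

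For item (1), the fixed point $E_2$ is defined for every parameter in a neighbourhood of the resonance point, and the center-manifold reduction has Jacobian eigenvalue $\zeta(\breve{\varepsilon})$ depending smoothly on $\breve{\varepsilon}$ with $|\zeta(0)|=1$. The transversality $\partial_{\alpha}|\zeta|\ne 0$ along $\mathfrak{L}_{2}$ follows verbatim from the argument used in Theorem \ref{th5.1} applied at $(r,\alpha)=(-(\beta^{2}-3\beta+3)/(\beta-3),-9/(\beta(\beta-3)))$, and the non-vanishing first Lyapunov coefficient is exactly $\mathrm{Re}(c_1(0))\ne 0$ from Theorem \ref{th6.2}, so a non-degenerate Neimark--Sacker bifurcation emerges. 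For item (2), the $\mathbb{Z}_{3}$-equivariance of the truncated flow \eqref{eq3.04} together with $b_1(0)\ne 0$ forces the cubic amplitude equation $\sigma w+b_1(\sigma)\bar{w}^{2}+c_1(\sigma)w^{2}\bar{w}=0$ to admit three non-zero roots cyclically permuted by multiplication by $e^{2\pi{\bf i}/3}$. These are hyperbolic saddles of the approximating flow and, by \cite[Lemma~9.13]{Kuznetsov}, they lift to a period-three saddle cycle $\{T_{11},T_{12},T_{13}\}$ of the mapping $\Gamma_{\breve{\varepsilon}}$ that persists for all small $\breve{\varepsilon}$.

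For item (3), the heteroclinic connection between consecutive saddles of the truncated flow exists exactly along a curve in the $(\sigma_{1},\sigma_{2})$-plane whose leading-order equation is linear and whose coefficients are explicit polynomials in $\mathrm{Re}\,b_1(0)$ and $\mathrm{Im}\,b_1(0)$; this is the curve $H_{3}$ of Kuznetsov. Pulling it back through the non-singular linear change \eqref{eq3.05} (whose Jacobian was computed in Theorem \ref{th6.2}) and then through the shift $\varepsilon_{1}=r+(\beta^{2}-3\beta+3)/(\beta-3)$, $\varepsilon_{2}=\alpha+9/(\beta(\beta-3))$, produces the curve $H_{3r}$ stated in the corollary. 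The principal technical obstacle is precisely this algebraic pull-back: one must invert the transversality matrix using the explicit values of $b_1(0)$ and $c_1(0)$ from Theorem \ref{th6.2}, check that the linear part matches the displayed coefficients of $H_{3r}$, and fold the higher-order part of \eqref{eq3.06} into the remainder term $O\!\left(|(r+(\beta^{2}-3\beta+3)/(\beta-3),\alpha+9/(\beta(\beta-3)))|^{2}\right)$. Persistence of the homoclinic structure from the truncated planar flow back to the full three-dimensional mapping \eqref{eq2.1} is then a consequence of the standard hyperbolic perturbation argument of \cite[Section~9.5.2]{Kuznetsov} applied to the saddle connections, which are structurally stable among the truncated terms of the normal form.
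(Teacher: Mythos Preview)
Your proposal is correct and follows essentially the same route as the paper: both read off the dynamics from Kuznetsov's universal $1{:}3$ resonance bifurcation diagram applied to the approximating flow \eqref{eq3.04}--\eqref{eq3.06} obtained in Theorem~\ref{th6.2}, identify the sign condition with $R_c(0)=\mathrm{Re}(c_1(0))/|b_1(0)|^{2}$, and pull back the Hopf and heteroclinic curves to the $(r,\alpha)$-plane through the transversality relation \eqref{eq3.05}. The paper is slightly more explicit---it rescales $b_1$ to $1$, passes to polar coordinates \eqref{eq3.08}, and computes $R_c(0)$ directly before invoking the Implicit Function Theorem---but the substance is identical.
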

\begin{proof}
In order to clarify the specific bifurcation phenomenon, we use the transformation $w=\gamma(\sigma)\eta$ with
\begin{eqnarray*}
\gamma(\sigma)=\frac{e^{(\frac{arg(b_1(\sigma))}{3}\,{\bf{i}})}}{|b_1(\sigma)|}
\end{eqnarray*}
to change system \eqref{eq3.06} into the form
\begin{eqnarray}
\frac{d\eta}{dt}=((\sigma_1+\sigma_2\,{\bf{i}})\eta+\bar{\eta}^2+c(\sigma)\eta^2\bar{\eta}+O(|\sigma|^2|\eta|,|\eta|^4),
\label{eq3.07}
\end{eqnarray}
where
$$
c(\sigma)=\frac{c_1(\sigma)}{|b_1(\sigma)|^2}
$$
In the polar coordinates $\eta=\rho e^{{\bf{i}}\iota}$ system \eqref{eq3.07} has the following approximating system:
\begin{eqnarray}
\begin{cases}
\begin{array}{l}
\frac{d\rho}{dt}=\sigma_1\rho+\rho^2cos(3\iota)+R_c(\sigma)\rho^3\\
\frac{d\iota}{dt}=\sigma_2-\rho sin(3\iota)+I_c(\sigma)\rho^2
\end{array}.
\end{cases}
\label{eq3.08}
\end{eqnarray}
where $R_c(\sigma)=Re(c(\sigma))$ and $I_c(\sigma)=Im(c(\sigma))$ are the real part and imaginary part of $c(\sigma)$, respectively.
\begin{figure}[htbp]
\centering
{
\includegraphics[width=10cm]{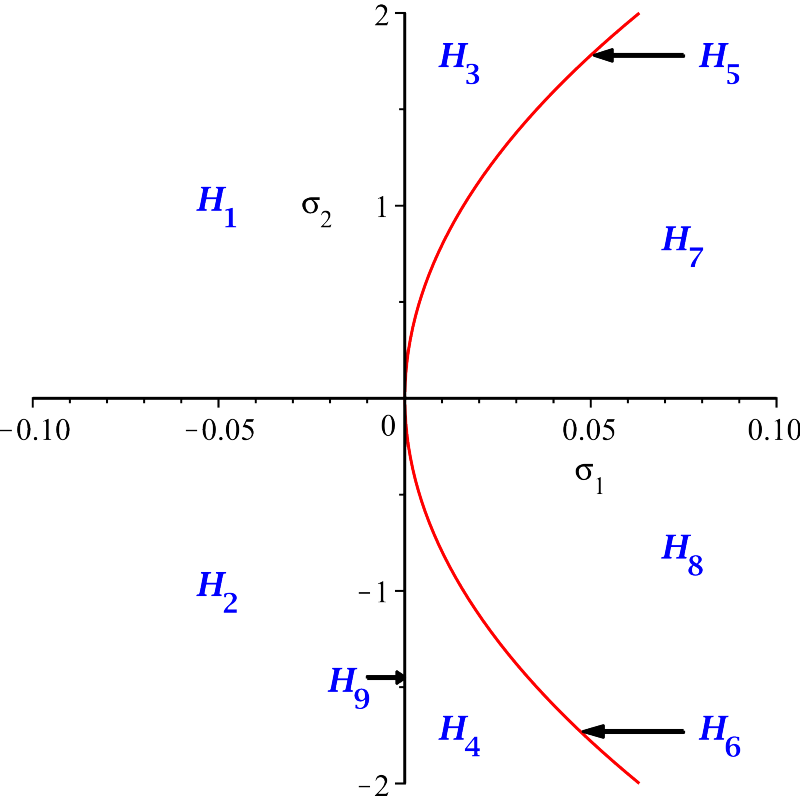}
}
\caption{Bifurcation diagram of system \eqref{eq3.08}.}
\label{resonance1}
\end{figure}

Since
$$R_c(0)={\frac {-4\,{\beta}^{2}+3\,\beta}{6\,{\beta}^{2}-18\,\beta+18}}<0,$$
we obtain the bifurcation diagram of the approximating system \eqref{eq3.08}, which is composed of Figures \ref{resonance1} and \ref{1bi3YJQ1}.
To be precise, when the parameters are located in regions $H_i$ $(i=1,2,\cdots,8)$ in Figure \ref{resonance1}, their corresponding phase diagrams are Figures \ref{Y3-1}-\ref{Y3-8} respectively.
The more general explanations for the corresponding bifurcation diagrams are as follows.
\begin{figure}[ht!]
\T\T\T\T\T\T\T\T\T\T\T\T\T\T\T\T\T\T\T\T\T\T\T\T\T\T\T\T\T\T\T\T\T\T\T\T
\subfigure[The phase diagrams of the region $H_1$ in Figure \ref{resonance1}.]{
\includegraphics[width=8cm]{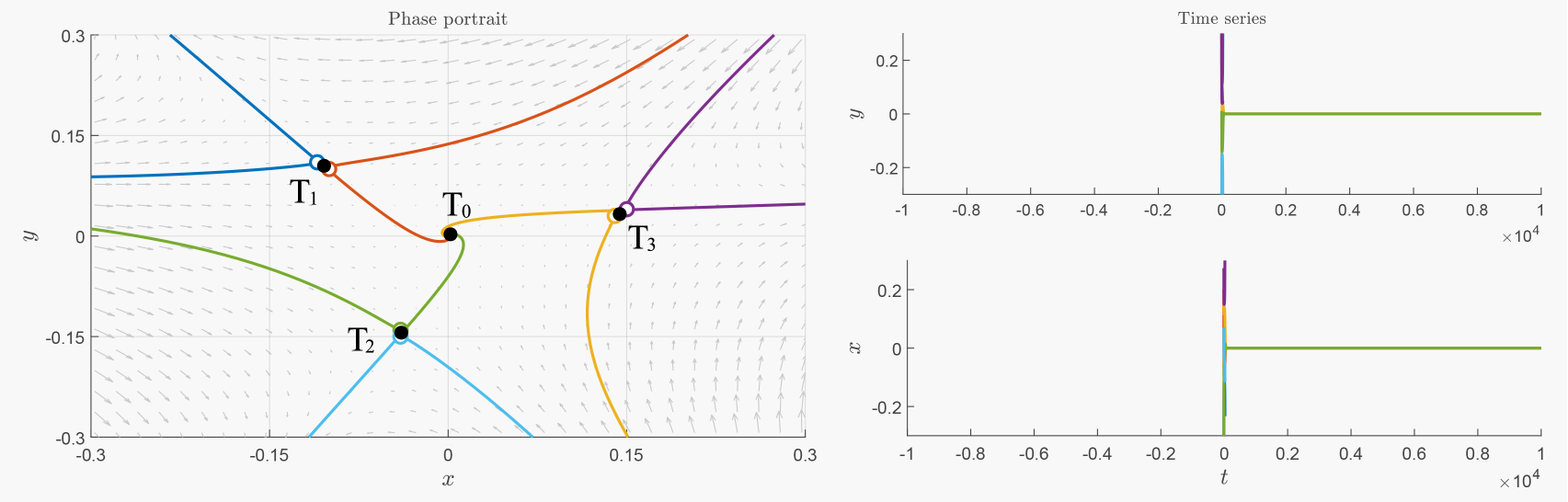}
\label{Y3-1}
}
\quad
\quad
\T\T\T\T\T\T\T\T\T\T\T\T\T\T\T\T\T\T\T\T\T\T\T\T\T\T\T\T\T\T\T\T\T\T\T\T
\subfigure[The phase diagrams of the region $H_2$ in Figure \ref{resonance1}.]{
\includegraphics[width=8cm]{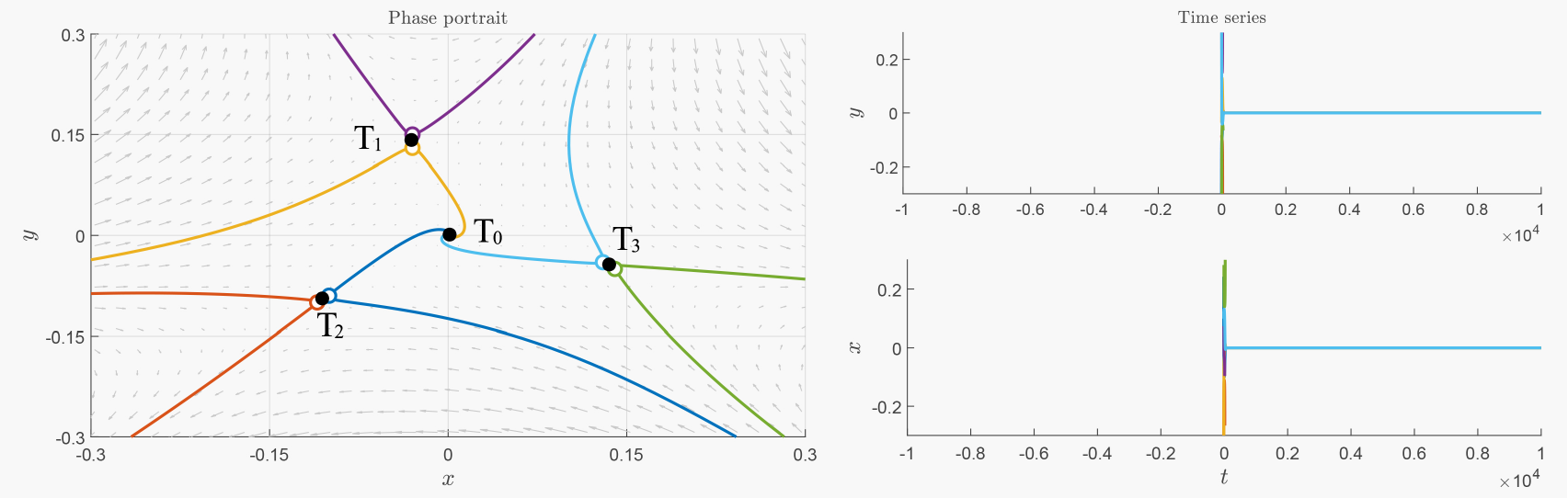}
\label{Y3-2}
}
\quad
\T\T\T\T\T\T\T\T\T\T\T\T\T\T\T\T\T\T\T\T\T\T\T\T\T\T\T\T\T\T\T\T\T\T\T\T
\subfigure[The phase diagrams of the region $H_3$ in Figure \ref{resonance1}.]{
\includegraphics[width=8cm]{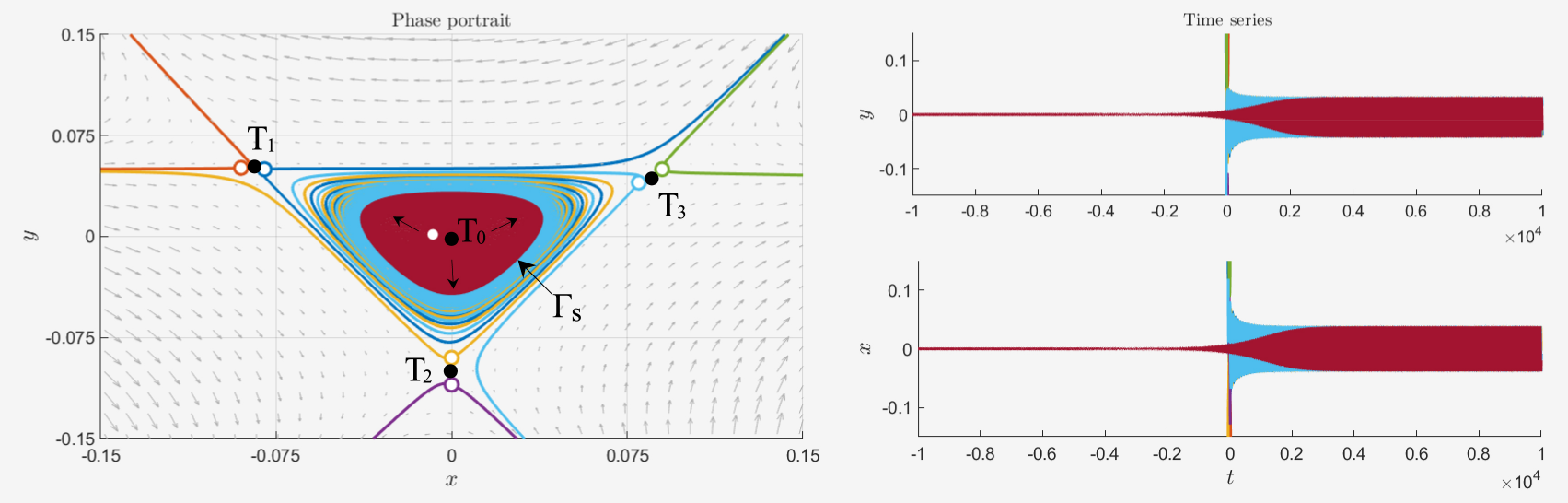}
\label{Y3-3}
}
\quad
\quad
\T\T\T\T\T\T\T\T\T\T\T\T\T\T\T\T\T\T\T\T\T\T\T\T\T\T\T\T\T\T\T\T\T\T\T\T
\subfigure[The phase diagrams of the region $H_4$ in Figure \ref{resonance1}.]{
\includegraphics[width=8cm]{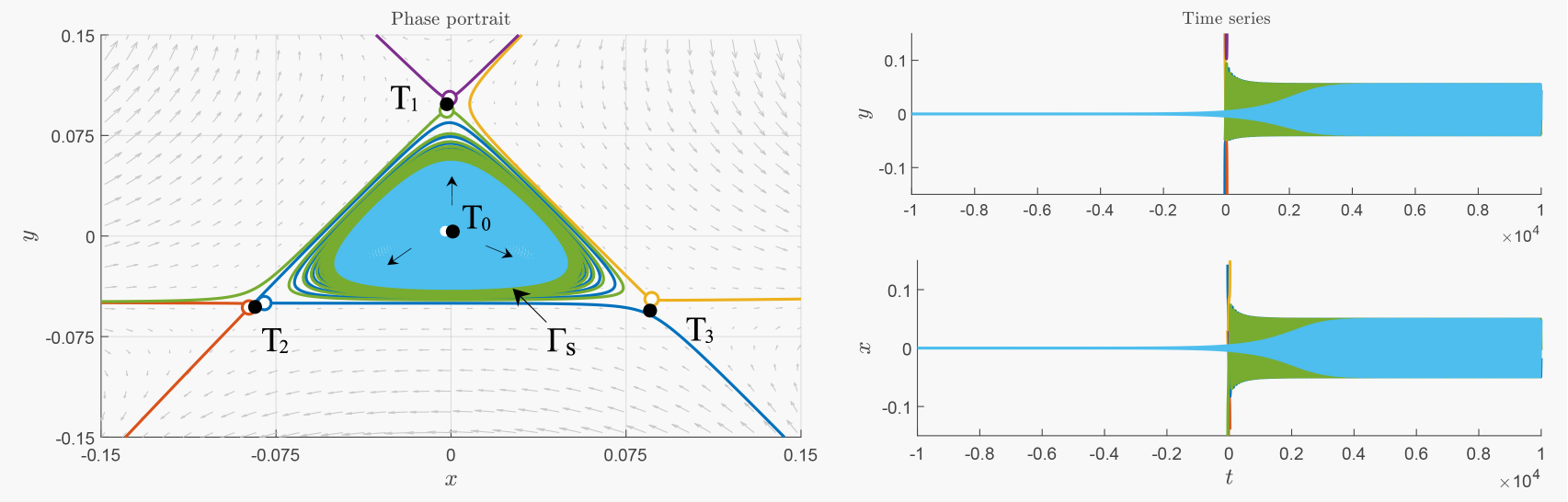}
\label{Y3-4}
}
\quad
\T\T\T\T\T\T\T\T\T\T\T\T\T\T\T\T\T\T\T\T\T\T\T\T\T\T\T\T\T\T\T\T\T\T\T\T
\subfigure[The phase diagrams of the region $H_5$ in Figure \ref{resonance1}.]{
\includegraphics[width=8cm]{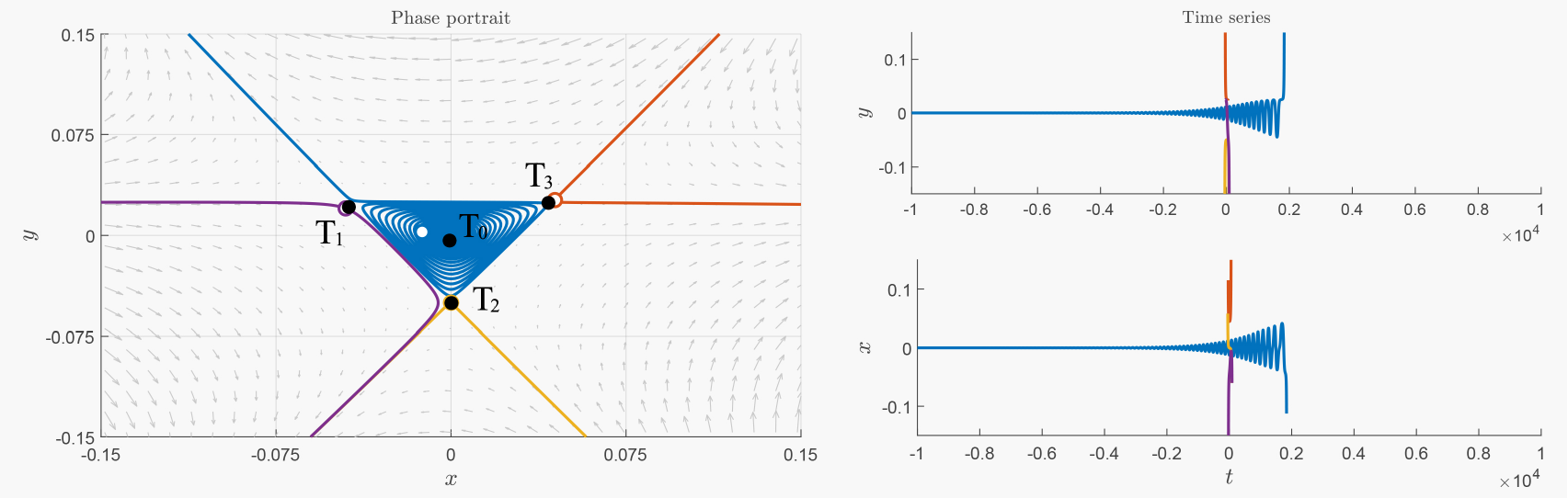}
\label{Y3-5}
}
\quad
\quad
\T\T\T\T\T\T\T\T\T\T\T\T\T\T\T\T\T\T\T\T\T\T\T\T\T\T\T\T\T\T\T\T\T\T\T\T
\subfigure[The phase diagrams of the region $H_6$ in Figure \ref{resonance1}.]{
\includegraphics[width=8cm]{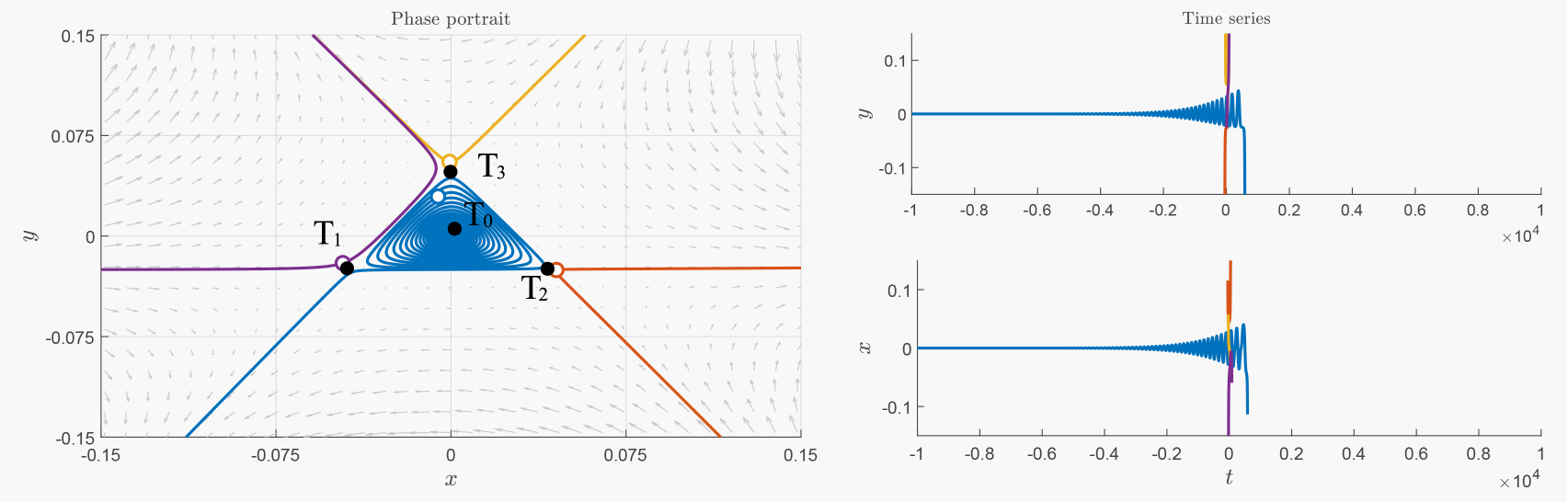}
\label{Y3-6}
}
\quad
\T\T\T\T\T\T\T\T\T\T\T\T\T\T\T\T\T\T\T\T\T\T\T\T\T\T\T\T\T\T\T\T\T\T\T\T
\subfigure[The phase diagrams of the region $H_7$ in Figure \ref{resonance1}.]{
\includegraphics[width=8cm]{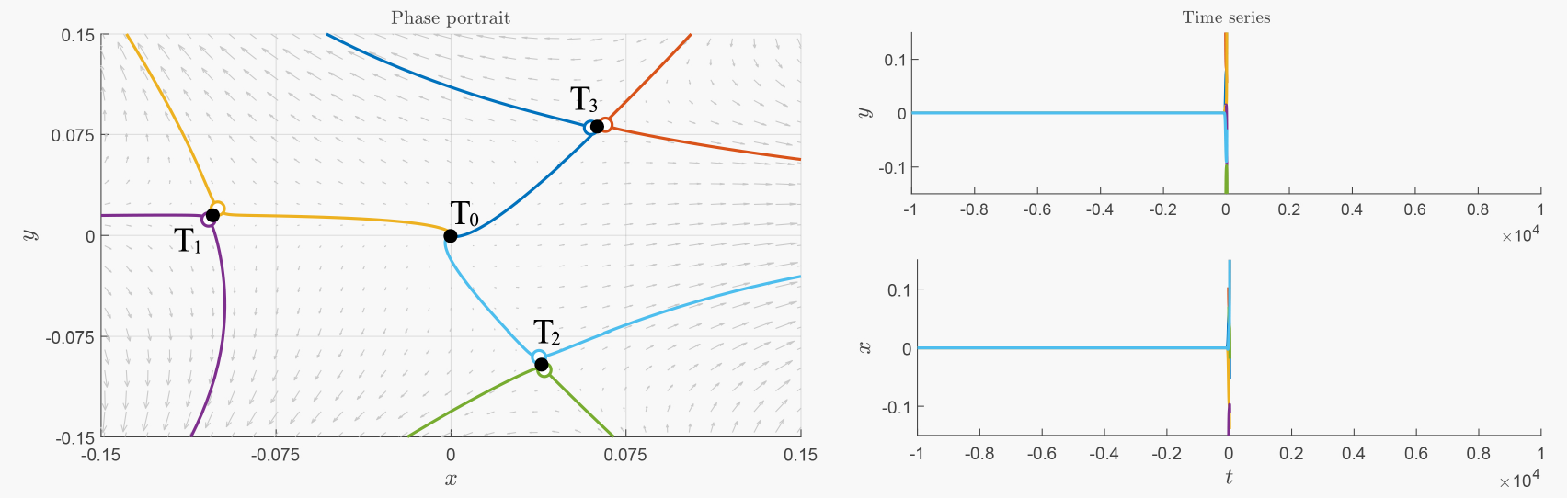}
\label{Y3-7}
}
\quad
\quad
\T\T\T\T\T\T\T\T\T\T\T\T\T\T\T\T\T\T\T\T\T\T\T\T\T\T\T\T\T\T\T\T\T\T\T\T
\subfigure[The phase diagrams of the region $H_8$ in Figure \ref{resonance1}.]{
\includegraphics[width=8cm]{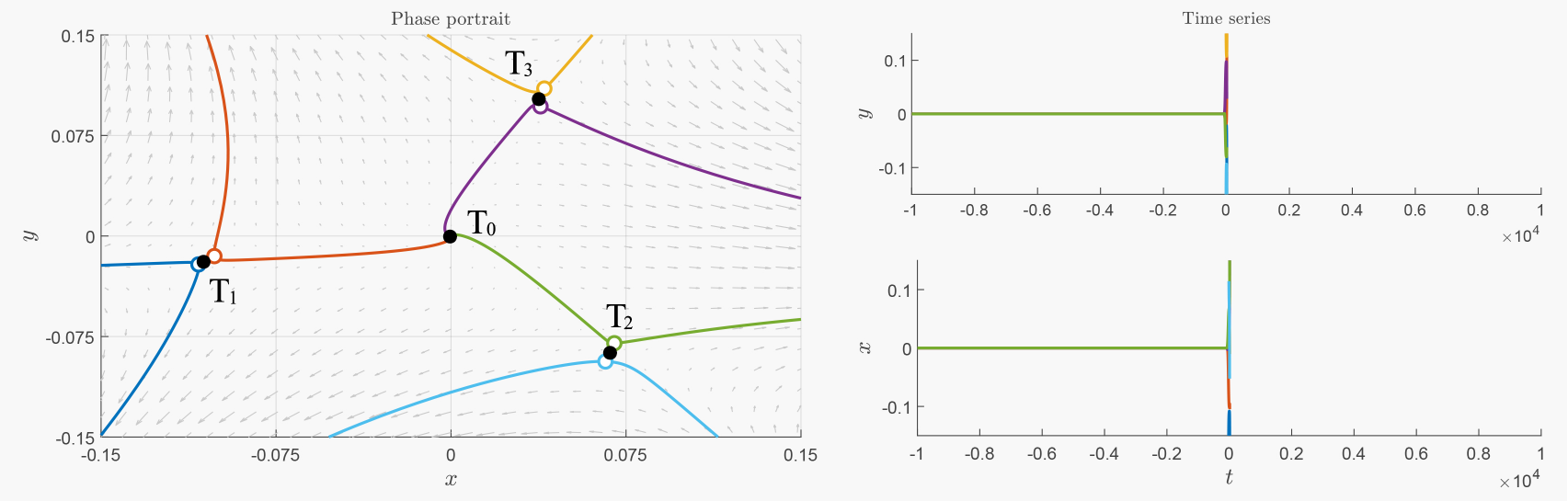}
\label{Y3-8}
}
\caption{phase portraits of system \eqref{eq3.08}.}
\label{1bi3YJQ1}
\end{figure}
\begin{description}
\item[(i)]When $(\sigma_1, \sigma_2)$ is near the origin, system \eqref{eq3.08} always has a trivial equilibrium $T_0$ with $\rho=0$, which is a focus in the case $(\sigma_1, \sigma_2)\neq0$ and a degenerate saddle in the $(\sigma_1, \sigma_2)=0$ (see \cite[Figure 3.1, p.279]{Chow}), and three nontrivial equilibria $T_k:(\rho_k,\iota_{s,k}),k = 1, 2, 3$, which are all saddles, located on a circle of radius $r_s$ with $r_s=\sqrt{\sigma_1^2+\sigma_2^2}+O(|\sigma|^{\frac{3}{2}})$, and separated by the angle $2\pi/3$ in $\iota$-coordinate.

\item[(ii)]For $\sigma_1<0$, the trivial equilibrium $T_0$ is a stable focus. As $\sigma$ crosses the $\sigma_2$-axis from left to right, $T_0$ becomes an unstable focus, and system \eqref{eq3.08} undergoes the Hopf bifurcation and produces an unique stable limit cycle $\Gamma_s$.

\item[(iii)]There is a bifurcation curve
    \begin{eqnarray*}
    H:=\{(\sigma_1,\sigma_2)|\sigma_1=-\frac{R_c(0)\,\sigma_2^2}{2}+o(\sigma_2^2)\}
    \end{eqnarray*}
    such that system \eqref{eq3.08} has a heteroclinic cycle formed by coinciding stable and unstable manifold of the nontrivial saddles for $\sigma\in H$, Where $H$ consist of $H_5$ and $H_6$. The heteroclinic cycle resembles a triangle and is stable from the inside.

\item[(iv)]As $\sigma$ crosses the bifurcation curve $H_1$ from left to right, the limit cycle disappears via a heteroclinic bifurcation.
\end{description}
Therefore, mapping $\Gamma_{\breve{\varepsilon}}$ has the following relationship with the differential system \eqref{eq3.08}: Mapping $\Gamma_{\breve{\varepsilon}}$ always has a trivial fixed point $O:(0,0)$ corresponding to the trivial equilibrium $T_0$ of \eqref{eq3.08}. Mapping $\Gamma_{\breve{\varepsilon}}$ undergoes a Neimark-Sacker bifurcation and generates an invariant circle surrounding the fixed point $T_0$ on a bifurcation curve corresponding to the Hopf bifurcation curve $\sigma_2$-axis of \eqref{eq3.08}. Mapping $\Gamma_{\breve{\varepsilon}}$ has a saddle cycle of period-3, which corresponds to the three nontrivial equilibria $T_k, k=1,2,3$ of \eqref{eq3.08}. The saddle cycle of period-3 lies in the invariant circle if $(\sigma_1, \sigma_2)$ lies in the curve corresponding to the bifurcation curve $H$ of \eqref{eq3.08}. Due to the topological equivalence of the mappings \eqref{eq2.1} and $\Gamma_{\breve{\varepsilon}}$, they exist the same bifurcation phenomena.

In addition, in order to obtain the bifurcation curve of mapping \eqref{eq2.1}, from $(\varepsilon_1,\varepsilon_2):=(r+(\beta^2-3\,\beta+3)/(\beta-3),\alpha+9/\beta\,(\beta-3))$ and \eqref{eq3.05}, we see that the $\sigma_2$-axis in Figure \ref{resonance1} corresponds to the line
\begin{eqnarray*}
&&(r,\alpha)\in \mathfrak{L}_2:=\{
N>0, 0<\beta<1, r=-\frac{\beta^2-3\,\beta+3}{\beta-3}, \alpha=\Psi_3
\}
\end{eqnarray*}
as $(r,\alpha)$ is close to the point $((\beta^2-3\,\beta+3)/(\beta-3), 9/\beta\,(\beta-3))$. Further, by the Implicit Function Theorem, one can check that the bifurcation curve $H$ of \eqref{eq3.08} corresponds to the curve $H_{3r}$ regarding the original parameter. The proof is completed.
\end{proof}

Besides, for
$$R_c(0):={\frac {-4\,{\beta}^{2}+3\,\beta}{6\,{\beta}^{2}-18\,\beta+18}}>0,$$
based on the same idea as in the proof of Corollary \ref{cor6.21} and in \cite{Chow,Kuznetsov,Kuznetsov1}, the similar conclusions will be obtained.

Based on the above analysis, we see that mapping \eqref{eq2.1} undergoes a $1:3$ resonance and generates complex dynamic properties near $1:3$ resonance point $((\beta^2-3\,\beta+3)/(\beta-3), 9/\beta\,(\beta-3))$ in Theorem \ref{th6.2} and Corollary \ref{cor6.21}. Hence, the complex dynamic behaviors of mapping \eqref{eq2.1} are not only dependent on bifurcation parameters, but also highly sensitive to parameter perturbations. From a biological perspective, as the parameter $(r,\alpha)$ changes near $((\beta^2-3\,\beta+3)/(\beta-3), 9/\beta\,(\beta-3))$, the numbers of susceptible, infective and recovered individuals can experience periodic or quasi periodic fluctuations (caused by non-degenerate Neimark-Sacker bifurcation), period-3 fluctuations (caused by the saddle cycle of period-3), long-period fluctuations, large-scale population outbreaks, and even chaos (caused by homoclinic structures).

\subsection{$1:4$ resonance at $E_2$}
\allowdisplaybreaks[4]
In this subsection, we discuss another type of codimension 2 bifurcation in mapping \eqref{eq2.1}, i.e., $1:4$ resonance, most of which is theoretically obtained by Arnold (see \cite{Arnold1,Arnold2}). For mapping \eqref{eq2.1}, when the following conditions are satisfied
{\small\begin{eqnarray*}
&&N>0, 0<\beta<1, 0<r<1, \Delta({P_{E_2}}(t))<0,\\
&&{P_{E_2}}({\bf{i}})={\frac {\beta\, \left( {\beta}^{2}+ \left( -\alpha+2\,r \right) \beta+
{r}^{2}-r\alpha+\alpha \right) }{\beta+r}}-{\frac {\left( \beta\,\alpha-2\,\beta-2\,r \right)\,{\bf i} }{\beta+r}}=0,\\
&&{P_{E_2}}(-{\bf{i}})={\frac {\beta\, \left( {\beta}^{2}+ \left( -\alpha+2\,r \right) \beta+
{r}^{2}-r\alpha+\alpha \right) }{\beta+r}}+{\frac {\left( \beta\,\alpha-2\,\beta-2\,r \right)\,{\bf i} }{\beta+r}}=0,\\
\end{eqnarray*}}$JF(E_2)$ has a pair of complex eigenvalues, namely $\pm{\bf{i}}$. Therefore, the $1:4$ resonance may occur near $E_2$. By solving the above semi-algebraic system, we get
$$r=-{\frac {{\beta}^{2}-2\,\beta+2}{\beta-2}}~~\mbox{and}~~\alpha=-{\frac {4}{\beta\, \left( \beta-2 \right) }}.$$

Under the aforementioned conditions, the subsequent analysis will focus on the existence of a 1:4 resonance in the vicinity of $E_2$.

\begin{thm}
Mapping \eqref{eq2.1} undergoes a $1:4$ resonance, if the parameter $(r,\alpha)$ varies near the point $(-({\beta}^{2}-2\,\beta+2)/(\beta-2),-4/\beta\,(\beta-2))$, and meets the following conditions
\begin{eqnarray*}
&&\frac{\beta\,(\beta-2)^2}{8}\neq0,-{\frac {3\,{\beta}^{3}{N}^{2} \left( \beta-2 \right) ^{4} \left(
\beta-\frac{2}{3} \right) }{40\,{\beta}^{4}-192\,{\beta}^{3}+384\,{\beta}^{2}-
384\,\beta+160}}\neq0,\\
&&{\frac {{\beta}^{2} \left( {\beta}^{2}-3\,\beta+1 \right) {N}^{2}
 \left( \beta-2 \right) ^{4}}{20\,{\beta}^{4}-96\,{\beta}^{3}+192\,{
\beta}^{2}-192\,\beta+80}}\neq0.
\end{eqnarray*}
\label{th6.3}
\end{thm}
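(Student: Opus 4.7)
The plan is to adopt the same three-step scheme used in the proofs of Theorems \ref{th5.1}, \ref{th6.1}, and \ref{th6.2}: introduce the shifted parameter vector $\breve{\varepsilon}:=(\varepsilon_1,\varepsilon_2)=(r+(\beta^{2}-2\beta+2)/(\beta-2),\ \alpha+4/(\beta(\beta-2)))$ so that the resonance point corresponds to $\breve{\varepsilon}=0$, translate $E_2$ to the origin by the standard affine shift, reduce the resulting map to a two-dimensional center manifold, and finally bring the planar map into its Arnold--Kuznetsov $1:4$ resonance normal form so that conditions (R4.0)--(R4.3) of Section~9.5.3 in \cite{Kuznetsov} can be checked one by one.

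After the affine shift of $E_2$ to the origin, I would apply the same invertible linear transformation $H_1$ used in the proofs of Theorems \ref{th5.1} and \ref{th6.1} to separate the decoupled third coordinate (whose linearization is multiplication by $1-\beta\in(-1,1)$) from the planar part whose characteristic polynomial reduces to $P_{E_2}$. The center manifold theorem then yields a $C^{2}$ graph $w=h_{6}(u,v)=O(|(u,v)|^{3})$ and a planar map depending on $\breve{\varepsilon}$ whose Jacobian at $\breve{\varepsilon}=0$ has eigenvalues $\pm{\bf i}$. A diagonalizing complex-linear transformation of the form $z=\mu+{\bf i}\nu$, exactly parallel to the one used after \eqref{eq5.4}, then puts the planar map into
\[z\mapsto\zeta(\breve{\varepsilon})\,z+\sum_{j+k=2}g_{jk}(\breve{\varepsilon})\,z^{j}\bar{z}^{k}+\sum_{j+k=3}h_{jk}(\breve{\varepsilon})\,z^{j}\bar{z}^{k}+O(|z|^{4}),\]
with $\zeta(0)={\bf i}$.

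Next, I would perform the quadratic near-identity change of variables that kills all second-order terms (possible because the resolvents $\zeta^{2}-\zeta$, $|\zeta|^{2}-\zeta$, $\bar{\zeta}^{2}-\zeta$ are all nonsingular at $\breve{\varepsilon}=0$), followed by a cubic near-identity change that removes the non-resonant cubic terms $z^{3}$ and $z\bar{z}^{2}$. The $1:4$ resonance condition $\zeta(0)^{4}=1$ forces $\bar{\zeta}(0)^{3}=\zeta(0)$, so the coefficient of $\bar{z}^{3}$ is resonant and must be kept, producing the normal form
\[z\mapsto\zeta(\breve{\varepsilon})\,z+C(\breve{\varepsilon})\,z^{2}\bar{z}+D(\breve{\varepsilon})\,\bar{z}^{3}+O(|z|^{4}).\]
The verification of conditions (R4.0)--(R4.3) then proceeds as follows. (R4.0), namely $\zeta^{k}(0)\neq 1$ for $k=1,2,3$, is immediate because $\{{\bf i},-1,-{\bf i}\}$ misses $1$. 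The transversality condition (R4.3) that $(r,\alpha)\mapsto(\mathrm{Re}\,\zeta,\mathrm{Im}\,\zeta)$ is a local diffeomorphism at the resonance point is checked by a $2\times 2$ Jacobian determinant on the eigenvalues of the planar Jacobian, exactly analogous to the computation of $\Upsilon_{8}$ in Theorem \ref{th6.1}. The remaining conditions (R4.1) $D(0)\neq 0$ and (R4.2) $C(0)\neq 0$ are precisely where the three explicit expressions in the theorem statement enter: after substituting the resonance values of $r$ and $\alpha$, $D(0)$ should simplify to a nonvanishing scalar multiple of $\beta(\beta-2)^{2}/8$, while $\mathrm{Re}\,C(0)$ and $\mathrm{Im}\,C(0)$ should simplify, up to nowhere-vanishing prefactors on the admissible parameter region, to the second and third expressions displayed in the statement.

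The main obstacle is the symbolic bulk of computing $C(0)$ and $D(0)$. The linear diagonalization introduces irrational factors of the type $\sqrt{-\beta(\beta^{2}+(r-4)\beta-4r+4)(\beta+r)}$ that have to be evaluated at the resonance values, the resolvent denominators $\zeta^{k}(0)-\zeta(0)$ for $k=2,3$ take small complex values (for instance $\zeta^{2}(0)-\zeta(0)=-1-{\bf i}$ and $\bar{\zeta}^{2}(0)-\zeta(0)=-1-{\bf i}$), and the full cubic jet of the map on the center manifold must be tracked rather than only the mixed quadratic terms as in the Neimark--Sacker case. Carrying these computations symbolically in $\beta$ and $N$, rationalizing the denominators, and matching the outcome against the three explicit expressions in the statement will require careful computer-algebra bookkeeping analogous to (but heavier than) that behind Theorem \ref{th6.1}. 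Once these identifications are made and the three hypotheses of the theorem are invoked, conditions (R4.1)--(R4.2) hold and Theorem 9.3 of \cite{Kuznetsov} delivers the $1:4$ resonance near $E_2$.
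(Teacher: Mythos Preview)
Your overall strategy---center-manifold reduction, complexification, near-identity transformations to reach the cubic normal form $z\mapsto\zeta z+Cz^2\bar z+D\bar z^3$, then checking nondegeneracy---matches the paper's approach closely. The gap is in your identification of what the three displayed hypotheses actually represent.

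The first quantity $\beta(\beta-2)^2/8$ is \emph{not} $D(0)$; it is the Jacobian determinant of the parameter change $(\epsilon_1,\epsilon_2)\mapsto(\omega_2,\omega_3)$ defined by $\bar\zeta(0)\,\zeta(\breve\epsilon)=e^{\omega_2+{\bf i}\omega_3}$, i.e.\ it is exactly the transversality condition you labeled (R4.3). The second and third quantities are not $\mathrm{Re}\,C(0)$ and $\mathrm{Im}\,C(0)$ directly: the paper passes from the truncated map normal form to its approximating planar flow (Kuznetsov's Theorem~3.28/Lemma~9.15 mechanism), obtaining $\dot\xi=(\omega_2+{\bf i}\omega_3)\xi+A(\breve\omega)\xi^2\bar\xi+\bar\xi^3$ with $A(0)=c_1(0)/|d_1(0)|$, $c_1(0)=-{\bf i}\,C(0)$, $d_1(0)=-{\bf i}\,D(0)$. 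The second and third displayed expressions are $a_0=\mathrm{Re}\,A(0)$ and $b_0=\mathrm{Im}\,A(0)$, which mix real and imaginary parts of $C(0)$ and divide by $|D(0)|$. So if you compute $C(0)$ and $D(0)$ at the map level and try to match them against the stated hypotheses as you described, the expressions will not line up; you need the extra step through the approximating vector field and the rescaling by $|d_1(0)|$ to recover the quantities in the statement.
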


\begin{proof}
We first translate $E_2$ to the origin O, and then use
\begin{eqnarray}
\breve{\epsilon}=(\epsilon_1, \epsilon_2)=(r+{\frac {{\beta}^{2}-2\,\beta+2}{\beta-2}}, \alpha+{\frac {4}{\beta\, \left( \beta-2 \right) }})
\label{eq7.1}
\end{eqnarray}
as a new parameter to replace $(\lambda,\mu)$, mapping \eqref{eq2.1} can be converted to
\begin{eqnarray}
\!\!\left(
\begin{array}{l}
u_{12} \\
\\
v_{12} \\
\\
w_{12}
\end{array}
\right)
\!\rightarrow\!
\left(
\begin{array}{c}
-{\frac {u_{{12}} \left( {\beta}^{2}\epsilon_{{2}}-\beta\,\epsilon_{{1
}}-2\,\beta\,\epsilon_{{2}}+2\,\epsilon_{{1}}-2 \right) }{\beta\,
\epsilon_{{1}}-2\,\epsilon_{{1}}-2}}-{\frac {v_{{12}} \left( \beta\,
\epsilon_{{1}}-2\,\epsilon_{{1}}-2 \right) }{\beta-2}}-{\frac {v_{{12}
}u_{{12}} \left( {\beta}^{2}\epsilon_{{2}}-2\,\beta\,\epsilon_{{2}}-4
 \right) }{N\beta\, \left( \beta-2 \right) }}\\
 \noalign{\medskip}
-{\frac {u_{{12}} \left( {\beta}^{2}\epsilon_{{1}}-{\beta}^{2}\epsilon
_{{2}}-2\,\beta\,\epsilon_{{1}}+2\,\beta\,\epsilon_{{2}}-2\,\beta+4
 \right) }{\beta\,\epsilon_{{1}}-2\,\epsilon_{{1}}-2}}+v_{{12}}+{
\frac {v_{{12}}u_{{12}} \left( {\beta}^{2}\epsilon_{{2}}-2\,\beta\,
\epsilon_{{2}}-4 \right) }{N\beta\, \left( \beta-2 \right) }}\\
\noalign{\medskip}
-{\frac { \left( {\beta}^{2}-\beta\,\epsilon_{{1}}-2\,\beta+2\,
\epsilon_{{1}}+2 \right) v_{{12}}}{\beta-2}}-w_{{12}} \left( -1+\beta
 \right)
\end{array}
\right).~~~
\label{eq7.2}
\end{eqnarray}
Continuing to introduce a reversible transformation
\begin{eqnarray*}
\left(
\begin{array}{cc}
u_{12} \\
v_{12} \\
w_{12}
\end{array}
\right)
\rightarrow
\left(
\begin{array}{ccc}
1 & 1 &0
\\
0 & 1 &0
\\
-1&-1&1
\end{array}
\right)
\left(
\begin{array}{l}
u_{13}
\\
v_{13}
\\
w_{13}
\end{array}
\right),
\end{eqnarray*}
mapping \eqref{eq7.2} can be changed into
\begin{eqnarray}\!\!\!\!\!
\begin{aligned}
\left(
\begin{array}{cc}
u_{13} \\
\\
v_{13} \\
\\
w_{13}
\end{array}
\right)\!\!\!
\rightarrow\!\!\!
\left(
\begin{array}{c}
-{\frac { \left( {\beta}^{2}\epsilon_{{2}}-\beta\,\epsilon_{{1
}}-2\,\beta\,\epsilon_{{2}}+2\,\epsilon_{{1}}-2 \right)\,u_{{13}} }{\beta\,
\epsilon_{{1}}-2\,\epsilon_{{1}}-2}}-{\frac { \left( \beta\,
\epsilon_{{1}}-2\,\epsilon_{{1}}-2 \right)\,v_{{13}} }{\beta-2}}-{\frac { \left( {\beta}^{2}\epsilon_{{2}}-2\,\beta\,\epsilon_{{2}}-4
 \right)\,u_{{13}}\,v_{{13}} }{\beta\,N \left( \beta-2 \right) }}
\\
\noalign{\medskip}
-{\frac { \left( {\beta}^{2}\epsilon_{{1}}-{\beta}^{2}\epsilon
_{{2}}-2\,\beta\,\epsilon_{{1}}+2\,\beta\,\epsilon_{{2}}-2\,\beta+4
 \right)\,u_{{13}} }{\beta\,\epsilon_{{1}}-2\,\epsilon_{{1}}-2}}+v_{{13}}+{
\frac {\left( {\beta}^{2}\epsilon_{{2}}-2\,\beta\,
\epsilon_{{2}}-4 \right)\,u_{{13}}\,v_{{13}}  }{\beta\,N \left( \beta-2 \right) }}
\\
\noalign{\medskip}
\left( 1-\beta \right)\,w_{{13}}
\end{array}
\right).
\end{aligned}
\label{eq7.3}
\end{eqnarray}

In addition, mapping \eqref{eq7.3} exists a two dimensional $C^{2}$ center manifold
\begin{eqnarray*}
w_{10}=h_{6}(u_{10},v_{10})=O(|(u_{10},v_{10})|^3),
\end{eqnarray*}
and then mapping \eqref{eq7.2} is restricted into the following two dimensional form
\begin{eqnarray}
\left[
\begin{array}{ccc}
   u_{13}  \\
   \noalign{\medskip}
   v_{13}
\end{array}
\right]\!\!\!
\mapsto\!\!\!
\left[\begin{array}{lll}
-{\frac { \left( {\beta}^{2}\epsilon_{{2}}-\beta\,\epsilon_{{1
}}-2\,\beta\,\epsilon_{{2}}+2\,\epsilon_{{1}}-2 \right)\,u_{{13}} }{\beta\,
\epsilon_{{1}}-2\,\epsilon_{{1}}-2}}-{\frac { \left( \beta\,
\epsilon_{{1}}-2\,\epsilon_{{1}}-2 \right)\,v_{{13}} }{\beta-2}}-{\frac { \left( {\beta}^{2}\epsilon_{{2}}-2\,\beta\,\epsilon_{{2}}-4
 \right)\,u_{{13}}\,v_{{13}} }{\beta\,N \left( \beta-2 \right) }}
\\
\noalign{\medskip}
-{\frac { \left( {\beta}^{2}\epsilon_{{1}}-{\beta}^{2}\epsilon
_{{2}}-2\,\beta\,\epsilon_{{1}}+2\,\beta\,\epsilon_{{2}}-2\,\beta+4
 \right)\,u_{{13}} }{\beta\,\epsilon_{{1}}-2\,\epsilon_{{1}}-2}}+v_{{13}}+{
\frac {\left( {\beta}^{2}\epsilon_{{2}}-2\,\beta\,
\epsilon_{{2}}-4 \right)\,u_{{13}}\,v_{{13}}  }{\beta\,N \left( \beta-2 \right) }}
 \end{array}\right].
\label{eq7.4}
\end{eqnarray}

Moreover, employing the transformation
\begin{eqnarray*}
\left(
\begin{array}{cc}
u_{13} \\
v_{13}
\end{array}
\right)
\rightarrow
\left(
\begin{array}{cc}
{\frac {{\beta}^{2}\,\epsilon_2-2\,\beta\,\epsilon_2-4}{2\,{\beta}^{2}\epsilon_1-2\,{\beta}^{
2}\,\epsilon_2-4\,\beta\,\epsilon_1+4\,\beta\,\epsilon_2-4\,\beta+8}}
 & -{\frac {\sqrt {\chi_5}}{ \left( 2\,\beta\,\epsilon_1-2\,\beta\,\epsilon_2-4 \right)
 \left( \beta-2 \right) }}
\\
1&0
\end{array}
\right)
\left(
\begin{array}{l}
u_{14}
\\
v_{14}
\end{array}
\right),
\end{eqnarray*}
where
{\footnotesize\begin{eqnarray*}
&&\!\!\!\!\!\!\!\!\chi_{5}:=-{\beta}^{4}\epsilon_2^{2}-4\,{\beta}^{3}\epsilon_1^{3}+4\,{\beta}^{3}\epsilon_1^{2}
\epsilon_2+4\,{\beta}^{3}\epsilon_2^{2}+16\,{\beta}^{2}\epsilon_1^{3}-16\,{\beta}^{2}\epsilon_1^{2}\epsilon_2+24\,{\beta}^{2}\epsilon_1^{2}
-16\,{\beta}^{2}\,\epsilon_1\,\epsilon_2-4\,{\beta
}^{2}\epsilon_2^{2}\\
&&~~~-16\,\beta\,\epsilon_1^{3}+16\,\beta\,\epsilon_1^{2}\,\epsilon_2+8\,{\beta}
^{2}\,\epsilon_2-64\,\beta\,\epsilon_1^{2}+32\,\beta\,\epsilon_1\,\epsilon_2-48\,\beta\,\epsilon_1+32\,\epsilon_1^{2}+64\,\epsilon_1+16,
\end{eqnarray*}}
we change \eqref{eq7.4} into the mapping
\begin{eqnarray}
\begin{aligned}
\left(
\begin{array}{cc}
u_{14} \\
v_{14}
\end{array}
\right)
\rightarrow
\left(
\begin{array}{l}
\tilde{k}_{11}\,u_{11}+\tilde{k}_{12}\,v_{11}+\tilde{p}_{20}\,u^2_{11}+\tilde{p}_{11}\,u_{11}\,v_{11}\\
\tilde{k}_{21}\,u_{11}+\tilde{k}_{22}\,v_{11}+\tilde{q}_{20}\,u^2_{11}+\tilde{q}_{11}\,u_{11}\,v_{11}\\
\end{array}
\right),
\end{aligned}
\label{eq7.4-1}
\end{eqnarray}
where
{\small\begin{eqnarray*}
&&\!\!\!\!\!\!\!\!\!\!\tilde{k}_{11}=\tilde{k}_{22}=:-{\frac {\nu\,{\beta}^{2}-2\,\beta\,\mu-2\,\beta\,\nu+4\,\mu}{2\,\beta
\,\mu-4\,\mu-4}},~\tilde{k}_{12}=-\tilde{k}_{21}:={\frac {\sqrt {\chi_5}}{2\,\beta\,\mu-4\,\mu-4}}\\
&&\!\!\!\!\!\!\!\!\!\!\tilde{p}_{20}:={\frac {{\beta}^{4}{\nu}^{2}-4\,{\beta}^{3}{\nu}^{2}+4\,{\beta}^{
2}{\nu}^{2}-8\,\nu\,{\beta}^{2}+16\,\beta\,\nu+16}{2\,\beta\, \left(
\beta\,\mu-\beta\,\nu-2 \right)  \left( \beta-2 \right) ^{2}N}},~\tilde{p}_{11}:=-{\frac {\sqrt {\chi_5} \left( \nu\,{\beta}^{2}-2\,\beta\,\nu-4
 \right) }{2\,\beta\, \left( \beta\,\mu-\beta\,\nu-2 \right)  \left(
\beta-2 \right) ^{2}N}}\\
&&\!\!\!\!\!\!\!\!\!\!\tilde{q}_{20}:={\frac {\chi_6}{2\,\beta\, \left( \beta\,\mu-\beta\,
\nu-2 \right)  \left( \beta-2 \right) ^{2}N\sqrt {\chi_5}}},\\
&&\!\!\!\!\!\!\!\!\!\!\tilde{q}_{11}:=-{\frac {2\,{\beta}^{4}\mu\,\nu-{\beta}^{4}{\nu}^{2}-8\,\mu\,\nu
\,{\beta}^{3}+4\,{\beta}^{3}{\nu}^{2}-4\,{\beta}^{3}\nu+8\,{\beta}^{2}
\mu\,\nu-4\,{\beta}^{2}{\nu}^{2}-8\,{\beta}^{2}\mu}{2\,\beta\, \left( \beta\,
\mu-\beta\,\nu-2 \right)  \left( \beta-2 \right) ^{2}N}}\\
&&~~~-{\frac {16\,\nu\,{\beta}^{2
}+16\,\beta\,\mu-16\,\beta\,\nu+16\,\beta-16}{2\,\beta\, \left( \beta\,
\mu-\beta\,\nu-2 \right)  \left( \beta-2 \right) ^{2}N}},
\end{eqnarray*}}and
{\small\begin{eqnarray*}
&&\!\!\!\!\!\!\!\!\!\!\chi_6:= 2\,{\beta}^{6}\epsilon_1\,\epsilon_2^{2}-{\beta}^{6}\epsilon_2^{3}-12\,\epsilon_1\,\epsilon_2^{2}{\beta}^{5}+6\,{\beta}^{5}\epsilon_2^{3}
-4\,\epsilon_2^{2}{\beta}^{5}+24\,{\beta}^{4}\epsilon_2^{2}\epsilon_1-12\,{\beta}^{4}\epsilon_2^{3}-
16\,{\beta}^{4}\epsilon_1\,\epsilon_2\\
&&~~+28\,{\beta}^{4}\epsilon_2^{2}-16\,{\beta}^{3}\epsilon_1\,\epsilon_2^{2}+8\,{\beta}^{3}\epsilon_2^{3}+64\,\epsilon_1\,\epsilon_2\,{\beta}^{3}-64\,{\beta}^{3}\epsilon_2^{2}+32\,{\beta}^{3}\epsilon_2
-64\,{\beta}^{2}\epsilon_1\,\epsilon_2+48\,{\beta}^{
2}\epsilon_2^{2}\\
&&~~+32\,{\beta}^{2}\epsilon_1-112\,\epsilon_2\,{\beta}^{2}-64\,\beta\,\epsilon_1+96
\,\beta\,\epsilon_2-64\,\beta+64.
\end{eqnarray*}}
\!\!\eqref{eq7.4-1} can be further rewritten by $z_2=u_{13}+v_{13}\,{\bf{i}}$ in the complex form
\begin{eqnarray}
\begin{array}{l}
z_2\mapsto\zeta_1(\breve{\epsilon})z_2+l_{20}z_2^2+l_{11}z_2\bar{z}_2+l_{02}\bar{z}^2_2,
\end{array}
\label{eq7.5}
\end{eqnarray}
where
\begin{eqnarray*}
&&\!\!\!\!\!\!\!\!\!\!l_{20}:=-{\frac { \left( \left( \frac{\beta}{2}-1 \right) \sqrt {\chi_5}-\frac{{\bf i}\,{\beta}^{3}\epsilon_2}{2}
+ \left( {\epsilon_1}^{2}\,{\bf i}+2\,{\bf i}\,\epsilon_2 \right) {\beta}^{2}
 \right)  \left( \epsilon_2\,{\beta}^{2}-2\,\beta\,\epsilon_2-4 \right) }{\sqrt {\chi_5}
\beta\, \left( \beta-2 \right) ^{2}N}}\\
&&~~~-{\frac { \left(  \left( -4\,{\bf i}\,{\epsilon_1}^{2}
-4\,{\bf i}\,\epsilon_1-2\,{\bf i}\,\epsilon_2+2\,{\bf i}\, \right) \beta+4\,{\bf i}\, \left( \epsilon_1+2 \right) \epsilon_1
 \right)  \left( \epsilon_2\,{\beta}^{2}-2\,\beta\,\epsilon_2-4 \right) }{\sqrt {\chi_5}
\beta\, \left( \beta-2 \right) ^{2}N}},\\
&&\!\!\!\!\!\!\!\!\!\!l_{11}:={\frac { \left( \epsilon_2\,{\beta}^{2}-2\,\beta\,\epsilon_2-4 \right) ^{2}
 \left( \frac{\sqrt {\chi_5}}{2}+2\,{\bf i}+ \left( \epsilon_1\,{\bf i}-\frac{{\bf i}\,\epsilon_2}{2} \right) {\beta}^{2}+
 \left( -2\,{\bf i}\,\epsilon_1+\epsilon_2\,{\bf i}-2\,{\bf i} \right) \beta \right) }{2\,\sqrt {\chi_5} \left(
\beta-2 \right) ^{2}N\beta\, \left( -2+ \left( \epsilon_1-\epsilon_2 \right) \beta
 \right) }},\\
&&\!\!\!\!\!\!\!\!\!\!l_{02}:={\frac { \left( \beta\,\epsilon_1-2\,\epsilon_1-2 \right)  \left( \epsilon_2\,{\beta}^{2}-2
\,\beta\,\epsilon_2-4 \right)  \left( \frac{\sqrt {\chi_5}\beta}{2}+\frac{{\bf i}\,{\beta}^{3}\epsilon_2}{2}+
 \left( {\epsilon_1}^{2}\,{\bf i}-{\bf i}\,\epsilon_1\,\epsilon_2-\epsilon_2\,{\bf i} \right) {\beta}^{2}\right) }{
\sqrt {\chi_5}\beta\, \left( \beta-2 \right) ^{2}N \left( -2+ \left( \epsilon_1-
\epsilon_2 \right) \beta \right) }}\\
&&~~~+{\frac { \left( \beta\,\epsilon_1-2\,\epsilon_1-2 \right)  \left( \epsilon_2\,{\beta}^{2}-2
\,\beta\,\epsilon_2-4 \right)  \left( -2\,{\bf i} \left(
\epsilon_1+1 \right)  \left( \epsilon_1-\epsilon_2+1 \right) \beta+4\,{\bf i}\,\epsilon_1+4\,{\bf i} \right) }{
\sqrt {\chi_5}\beta\, \left( \beta-2 \right) ^{2}N \left( -2+ \left( \epsilon_1-
\epsilon_2 \right) \beta \right) }},\\
\end{eqnarray*}
By Lemma $9.14$ in \cite[p.455]{Kuznetsov}, mapping \eqref{eq7.5} can be transformed into the form
\begin{eqnarray}
\eta_1\mapsto\Gamma_{\breve{\epsilon}}(\eta_1):=\zeta_1(\breve{\epsilon})\eta_1+C(\breve{\epsilon})\eta_1\bar{\eta}^2_1+D(\breve{\epsilon})\bar{\eta}^3_1+O(|\eta_1|^4),
\label{eq7.6}
\end{eqnarray}
where
\begin{eqnarray*}
&&C(\breve{\epsilon}):=\frac{(1+3\,{\bf{i}})\,l_{20}\,l_{11}}{4}+\frac{(1-{\bf{i}})\,l_{11}\,\bar{l}_{11}}{2}-\frac{(1+{\bf{i}})\,l_{02}\,\bar{l}_{02}}{4}+\frac{l_{21}}{2},\\
&&D(\breve{\epsilon}):=\frac{({\bf{i}}-1)\,l_{11}\,l_{02}}{4}-\frac{(1+{\bf{i}})\,l_{02}\,\bar{l}_{20}}{4}+\frac{l_{03}}{6},
\end{eqnarray*}
by an invertible smooth change of variable, smoothly depending on the parameters, for sufficiently small $\breve{\epsilon}$.
Since $\zeta_1(0)=e^{(\frac{\pi\,{\bf{i}}}{2})}={\bf{i}}$, we introduce a new parameters $\breve{\omega}=(\omega_2,\omega_3)$, which satisfies
\begin{eqnarray}
\zeta_1(0)^3\,\zeta_1(\breve{\epsilon})=e^{(\omega_2+{\bf{i}}\,\omega_3)}.
\label{eq7.7}
\end{eqnarray}
As a result, from \eqref{eq7.7} we get a mapping
\begin{eqnarray}
\breve{\epsilon}\mapsto\breve{\omega}(\breve{\epsilon})=(\omega_2(\breve{\epsilon}),\omega_3(\breve{\epsilon})).
\label{eq7.8}
\end{eqnarray}
Due to
\begin{eqnarray*}
det
\left.
\left(
\begin{array}{ll}
\frac{\partial\omega_2}{\partial\epsilon_1}&\frac{\partial\omega_2}{\partial\epsilon_2}\\
\frac{\partial\omega_3}{\partial\epsilon_1}&\frac{\partial\omega_3}{\partial\epsilon_2}
\end{array}
\right)
\right|_{(\epsilon_1,\epsilon_2)=(0,0)}
=det\left(
\begin{array}{cc}
1-\beta    &  \frac{\beta^2}{4}\\
\frac{\beta}{2}-1 &  -\frac{(\beta-2)\,\beta}{4}
\end{array}
\right)=\frac{\beta\,(\beta-2)^2}{8}\neq0,
\end{eqnarray*}
mapping \eqref{eq7.8} is regular at $0$. Thereupon, the new parameter $\breve{\omega}$ can be used as a new unfolding parameter and we can introduce the truncated normal form
\begin{eqnarray}
N_{\omega}: \eta_1\mapsto e^{(\frac{\pi\,{\bf{i}}}{2}+\omega_2+{\bf{i}}\,\omega_3)}\,\eta_1+c(\breve{\omega})\eta_1\bar{\eta}^2_1+d(\breve{\omega})\bar{\eta}^3_1,
\label{eq7.9}
\end{eqnarray}
where $c$ and $d$ are smooth functions of $\breve{\omega}$, such that $c(0)=C(0)$ and $d(0)=D(0)$.
In addition, define a linear transformation $R: \mathbb{C}\rightarrow\mathbb{C}$ by
\begin{eqnarray}
\eta_1\mapsto R\,\eta_1:=\bar{\zeta}_1(0)\,\eta_1=e^{(\frac{\pi\,{\bf{i}}}{2})}\,\eta_1=-{\bf{i}}\,\eta_1
\label{eq7.10}
\end{eqnarray}
which is the 'clockwise' rotation through $\pi/2$. The truncated normal form \eqref{eq7.9} is invariant with respect to $R$. Note that $R^4=id$. The phase portraits that follow will therefore possess $\mathbb{Z}_4$-symmetry.

From Theorem $3.28$ in \cite[p.80]{Kuznetsov1}, for sufficiently small $\breve{\omega}$, mapping \eqref{eq7.9} satisfies
$$R N_{\breve{\omega}}(\eta_1)=\psi_{\breve{\omega}}^1(\eta_1)+O(|\eta_1|^4)$$
where $\psi_\omega^t$ is the flow of the approximating planar differential system
\begin{eqnarray}
\dot{\eta}_1=(\omega_2+{\bf{i}}\,\omega_3)\,\eta_1+c_1(\breve{\omega})\eta_1\bar{\eta}^2_1+d_1(\breve{\omega})\bar{\eta}^3_1, \eta_1\in \mathbb{C}
\label{eq7.11}
\end{eqnarray}
where $c_1$ and $d_1$ are smooth complex-valued functions of $\breve{\omega}$, such that
$$c_1(0)=\bar{\zeta}_1(0)\,c(0)=-{\bf{i}}\,c(0),~~d_1(0)=\bar{\zeta}_1(0)\,d(0)=-{\bf{i}}\,d(0).$$

Using the scaling
$$\eta_1=\frac{e^{(\frac{{\bf{i}}\,arg(d_1(\breve{\omega}))}{4})}\,\xi_1}{\sqrt{|d_1(\breve{\omega})|}}$$
and employing a transformation
$$A(\breve{\omega}):=\frac{c_1(\breve{\omega})}{|d_1(\breve{\omega})|},$$
\eqref{eq7.11} can be converted into the complex ODE
\begin{eqnarray}
\dot{\xi_1}=(\omega_2+{\bf{i}}\,\omega_3)\,\xi_1+A(\breve{\omega})\xi_1^2\bar{\xi}_1+\bar{\xi}_1^3, ~\xi_1 \in \mathbb{C},
\label{eq7.12}
\end{eqnarray}
Writing \eqref{eq7.12} in polar coordinates with $\xi_1=\rho_1 exp({\bf{i}}\nu_1)$, we obtain
\begin{eqnarray}
\begin{cases}
\begin{array}{l}
\frac{d\rho_1}{dt}=\omega_2\rho+Re(A(\breve{\omega}))\rho_1^3+\rho_1^3cos(4\,\nu_1)\\
\frac{d\nu_1}{dt}=\omega_3+Im(A(\breve{\omega}))\rho_1^2-\rho_1^2\,sin(4\,\nu_1)
\end{array}
\end{cases}.
\label{eq7.14}
\end{eqnarray}
It can be checked that
\begin{eqnarray*}
&&a_0=Re(A(0)):=-{\frac {3\,{\beta}^{3}{N}^{2} \left( \beta-2 \right) ^{4} \left(
\beta-\frac{2}{3} \right) }{2\,\sqrt{20\,{\beta}^{4}-96\,{\beta}^{3}+192\,{
\beta}^{2}-192\,\beta+80}}}\neq0,\\
&&b_0=Im(A(0)):={\frac {{\beta}^{2} \left( {\beta}^{2}-3\,\beta+1 \right) {N}^{2}
 \left( \beta-2 \right) ^{4}}{\sqrt{20\,{\beta}^{4}-96\,{\beta}^{3}+192\,{
\beta}^{2}-192\,\beta+80}}}\neq0.
\end{eqnarray*}
Consequently, the system experiences a $1:4$ resonance. This completes the proof.
\end{proof}

Furthermore, from \cite[pp.80-81]{Kuznetsov1}, we know that the bifurcation diagram of the approximating system \eqref{eq7.14} in the $(\omega_2,\omega_3)$-plane depends on $A_0=a_0+{\bf{i}}\,b_0$, and the division of this quadrant of the $A_0$-plane into regions corresponding to different bifurcation diagrams are shown in Figure \ref{partion4-1}. To be more precise, we have the following conclusions.
\begin{cor}
Under the conditions of Theorem \ref{th6.3}, if $(a_0,b_0)$ belongs to region II in Figure \ref{partion4-1},
then mapping \eqref{eq2.1} possesses the following dynamic properties when the parameter $(r, \alpha)$ in a sufficiently small neighborhood of point $(-(\beta^2-2\,\beta+2)/(\beta-2),-4/\beta\,(\beta-2))$:
\begin{description}
    \item[$(1)$]System $\eqref{eq2.1}$ always has a fixed point $E_2$. As $(r,\alpha)$ crosses the line
    $$
    (r,\alpha)\in \mathfrak{L}_2:=\{N>0, 0<\beta<1, \Psi_1<r<1, \alpha=\Psi_3\},
    $$
    \eqref{eq2.1} undergoes a non-degenerate supercritical Neimark-Sacker bifurcation and generates a stable invariant circle $\Gamma$.
    \item[$(2)$]  When $(r,\alpha)$ near the curves
    {\footnotesize\begin{eqnarray*}
    &&\!\!\!\!\!\!\!H_{41}:=\{(r,\alpha)|-\frac{\chi_7}{\chi_8}\,(r+\frac {{\beta}^{2}-2\,\beta+2}{\beta-2})-\frac{\chi_9}{\chi_{10}}\,(\alpha+{\frac {4}{\beta\, \left( \beta-2 \right) }})\\
    &&~~~~~~~+O(|(r+{\frac {{\beta}^{2}-2\,\beta+2}{\beta-2}},\alpha+{\frac {4}{\beta\, \left( \beta-2 \right) }})|^2)=0\}
    \end{eqnarray*}}
    and $H_{42}$ (replacing $\sqrt{H}$ with $-\sqrt{H}$ in the expression of $H_{41}$, one can obtain the expression of $H_{42}$), where
    {\footnotesize\begin{eqnarray*}
    &&\!\!\!\!\!\!\!\!\!\!\chi_7:=-51200+51200\,\sqrt {H}+491520\,\sqrt {H}{\beta}^{6}-971776\,\sqrt {H}{\beta}^{5}+1328128\,\sqrt {H}{\beta}^{4}\\
    &&~~~~-1253376\,\sqrt {H}{\beta}^{3}+786432\,\sqrt {H}{\beta}^{2}-296960\,\sqrt {H}\beta-3200\,\sqrt {H}{\beta}^{9}-165888\,\sqrt {H}{\beta}^{7}\\
    &&~~~~+33920\,\sqrt{H}{\beta}^{8}-17408\,{\beta}^{6}{N}^{4}-74\,{N}^{4}{\beta}^{16}+804\,{N}^{4}{\beta}^{15}-5124\,{N}^{4}{\beta}^{14}
    +21416\,{N}^{4}{\beta}^{13}\\
    &&~~~~-61952\,{N}^{4}{\beta}^{12}+127232\,{N}^{4}{\beta}^{11}-186752\,{N}^{4}{\beta}^{10}+194048\,{N}^{4}{\beta}^{9}-138752\,{N}^{4}{\beta}^{8}\\
    &&~~~~+64512\,{N}^{4}{\beta}^{7}+2048\,{\beta}^{5}{N}^{4}+3\,{N}^{4}{\beta}^{17}+1600\,{\beta}^{9}+664064\,{\beta}^{5}+98304\,{\beta}^{7}
    -18560\,{\beta}^{8}\\
    &&~~~~-971776\,{\beta}^{4}+983040\,{\beta}^{3}-313344\,{\beta}^{6}-663552\,{\beta}^{2}+271360\,\beta,\\
    &&\!\!\!\!\!\!\!\!\!\!\chi_8:=2\, ( 3\,{N}^{2}{\beta}^{8}-26\,{N}^{2}{\beta}^{7}+88\,{N}^{2}{\beta}^{6}-144\,{N}^{2}{\beta}^{5}+112\,{\beta}^{4}{N}^{2}
    -32\,{\beta}^{3}{N}^{2}-40\,{\beta}^{4}+192\,{\beta}^{3}\\
    &&~~~-384\,{\beta}^{2}+384\,\beta-160)(3\,{N}^{2}{\beta}^{8}-26\,{N}^{2}{\beta}^{7}
    +88\,{N}^{2}{\beta}^{6}-144\,{N}^{2}{\beta}^{5}+112\,{\beta}^{4}{N}^{2}\\
    &&~~~-32\,{\beta}^{3}{N}^{2}+40\,{\beta}^{4}-192\,{\beta}^{3}+384\,{\beta}^{2}-384\,\beta+160),\\
    &&\!\!\!\!\!\!\!\!\!\!\chi_9:=\beta\, ( 51200-178176\,\sqrt {H}{\beta}^{6}+307712\,\sqrt {H}{\beta}^{5}-356352\,\sqrt {H}{\beta}^{4}+270336\,\sqrt {H}{\beta}^{3}\\
    &&~~~-122880\,\sqrt {H}{\beta}^{2}+25600\,\sqrt {H}\beta+1600\,\sqrt {H}{\beta}^{9}+67584\,\sqrt {H}{\beta}^{7}-15360\,\sqrt {H}{\beta}^{8}\\
    &&~~~-1024\,{\beta}^{6}{N}^{4}-56\,{N}^{4}{\beta}^{16}+474\,{N}^{4}{\beta}^{15}-2404\,{N}^{4}{\beta}^{14}+8128\,{N}^{4}{\beta}^{13}
    -19264\,{N}^{4}{\beta}^{12}\\
    &&~~~+32704\,{N}^{4}{\beta}^{11}-39808\,{N}^{4}{\beta}^{10}+34048\,{N}^{4}{\beta}^{9}-19456\,{N}^{4}{\beta}^{8}+6656\,{N}^{4}{
    \beta}^{7}+3\,{N}^{4}{\beta}^{17}\\
    &&~~~-1600\,{\beta}^{9}-664064\,{\beta}^{5}-98304\,{\beta}^{7}+18560\,{\beta}^{8}+971776\,{\beta}^{4}-983040\,{
    \beta}^{3}+313344\,{\beta}^{6}\\
    &&~~~+663552\,{\beta}^{2}-271360\,\beta),\\
    &&\!\!\!\!\!\!\!\!\!\!\chi_{10}:=4\,( 3\,{N}^{2}{\beta}^{8}-26\,{N}^{2}{\beta}^{7}+88\,{N}^{2}{
    \beta}^{6}-144\,{N}^{2}{\beta}^{5}+112\,{\beta}^{4}{N}^{2}-32\,{\beta}^{3}{N}^{2}-40\,{\beta}^{4}+192\,{\beta}^{3}\\
    &&~~~~-384\,{\beta}^{2}+384\,\beta-160)(3\,{N}^{2}{\beta}^{8}-26\,{N}^{2}{\beta}^{7}+88\,{N}^{2}{\beta}^{6}-144\,{N}^{2}{\beta}^{5}
    +112\,{\beta}^{4}{N}^{2}\\
    &&~~~~-32\,{\beta}^{3}{N}^{2}+40\,{\beta}^{4}-192\,{\beta}^{3}+384\,{\beta}^{2}-384\,\beta+160),\\
    &&\!\!\!\!\!\!\!\!\!\!H:=\frac{\chi_{11}}{1600\,( {\beta}^{2}-{\frac {14\,\beta}{5}}+2) ^{2}( {\beta}^{2}-2\,\beta+2 )},\\
    &&\!\!\!\!\!\!\!\!\!\!\chi_{11}:=-12800+13\,{N}^{4}{\beta}^{14}-218\,{N}^{4}{\beta}^{13}+1618\,{N}^{4}{\beta}^{12}-6976\,{N}^{4}{\beta}^{11}
    +19264\,{N}^{4}{\beta}^{10}\\
    &&~~~~-35392\,{N}^{4}{\beta}^{9}+43456\,{N}^{4}{\beta}^{8}-34816\,{N}^{4}{\beta}^{7}+( 17152\,{N}^{4}-1600) {\beta}^{6}+( -4608\,{N}^{4}\\
    &&~~~~+12160 ) {\beta}^{5}+( 512\,{N}^{4}-40064){\beta}^{4}+73728\,{\beta}^{3}-80128\,{\beta}^{2}+48640\,\beta,
    \end{eqnarray*}}there exists a saddle-node of period-4 located on the invariant circle $\Gamma$, which is a homoclinic structure.
 \item[$(3)$] As $(r,\alpha)$ crosses the curves $H_{41}$ and $H_{42}$, the mapping \eqref{eq2.1} undergoes a fold bifurcation and generates a period-4 saddle and a period-4 node (or focus).
\end{description}
\label{cor6.31}
\end{cor}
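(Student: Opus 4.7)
The plan is to transfer the dynamical description of the planar approximating system \eqref{eq7.14} --- already constructed in the proof of Theorem \ref{th6.3} --- into a statement about the original mapping \eqref{eq2.1}, and then to express the resulting bifurcation curves in the original $(r,\alpha)$-parameters. The relevant abstract machinery is the general theory of the $1{:}4$ strong resonance, in the version due to Arnold and Berezovskaya--Khibnik presented in \cite[Sec.~9.5.4]{Kuznetsov} and \cite[Sec.~3.5]{Kuznetsov1}; the hypothesis that $(a_0,b_0)$ lies in Region~II of Figure \ref{partion4-1} places us in a well-studied stratum of the $A_0$-plane whose bifurcation diagram contains exactly the three features listed as (1)--(3).

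First I would invoke Takens' Theorem in the form used in Theorem \ref{th6.3}: the composition $R N_{\breve{\omega}}$ coincides with the time-one map $\psi^1_{\breve{\omega}}$ of \eqref{eq7.14} up to $O(|\eta_1|^4)$, so that equilibria of the amplitude system correspond, via the $\mathbb{Z}_4$-orbit structure, to period-$4$ cycles of $N_{\breve{\omega}}$ and hence of the original mapping. From the phase portraits in Region~II I would then read off three statements: (i) the trivial equilibrium $\rho_1=0$ undergoes a supercritical Hopf bifurcation at $\omega_2=0$, which lifts to the supercritical Neimark--Sacker bifurcation on $\mathfrak{L}_2$ claimed in item~(1), supercriticality being guaranteed by $a_0=\mathrm{Re}\,A(0)<0$ throughout Region~II; (ii) a pair of fold curves $\omega_2=\pm\sqrt{H}\,\omega_3+o(\omega_3)$ bounds a wedge in which an additional $\mathbb{Z}_4$-orbit of saddles coexists with a $\mathbb{Z}_4$-orbit of nodes/foci, lifting to the period-$4$ fold bifurcations of item~(3); and (iii) on a curve inside this wedge the saddle orbit has heteroclinic connections that lie on the invariant circle $\Gamma$ born from the Neimark--Sacker bifurcation, producing the homoclinic structure of item~(2). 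Persistence of each feature under the discarded $O(|\eta_1|^4)$ terms is standard: the Hopf/Neimark--Sacker and fold curves survive by transversality, while the homoclinic tangency persists outside an exponentially thin Arnold tongue because $b_0=\mathrm{Im}\,A(0)\neq 0$.

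Second, to obtain the explicit local expressions for $H_{41}$, $H_{42}$ and $\mathfrak{L}_2$ in the $(r,\alpha)$-plane, I would compose the linear parts of the parameter changes already introduced in Theorem \ref{th6.3}: the affine shift \eqref{eq7.1} and the implicit change \eqref{eq7.7}. The Jacobian of the second at the origin was computed in Theorem \ref{th6.3} to equal
\[
\det\begin{pmatrix} 1-\beta & \beta^{2}/4 \\[2pt] \beta/2-1 & -(\beta-2)\beta/4 \end{pmatrix}=\frac{\beta(\beta-2)^{2}}{8},
\]
which is nonzero under the hypotheses of Theorem \ref{th6.3}, so the Implicit Function Theorem provides a smooth local inverse $\breve{\omega}\mapsto\breve{\epsilon}$. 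Pulling the line $\omega_2=0$ back through this inverse and then through \eqref{eq7.1} yields $\mathfrak{L}_2$; doing the same with the two fold curves $\omega_2=\pm\sqrt{H}\,\omega_3+o(\omega_3)$ yields the leading-order expansions that define $H_{41}$ and $H_{42}$.

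The main obstacle will be algebraic rather than conceptual: obtaining the explicit rational functions $\chi_7,\ldots,\chi_{11}$ and $H$ requires evaluating the normalization $|d_1(0)|$, separating real and imaginary parts of $c_1(0)/|d_1(0)|$ to verify Region~II membership, and then tracking all of this through the composed linearizations of \eqref{eq7.1}--\eqref{eq7.8}. This is mechanical but unavoidable, and is best carried out in a computer algebra system; higher-order terms in the expansions of $H_{41}$ and $H_{42}$ would be prohibitive, but the qualitative conclusions (1)--(3) require only the linear-order part tabulated in the statement.
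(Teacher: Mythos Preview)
Your proposal is correct and follows essentially the same route as the paper: read off the bifurcation structure of the $\mathbb{Z}_4$-equivariant approximating system \eqref{eq7.14} in Region~II from \cite{Kuznetsov,Kuznetsov1}, lift equilibria/limit cycles/heteroclinic connections to period-$4$ cycles/invariant circles/homoclinic structures of the map via the time-one approximation, and then pull the Hopf line $\{\omega_2=0\}$ and the two fold curves back through the composed parameter changes \eqref{eq7.1} and \eqref{eq7.7}--\eqref{eq7.8} to obtain $\mathfrak{L}_2$, $H_{41}$, $H_{42}$. One small correction: in the paper's organization (and in the corollary as stated) the homoclinic structure of item~(2) lives \emph{on} the fold curves $L_1,L_2$ themselves---the four saddle-nodes are born sitting on the limit cycle $\Gamma_{sl}$---rather than on a separate curve inside the wedge, so items~(2) and~(3) both refer to the same pair $H_{41},H_{42}$.
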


\begin{figure}[ht!]
\centering
{
\includegraphics[width=10cm]{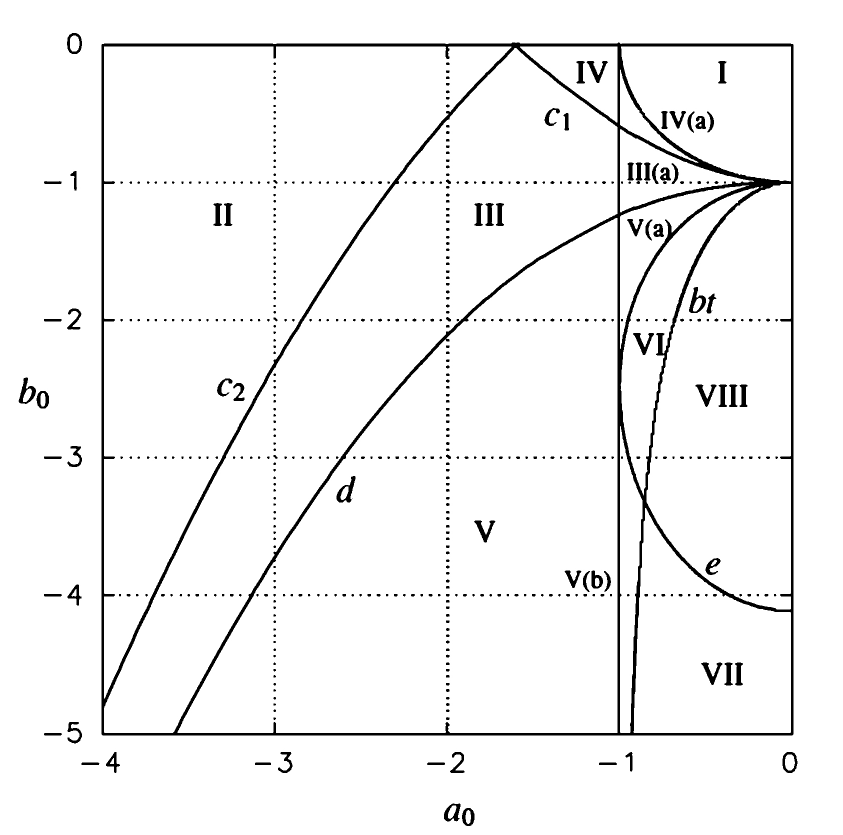}
}
\caption{Partitioning of the $(a_0, b_0)$-plane of system \eqref{eq7.14}}
\label{partion4-1}
\end{figure}
\begin{figure}[ht!]
\centering
{
\includegraphics[width=10cm]{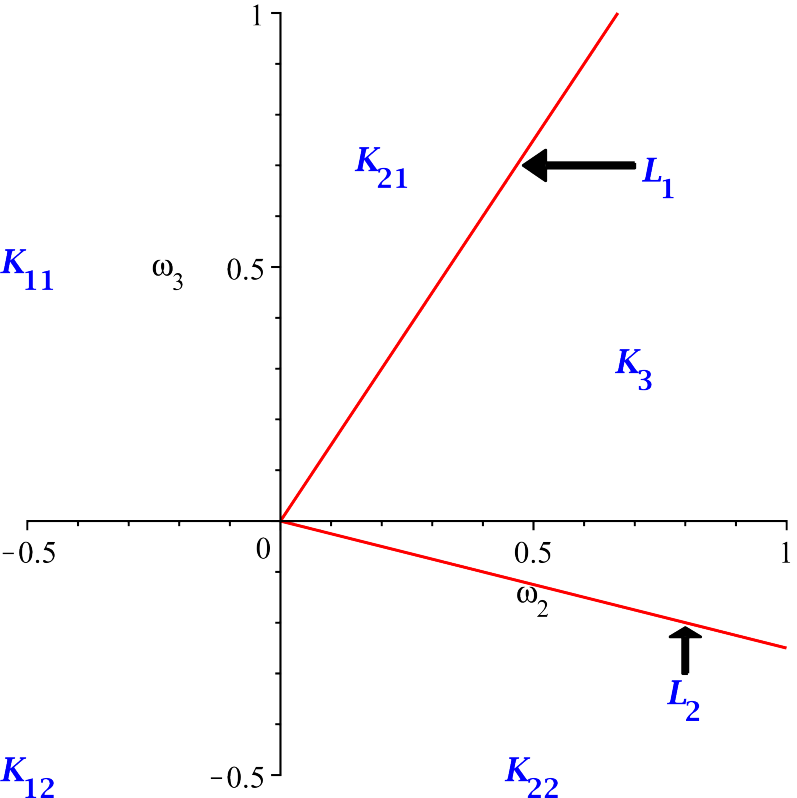}
}
\caption{Bifurcation diagram of system \eqref{eq7.14}.}
\label{rea4-2}
\end{figure}
\begin{proof}
From \cite[pp.82-86]{Kuznetsov1}, we know that system \eqref{eq7.14} has the following bifurcation in a neighbourhood of $(\omega_1,\omega_2)=(0,0)$ for sufficiently small $|\omega|$. Furthermore, we can obtain the bifurcation diagrams of system \eqref{eq7.14}, which consist of parameter diagram and phase diagram as shown in Figures \ref{rea4-2} and \ref{1bi4YJQ1} respectively.
Especially, we give the phase diagrams corresponding to regions $K_3$, $K_{1i}$, $K_{2i}$ and $L_{i}$ ($i=1,2$) of Figure \ref{rea4-2} which are shown in Figure \ref{Y4-1}-\ref{Y4-7} respectively.
\begin{figure}[ht!]
\T\T\T\T\T\T\T\T\T\T\T\T\T\T\T\T\T\T\T\T\T\T\T\T\T\T\T\T\T\T\T\T\T\T\T\T
\subfigure[The phase diagrams of the region $K_{11}$ in Figure \ref{rea4-2}.]{
\includegraphics[width=7.5cm]{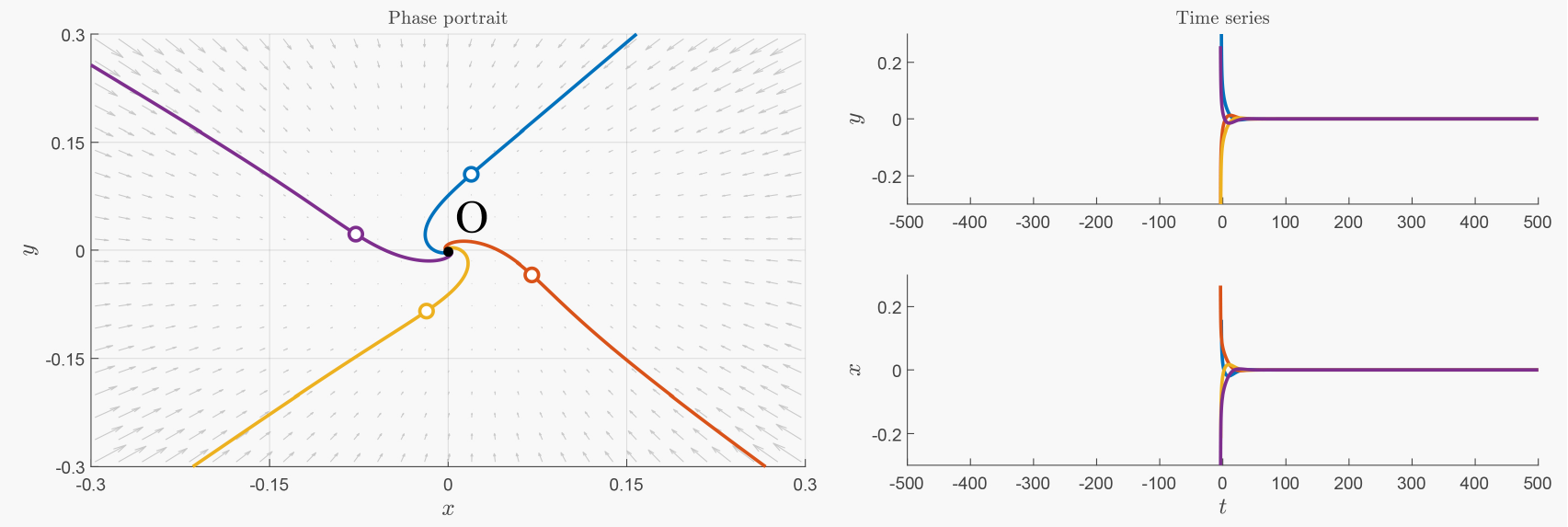}
\label{Y4-1}
}
\quad
\quad
\T\T\T\T\T\T\T\T\T\T\T\T\T\T\T\T\T\T\T\T\T\T\T\T\T\T\T\T\T\T\T\T\T\T\T\T
\subfigure[The phase diagrams of the region $K_{12}$ in Figure \ref{rea4-2}.]{
\includegraphics[width=7.5cm]{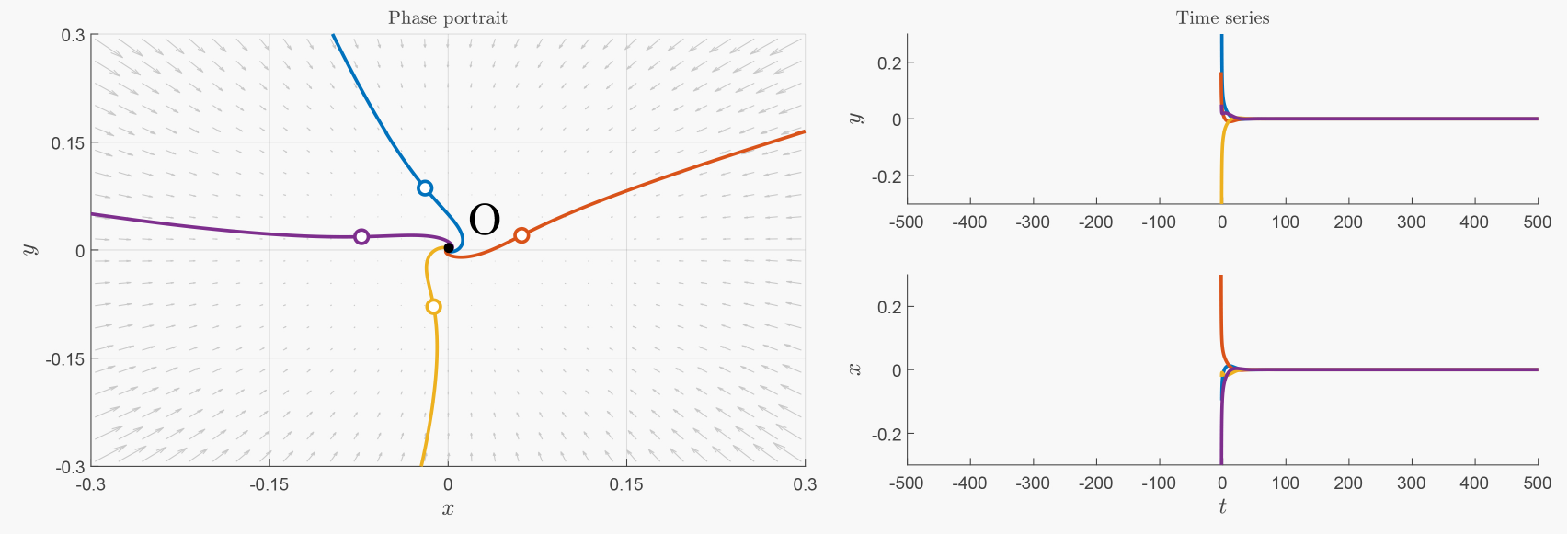}
\label{Y4-2}
}
\quad
\T\T\T\T\T\T\T\T\T\T\T\T\T\T\T\T\T\T\T\T\T\T\T\T\T\T\T\T\T\T\T\T\T\T\T\T
\subfigure[The phase diagrams of the region $K_{21}$ in Figure \ref{rea4-2}.]{
\includegraphics[width=7.5cm]{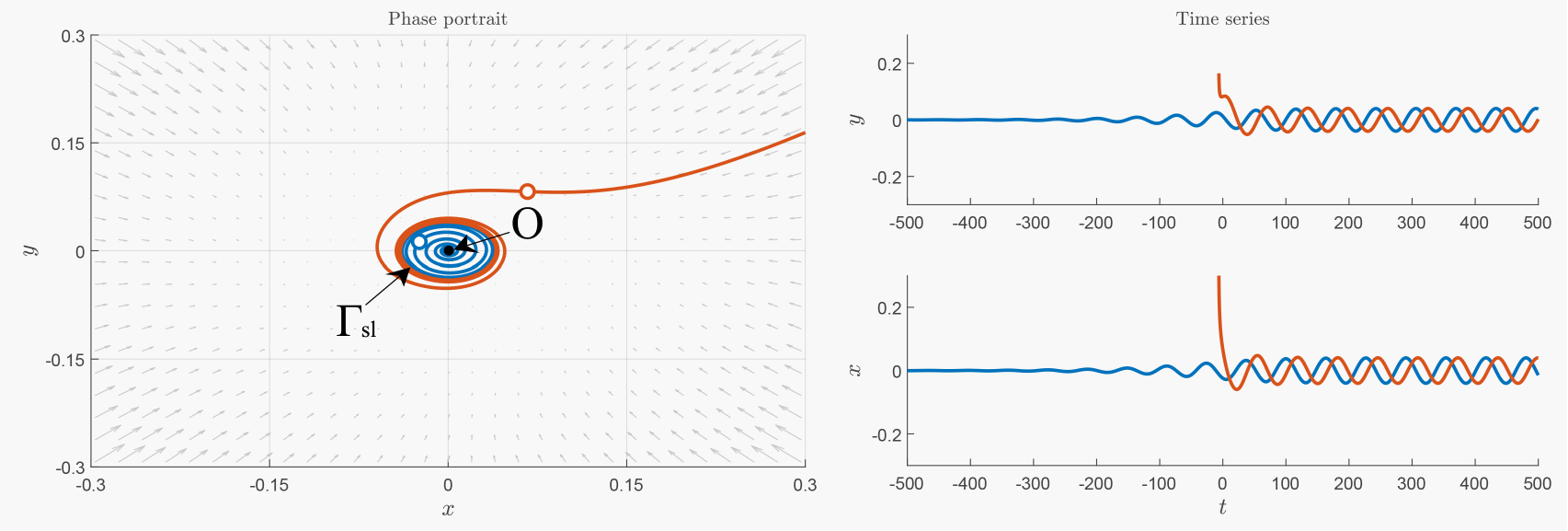}
\label{Y4-3}
}
\quad
\quad
\T\T\T\T\T\T\T\T\T\T\T\T\T\T\T\T\T\T\T\T\T\T\T\T\T\T\T\T\T\T\T\T\T\T\T\T
\subfigure[The phase diagrams of the region $K_{22}$ in Figure \ref{rea4-2}.]{
\includegraphics[width=7.5cm]{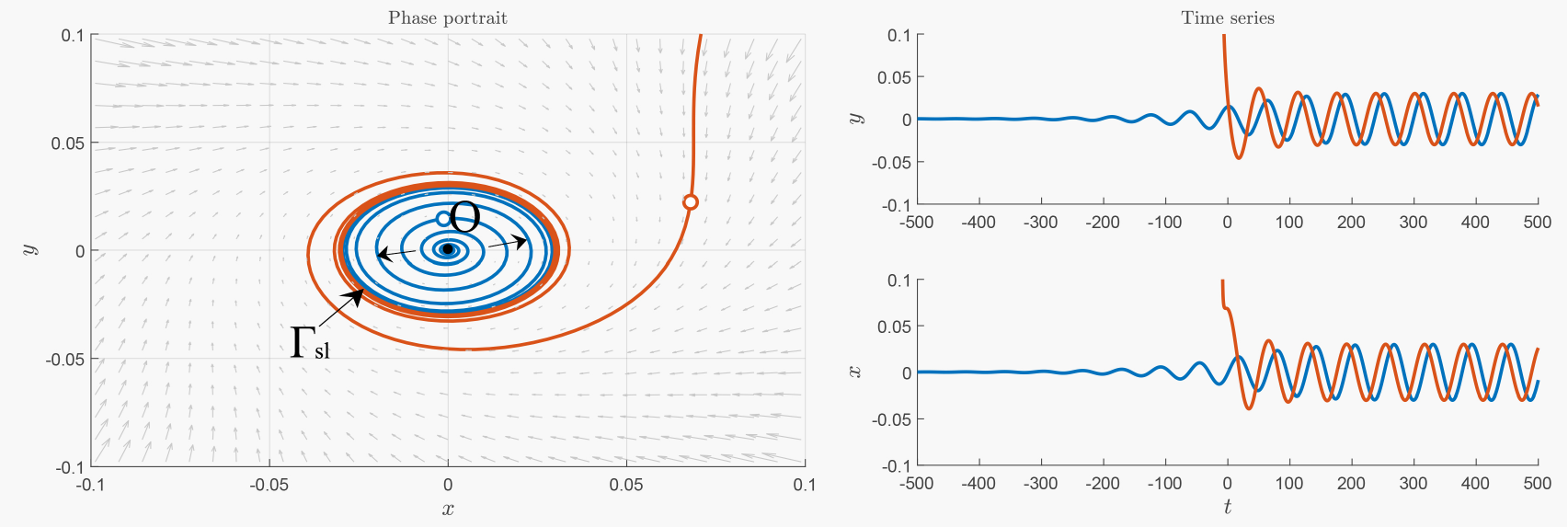}
\label{Y4-4}
}
\T\T\T\T\T\T\T\T\T\T\T\T\T\T\T\T\T\T\T\T\T\T\T\T\T\T\T\T\T\T\T\T\T\T\T\T
\subfigure[The phase diagrams of the region $L_1$ in Figure \ref{rea4-2}.]{
\includegraphics[width=7.5cm]{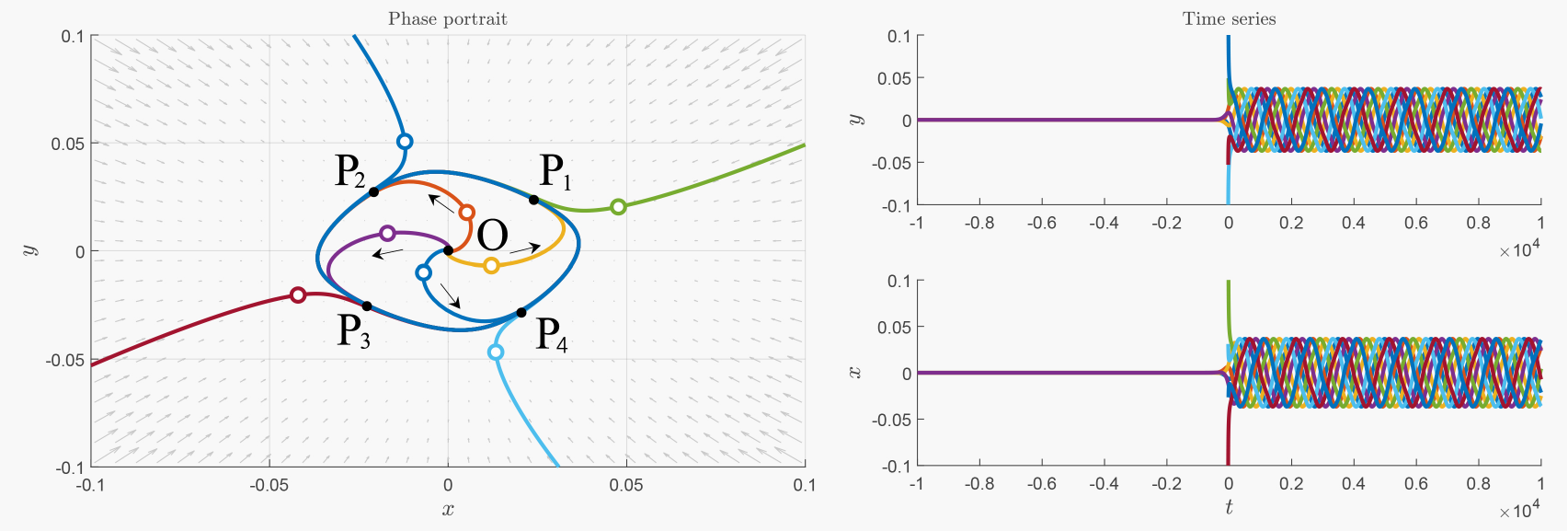}
\label{Y4-5}
}
\quad
\quad
\T\T\T\T\T\T\T\T\T\T\T\T\T\T\T\T\T\T\T\T\T\T\T\T\T\T\T\T\T\T\T\T\T\T\T\T
\subfigure[The phase diagrams of the region $L_2$ in Figure \ref{rea4-2}.]{
\includegraphics[width=7.5cm]{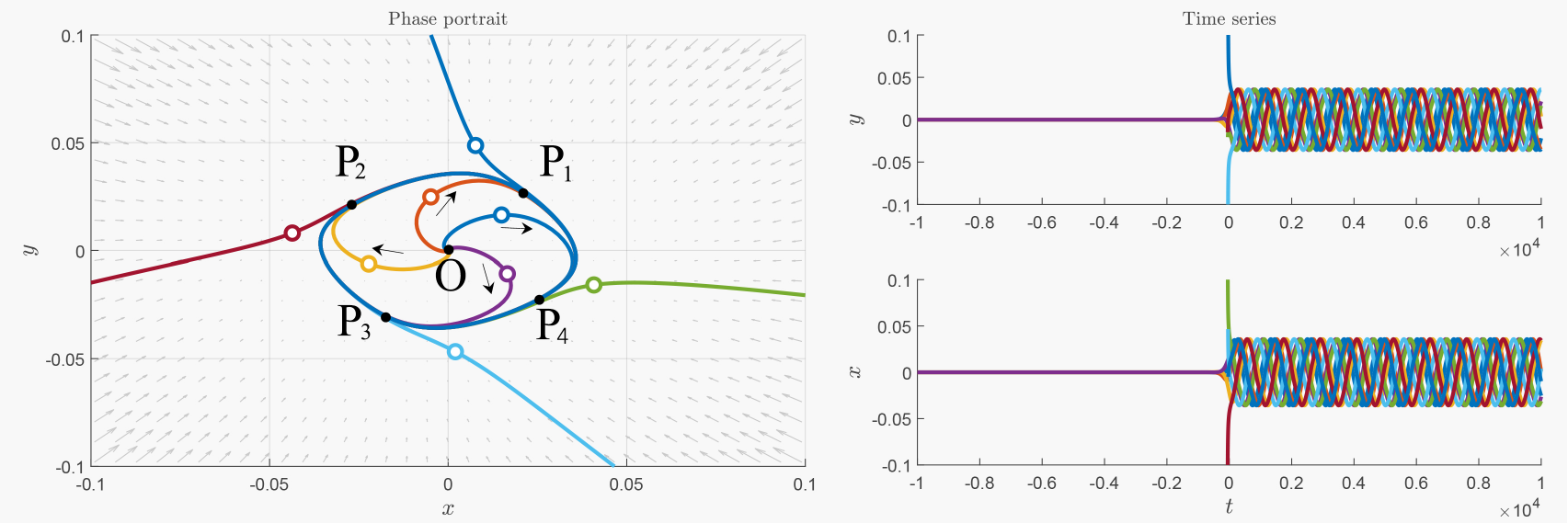}
\label{Y4-6}
}
\centering
\T\T\T\T\T\T\T\T\T\T\T\T\T\T\T\T\T\T\T\T\T\T\T\T\T\T\T\T\T\T\T\T\T\T\T\T
\subfigure[The phase diagrams of the region $K_3$ in Figure \ref{rea4-2}.]{
\includegraphics[width=12cm]{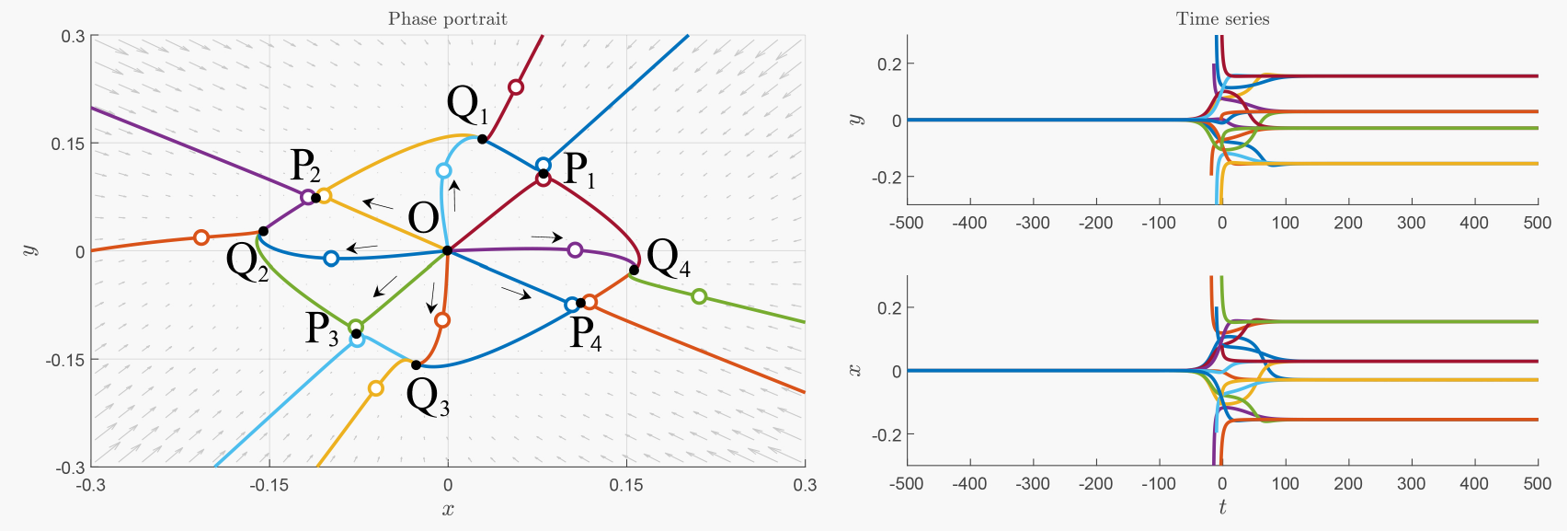}
\label{Y4-7}
}
\caption{phase portraits of system \eqref{eq7.14}.}
\label{1bi4YJQ1}
\end{figure}
\begin{description}
\item[(i)]When the parameter $\breve{\omega}=(\omega_2,\omega_3)$ locates in regions $K_{11}$ and $K_{12}$ in Figure \ref{rea4-2}, system \eqref{eq7.14} has a trivial equilibria $O$ which is a stable focus. And as $\breve{\omega}$ crosses the curve
    $$N:=\{(\omega_2,\omega_3)\in\mathbb{R}^2:\omega_2=0\}$$
    from $K_{11}$ and $K_{12}$ to $K_{21}$ and $K_{22}$, respectively, \eqref{eq7.14} undergoes a Andronnov-Hopf bifurcation and produces a unique stable limit cycle $\Gamma_{sl}$ in $K_{21}$ and $K_{22}$.
\item[(ii)]If $\breve{\omega}$ lies on the curves
    $$
    L_1:\{(\omega_2,\omega_3)\in\mathbb{R}^2:\omega_3=-\frac{\chi_{12}\,\omega_2}{\chi_{13}}+O(\omega^2_2)\}
    $$
    and
    $$L_2:\{(\omega_2,\omega_3)\in\mathbb{R}^2:\omega_3=-\frac{\chi_{14}\,\omega_2}{\chi_{13}}+O(\omega^2_2)\},$$
    where
    {\footnotesize\begin{eqnarray*}
    &&\!\!\!\!\!\!\!\!\!\!\chi_{12}:=6\, ( -{\frac {800\, ( {\beta}^{2}-2\,\beta+2 ) ^{2}\sqrt {H}}{3} ( {\beta}^{2}-{\frac {14\,\beta}{5}}+2 ) ^{2}}+ ( {\beta}^{2}-3\,\beta+1 ) {N}^{4} ( \beta-2 ) ^{8}{\beta}^{5} ( \beta-\frac{2}{3} )) w_{{2}},\\
    &&\!\!\!\!\!\!\!\!\!\!\chi_{13}:=-25600+9\,{N}^{4}{\beta}^{16}-156\,{N}^{4}{\beta}^{15}+1204\,{N}^{4}{\beta}^{14}-5440\,{N}^{4}{\beta}^{13}+15904\,{N}^{4}
    {\beta}^{12}\\
    &&~~~-31360\,{N}^{4}{\beta}^{11}+42112\,{N}^{4}{\beta}^{10}-37888\,{N}^{4}{\beta}^{9}+( 21760\,{N}^{4}-1600) {\beta}^{8}+(-7168\,{N}^{4}\\
    &&~~~+15360) {\beta}^{7}+ ( 1024\,{N}^{4}-67584 ) {\beta}^{6}+178176\,{\beta}^{5}-307712\,{\beta}^{4}+356352\,{\beta}^{3}
    -270336\,{\beta}^{2}\\
    &&~~~+122880\,\beta,\\
    &&\!\!\!\!\!\!\!\!\!\!\chi_{14}:=6\,( {\frac {800\, ( {\beta}^{2}-2\,\beta+2 ) ^{2}\sqrt {H}}{3} ( {\beta}^{2}-{\frac {14\,\beta}{5}}+2 ) ^{2}}+ ( {\beta}^{2}-3\,\beta+1 ) {N}^{4} ( \beta-2 ) ^{8}{\beta}^{5} ( \beta-\frac{2}{3} )) w_{{2}},
    \end{eqnarray*}}system \eqref{eq7.14} exists four non-trivial equilibria $P_i$ ($i=1,\cdots,4$), which are all saddle-nodes. Meanwhile, the four saddle-nodes locates on the limit cycle which is generated by Andronnov-Hopf bifurcation. It forms a heteroclinic cycle.
\item[(iii)] As $\breve{\omega}$ crosses the curves $L_1$ and $L_2$ from $K_{21}$ and $K_{22}$ to $K_3$ respectively,
    system \eqref{eq2.1} undergoes a fold bifurcation and generates eight nontrivial equilibria, which are four saddles $P_i$ and four stable foci $Q_i$ (or stable nodes), where $i=1,\cdots,4$. In addition, four saddles and four foci (or nodes) form a heteroclinic cycle.
\end{description}

Since the time-$1$ flow of the ODE approximates the map $N_{\breve{\omega}}$. Hence, the trivial equilibrium of the approximating system \eqref{eq7.14} corresponds to the trivial fixed point of $N_{\breve{\omega}}$, while four symmetric nontrivial equilibria actually correspond to one period-$4$ cycle. The Hopf bifurcation line $N$ approximates the Neimark-Sacker bifurcation line $\mathfrak{L}_2$ of the trivial fixed point of $N_{\breve{\omega}}$. Moreover, the leading terms of the asymptotic expressions for the respective curves coincide. As usual, heteroclinic connections in the approximating system \eqref{eq7.14} become homoclinic structures for the map $N_{\breve{\omega}}$. They are formed by intersections of the stable and unstable invariant manifolds of the saddle period-$4$ cycle. These structures imply the existence of an infinite number of periodic orbits. Closed invariant curves corresponding to limit cycles lose their smoothness and are destroyed, almost   colliding   with the saddle period-$4$ cycle. All these phenomena are also present in the full (nontruncated) normal form \eqref{eq7.6} under the same non-degeneracy conditions.

Finally, we give the bifurcation curves in the original parameter $(r,\alpha)$-plane. From the parameter transformations \eqref{eq7.1} and \eqref{eq7.8}, one can check that the bifurcation curves $N$, $L_1$ and $L_2$ in the $(\omega_2,\omega_3)$-plane correspond to the curves $\mathfrak{L}_4$,
$H_{41}$ and $H_{42}$ in the $(r,\alpha)$-plane, respectively. The proof is completed.
\end{proof}

From Theorem \ref{th6.3}, we know that mapping \eqref{eq2.1} experiences a $1:4$ resonance and has many complex dynamic characteristics nearby the $1:4$ resonance point $(1,1)$. Thus, when the parameter $(\lambda,\mu)$ is close to $(1,1)$, the dynamic characteristics of the mapping exhibit high sensitivity to variations in the parameter.
Furthermore, in the field of biology, non-degenerate Neimark-Sacker bifurcation can give rise to the periodic or quasi-periodic fluctuations of  the numbers of susceptible, infective and recovered individuals. Additionally, an invariant periodic orbit with period-4 implies that, after 4 time intervals, the numbers of susceptible, infective and recovered individuals will shift from a stable state to a repeatable (approaching) state.

\section{Arnold tongues}

Based on the results proved in Theorem \ref{th5.1}, in this section we investigate the parameter conditions for the Arnold tongue on the stable invariant circle. Under these conditions, system \eqref{eq2.1} exhibits periodic orbits with period $m$, where $m\geq 5$ and $m\in \mathbb{Z}^+$. As shown in \cite{Arrowsmith,Whitley}, for this phenomenon to occur, system \eqref{eq2.1} must have a pair of complex roots that cross the unit circle at the roots of unity $\varrho_0:= e^{(2\pi\,{\bf i}\,n/m)}$. In other words, the eigenvalues $t_{1}$ and $t_{2}$, given in $\eqref{solu-1}$, cross the unit circle at the roots of unity $\varrho:= e^{(2\pi\,{\bf i}\,n/m)}$. Here, $n$ is a positive integer such that $n/m\in(0,1)$ is irreducible.

\begin{thm}
Suppose that $\beta\neq-2\,cos(2\,\pi\,n/m)+2$, where $m\geq5$ is a positive integer and $n/m\in(0,1)$ with $n\in \mathbb{Z}^+$ is an irreducible fraction. Assume further that the parameter $(r,\alpha)$ is in the vicinity of $(r_*,\alpha_*)$ and lies within an Arnold tongue $\mathcal{A}_{n/m}$ defined as follows:
$$\mathcal{A}_{n/m}=\left\{(r,\alpha)\bigg|T_{-}+\frac{2\,\pi\,n}{m}-\Xi_1<\arctan\left(\frac{\sqrt{\Xi_0}}{\left( 2-\alpha
 \right) \beta+2\,r}\right)<T_{+}+\frac{2\,\pi\,n}{m}-\Xi_1\right\},$$
where
\begin{eqnarray*}
&&\Xi_0:=-\beta\, \left( 4\,{r}^{3}+ \left( 12\,\beta-4\,\alpha
 \right) {r}^{2}+ \left( -8\,\beta\,\alpha+12\,{\beta}^{2} \right) r+
 \left( 2\,\beta-\alpha \right) ^{2}\beta \right),\\
&&r_*:=-{\frac {{\beta}^{2}+2\,\beta\,cos(\frac{2\pi n}{m})-2\,\beta-2\,cos(\frac{2\pi n}{m})+2}{\beta+2\,cos(\frac{2\pi n}{m})-2}}, \alpha_*:={\frac {-4\, \left( cos(\frac{2\pi n}{m})-1 \right) ^{2}}{\beta\, \left( \beta+2\,cos(\frac{2\pi n}{m})-2 \right) }},\\
&&T_{\pm}\approx\frac{\tilde{\varrho}_{2}(0)}{\check{\varrho}_{3}(0)}\,\left(\sqrt{{\frac {-{\beta}^{3}+ \left( \alpha-2\,r \right) {\beta}^{2}- \left( r
-1 \right)  \left( r-\alpha+1 \right) \beta+r}{\beta+r}}
}-1\right)\\
&&~~~~~~~\pm\frac{|\varsigma(0)|}{|\check{\varrho}_{3}(0)|^{(m-2)/2}}\,\left(\sqrt{{\frac {-{\beta}^{3}+ \left( \alpha-2\,r \right) {\beta}^{2}- \left( r
-1 \right)  \left( r-\alpha+1 \right) \beta+r}{\beta+r}}
}-1\right)^{\frac{m-2}{2}},
\end{eqnarray*}
and
\begin{eqnarray*}
\Xi_1:=
\left\{
\begin{array}{llll}
0,~~~\mbox{when}~~{\frac {-\alpha\,\beta+2\,\beta+2\,r}{2\,\beta+2\,r}}>0,{\frac {\sqrt {\Xi_0}}{2\,\beta+2\,r}}>0,\\
\pi,~~~\mbox{when}~~{\frac {-\alpha\,\beta+2\,\beta+2\,r}{2\,\beta+2\,r}}<0,{\frac {\sqrt {\Xi_0}}{2\,\beta+2\,r}}>0,\\
-\pi,~~~\mbox{when}~~{\frac {-\alpha\,\beta+2\,\beta+2\,r}{2\,\beta+2\,r}}<0,{\frac {\sqrt {\Xi_0}}{2\,\beta+2\,r}}<0,\\
0,~~~\mbox{when}~~{\frac {-\alpha\,\beta+2\,\beta+2\,r}{2\,\beta+2\,r}}>0,{\frac {\sqrt {\Xi_0}}{2\,\beta+2\,r}}<0,\\
\end{array}
\right.
\end{eqnarray*}
system \eqref{eq2.1} possesses two $m$-periodic orbits, one attracting and one repelling, on the invariant circle that arises from the Neimark-Sacker bifurcation.
\label{th7.1}
\end{thm}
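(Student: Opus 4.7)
The plan is to build on the normal form reduction already carried out in the proof of Theorem \ref{th5.1} for the Neimark--Sacker bifurcation, and then push it further to isolate the resonant terms that produce phase--locking. First I would fix $m\geq 5$ and $n/m\in(0,1)$ irreducible, and note that at the candidate point $(r_*,\alpha_*)$ defined in the statement, the two complex eigenvalues in \eqref{solu-1} satisfy $t_1(r_*,\alpha_*)=e^{2\pi{\bf i}n/m}$ and $t_2(r_*,\alpha_*)=e^{-2\pi{\bf i}n/m}$; this follows by direct substitution, and the assumption $\beta\neq -2\cos(2\pi n/m)+2$ guarantees that the denominators appearing in $r_*$ and $\alpha_*$ do not vanish. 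The assumption $m\geq 5$ places us in the \emph{weak resonance} case, where $\mu_0^k\neq 1$ for $k=1,2,3,4$, so the cubic Poincar\'e normal form produced in the proof of Theorem \ref{th5.1} remains valid, and in addition no resonant monomial of degree $\leq m-2$ survives the normalization procedure.

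Next I would restrict mapping \eqref{eq2.1} to the two-dimensional center manifold at $E_2$ (as in \eqref{eq4.2}), diagonalize the linear part, and iterate the near-identity transformations of Kuznetsov up to order $m-1$. This yields a truncated complex normal form
\begin{equation*}
\omega\mapsto \mu(r,\alpha)\,\omega+\check{\varrho}_3(r,\alpha)\,\omega|\omega|^2+\dots+\varsigma(r,\alpha)\,\bar{\omega}^{m-1}+O(|\omega|^m),
\end{equation*}
where $\mu(r_*,\alpha_*)=e^{2\pi{\bf i}n/m}$ and only the cubic term and the $(m-1)$-th order resonant term $\bar\omega^{m-1}$ survive; the remaining intermediate monomials can be killed because of the non-resonance conditions $\mu_0^k\neq 1$ for $2\leq k\leq m-1$ with $k\neq m$. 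I would then pass to polar coordinates $\omega=\rho e^{{\bf i}\vartheta}$ and study the $m$-th iterate. Writing $\mu=|\mu|e^{{\bf i}\phi}$ and using the explicit formula $|\mu|=\sqrt{((-\beta^3+(\alpha-2r)\beta^2-(r-1)(r-\alpha+1)\beta+r)/(\beta+r))}$, the radius equation
\begin{equation*}
\rho\mapsto |\mu|\rho+\mathrm{Re}(\check{\varrho}_3 e^{-{\bf i}\phi})\rho^3+O(\rho^5)
\end{equation*}
fixes the invariant circle $\rho=\rho_*(r,\alpha)\sim\sqrt{|\mu|-1}$ produced by the Neimark--Sacker bifurcation of Theorem \ref{th5.1}, and the angular equation becomes, after $m$ iterations and factoring out the rigid rotation by $2\pi n$,
\begin{equation*}
\vartheta\mapsto\vartheta+m\Bigl(\phi-\tfrac{2\pi n}{m}\Bigr)+m\,\mathrm{Im}(\check{\varrho}_3 e^{-{\bf i}\phi})\rho_*^2+|\varsigma|\rho_*^{m-2}\sin(m\vartheta+\arg\varsigma)+\cdots.
\end{equation*}
A fixed point of this angular map corresponds to an $m$-periodic orbit of the original system, and exists iff the constant part lies within the range $[-|\varsigma|\rho_*^{m-2},|\varsigma|\rho_*^{m-2}]$ of the oscillating term. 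Solving this condition and substituting $\rho_*^2\sim(|\mu|-1)/|\check{\varrho}_3|$ gives precisely the two boundary equations $T_\pm$ written in the statement.

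The final step is to translate this existence condition back to $(r,\alpha)$. Here I would use the identity
\begin{equation*}
\phi=\arctan\!\left(\frac{\sqrt{\Xi_0}}{(2-\alpha)\beta+2r}\right)-\Xi_1,
\end{equation*}
where $\Xi_1\in\{0,\pm\pi\}$ is the standard two-argument correction for the quadrant in which $\mu$ lies (matching the case distinction defining $\Xi_1$ in the statement). Substituting this formula for $\phi$ into $T_{-}<\phi-2\pi n/m<T_{+}$ yields exactly the region $\mathcal{A}_{n/m}$. Inside $\mathcal{A}_{n/m}$ the angular map has two fixed points on the invariant circle; linearizing the $m$-th iterate at them gives multipliers whose modulus is $1\pm m|\varsigma|\rho_*^{m-2}\cos(m\vartheta_\pm+\arg\varsigma)+O(\rho_*^{m})$ of opposite sign, so one cycle is attracting and the other repelling.

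The main obstacle will be the bookkeeping in the normal form reduction up to order $m-1$: the coefficient $\varsigma(0)$ of the resonant term $\bar\omega^{m-1}$ is what controls the tongue width, and because it appears only at order $m-1$ the tongue is exponentially thin in $\sqrt{|\mu|-1}$, so one must verify $\varsigma(0)\neq 0$ generically and track how the cubic coefficient $\check{\varrho}_3(0)$ interacts with it through the change of coordinates that brings $\mu$ to its diagonal form. Once this algebraic reduction is in hand, the rest of the argument is the standard Arnold tongue geometry described above.
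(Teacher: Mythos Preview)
Your approach is essentially the same as the paper's: reduce to the center manifold, push the Poincar\'e normal form to order $m-1$ so that only the rotationally invariant terms $z^{k+1}\bar z^k$ and the resonant monomial $\bar z^{m-1}$ survive, pass to polar coordinates, read off the Arnold tongue boundaries in the $(\varpi_1,\varpi_2)=(|\mu|-1,\arg\mu-2\pi n/m)$ parameters, and then pull this back to $(r,\alpha)$ via the explicit modulus/argument formulas for the eigenvalue. The paper shortcuts the angular analysis by invoking Theorem~2.4 of Whitley \cite{Whitley} for the tongue boundaries \eqref{a4}, whereas you derive them directly from the fixed-point equation of the $m$-th iterate of the angular map and check stability by linearization; but the substance is identical. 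One small inaccuracy: you say ``only the cubic term and the $(m-1)$-th order resonant term survive''---in fact all odd $z|z|^{2k}$ terms with $2k+1\leq m-1$ are resonant and remain (cf.\ the paper's normal form with the full sum $\sum_{k=1}^{[(m-2)/2]}\varrho_{k+1,k}z^{k+1}\bar z^k$), though for the leading-order tongue boundaries only $\check\varrho_3$ and $\varsigma$ matter, so your conclusion is unaffected.
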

\begin{proof}
To facilitate the discussion, we express $t_{1}$ as presented in $\eqref{solu-1}$ in the following exponential form:
\begin{eqnarray}
&&\tilde{t}=(1+\varpi_1(r,\alpha))\,exp({\bf i}\,(2\,\pi\,n/m+\varpi_2(r,\alpha))).
\label{a1}
\end{eqnarray}
Obviously, $\varpi_1(r_*,\alpha_*)=\varpi_2(r_*,\alpha_*)=0$.
Drawing on the Poincar$\acute{e}$ normal form theory (cf. Lemma 2 in \cite[p.44]{Iooss} and \cite[p.259]{Arrowsmith}) and the center manifold theory (the dimensionality-reduction method employed in the proof of Theorem \ref{th5.1}), when $(r,\alpha)$ is in the vicinity of $(r_*,\alpha_*)$, system \eqref{eq2.1} can be expressed in the following complex-valued normal form:
\begin{eqnarray}
&&z\mapsto\tilde{t}\,z+\sum_{k=1}^{[(m-2)/2]}\varrho_{k+1,k}(\varpi)\,z^{k+1}\,\bar{z}^{k}+\varsigma(\varpi)\,\bar{z}^{m-1}+O(|z|^m),
\label{a2}
\end{eqnarray}
where $\varpi:=(\varpi_1(r,\alpha),\varpi_2(r,\alpha))$, $[\cdot]$ denotes the integer part of $\cdot$, and $\varrho_{k+1,k}(\varpi)$ and $\varsigma(\varpi)$ are all analytic functions of $\varpi$. Let $z:=\check{\rho}\,e^{(2\pi{\bf i}\theta)}$. Then system \eqref{a2} is transformed into the following polar coordinate form:
\begin{eqnarray}
\begin{aligned}
\!\!\!\!\!\!\!\left(
\begin{array}{cc}
\check{\rho} \\
\theta
\end{array}
\right)\!\!
\rightarrow\!\!
\left(
\begin{array}{l}
(1+\varpi_1)\,\check{\rho}+\sum_{k=1}^{[(m-2)/2]}\check{\varrho}_{2\,k+1}(\varpi)\,\check{\rho}^{2\,k+1}
+\check{\varsigma}(\varpi,\theta)\,\check{\rho}^{m-1}+O(\check{\rho}^{m})\\
\theta+\frac{2\,\pi\,n}{m}+\varpi_2+\sum_{k=1}^{[(m-2)/2]}\tilde{\varrho}_{2\,k}(\varpi)\,\check{\rho}^{2\,k+1}
+\tilde{\varsigma}(\varpi,\theta)\,\check{\rho}^{m-2}+O(\check{\rho}^{m-1})
\end{array}
\right)\!\!,
\end{aligned}
\label{a3}
\end{eqnarray}
where $\check{\varrho}_{2\,k+1}$ and $\tilde{\varrho}_{2\,k}$ are all analytic functions independent of $\theta$,
\begin{eqnarray*}
&&\check{\varsigma}(\varpi,\theta)=\mathrm{Re}(\varsigma(\varpi)\,e^{(-{\bf i}\,(m\,\theta+\frac{2\,\pi\,n}{m}+\varpi_2))}),\\
&&\tilde{\varsigma}(\varpi,\theta)=\mathrm{Im}\left(\frac{\varsigma(\varpi)\,e^{(-{\bf i}\,(m\,\theta+\frac{2\,\pi\,n}{m}+\varpi_2))}}{1+\varpi_1}\right).
\end{eqnarray*}
Specifically, the following computations can be carried out:
\begin{eqnarray*}
&&\check{\varrho}_{3}(\varpi)=\mathrm{Re}(\varrho_{2,1}(\varpi)\,e^{(-{\bf i}\,(\frac{2\,\pi\,n}{m}+\varpi_2))}),\\
&&\tilde{\varrho}_{2}(\varpi)=\frac{\mathrm{Im}(\varrho_{2,1}(\varpi)\,e^{(-{\bf i}\,(\frac{2\,\pi\,n}{m}+\varpi_2))})}{1+\varpi_1}.
\end{eqnarray*}
According to Theorem 2.4 in \cite{Whitley}, for system \eqref{a3}, within the parameter $(\varpi_1,\varpi_2)$-plane, there exist two $m$-periodic orbits, one attracting and one repelling, on the invariant circle resulting from the Neimark-Sacker bifurcation.
This occurs when $\varpi$ lies within a tongue $\mathcal{A}_{n/m}$ whose boundaries are given by:
\begin{eqnarray}
&&\varpi_2\approx\frac{\tilde{\varrho}_{2}(0)\,\varpi_1}{\check{\varrho}_{3}(0)}\pm\frac{|\varsigma(0)|\,\varpi_1^{(m-2)/2}}{|\check{\varrho}_{3}(0)|^{(m-2)/2}}.
\label{a4}
\end{eqnarray}
Moreover, we can compute that
\begin{eqnarray*}
&&\!\!\!\!\check{\varrho}_{3}(0)=-{\frac { \left( {\beta}^{2}+\beta\,r_{{*}}-1 \right)  \left( r_{
{*}}+\beta \right) ^{4}}{ 8\,\left( r_{{*}}+\beta-1 \right) {N}^{2}}},\\
&&\!\!\!\!\tilde{\varrho}_{2}(0)=-\frac{\chi_{15}}{\chi_{16}},
\end{eqnarray*}
where
\begin{eqnarray*}
&&\!\!\!\!\chi_{15}:=\sqrt {-\beta\,( {\beta}^{2}+ ( r_{{*}}-4) \beta-4\,r_{{*}}+4)( r_{{*}}+\beta ) }( r_{{*}}+\beta) ^{3}( {\beta}^{7}+( 3\,r_{{*
}}-5 ) {\beta}^{6}+( 3\,r_{*}^{2}\\
&&~~~~~~-15\,r_{{*}}+8) {\beta}^{5}+ ( r_{*}^{3}-*5\,r_{*}^{2}+20\,r_{{*}}-* ) {\beta}^{4}+(-5\,r_{*}^{3}+16\,r_{*}^{2}+2\,r_{{*}}-9 ) {\beta}^{3}+ \\
&&~~~~~~( 4\,r_{*}^{3}+7\,r_{*}^{2}-17\,r_{{*}}+6 ) {\beta}^{2}+ ( 4\,r_{*}^{3}-6\,r_{*}^{2}+2\,r_{{*}} ) \beta+2\,r_{{*}}( r_{{*}}-1) ^{2} ) ,\\
&&\!\!\!\!\chi_{16}:=8\, \left( {\beta}^{2}+ \left( r_{{*}}-3 \right)
\beta-3\,r_{{*}}+3 \right) {\beta}^{2} \left( {\beta}^{2}+ \left( r_{{
*}}-4 \right) \beta-4\,r_{{*}}+4 \right) {N}^{2} \left( r_{{*}}+\beta-
1 \right).
\end{eqnarray*}
Furthermore, as observed from \eqref{a2}, we see that $|\varsigma(0)|$ varies with $m$. Additionally, from \eqref{a1} we know that $|\tilde{t}|=1+\varpi_1$ and the argument of $\tilde{t}$, denoted as $\arg(\tilde{t})$, is given by $\arg(\tilde{t})=2\pi n/m+\varpi_2$. By incorporating  $\eqref{solu-1}$, we obtain the following expressions:
\begin{eqnarray*}
&&\!\!\!\!\!\!\!\!|\tilde{t}|=\sqrt{{\frac {-{\beta}^{3}+ \left( \alpha-2\,r \right) {\beta}^{2}- \left( r
-1 \right)  \left( r-\alpha+1 \right) \beta+r}{\beta+r}}
},\\
&&\!\!\!\!\!\!\!\!\arg(\tilde{t})=\arctan\left({\frac {\sqrt {\Xi_0 }}{ \left( 2-\alpha
 \right) \beta+2\,r}}
\right)+\Xi_1.
\end{eqnarray*}
Consequently,
\begin{eqnarray}
&&\varpi_1=\sqrt{{\frac {-{\beta}^{3}+ \left( \alpha-2\,r \right) {\beta}^{2}- \left( r
-1 \right)  \left( r-\alpha+1 \right) \beta+r}{\beta+r}}
}-1,
\label{a5}
\end{eqnarray}
\begin{eqnarray}
&&\varpi_2=\arctan\left({\frac {\sqrt {\Xi_0 }}{ \left( 2-\alpha
 \right) \beta+2\,r}}
\right)+\Xi_1-\frac{2\pi n}{m}.
\label{a6}
\end{eqnarray}
By substituting \eqref{a5} and \eqref{a6} into \eqref{a4}, we can determine the values of $T_{\pm}$ and $\mathcal{A}_{n/m}$. The proof is completed.
\end{proof}
\begin{rmk}
The $\varsigma(0)$ in Theorem \ref{th7.1} is the coefficient of the term $\bar{z}^{m-1}$ in the equation \eqref{a2} evaluated at $\varpi=(0,0)$ . Evidently, the function $\varsigma(\varpi)$ depends on $m$. Generally speaking, as the value of $m$ decreases, the expressions for $\varsigma(\varpi)$ become simpler.
\end{rmk}

According to Theorem \ref{th7.1}, if an Arnold tongue $\mathcal{A}_{n/m}$ region exists, then the system \eqref{eq2.1} has two $m$-periodic orbits on the invariant closed curve resulting from the Neimark-Sacker bifurcation: one is attracting and the other is repelling. This indicates that the mapping \eqref{eq2.1} can transition from a stable state to an oscillatory state, specifically, $m$-periodic fluctuations. Moreover, we can infer that the numbers of susceptible, infective, and recovered individuals vary periodically. Such periodic variations suggest that the population is on a path towards extinction. From a biological standpoint, it is thus imperative that we prevent the occurrence of this situation.

\section{Numerical simulations}
In this section, we will conduct numerical simulations of the dynamic phenomena of the mapping \eqref{eq2.1} to validate the results presented in the previous sections.

First, we simulate the codimension 1 bifurcation phenomena in the system \eqref{eq2.1}.
According to Theorem \ref{th3.1}, for $N>0,0<\beta<1$ and $0<r<1$, mapping \eqref{eq2.1} undergoes a transcritical bifurcation at the fixed point $E_1$ when the parameter $\alpha$ crosses the value $\alpha=\beta+r$. To detect this bifurcation phenomenon for the fixed point $E_1$, we utilize MatcontM in {\it MatlabR2022a} (see \cite{Kuznetsov1}). We set the parameter values as follows: $N=0.51$, $\beta=0.31$, $r=0.27$ and $\alpha=0.76461239$, where $\alpha$ is an active parameter. Taking an initial value $(x_{1},y_{1},z_{1})=(0.38686268,0.065814774,0.057322545)$, we select the ``Initial Point Type'' as ``Fixed Point'' and leave the other options as the system defaults in MatcontM. We halt the ``Forward'' computation once the first ``BP'' point is detected. This ``BP'' point occurs when two fixed - point curves intersect in the $(\alpha,x)$-plane (as shown in Figure \ref{BP-1}). This intersection indicates that the transcritical bifurcation has taken place.

Theorem \ref{th4.1} indicates that for $N>0$, $0<\beta<1$ and $0<r<\Psi_1$, when $\Theta_1\neq0$ and $\Theta_2>0$, mapping (\ref{eq2.1}) experiences a supercritical flip bifurcation. As the parameter $\alpha$ crosses $\alpha=\Psi_2$., a stable period-two cycle emerges near the fixed point $E_2$. Thus, under the default settings described in the previous paragraph, we halt the ``Backward'' computation once a ``PD'' point, which might be subject to a flip bifurcation, is detected (see Figure \ref{BP-1}). From the ``Output'' window in MatcontM, we observe that the normal form coefficient at the ``PD'' point is $37.1311$, signifying that the mapping \eqref{eq2.1} undergoes a supercritical flip bifurcation.
Furthermore, using the ``PD'' point as an initial value, we change the ``Initializer'' to ``FP-curve x2'' while keeping the other options as the system defaults in MatcontM. We stop the computation upon detecting the first ``PD'' point and then select this ``PD'' point as an initial value (see Figure \ref{PD-1}). This process reveals that mapping \eqref{eq2.1} generates a stable period-two cycle.
In addition, we will numerically simulate a cascade resulting from the supercritical flip bifurcation using {\it MatlabR2022a}. We choose
$\alpha\in [3.95, 4.85]$ as the bifurcation parameter and set the parameters $N=0.72$, $\beta=0.52$, $r=0.21$, and an initial value $(x_{2},y_{2},z_{2})=(0.131335764, 0.4193224693, 0.1693417666)$ (note that $\Theta_1=-0.7871437846\neq0$ and $\Theta_2=2344.468744\ne 0$).
With {\it MatlabR2022a}, we aim to plot the bifurcation diagram of the mapping \eqref{eq2.1} in the $(A,x,y,z)-$space. However, this is not feasible in four-dimensional space. As a result, we simulate the projections of the flip bifurcation diagram of the mapping \eqref{eq2.1} onto three different three-dimensional spaces: the $(A,x,y)$-space, the $(A,x,z)$-space and the $(A,y,z)$-space. These are presented in Figures \ref{flip-1}, \ref{flip-2}, and \ref{flip-3} respectively, all of which display a cascade of flip bifurcations. This implies that such a cascade also exists in the $(A,x,y,z)-$space.
In conclusion, the numerical simulation shows that the mapping \eqref{eq2.1} undergoes a supercritical flip bifurcation and generates a stable period-two cycle near the fixed point $E_2$. The other instances of flip bifurcations can be simulated in a similar manner.

\begin{figure}[ht!]
\subfigure[Detection of transcritical bifurcation points.]{
\includegraphics[width=7.2cm]{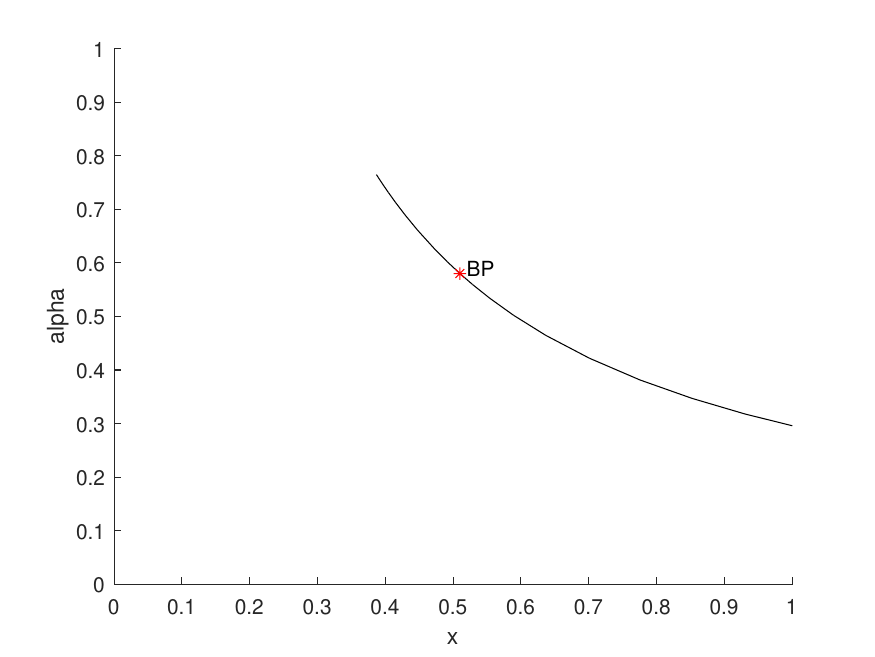}
\label{BP-1}
}
\quad
\subfigure[Detection of the stable period-two cycle for the supercritical flip bifurcation.]{
\includegraphics[width=7.2cm]{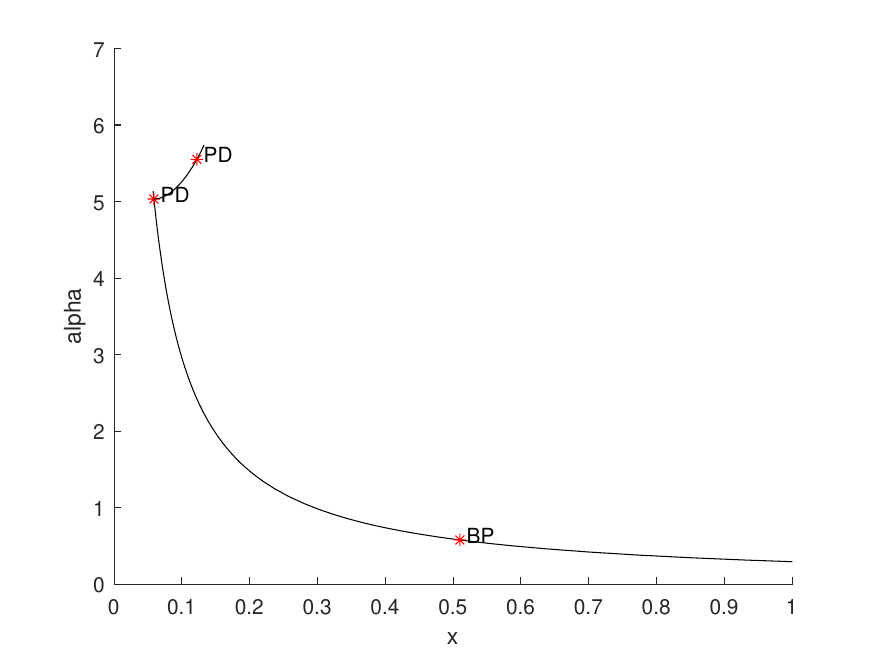}
\label{PD-1}
}
\caption{The continuation of $E_1$ and $E_2$ through MatcontM in the $(\alpha,x)$-plane.}
\end{figure}
\begin{figure}[ht!]
\T\T\T\T\T\T\T\T\T\T\T\T\T\T\T\T\T\T\T\T\T\T\T\T\T\T\T\T\T\T\T\T\T\T\T\T
\subfigure[Projection diagram in $(\alpha,x,y)$-space with $N=0.72$, $\beta=0.52$ and $r=0.21$.]{
\includegraphics[width=7.5cm]{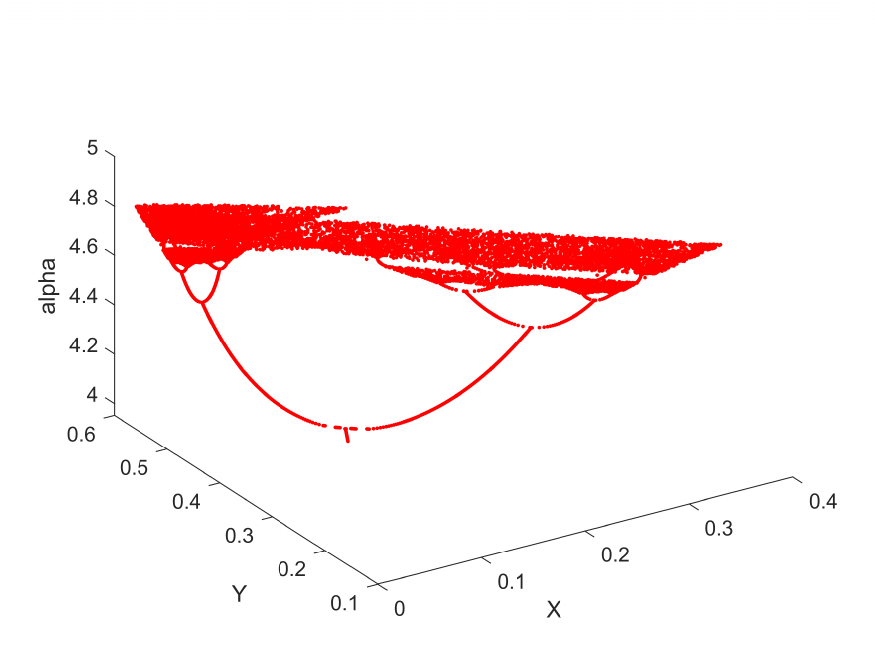}
\label{flip-1}
}
\quad
\subfigure[Projection diagram in $(\alpha,x,y)$-space with $N=0.72$, $\beta=0.52$ and $r=0.21$.]{
\includegraphics[width=7.5cm]{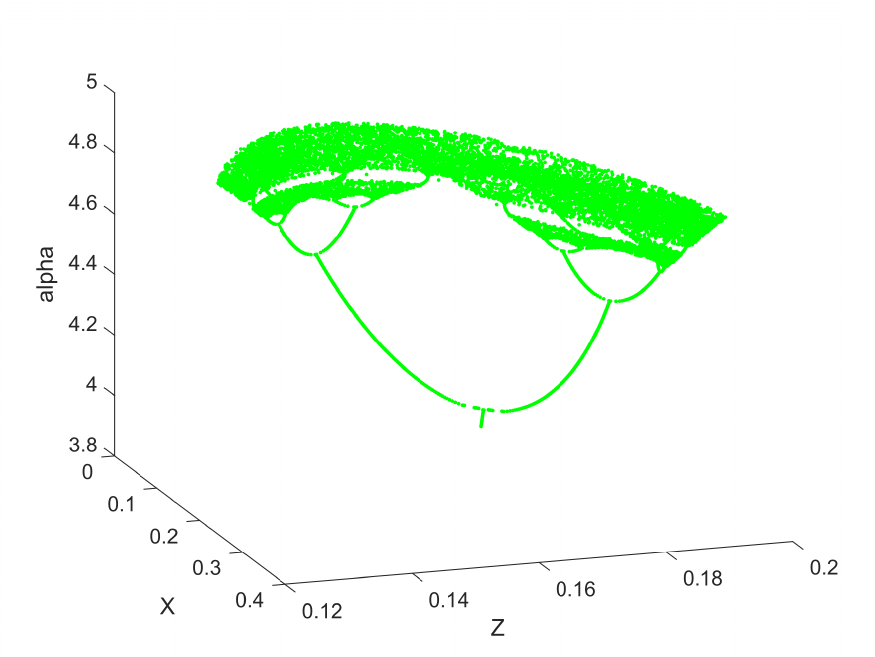}
\label{flip-2}
}
\quad
\centering
\subfigure[Projection diagram in $(\alpha,x,y)$-space with $N=0.72$, $\beta=0.52$ and $r=0.21$.]{
\includegraphics[width=7.5cm]{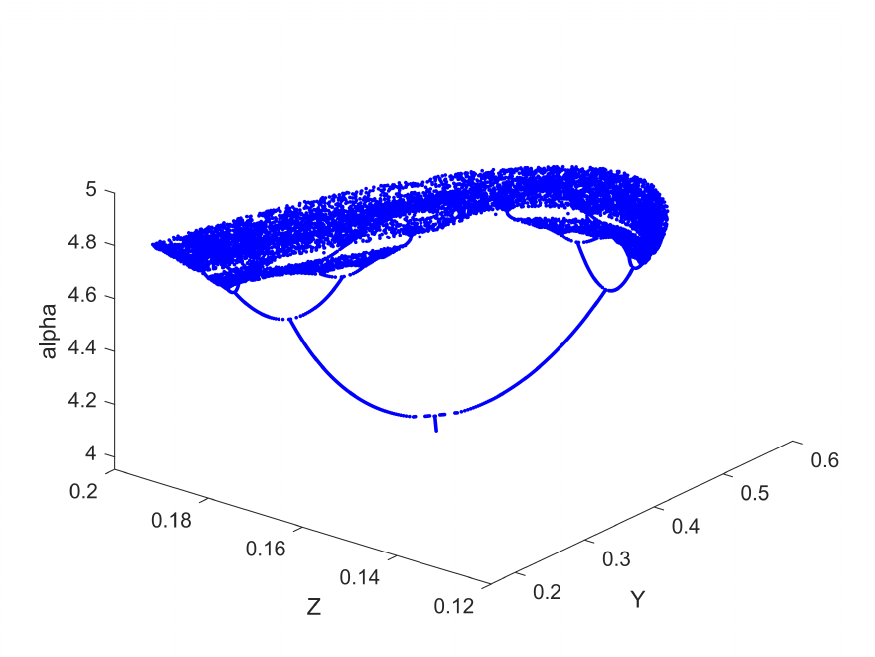}
\label{flip-3}
}
\caption{Projection diagrams of flip bifurcation diagram on different spaces as parameter $\alpha$ cross $\Psi_2$ when $N>0$, $0<\beta<1$ and $0<r<\Psi_1$.}
\end{figure}

According to Theorem \ref{th5.1}, when the parameter crosses $\mathfrak{L}_2$ and satisfies the following conditions:
\begin{eqnarray*}
&&r\neq-{\frac {{\beta}^{2}-3\,\beta+3}{\beta-3}},~r\neq-{\frac {{\beta}^{2}-2\,\beta+2}{\beta-2}}, r\neq1-\beta~~\mbox{and}~~r\neq-\frac{\beta^2-1}{\beta},
\end{eqnarray*}
mapping \eqref{eq2.1} gives rise to a Neimark-Sacker bifurcation in the vicinity of $E_2$.
Therefore, by choosing the parameters $N=1.25$, $\beta=0.32$, $r=0.7983$ and $\alpha=10.495403$, where $\alpha$ serves as an active parameter, and taking an initial value $(x_{31},y_{31},z_{31})=(0.13016993,0.32614041,0.79340337)$, setting the ``Initial Point Type'' as ``Fixed Point'' while keeping the other options at the system default in MatcontM, our computations detect a ``NS'' point that may be associated with a Neimark-Sacker bifurcation, as depicted in Figure \ref{NS-1}. From the ``Output'' window in MatcontM, we observe that the normal form coefficient at the ``NS'' point is $1.588549>0$. This indicates that the mapping \eqref{eq2.1} experiences a subcritical Neimark-Sacker bifurcation, resulting in the emergence of a unique unstable invariant circle. Furthermore, when setting the parameters $N=3.72$, $\beta=0.52$, $r=0.81$ and $\alpha=5.36$, and taking two initial values $(x_{32},y_{32},z_{32}) = (0.93896381,1.0956095,1.7)$ and $(x_{33},y_{33},z_{33}) = (0.93896382,1.0956095,1.7)$, we plot the bifurcation diagram of mapping \eqref{eq2.1} in the $(x,y,z)-$space (see Figure \ref{ns-1}). In Figure \ref{ns-1}, we can see two orbits of different colors. The red orbit moves away from the invariant circle $\Gamma$ and converges to the fixed point $E_2$, while the blue orbit departs from $\Gamma$ and goes to infinity. Thus, there exists an unstable invariant circle $\Gamma$ at the intersection of the two orbits. This implies that the mapping \eqref{eq2.1} undergoes a subcritical Neimark-Sacker bifurcation in the vicinity of $E_2$.

\begin{figure}[ht!]
\centering
{
\includegraphics[width=10cm]{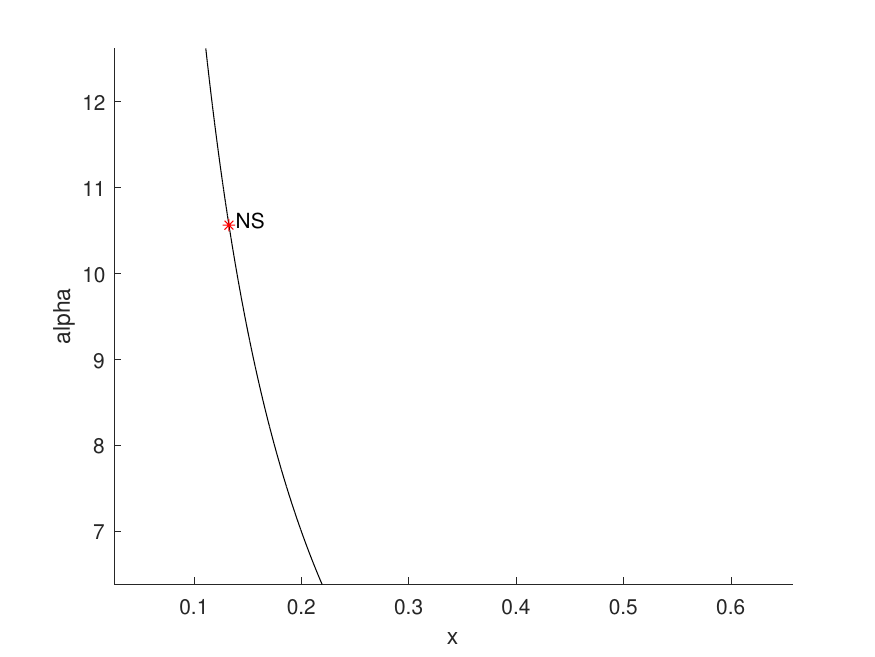}
}
\caption{Detection of the subcritical Neimark-Sacker bifurcation.}
\label{NS-1}
\end{figure}
\begin{figure}[htbp]
\centering
{
\includegraphics[width=10cm]{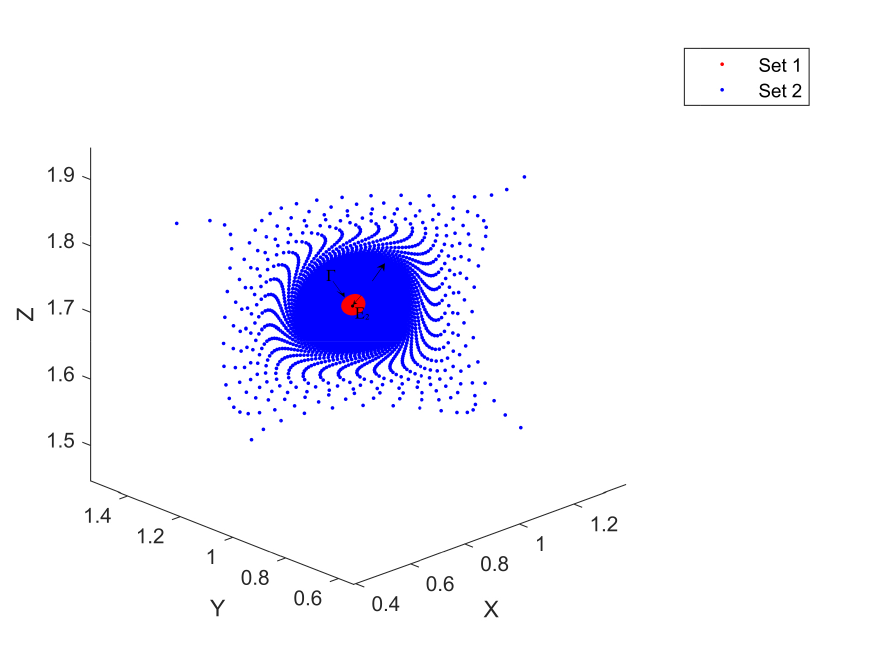}
}
\caption{An unstable invariant circle $\Gamma$ generated from the Neimark-Sacker bifurcation as the parameter crosses $\mathfrak{L}$.}
\label{ns-1}
\end{figure}
Subsequently, to further validate the accuracy of the content in Section 4, we will simulate the codimension 2 bifurcations of the mapping \eqref{eq2.1}. Taking the ``NS'' point depicted in Figure \ref{NS-1} as a basis, we commence by simulating the Neimark-Sacker bifurcation curve. This curve serves as the organizing center for codimension-2 bifurcations, enabling transitions to other bifurcation curves. Specifically, we designate a ``NS'' point within Figure \ref{NS-1} as the initial value. In MatcontM, we choose  and  as the active parameters, while maintaining the other options at the system defaults. The computation is halted once ``R2'', ``R3'', ``R4'' and ``CH'' points are detected. These points respectively correspond to the $1:2$, $1:3$ and $1:4$ resonance points, , as well as a point experiencing a Chenciner bifurcation (refer to Figure \ref{cod-2}).
Furthermore, from the ``Output'' in MatcontM, we have the following results:
\\
{\tt label=R2, x=(0.100000~0.338560~0.811440~13.586957~0.766957~-1.000000),\\
normal form coefficient of R2: [c,d]=1.969247e+01,-1.181541e+02,\\
label=R3, x=(0.133333~0.319218~0.797499~10.494403~0.799403~-0.500000),\\
normal form coefficient of R3: Re(c\_1)=4.281802e-01,\\
label=R4, x=(0.200000~0.282240~0.767760~7.440476~0.870476~0.000000),\\
normal form coefficient of R4: A=1.120520e-01+9.589064e-02i,\\
label=CH, x=(0.850000~0.040960~0.359040~4.595588~2.805000~0.764706),\\
normal form coefficient of CH=1.46786e+00.}\\
Since these normal form coefficients are non - zero, we can conclude that system \eqref{eq2.1} experiences $1:2$, $1:3$ and $1:4$ resonances, as well as a Chenciner bifurcation in. This validates the conclusions of Theorems \ref{th6.1}-\ref{th6.3}.

\begin{figure}[ht!]
\centering
{
\includegraphics[width=10cm]{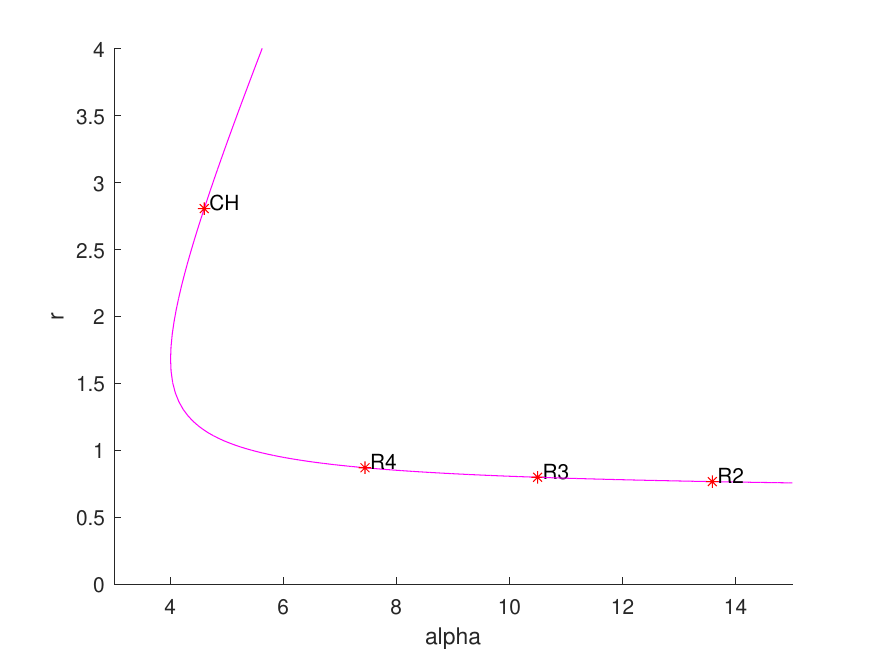}
}
\caption{Neimark-Sacker bifurcation curve including ``R2'',``R3'', ``R4'' and ``CH'' points.}
\label{cod-2}
\end{figure}
In what follows, we will verify Corollaries \ref{cor6.11} - \ref{cor6.31}.
In Figure \ref{cod-2}, a ``CH'' point is visible on the Neimark-Sacker bifurcation curve.
For the parameter $(\alpha,r)$ in a small neighborhood of the Chenciner bifurcation point$(\Psi_3, -(\beta^2-1)/\beta)$, there exists a parameter curve in system \eqref{eq2.1} along which the Neimark-Sacker bifurcation can occur. This implies that the first conclusion of Corollary \ref{cor6.12} is valid. Moreover, with parameters $N=16.32$, $\beta=0.72$, $r=0.66887$ and $\alpha=4.96031746$, and setting the initial values
\begin{eqnarray*}
&&(x_{41},y_{41},z_{44})=(5.8908,6.091217853,5.659079786),
\\
&&(x_{42},y_{42},z_{45})=(5.8909,6.091217853,5.659079786),
\end{eqnarray*}
the blue and red orbits are obtained (see Figure \ref{CH-2}). From Figure \ref{CH-2}, we can observe an invariant circle $\Gamma_1$ at the intersection of the blue and red orbits. The blue orbit starting from the invariant circle $\Gamma_1$ approaches the fixed point $E_2$, while the red orbit moves away from $\Gamma_1$ towards infinity. This indicates that $\Gamma_1$ is unstable.
Furthermore, according to Corollary \ref{cor6.12}, when $L_2>0$ and the parameters lie within the region $\mathfrak{G_1}$, the mapping \eqref{eq2.1} has two invariant circles. The outer one is unstable, and the inner one is stable. Thus, we use {\it MatlabR2022a} to plot the bifurcation diagram of mapping \eqref{eq2.1} in the $(x,y,z)-$space (see Figures \ref{CH-1}). With $N=16.32$, $\beta=0.72$, $r=0.668891$ and $\alpha=4.96031746$, and taking initial values
\begin{eqnarray*}
&&(x_{43},y_{43},z_{43})=(5.8905,6.091217853,5.659079786),
\\
&&(x_{44},y_{44},z_{44})=(5.8906,6.091217853,5.659079786),
\\
&&(x_{45},y_{45},z_{45})=(4.569702359,6.091217853,5.659079786),
\end{eqnarray*}
we numerically simulate the cyan, red, and blue orbits of mapping \eqref{eq2.1} as shown in Figure \ref{CH-1}, respectively. From Figure \ref{CH-1}, we can see that there are two invariant circles $\Gamma_u$ and $\Gamma_s$ (where $\Gamma_u$ is located outside of $\Gamma_s$),
The red orbit starting from the outer invariant circle $\Gamma_u$ moves towards infinity, the cyan orbit starting from the outer invariant circle $\Gamma_u$ approaches the inner invariant circle $\Gamma_s$, and the blue orbit starting from the fixed point $E_2$ also approaches the inner invariant circle $\Gamma_s$. This implies that the outer invariant circle $\Gamma_u$ is unstable and the inner invariant circle $\Gamma_s$ is stable. In addition, Corollary \ref{cor6.12} also states that as the parameters approach the region $\mathfrak{G_2}$, the two invariant circles of the mapping \eqref{eq2.1} merge into a single invariant circle that is unstable on the outside and stable on the inside. Hence, with parameters $N=16.32$, $\beta=0.72$, $r=0.6689363$ and $\alpha=4.96031746$, and setting initial values
\begin{eqnarray*}
&&(x_{46},y_{46},z_{46})=(5.7508,6.091217853,5.659079786),
\\
&&(x_{47},y_{47},z_{47})=(4.569702359,6.091217853,5.659079786),
\end{eqnarray*}
we obtain the blue and red orbits as shown in Figure \ref{CH-3}. From Figure \ref{CH-3}, we observe an invariant circle $\Gamma_1$ at the intersection of the blue and red orbits. The blue orbit starting from the invariant circle $\Gamma_1$ moves away from $\Gamma_1$ towards infinity, and the red orbit starting from the fixed point $E_2$ approaches the invariant circle $\Gamma_1$. This implies that $\Gamma_1$ is unstable on the outside and stable on the inside. Therefore, all the results of Corollary \ref{cor6.12} are verified. The verification of Corollary \ref{cor6.11} can be obtained in a similar manner.

\begin{figure}[ht!]
\T\T\T\T\T\T\T\T\T\T\T\T\T\T\T\T\T\T\T\T\T\T\T\T\T\T\T\T\T\T\T\T\T\T\T\T
\subfigure[Two invariant circles $\Gamma_u$ and $\Gamma_s$.]{
\includegraphics[width=7.5cm]{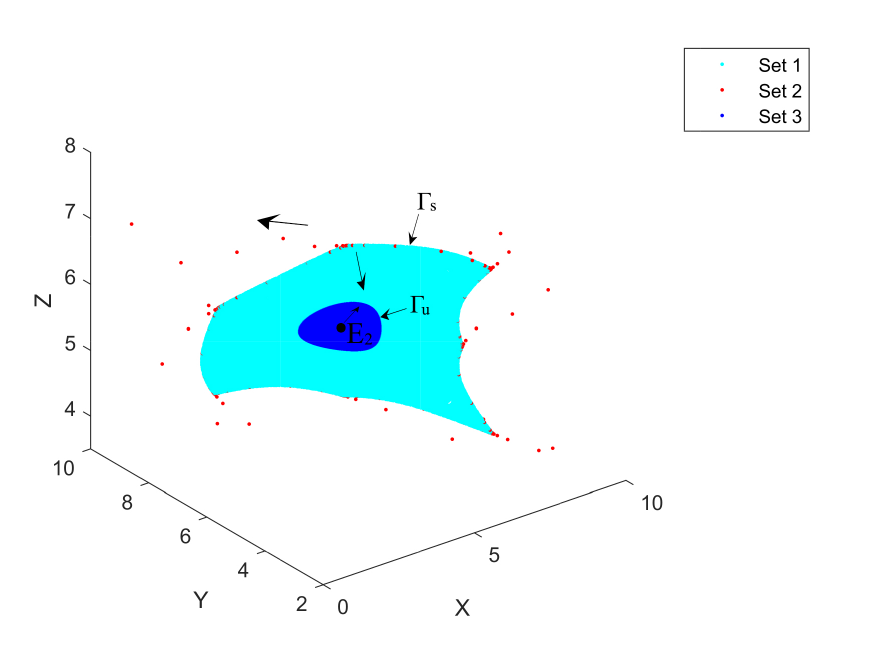}
\label{CH-1}
}
\quad
\subfigure[An unstable invariant circle $\Gamma_1$.]{
\includegraphics[width=7.5cm]{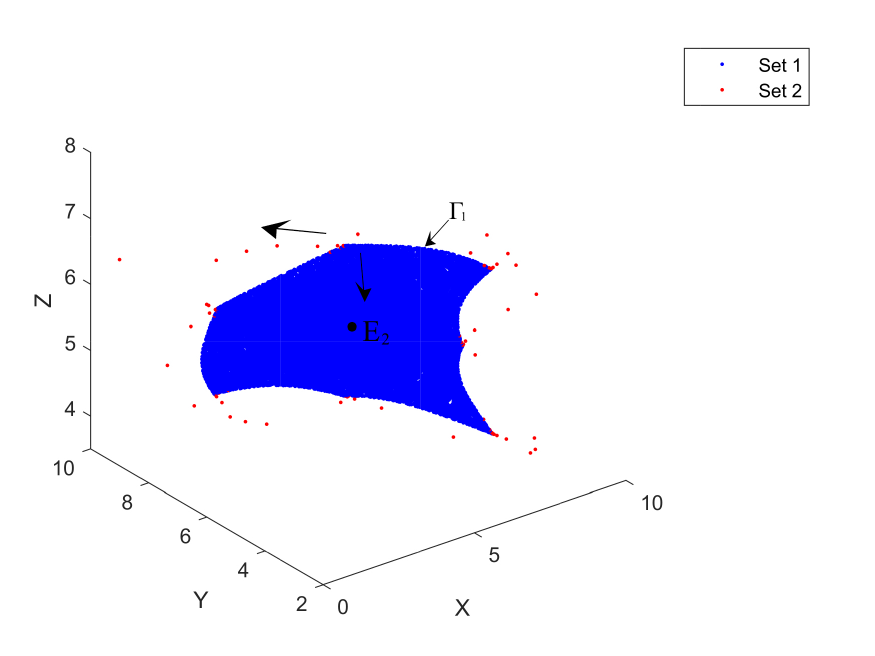}
\label{CH-2}
}
\quad
\centering
\subfigure[An unstable from the outside and stable from the inside invariant circle $\Gamma_1$.]{
\includegraphics[width=7.5cm]{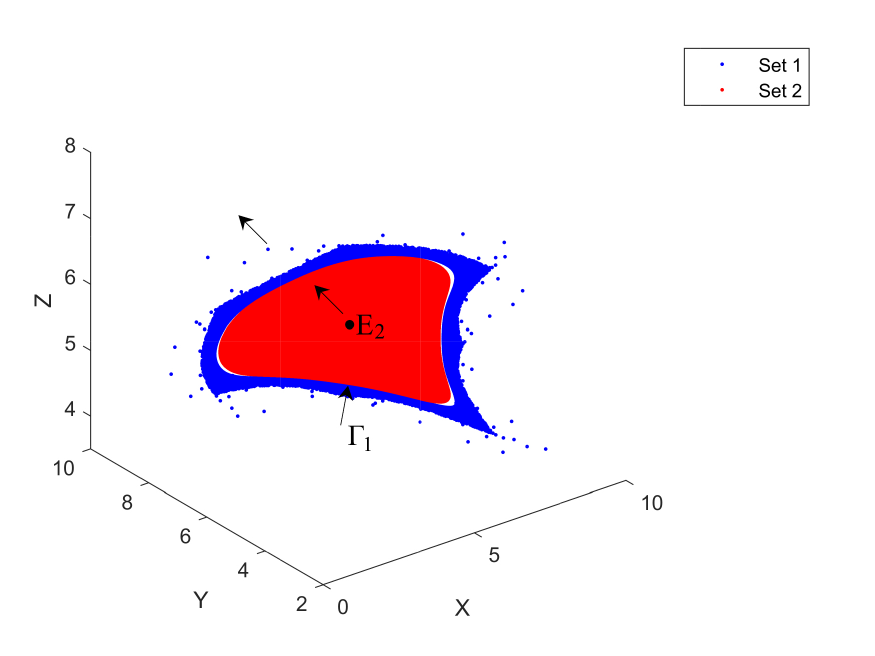}
\label{CH-3}
}
\caption{Invariant circles arising from Chenciner bifurcation.}
\end{figure}
Besides, we will analyze the bifurcation phenomena of the mapping \eqref{eq2.1} in the vicinity of the $1:3$ resonance point to verify the correctness of Corollary \ref{cor6.21}.
As depicted in Figure \ref{cod-2}, a ``R3'' point is present on the Neimark-Sacker bifurcation curve. Consequently, the first conclusion of Corollary \ref{cor6.21} is valid.
Furthermore, by choosing the ``R3'' point in Figure \ref{cod-2} as the initial value, selecting $A$ and $d$ as the active parameters, setting ``NS-curve x3'' as the ``initializer'', and modifying ``Amplitude'', ``VarTolerance'', ``FunTolerance'' and ``TestTolerance'' as $0.0001$, $10e-03$, $10e-03$ and $10e-02$, respectively, while keeping the other options at the system defaults in MatcontM, we detect a curve (denoted as $\mathcal{F}_1$) near the ``R3'' point (see Figure \ref{1bi3-1}). In the output panel of MatcontM, we can observe that the parameters $r$ and $\alpha$ of the last point of curve $\mathcal{F}_1$ are $10.104305$ and $0.76024475$ respectively.
Moreover, when we change ``MaxNumPoint'' to $5000$, that is, increase the number of iterations, and keep the other options unchanged as described above in MatcontM, we also detect a curve (denoted as $\mathcal{F}_2$) in Figure \ref{1bi3-2}. In the output panel of MatcontM, we can observe that the parameters $r$ and $\alpha$ of the last point of curve $\mathcal{F}_2$ are $10.196342$ and $0.0.76378977$ respectively. We note that the last point of curve $\mathcal{F}_2$ is closer to the ``R3'' point than the last point of curve . As we further increase the value of ``MaxNumPoint'', we find that the last point of the detected curve gets closer to the ``R3'' point. Hence, we can state that the curve detected by the above-mentioned method starting from the ``R3'' point eventually returns to the ``R3'' point, which corresponds to a neutral saddle cycle of period-3. This verifies the second conclusion of Corollary \ref{cor6.21}.
Furthermore, we numerically simulate the dynamic properties by employing {\it MatlabR2022a} to further verify Corollary \ref{cor6.21}. Selecting the parameters $N=1.25$, $\beta=0.31998$, $r=0.7983$ and $\alpha=10.5721$, and taking the initial values
\begin{eqnarray*}
&&(x_{51},y_{51},z_{51})=(0.133238,0.3197913967,0.7977795999),
\\
&&(x_{52},y_{52},z_{52})=(0.133239,0.3197913967,0.7977795999),
\\
&&(x_{53},y_{53},z_{53})=(0.13018,0.32614,0.793394),
\end{eqnarray*}
in Matlab R2022a, we obtain the cyan, blue, and red orbits respectively (see Figures \ref{1bi3-3} and \ref{1bi3-4}). From Figures \ref{1bi3-3} and \ref{1bi3-4}, we can see that there exists a period-3 saddle point cycle $\{T_{11}, T_{12}, T_{13}\}$ located at the junction of the red and blue orbits and an invariant circle $\Gamma$. Since the blue orbit moves away from $\Gamma$  towards infinity and the cyan orbit from $\Gamma$ approaches the fixed point $E_2$, the invariant circle $\Gamma$ is unstable. This implies that the period-3 saddle point cycle $\{T_{11}, T_{12}, T_{13}\}$ and the invariant circle $\Gamma$ coexist.
Additionally, by setting parameters $N=1.25$, $\beta=0.31998$, $r=0.79829$ and $\alpha=10.5721$, and letting the initial values
\begin{eqnarray*}
&&(x_{54},y_{54},z_{54})=(0.133258,0.3197913967,0.7977795999),
\\
&&(x_{55},y_{55},z_{55})=(0.13018,0.32614,0.793394),
\end{eqnarray*}
we obtain the cyan and blue orbits respectively as shown in Figures \ref{1bi3-5} (or \ref{1bi3-6}). From Figure \ref{1bi3-5}, we observe that a period-3 saddle point cycle $\{T_{11}, T_{12}, T_{13}\}$ lies on the invariant circle $\Gamma_2$, forming a homoclinic structure.

\begin{figure}[ht!]
\T\T\T\T\T\T\T\T\T\T\T\T\T\T\T\T\T\T\T\T\T\T\T\T\T\T\T\T\T\T\T\T\T\T\T\T
\subfigure[Bifurcation curve near $1:3$ resonance point by keeping ``MaxNumPoint'' as the system default.]{
\includegraphics[width=7.5cm]{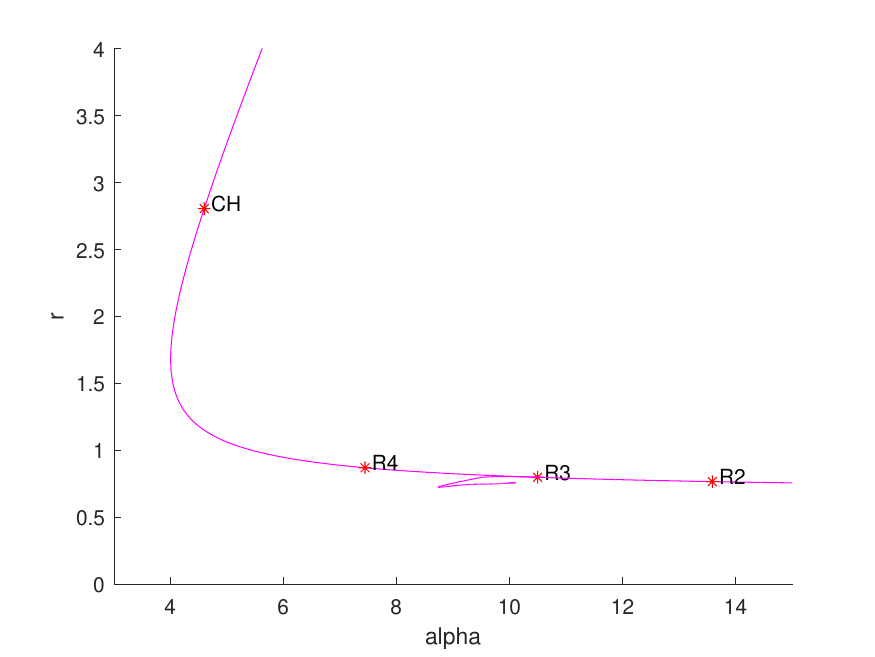}
\label{1bi3-1}
}
\quad
\subfigure[Bifurcation curve near $1:3$ resonance point by changing ``MaxNumPoint'' as $5000$.]{
\includegraphics[width=7.5cm]{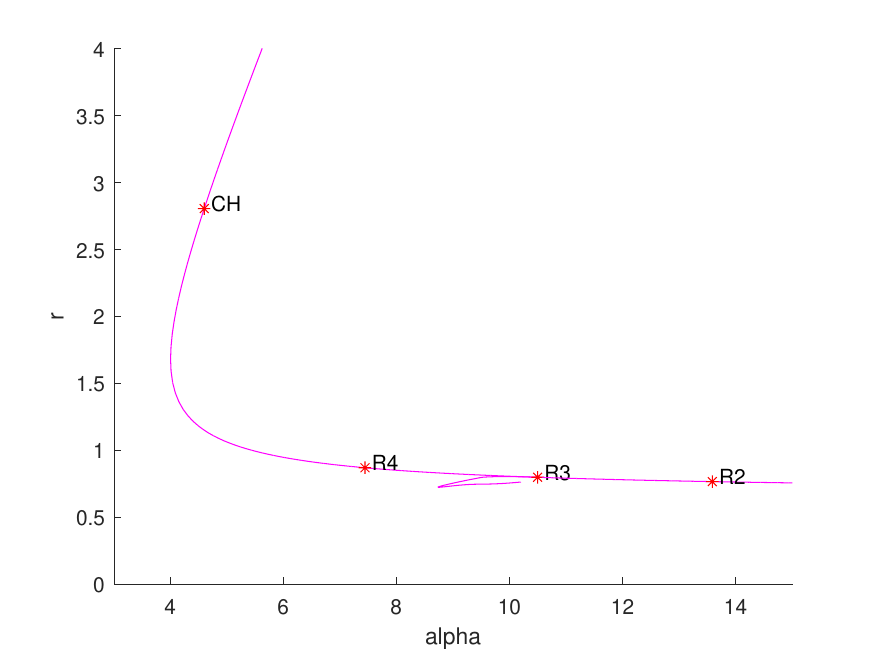}
\label{1bi3-2}
}
\caption{Detection portraits of system (1.2) in MatcontM.}
\end{figure}
\begin{figure}[ht!]
\T\T\T\T\T\T\T\T\T\T\T\T\T\T\T\T\T\T\T\T\T\T\T\T\T\T\T\T\T\T\T\T\T\T\T\T
\subfigure[$N=1.25$, $\beta=0.31998$, $r=0.7983$ and $\alpha=10.5721$.]{
\includegraphics[width=7.5cm]{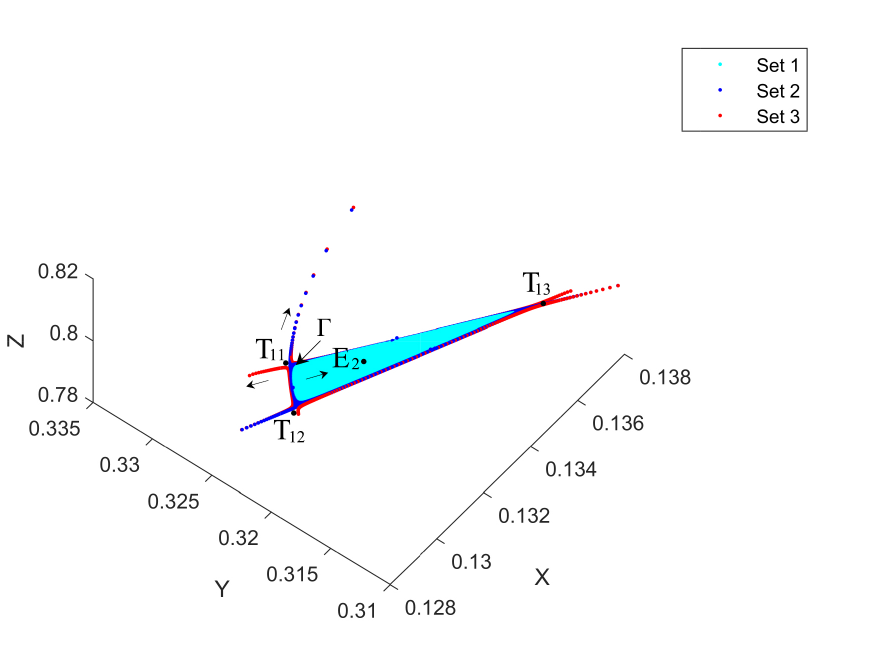}
\label{1bi3-3}
}
\quad
\subfigure[Zoom at Figure \ref{1bi3-3}.]{
\includegraphics[width=7.5cm]{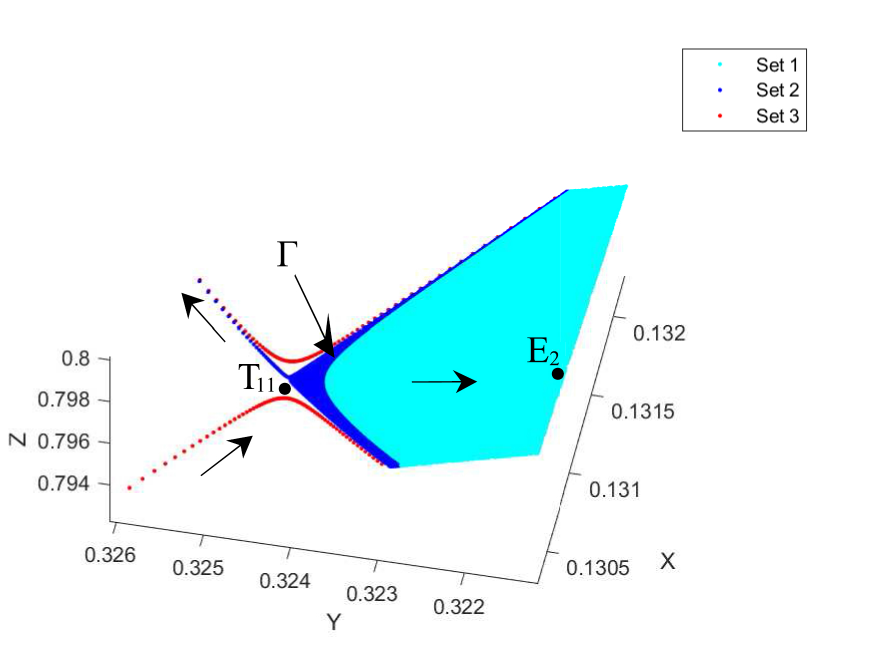}
\label{1bi3-4}
}
\caption{phase portraits of system \eqref{eq2.1} in $(r, \alpha)-$plane.}
\end{figure}
\begin{figure}[ht!]
\T\T\T\T\T\T\T\T\T\T\T\T\T\T\T\T\T\T\T\T\T\T\T\T\T\T\T\T\T\T\T\T\T\T\T\T
\subfigure[$N=1.25$, $\beta=0.31998$, $r=0.79829$ and $\alpha=10.5721$.]{
\includegraphics[width=7.5cm]{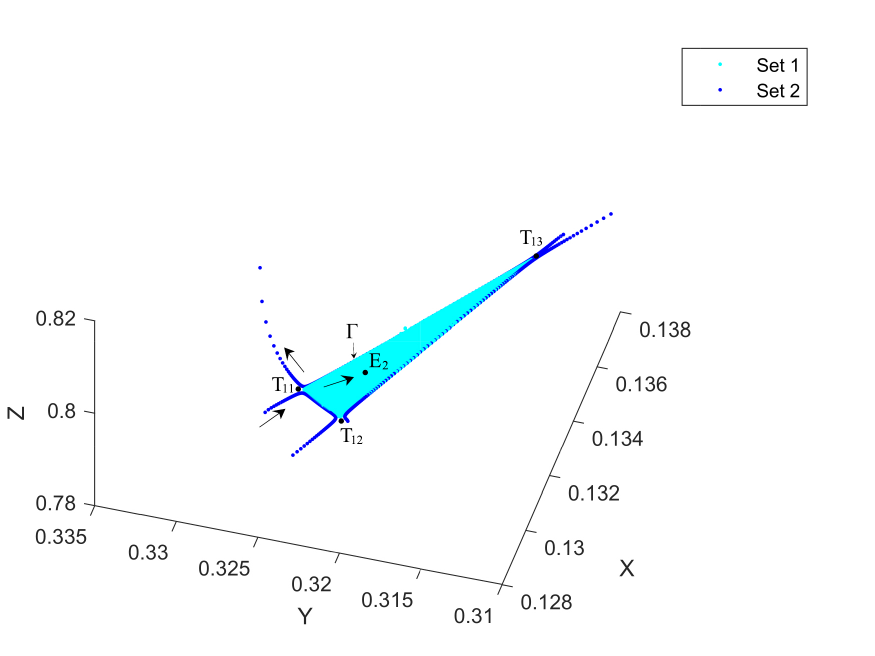}
\label{1bi3-5}
}
\quad
\subfigure[Zoom at Figure \ref{1bi3-5}.]{
\includegraphics[width=7.5cm]{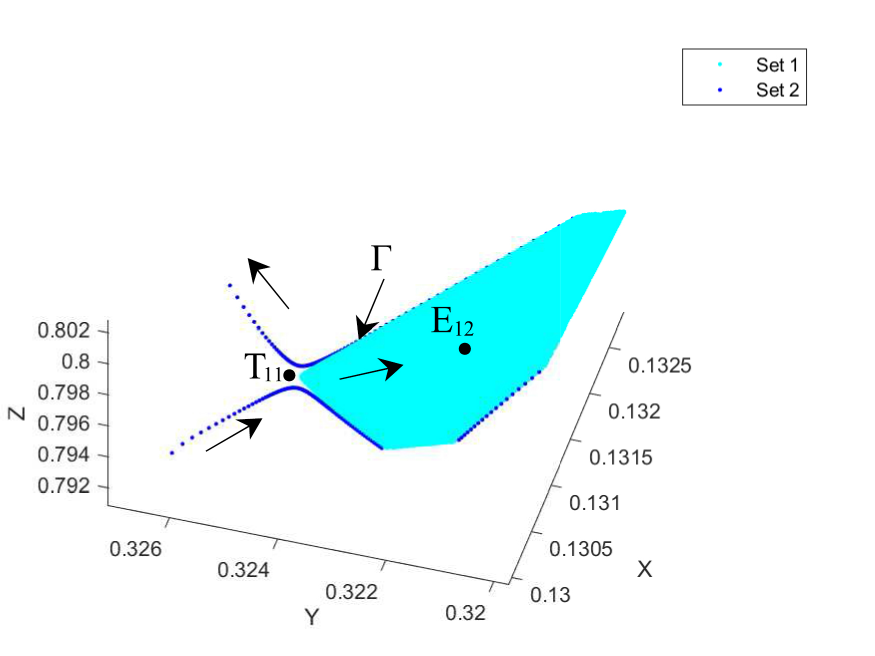}
\label{1bi3-6}
}
\caption{phase portraits of system \eqref{eq2.1} in $(r, \alpha)-$plane.}
\end{figure}

Now, we further verify the correctness of Corollary \ref{cor6.31}.
In Figure \ref{cod-2}, a ``R4'' point is detected on the Neimark-Sacker bifurcation curve. Thus, the first conclusion of Corollary \ref{cor6.31} has been verified.
In addition, according to Corollary \ref{cor6.31}, we know that if $(a_0,b_0)$ belongs to region II in Figure \ref{partion4-1} and $(r,\alpha)$ is close to the curves $H_{41}$ and $H_{42}$, then there exists a period-4 saddle node on the invariant circle $\Gamma$, forming a homoclinic structure.
Therefore, by choosing the parameters $N=10.25$, $\beta=0.80685$, $r=0.86940958$ and $\alpha=4.155094036$, and selecting the initial values
\begin{eqnarray*}
&&(x_{61},y_{61},z_{61})=(4.135,2.943,3.171),
~~~(x_{62},y_{62},z_{62})=(3,4,3.3),
\end{eqnarray*}
in Matlab R2022a, we respectively simulate a red and a cyan orbit as shown in Figures \ref{1bi4-3} and \ref{1bi4-4}, where Figure \ref{1bi4-4} is an enlargement of Figure \ref{1bi4-3}. From Figure \ref{1bi4-4}, we can see that there is a period-4 saddle-node $\{Q_{1}, Q_{2}, Q_{3}, Q_{4}\}$ at the junction of the red orbit and the cyan orbit.
Next, by setting $N=10.25$, $\beta=0.80685$, $r=0.869401$ and $\alpha=4.155094036$, while keeping the initial values unchanged, we respectively obtain the blue and magenta orbits in Figures \ref{1bi4-5} and \ref{1bi4-6}. From Figure \ref{1bi4-6}, we observe a period-4 saddle $\{S_{1}, S_{2}, S_{3}, S_{4}\}$ and a period-4 focus $\{N_{1}, N_{2}, N_{3}, N_{4}\}$.
In addition, we choose the parameters $N=10.24$, $\beta=0.80685$, $r=0.86942$ and $\alpha=4.155094036$, and take the $(x_{62},y_{62},z_{62})$ and
\begin{eqnarray*}
&&(x_{63},y_{63},z_{63})=(4.13,2.94044885,3.168476221),
\end{eqnarray*}
as the initial values, the blue and red orbits are obtained respectively, as shown in Figures \ref{1bi4-7} and \ref{1bi4-8}. From Figure \ref{1bi4-8}, we can observe that there is an invariant circle $\Gamma_2$ generated by the Neimark-Sacker bifurcation at the junction of the blue and red orbits. Since the red orbit inside the $\Gamma_2$ approaches the $\Gamma_2$ from the fixed point $E_2$, and the blue orbit outside $\Gamma_2$ also approaches $\Gamma_2$, this implies that the invariant circle $\Gamma_2$ is stable. It is worth noting that at this time, there are no saddle points outside the invariant circle in Figure \ref{1bi4-8}, which is also a distinctive feature when the parameter $(a_0,b_0)$ belongs to region II in Figure \ref{partion4-1}.
Thus, Corollary \ref{cor6.31} is validated.

\begin{figure}[ht!]
\T\T\T\T\T\T\T\T\T\T\T\T\T\T\T\T\T\T\T\T\T\T\T\T\T\T\T\T\T\T\T\T\T\T\T\T
\subfigure[$\beta=1.5201$, $r=2.645$ and $\alpha=5.48245614$.]{
\includegraphics[width=7.5cm]{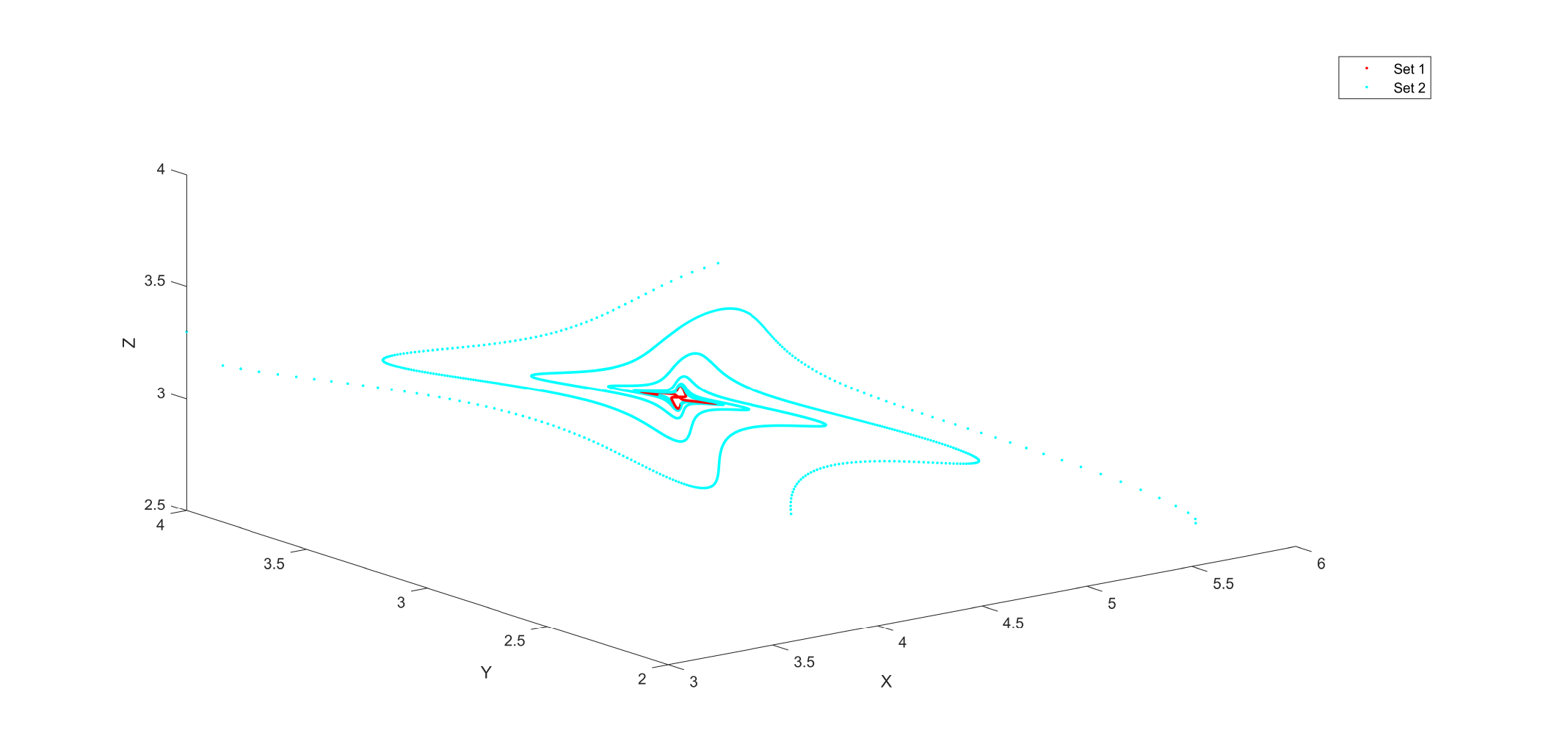}
\label{1bi4-3}
}
\quad
\subfigure[Zoom at Figure \ref{1bi4-3}.]{
\includegraphics[width=7.5cm]{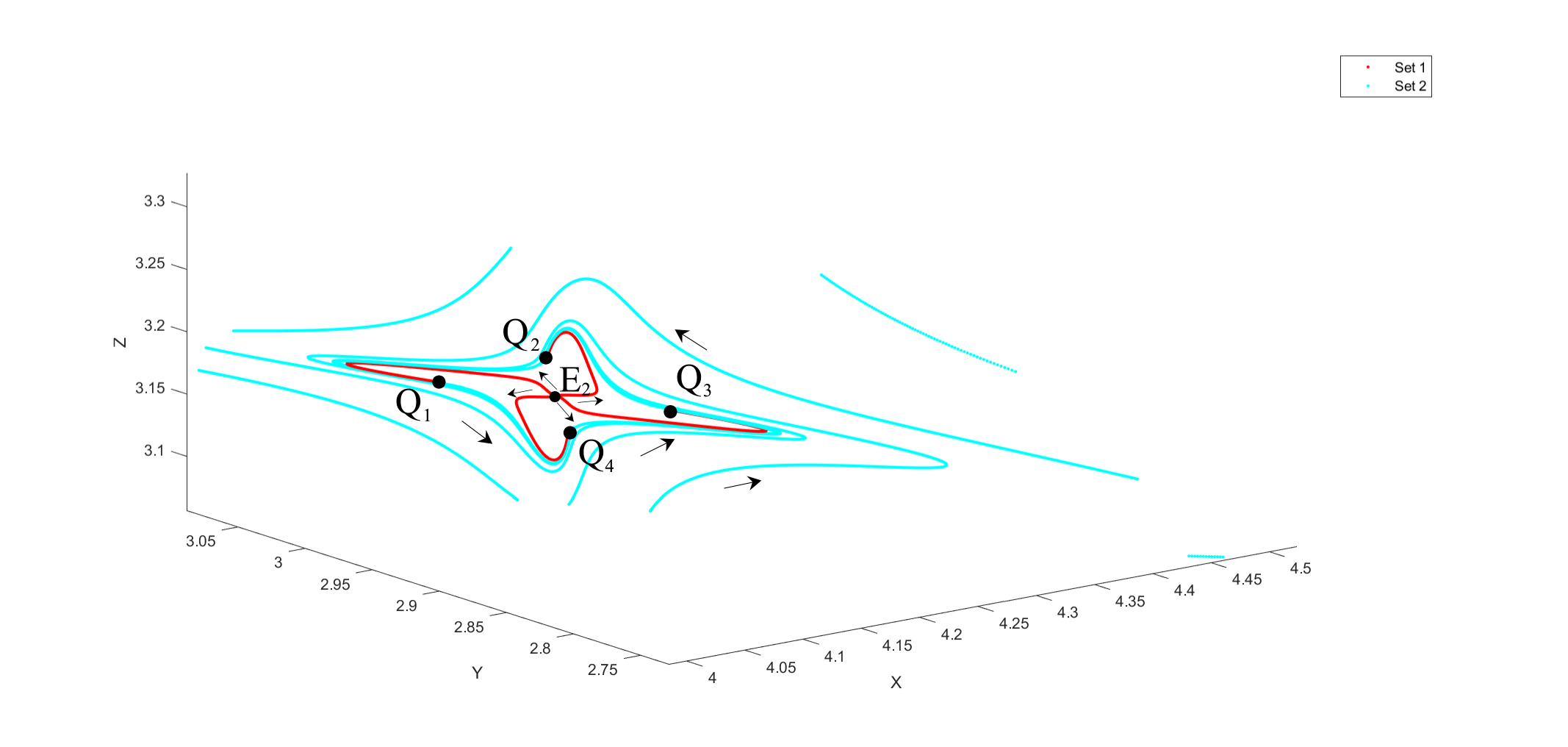}
\label{1bi4-4}
}
\caption{phase portraits of system \eqref{eq2.1} in $(r, \alpha)-$plane.}
\end{figure}
\begin{figure}[ht!]
\T\T\T\T\T\T\T\T\T\T\T\T\T\T\T\T\T\T\T\T\T\T\T\T\T\T\T\T\T\T\T\T\T\T\T\T
\subfigure[$N=10.25$, $\beta=0.80685$, $r=0.869401$ and $\alpha=4.155094036$.]{
\includegraphics[width=7.5cm]{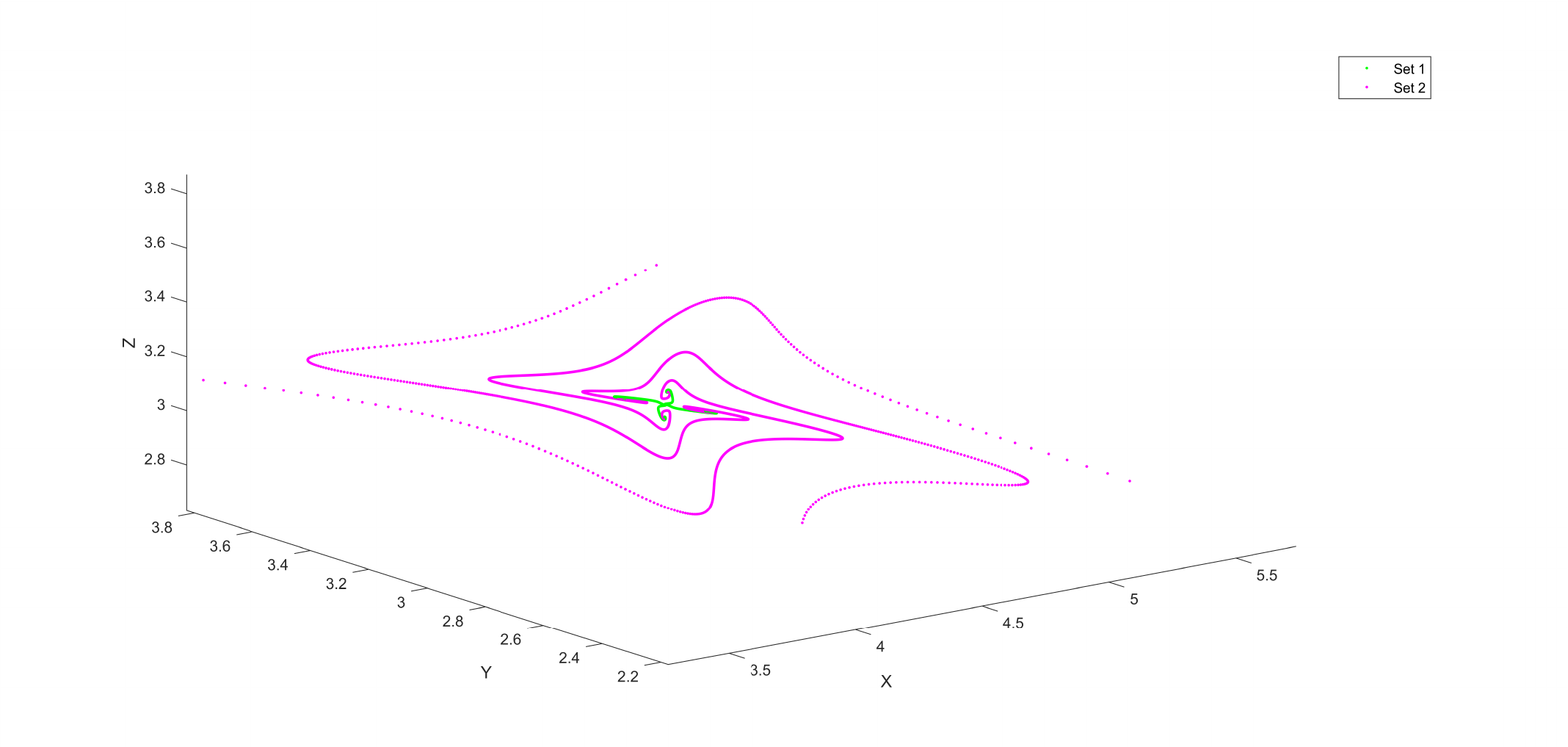}
\label{1bi4-5}
}
\quad
\subfigure[Zoom at Figure \ref{1bi4-5}.]{
\includegraphics[width=7.5cm]{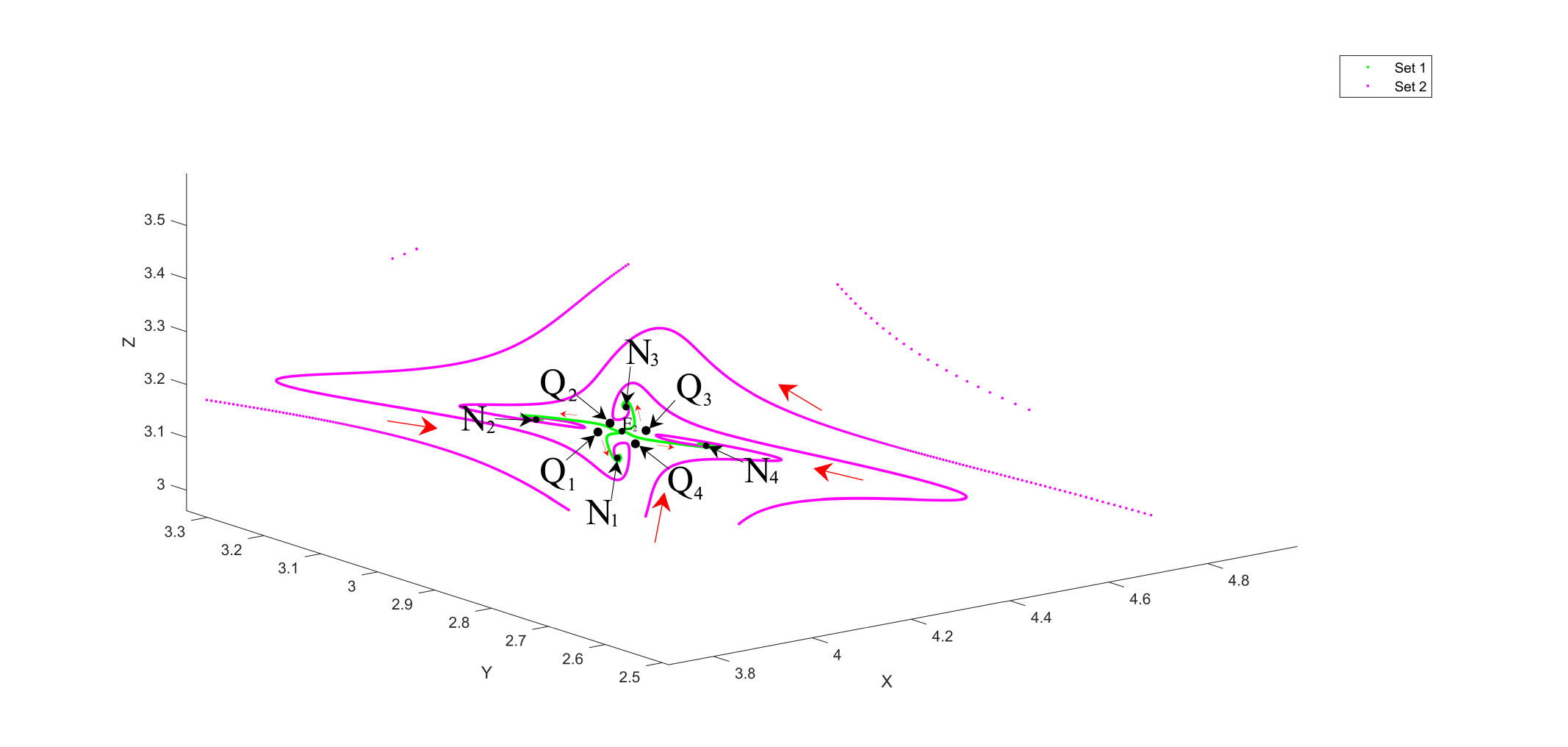}
\label{1bi4-6}
}
\caption{phase portraits of system \eqref{eq2.1} in $(r, \alpha)-$plane.}
\end{figure}
\begin{figure}[ht!]
\T\T\T\T\T\T\T\T\T\T\T\T\T\T\T\T\T\T\T\T\T\T\T\T\T\T\T\T\T\T\T\T\T\T\T\T
\subfigure[$N=10.24$, $\beta=0.80685$, $r=0.86942$ and $\alpha=4.155094036$.]{
\includegraphics[width=7.5cm]{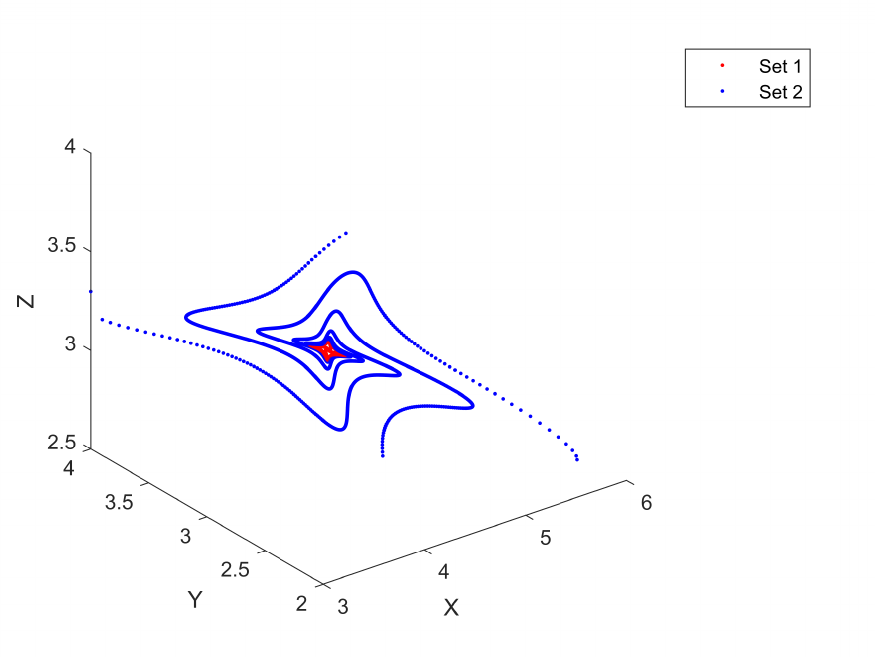}
\label{1bi4-7}
}
\quad
\subfigure[Zoom at Figure \ref{1bi4-5}.]{
\includegraphics[width=7.5cm]{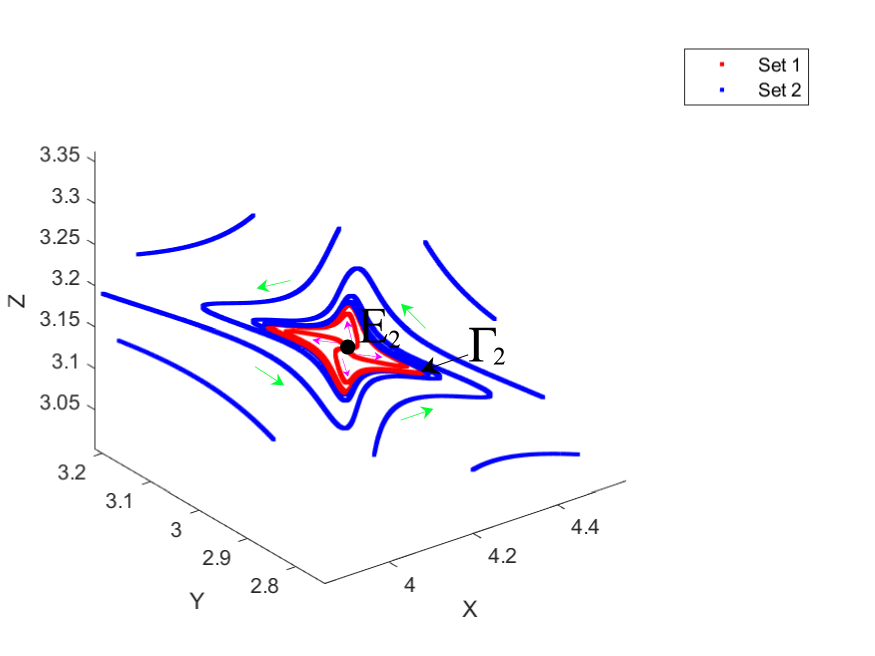}
\label{1bi4-8}
}
\caption{phase portraits of system \eqref{eq2.1} in $(r, \alpha)-$plane.}
\end{figure}

\begin{figure}[ht!]
\T\T\T\T\T\T\T\T\T\T\T\T\T\T\T\T\T\T\T\T\T\T\T\T\T\T\T\T\T\T\T\T\T\T\T\T
\subfigure[An Arnold tongue corresponding to the rotation number $2/5$.]{
\includegraphics[width=7.0cm]{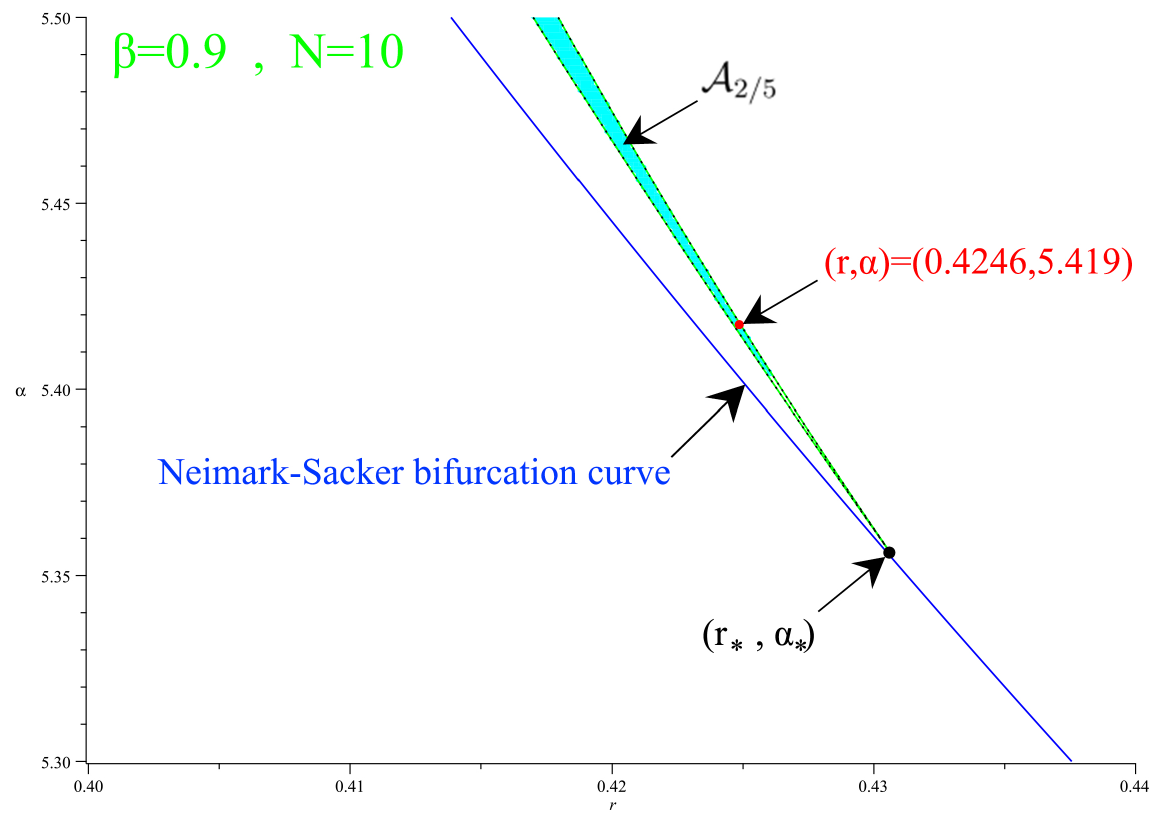}
\label{arnold-1}
}
\subfigure[A stable 5-periodic orbit $\{U_1,U_2,U_3,U_4,U_5\}$ on the invariant circle.]{
\includegraphics[width=7.0cm]{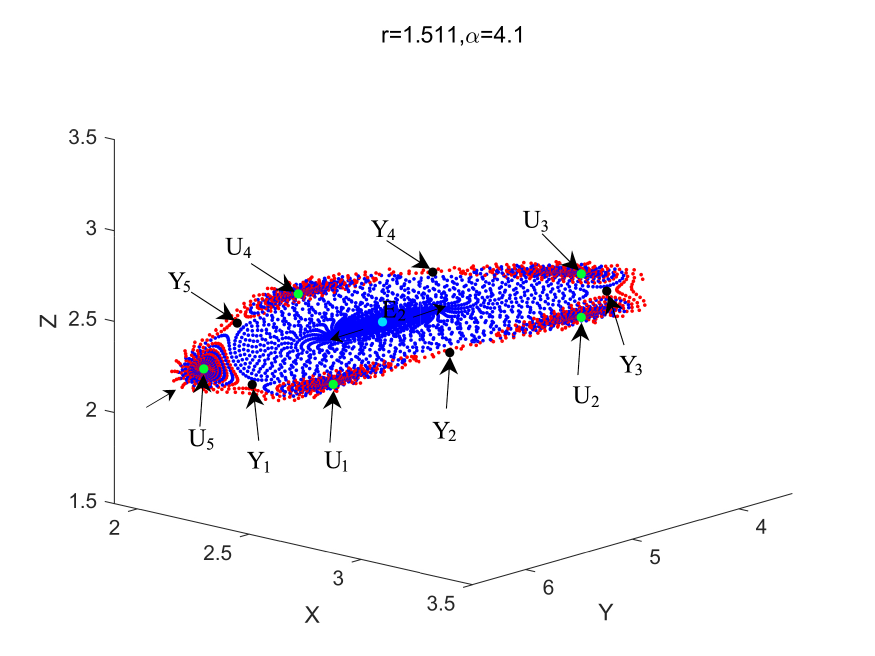}
\label{wr+1}
}
\T\T\T\T\T\T\T\T\T\T\T\T\T\T\T\T\T\T\T\T\T\T\T\T\T\T\T\T\T\T\T\T\T\T\T\T
\subfigure[The red orbit approaches the invariant circle from the outside of the invariant circle.]{
\includegraphics[width=7.0cm]{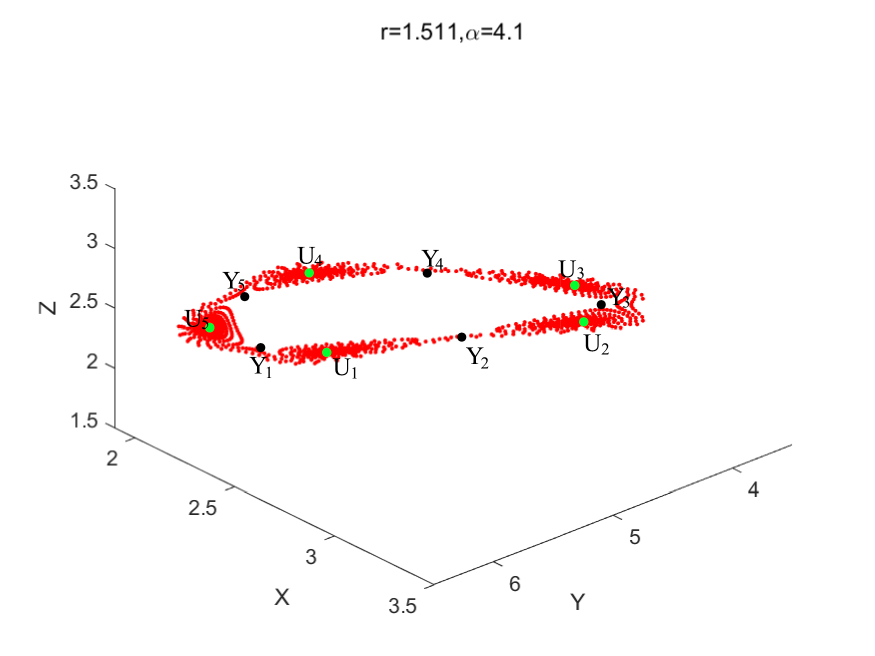}
\label{wr+2}
}
\quad
\quad
\quad
\subfigure[The blue orbit approaches the invariant circle from the fixed point $E_2$.]{
\includegraphics[width=7.0cm]{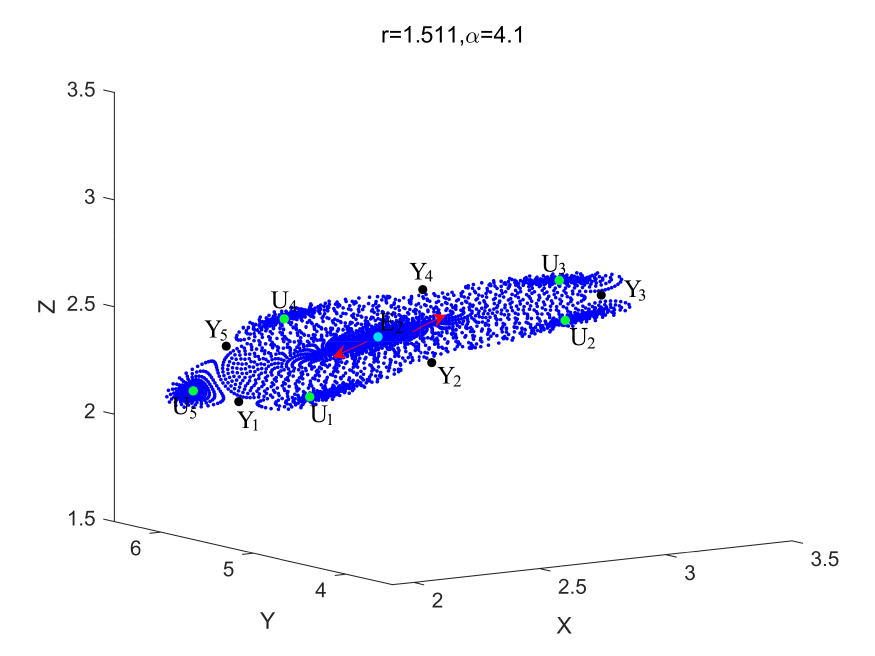}
\label{wr3}
}
\caption{Arnold tongue and stable 5-periodic orbit}
\label{Arnold-YJQ}
\end{figure}

In addition, we perform a numerical simulation of an Arnold tongue $\mathcal{A}_{2/5}$ and 5-periodic orbits on the invariant closed curve resulting from the Neimark-Sacker bifurcation when the parameters are set as $\beta=0.9$ and $N=10$.  Given $n=2$ and $m=5$, we obtain $r*\approx0.4311216871$ and $\alpha_*\approx5.351159455$. In this particular case, mapping \eqref{a2} can be written as
\begin{eqnarray*}
&&z\mapsto\tilde{t}\,z+\varrho_{2,1}\,z^{2}\,\bar{z}^{1}+\varsigma\,\bar{z}^{4}+O(|z|^5),
\end{eqnarray*}
where
\begin{eqnarray*}
&&\tilde{t}\approx-0.8090169949 + 0.5877852523\,{\bf i},\\
&&\varrho_{2,1}\approx0.05167849067 + 0.06713669603\,{\bf i},\\
&&\varsigma\approx-0.004804598419 - 0.009002834572\,{\bf i}.
\end{eqnarray*}
Additionally, it can be verified that
\begin{eqnarray*}
&&\check{\varrho_3}\approx-0.002346818430,~~\tilde{\varrho_2}\approx-0.084690582253,~~|\varsigma|\approx0.01020466542.
\end{eqnarray*}
According to Theorem \ref{th7.1}, the Arnold tongue $\mathcal{A}_{2/5}$ is expressed as
\begin{eqnarray*}
&&\mathcal{A}_{1/5}=\{(\lambda,\mu)|T_{-}-\frac{1\,\pi}{5}<arctan(-{\frac {\sqrt {\Xi_{0}}}{ \left( 4\,\mu-3 \right) {\lambda}^{2}-2\,\lambda\,\mu+{\mu}^{2}}})<T_{+}-\frac{1\,\pi}{5}\},
\end{eqnarray*}
where
\begin{eqnarray*}
&&T_{\pm}\approx36.08740291\,\left(\sqrt{{\frac {-{\beta}^{3}+ \left( \alpha-2\,r \right) {\beta}^{2}- \left( r
-1 \right)  \left( r-\alpha+1 \right) \beta+r}{\beta+r}}
}-1\right)\\
&&~~~~~~~\pm89.75931609\,\left(\sqrt{{\frac {-{\beta}^{3}+ \left( \alpha-2\,r \right) {\beta}^{2}- \left( r
-1 \right)  \left( r-\alpha+1 \right) \beta+r}{\beta+r}}
}-1\right)^{\frac{3}{2}},
\end{eqnarray*}
By using Maple 2021, an Arnold tongue $\mathcal{A}_{2/5}$, represented by the cyan region in Figure \ref{arnold-1}, is simulated in the
$(\lambda,\mu)$-space. Besides, when we select the parameters $r=0.4246$ and $\alpha=5.419$, which are located within the Arnold tongue $\mathcal{A}_{2/5}$ (as shown in Figure \ref{arnold-1}), and utilize Matlab R2022a to simulate system \eqref{eq2.1}, after $1\times10^5$ iterations, we obtain a stable 5-periodic orbit $\{U_1,U_2,U_3,U_4,U_5\}$, depicted as five green points in Figure \ref{wr+1}, on the invariant circle resulting from the Neimark-Sacker bifurcation. Moreover, from Figures \ref{wr+1}, \ref{wr+2} and \ref{wr3}, it can be observed that the period-5 orbit serves as the $\omega$-limit set of the blue orbit with the initial value
$$(x_{70}, y_{70},z_{70})=(2.444362428, 5.133680971, 2.4219566)$$
near the fixed point $E_*$ and the red - blue orbit with the initial value
$$(x_{71}, y_{72}, z_{73})=(2.18, 5.78, 2.12)$$
outside the invariant circle, respectively. Furthermore, there exists a saddle 5-periodic orbit $\{Y_1,Y_2,Y_3,Y_4,Y_5\}$ near the black points on the invariant circle.


{\footnotesize

}
\section*{{\fontsize{13}{10}\selectfont Appendix: Expressions of $\Upsilon_6$ and $\Upsilon_7$}}
{\fontsize{9}{10}\selectfont\begin{eqnarray*}
&&\!\!\!\!\!\!\!\!\!\!\!\!\!\!\!\!
\Upsilon_6:=({\beta}^{2}\epsilon-\beta\,\epsilon-1 ) ^{
2} ( {\epsilon}^{4} ( \epsilon-\varepsilon ) ^
{2}{\beta}^{16}-6\, ( \epsilon-\varepsilon )  ( \frac{1}{2}\,{
\varepsilon}^{4}-\frac{4}{3}\,\epsilon\,{\varepsilon}^{3}+ ( \frac{5}{6}\,{
\epsilon}^{2}-{\frac {5\,\epsilon}{12}} ) {\varepsilon}^{2}-{
\frac {7\,{\epsilon}^{2}\varepsilon}{12}}+{\epsilon}^{2} (
\epsilon+\frac{1}{3} )  ) {\epsilon}^{2}{\beta}^{15}\\
&&~+ ( -4\,
\epsilon\,{\varepsilon}^{7}+ ( 13\,{\epsilon}^{2}-5\,\epsilon
 ) {\varepsilon}^{6}+ ( -14\,{\epsilon}^{3}+2\,{\epsilon}^{
2} ) {\varepsilon}^{5}+ ( 5\,{\epsilon}^{4}+41\,{\epsilon}^{3}+{\frac {33\,{\epsilon}^{2}}{2}} ) {\varepsilon}^{4}+ (
-68\,{\epsilon}^{4}-{\frac {65\,{\epsilon}^{3}}{2}} ) {
\varepsilon}^{3}\\
&&~+ ( -\frac{23}{2}\,{\epsilon}^{3}+30\,{\epsilon}^{5}+26\,
{\epsilon}^{4} ) {\varepsilon}^{2}+ ( -25\,{\epsilon}^{5}+8
\,{\epsilon}^{4} ) \varepsilon+15\,{\epsilon}^{6}+{\epsilon}^{4}
+\frac{7}{2}\,{\epsilon}^{5} ) {\beta}^{14}+ (  ( 1+19\,
\epsilon ) {\varepsilon}^{7}+ ( -68\,{\epsilon}^{2}\\
&&~-8\,
\epsilon ) {\varepsilon}^{6}+ ( -37\,\epsilon+79\,{\epsilon
}^{3}+{\frac {37\,{\epsilon}^{2}}{2}} ) {\varepsilon}^{5}+
 ( 35\,{\epsilon}^{2}-30\,{\epsilon}^{4}-{\frac {183\,{\epsilon}^
{3}}{2}} ) {\varepsilon}^{4}+ ( 95\,{\epsilon}^{3}+{\frac {
87\,{\epsilon}^{2}}{2}}+155\,{\epsilon}^{4} ) {\varepsilon}^{3}\\
&&~+
 ( -136\,{\epsilon}^{4}-{\frac {63\,{\epsilon}^{3}}{2}}-75\,{
\epsilon}^{5} ) {\varepsilon}^{2}+ ( -{\frac {49\,{\epsilon
}^{4}}{2}}-{\frac {31\,{\epsilon}^{3}}{2}}+{\frac {125\,{\epsilon}^{5}
}{2}} ) \varepsilon-20\,{\epsilon}^{6}+10\,{\epsilon}^{5}+9\,{
\epsilon}^{4} ) {\beta}^{13}+ ( -{\varepsilon}^{8}\\
&&~+ (-28\,\epsilon-2 ) {\varepsilon}^{7}+ ( 134\,{\epsilon}^{2}+
91\,\epsilon+12 ) {\varepsilon}^{6}+ ( -180\,{\epsilon}^{3}
-218\,{\epsilon}^{2}+32\,\epsilon ) {\varepsilon}^{5}+ ( 75
\,{\epsilon}^{4}+259\,{\epsilon}^{3}\\
&&~-176\,{\epsilon}^{2}-113\,\epsilon
 ) {\varepsilon}^{4}+ ( -230\,{\epsilon}^{4}-\frac{11}{2}\,{
\epsilon}^{3}+93\,{\epsilon}^{2} ) {\varepsilon}^{3}+ ( 100
\,{\epsilon}^{5}+255\,{\epsilon}^{4}+91\,{\epsilon}^{3}+{\frac {135\,{
\epsilon}^{2}}{2}} ) {\varepsilon}^{2}+ ( -130\,{\epsilon}^
{5}\\
&&~-{\frac {61\,{\epsilon}^{4}}{2}}-{\frac {59\,{\epsilon}^{3}}{2}}
 ) \varepsilon+15\, ( {\epsilon}^{3}-{\frac {11\,{\epsilon}
^{2}}{6}}-\frac{3}{2}\,\epsilon-{\frac{13}{30}} ) {\epsilon}^{3}
 ) {\beta}^{12}+ ( 6\,{\varepsilon}^{8}+ ( -7\,
\epsilon-8 ) {\varepsilon}^{7}+ ( -104\,{\epsilon}^{2}-130
\,\epsilon\\
&&~-29 ) {\varepsilon}^{6}+ ( 55+205\,{\epsilon}^{3}+{\frac {961\,{\epsilon}^{2}}{2}}+241\,\epsilon ) {\varepsilon}^
{5}+ ( -100\,{\epsilon}^{4}-{\frac {1125\,{\epsilon}^{3}}{2}}-{
\frac {221\,{\epsilon}^{2}}{2}}+120\,\epsilon ) {\varepsilon}^{4
}+ ( 295\,{\epsilon}^{4}\\
&&~-49\,{\epsilon}^{3}-527\,{\epsilon}^{2}-{
\frac {379\,\epsilon}{2}} ) {\varepsilon}^{3}+ ( -75\,{
\epsilon}^{5}-231\,{\epsilon}^{4}+{\frac {445\,{\epsilon}^{3}}{2}}+95
\,{\epsilon}^{2} ) {\varepsilon}^{2}+ ( {\frac {349\,{
\epsilon}^{5}}{2}}+90\,{\epsilon}^{4}+116\,{\epsilon}^{3}\\
&&~+{\frac {111
\,{\epsilon}^{2}}{2}} ) \varepsilon-{\frac {45\,{\epsilon}^{4}}{
2}}-{\frac {39\,{\epsilon}^{3}}{2}}+5\,{\epsilon}^{5}-6\,{\epsilon}^{6
} ) {\beta}^{11}+ ( -15\,{\varepsilon}^{8}+ ( 70\,
\epsilon+41 ) {\varepsilon}^{7}+ ( -20\,{\epsilon}^{2}-97\,
\epsilon-14 ) {\varepsilon}^{6}\\
&&~+ ( -110\,{\epsilon}^{3}-292
\,{\epsilon}^{2}-401\,\epsilon-129 ) {\varepsilon}^{5}+ (
75\,{\epsilon}^{4}+646\,{\epsilon}^{3}+{\frac{263}{2}}+{\frac {1495\,{
\epsilon}^{2}}{2}}+542\,\epsilon ) {\varepsilon}^{4}+ ( -
328\,{\epsilon}^{4}\\
&&~-{\frac {715\,{\epsilon}^{3}}{2}}+{\frac {297\,{
\epsilon}^{2}}{2}}+{\frac {319\,\epsilon}{2}} ) {\varepsilon}^{3
}+ ( 30\,{\epsilon}^{5}+186\,{\epsilon}^{4}-611\,{\epsilon}^{3}-
607\,{\epsilon}^{2}-{\frac {379\,\epsilon}{2}} ) {\varepsilon}^{
2}+ ( -137\,{\epsilon}^{5}+33\,{\epsilon}^{4}\\
&&~+{\frac {291\,{
\epsilon}^{3}}{2}}+61\,{\epsilon}^{2} ) \varepsilon+{\epsilon}^{
2} ( 75\,{\epsilon}^{2}+18+59\,\epsilon+{\frac {89\,{\epsilon}^{3
}}{2}}+{\epsilon}^{4} )  ) {\beta}^{10}+ ( 20\,{
\varepsilon}^{8}+ ( -91\,\epsilon-85 ) {\varepsilon}^{7}+
 ( 92\,{\epsilon}^{2}\\
&&~+440\,\epsilon+122 ) {\varepsilon}^{6}
+ ( -\frac{17}{2}+9\,{\epsilon}^{3}-{\frac {589\,{\epsilon}^{2}}{2}}-227
\,\epsilon ) {\varepsilon}^{5}+ ( -30\,{\epsilon}^{4}-{
\frac {617\,{\epsilon}^{3}}{2}}-274-404\,{\epsilon}^{2}-{\frac {1885\,
\epsilon}{2}} ) {\varepsilon}^{4}\\
&&~+ ( 253\,{\epsilon}^{4}+{
\frac{367}{2}}+685\,{\epsilon}^{3}+{\frac {2081\,{\epsilon}^{2}}{2}}+{
\frac {1713\,\epsilon}{2}} ) {\varepsilon}^{3}+ ( -5\,{
\epsilon}^{5}-266\,{\epsilon}^{4}+{\frac {373\,{\epsilon}^{3}}{2}}+{
\frac {213\,{\epsilon}^{2}}{2}}+{\frac {213\,\epsilon}{2}} ) {
\varepsilon}^{2}\\
&&~+ ( {\frac {115\,{\epsilon}^{5}}{2}}-{\frac {215
\,\epsilon}{2}}-{\frac {321\,{\epsilon}^{4}}{2}}-504\,{\epsilon}^{3}-{\frac {773\,{\epsilon}^{2}}{2}} ) \varepsilon-56\,{\epsilon}^{5}
+4\,{\epsilon}^{4}+55\,{\epsilon}^{3}+26\,{\epsilon}^{2} ) {
\beta}^{9}+ ( -15\,{\varepsilon}^{8}+ ( 56\,\epsilon\\
&&~+104
 ) {\varepsilon}^{7}+ ( -62\,{\epsilon}^{2}-507\,\epsilon-
203 ) {\varepsilon}^{6}+ ( 16\,{\epsilon}^{3}+554\,{
\epsilon}^{2}+1080\,\epsilon+258 ) {\varepsilon}^{5}+ ( 5\,
{\epsilon}^{4}-41\,{\epsilon}^{3}-{\frac{91}{2}}-850\,{\epsilon}^{2}\\
&&~-180\,\epsilon ) {\varepsilon}^{4}+ ( -110\,{\epsilon}^{4}-
320-{\frac {565\,{\epsilon}^{3}}{2}}-698\,{\epsilon}^{2}-1291\,
\epsilon ) {\varepsilon}^{3}+ ( {\frac{307}{2}}+286\,{
\epsilon}^{4}+409\,{\epsilon}^{3}+{\frac {2479\,{\epsilon}^{2}}{2}}\\
&&~+{
\frac {1603\,\epsilon}{2}} ) {\varepsilon}^{2}+ ( -10\,{
\epsilon}^{5}+{\frac {49\,\epsilon}{2}}+{\frac {79\,{\epsilon}^{4}}{2}
}+{\frac {351\,{\epsilon}^{3}}{2}}-\frac{15}{2}\,{\epsilon}^{2} )
\varepsilon-{\frac {223\,{\epsilon}^{4}}{2}}-110\,{\epsilon}^{2}-182\,
{\epsilon}^{3}+{\frac {55\,{\epsilon}^{5}}{2}}\\
&&~-{\frac {53\,\epsilon}{2
}} ) {\beta}^{8}+ ( 6\,{\varepsilon}^{8}+ ( -17\,
\epsilon-78 ) {\varepsilon}^{7}+ ( 16\,{\epsilon}^{2}+294\,\epsilon+215 ) {\varepsilon}^{6}+ ( -293-5\,{\epsilon}^{3}-
{\frac {659\,{\epsilon}^{2}}{2}}-1149\,\epsilon ) {\varepsilon}^
{5}\\
&&~+ ( {\frac {187\,{\epsilon}^{3}}{2}}+{\frac{879}{2}}+{\frac {
2587\,{\epsilon}^{2}}{2}}+1453\,\epsilon ) {\varepsilon}^{4}+
 ( 20\,{\epsilon}^{4}-{\frac{285}{2}}-{\frac {429\,{\epsilon}^{3}
}{2}}-1020\,{\epsilon}^{2}-{\frac {311\,\epsilon}{2}} ) {
\varepsilon}^{3}+ ( -210\\
&&~-150\,{\epsilon}^{4}-{\frac {277\,{
\epsilon}^{3}}{2}}-898\,{\epsilon}^{2}-923\,\epsilon ) {
\varepsilon}^{2}+ ( {\frac{145}{2}}+417\,\epsilon+113\,{\epsilon}
^{4}+297\,{\epsilon}^{3}+804\,{\epsilon}^{2} ) \varepsilon+{
\frac {155\,{\epsilon}^{4}}{2}}-35\,{\epsilon}^{2}\\
&&~+34\,{\epsilon}^{3}-
5\,{\epsilon}^{5}-\frac{13}{2}\,\epsilon ) {\beta}^{7}+ ( -{
\varepsilon}^{8}+ ( 2\,\epsilon+33 ) {\varepsilon}^{7}+
 ( -{\epsilon}^{2}-90\,\epsilon-164 ) {\varepsilon}^{6}+ ( 82\,{\epsilon}^{2}+627\,\epsilon+216 ) {\varepsilon}^{5}
\\
&&~+ ( -25\,{\epsilon}^{3}-707\,{\epsilon}^{2}-1380\,\epsilon-298
 ) {\varepsilon}^{4}+ ( {\frac{1019}{2}}+214\,{\epsilon}^{3
}+{\frac {3065\,{\epsilon}^{2}}{2}}+{\frac {2579\,\epsilon}{2}}
 ) {\varepsilon}^{3}+ ( -{\frac{353}{2}}+30\,{\epsilon}^{4}
\\
&&~-{\frac {615\,{\epsilon}^{3}}{2}}-643\,{\epsilon}^{2}-{\frac {605\,
\epsilon}{2}} ) {\varepsilon}^{2}+ ( -69-307\,\epsilon-90\,
{\epsilon}^{4}-{\frac {227\,{\epsilon}^{3}}{2}}-507\,{\epsilon}^{2}
 ) \varepsilon+5\,{\epsilon}^{4}+132\,{\epsilon}^{3}+219\,{
\epsilon}^{2}\\
&&~+93\,\epsilon+15 ) {\beta}^{6}+ ( -6\,{
\varepsilon}^{7}+ ( 76+12\,\epsilon ) {\varepsilon}^{6}+
 ( -{\frac{347}{2}}-6\,{\epsilon}^{2}-196\,\epsilon ) {
\varepsilon}^{5}+ ( {\frac{161}{2}}+170\,{\epsilon}^{2}+{\frac {
1371\,\epsilon}{2}} ) {\varepsilon}^{4}\\
&&~+ ( -196-50\,{
\epsilon}^{3}-773\,{\epsilon}^{2}-{\frac {1893\,\epsilon}{2}} )
{\varepsilon}^{3}+ ( {\frac{681}{2}}+241\,{\epsilon}^{3}+{\frac {
1993\,{\epsilon}^{2}}{2}}+{\frac {1607\,\epsilon}{2}} ) {
\varepsilon}^{2}+ ( -{\frac{201}{2}}-269\,\epsilon\\
&&~+20\,{\epsilon}
^{4}-{\frac {361\,{\epsilon}^{3}}{2}}-{\frac {547\,{\epsilon}^{2}}{2}}
 ) \varepsilon-7-20\,{\epsilon}^{4}-91\,{\epsilon}^{2}-59\,{
\epsilon}^{3}-{\frac {51\,\epsilon}{2}} ) {\beta}^{5}+ ( -
15\,{\varepsilon}^{6}+ ( 95+30\,\epsilon ) {\varepsilon}^{5
}+ ( -15\,{\epsilon}^{2}\\
&&~-225\,\epsilon-89 ) {\varepsilon}^{
4}+ ( -{\frac{97}{2}}+180\,{\epsilon}^{2}+392\,\epsilon ) {
\varepsilon}^{3}+ ( -50\,{\epsilon}^{3}-442\,{\epsilon}^{2}-{
\frac {711\,\epsilon}{2}}-53 ) {\varepsilon}^{2}+ ( 134\,{
\epsilon}^{3}+{\frac {703\,{\epsilon}^{2}}{2}}\\
&&~+{\frac {601\,\epsilon}{
2}}+111 ) \varepsilon-75\,{\epsilon}^{2}+5\,{\epsilon}^{4}-{
\frac{43}{2}}-{\frac {75\,{\epsilon}^{3}}{2}}-79\,\epsilon ) {
\beta}^{4}+ ( -20\,{\varepsilon}^{5}+ ( 40\,\epsilon+70
 ) {\varepsilon}^{4}+ ( -20\,{\epsilon}^{2}-145\,\epsilon\\
&&~-11 ) {\varepsilon}^{3}+ ( -{\frac{139}{2}}+100\,{\epsilon}^
{2}+99\,\epsilon ) {\varepsilon}^{2}+ ( -25\,{\epsilon}^{3}
-{\frac {235\,{\epsilon}^{2}}{2}}-{\frac {103\,\epsilon}{2}}+12
 ) \varepsilon+56\,{\epsilon}^{2}+{\frac {87\,\epsilon}{2}}+\frac{23}{2}
\\
&&~+{\frac {59\,{\epsilon}^{3}}{2}} ) {\beta}^{3}+ ( -15\,{
\varepsilon}^{4}+ ( 31+30\,\epsilon ) {\varepsilon}^{3}+( -15\,{\epsilon}^{2}-52\,\epsilon+8 ) {\varepsilon}^{2}
 + ( -{\frac{59}{2}}+26\,{\epsilon}^{2}+\epsilon )
\varepsilon+\frac{9}{2}\,\epsilon+7-5\,{\epsilon}^{3}\\
&&~-9\,{\epsilon}^{2}
 ) {\beta}^{2}+ ( -6\,{\varepsilon}^{3}+ ( 8+12\,
\epsilon ) {\varepsilon}^{2}+ ( \frac{5}{2}
 -6\,{\epsilon}^{2}-10\,
\epsilon ) \varepsilon-\frac{5}{2}\,\epsilon-4+2\,{\epsilon}^{2}
 ) \beta- ( \epsilon-\varepsilon+1 )  ( \epsilon
-\varepsilon )  ),\\
&&\!\!\!\!\!\!\!\!\!\!\!\!\!\!\!\!
\Upsilon_7:=4\, (  ( \epsilon-\varepsilon ) \beta-\epsilon+
\varepsilon-1 ) ^{2} ( \beta-1 ) ^{4}{N}^{2} (
\beta\,\varepsilon+1 ) ^{2}{\beta}^{5} ( {\beta}^{5}{
\epsilon}^{2}+ ( -4\,\epsilon\,{\varepsilon}^{2}+4\,{\varepsilon}
^{3}-2\,{\epsilon}^{2} ) {\beta}^{4}+ ( -8\,{\varepsilon}^{
3}+ ( 8\,\epsilon\\
&&~+12 ) {\varepsilon}^{2}-8\,\epsilon\,
\varepsilon+{\epsilon}^{2}-2\,\epsilon ) {\beta}^{3}+ ( 4\,
{\varepsilon}^{3}+ ( -4\,\epsilon-20 ) {\varepsilon}^{2}+
 ( 16\,\epsilon+12 ) \varepsilon-2\,\epsilon ) {\beta
}^{2}+ ( 8\,{\varepsilon}^{2}+ ( -8\,\epsilon-16 )
\varepsilon+8\,\epsilon\\
&&~+5 ) \beta-4\,\epsilon+4\,\varepsilon-4
 ) .
\end{eqnarray*}}
\end{document}